\documentclass[11pt]{article}

\usepackage[compress]{natbib}
\usepackage[margin=1in]{geometry}
\usepackage{setspace}
\usepackage{amsmath,amssymb,amsthm,amsfonts,mathtools}
\usepackage{booktabs}
\usepackage{nicefrac}
\usepackage{graphicx}
\usepackage{multirow}
\usepackage{algorithm}
\usepackage{algpseudocode}
\usepackage[shortlabels]{enumitem}
\usepackage[table,dvipsnames]{xcolor}
\usepackage{microtype}

\usepackage[T1]{fontenc}
\usepackage{lmodern}

\usepackage[scaled]{helvet}
\usepackage{courier}
\normalfont
\usepackage[T1]{fontenc}
\usepackage{url}
\usepackage{thmtools}
\usepackage{hyperref}
\hypersetup{
  colorlinks=true,
  citecolor=MidnightBlue,
  linkcolor=red!80!black,
  urlcolor=MidnightBlue
}

\allowdisplaybreaks[4]
\newtheoremstyle{jmlr}
  {9pt plus 3pt minus 5pt} 
  {9pt plus 3pt minus 5pt} 
  {\itshape}               
  {0pt}                    
  {\bfseries}              
  {.}                      
  {.5em}                   
  {}                       

\theoremstyle{jmlr}

\newtheorem{theorem}{Theorem}
\newtheorem{lemma}[theorem]{Lemma}
\newtheorem{proposition}[theorem]{Proposition}

\newtheorem{definition}{Definition}

\newtheorem{assumption}{Assumption}

\newenvironment{keywords}
  {\par\vspace{0.5em}\noindent\textbf{Keywords:}\ }
  {\par}

\newcommand{\equalcontrib}{\textsuperscript{*}}
\newcommand{\corresponding}{\textsuperscript{\dag}}

\begin{document}

\title{SGHA: A Single--Loop Fully First-Order Algorithm for\\
Nonconvex--Strongly-Convex Bilevel Optimization}

\date{\today}
\author{%
Zhihao Gu\textsuperscript{1}\equalcontrib \qquad
Qilong Wu\textsuperscript{2}\equalcontrib \qquad
Junchi Yang\textsuperscript{2}\corresponding\\[0.75em]
\textsuperscript{1}Department of Industrial and Manufacturing Engineering\\
The Pennsylvania State University\\[0.25em]
\textsuperscript{2}School of Data Science\\
The Chinese University of Hong Kong, Shenzhen\\[0.6em]
\small\texttt{zbg5155@psu.edu}\quad
\small\texttt{qilongwu@link.cuhk.edu.cn}\\
\small\texttt{yangjunchi@cuhk.edu.cn}
}

\maketitle

\renewcommand{\thefootnote}{\fnsymbol{footnote}}
\footnotetext[1]{Equal contributions.}
\footnotetext[2]{The corresponding author. Also affiliated with Shenzhen Research Institute of Big Data and 
Shenzhen International Center for Industrial and Applied Mathematics.}
\renewcommand{\thefootnote}{\arabic{footnote}}
\begin{abstract}

In this work, we study the oracle complexity of finding an $\epsilon$-stationary point for nonconvex--strongly-convex (NC--SC) bilevel optimization using only first-order oracles. Existing methods achieving the best-known complexity guarantees typically rely on double-loop, penalty-based procedures. We propose a novel \emph{single-loop} algorithm based on a constrained reformulation in which lower-level stationarity is imposed as a constraint. Specifically, we construct a regularized Lagrangian by introducing a quadratic regularizer and restricting the dual variable to a bounded domain, and then apply Smoothed Gradient Descent Ascent \citep{zhang2020single}, with Hessian-vector products approximated via finite differences of gradients. We refer to the resulting deterministic and stochastic algorithms as SGHA and Stoc--SGHA, respectively. In the deterministic setting, SGHA achieves an oracle complexity of $\mathcal{O}(\bar{\kappa}_y^{5}\epsilon^{-2})$, where $\bar{\kappa}_y$ denotes the relevant condition number. In the stochastic setting, Stoc--SGHA achieves an oracle complexity of $\mathcal{O}\left(\bar{\kappa}_y^{17}\epsilon^{-6}\rho^{-3}\right)$ with probability at least $1-\rho$ for any $\rho\in(0,1)$, and an oracle complexity of $\mathcal{O}\left(\bar{\kappa}_y^{17}\epsilon^{-6}\right)$ in expectation under an additional bounded-iterate assumption. Moreover, under an additional stochastic smoothness assumption imposed only on the lower-level objective, the stochastic oracle complexity of Stoc--SGHA improves to $\mathcal{O}\left(\bar{\kappa}_y^{11}\epsilon^{-4}\rho^{-2}\right)$ with high probability and $\mathcal{O}\left(\bar{\kappa}_y^{11}\epsilon^{-4}\right)$ in expectation, matching the $\epsilon$-dependence of the lower bounds. 

\end{abstract}

\begin{keywords}
  Bilevel optimization; fully first-order methods; finite-difference approximation; oracle complexity
\end{keywords}

\section{Introduction}
\label{sec:intro}

Bilevel optimization \citep{colson2007overview} is a fundamental framework for modeling hierarchical decision-making problems with two nested levels. It originates from
the two-player general-sum Stackelberg game \citep{von2010market}, and reflects the sequential nature of the decision process
of two players. Recently, it has found broad applications in machine learning and related areas, including meta-learning \citep{rajeswaran2019meta}, hyperparameter optimization \citep{franceschi2018bilevel,bao2021stability}, model selection \citep{giovannelli2021bilevel,kunapuli2008bilevel}, adversarial learning \citep{goodfellow2020generative}, and reinforcement learning \citep{hong2023two,konda1999actor,zeng2024two}. In its canonical form, bilevel optimization can be written as the following problem:
\begin{equation}\tag{P}\label{Bilevel Formulation}
\min_{x \in \mathbb{R}^{d_x}} F(x) = f(x, y^*(x)),
\quad \text{s.t.}\quad
y^*(x) \in \arg\min_{y \in \mathbb{R}^{d_y}} g(x,y).
\end{equation}
The hyper-objective \(F\) depends on the upper-level variable \(x\) both directly through \(f\) and indirectly through the lower-level solution \(y^*(x)\).

We consider the smooth nonconvex--strongly-convex (NC--SC) setting \citep{ghadimi2018approximation,hong2023two,ji2021bilevel}, in which the lower-level objective $g(x,y)$ is strongly convex in $y$, while the upper-level objective $f(x,y)$ may be nonconvex. Under this assumption, the lower-level solution $y^\star(x)$ is unique, and the resulting hyperobjective $F$ is well defined and smooth. By the implicit function theorem \citep{krantz2002implicit}, the gradient of the hyperobjective can be characterized as follows:
\begin{equation}
    \nabla F(x)
=
\nabla_x f(x,y^*(x))
-
\nabla^2_{xy} g(x,y^*(x))
\left[\nabla^2_{yy} g(x,y^*(x))\right]^{-1}
\nabla_y f(x,y^*(x)).
\label{eq: implicit-differentiation formula}
\end{equation}
This motivates methods that estimate $\nabla y^*(x)$ and substitute it into the gradient expression~\citep{gould2016differentiating, domke2012generic, ghadimi2018approximation, dagreou2022framework}. However, they typically rely on the Hessian of $g$.

To avoid costly Hessian--vector products (HVPs), fully first-order methods have recently received increasing attention in bilevel optimization \citep{kwon2023fully, liu2022bome}. A prominent line of work adopts the penalty formulation
\[
F_{\lambda}(x,y)
=
f(x,y)
+
\lambda\bigl[g(x,y)-g^\star(x)\bigr],
\quad \text{with }
g^\star(x)
=
\min_{y} g(x,y)
=
g\bigl(x,y^\star(x)\bigr),
\]
where $\lambda>0$ is the penalty parameter and $g(x,y)-g^\star(x)$ measures lower-level suboptimality. \citet{chen2025near} and \citet{chen2026conditionnumberdependencybilevel} analyze this penalty-based approach with the choice $\lambda=\Theta(\epsilon^{-1})$. In particular, F$^2$BA and F$^2$BA$^+$ achieve a nearly optimal fully first-order oracle complexity of $\widetilde{\mathcal{O}}(\epsilon^{-2})$ in the deterministic NC--SC setting, while F$^2$BSA and F$^3$BSA attain the best-known complexity of $\widetilde{\mathcal{O}}(\epsilon^{-6})$ in the stochastic NC--SC setting.

However, these complexity guarantees rely crucially on double-loop procedures. Specifically, the algorithms introduce two auxiliary variables, $z$ and $y$, and solve the corresponding inner subproblems to prescribed accuracies in order to approximate $g^\star(x)$ and $\arg\min_y F_{\lambda}(x,y)$. A key observation in \citet{chen2025near} is that, when the inner iterates remain sufficiently close to their exact minimizers, the outer update in $x$ can use a relatively large stepsize because the value function $F_{\lambda}^\star(x) := \min_y F_{\lambda}(x, y)$ has a smoothness constant of order $\Theta(1)$, independent of the large penalty parameter $\lambda$. In contrast, if the inner loops are removed directly, the tracking errors in $y$ and $z$ evolve simultaneously with the upper-level variable $x$, while the large penalty parameter deteriorates the conditioning of the penalized problem. As a result, algorithms and analyses that cannot exploit the $\lambda$-independent smoothness of $F_\lambda^\star$ typically yield worse complexity guarantees \citep{liu2022bome,shen2023penalty,kwon2023fully}. Therefore, a direct single-loop implementation of the penalty-based approach does not retain the best-known dependence on $\epsilon$.

\begin{quote}
\centering
\itshape
Can a single-loop, fully first-order algorithm match the best
\(\epsilon\)-dependence \\
achieved by double-loop penalty methods?
\end{quote}

We answer this question by proposing a novel single-loop algorithm, \textbf{SGHA} (\textbf{S}moothed \textbf{G}DA with \textbf{H}essian \textbf{A}pproximation), together with its stochastic counterpart, \textbf{Stoc-SGHA}. Our approach builds on the Smoothed Gradient Descent Ascent (GDA) framework \citep{zhang2020single} and adapts it to NC--SC bilevel optimization through a regularized Lagrangian reformulation, combining a first-order approximation for Hessian--vector products.

The starting point of our approach is to reformulate
Problem~\eqref{Bilevel Formulation} as an equality-constrained problem. Since
the lower-level objective \(g(x,\cdot)\) is strongly convex, the lower-level
solution is uniquely characterized by the first-order optimality condition
$\nabla_y g(x,y^*(x))=0$.
Therefore, Problem~\eqref{Bilevel Formulation} can be equivalently written as
\begin{equation}\tag{R}\label{Reformulated Bilevel Optimization}
    \min_{x \in \mathbb{R}^{d_x},\, y \in \mathbb{R}^{d_y}} f(x, y),
    \quad \text{s.t.}\quad
    \nabla_y g(x,y) = 0.
\end{equation}
We then introduce the associated Lagrangian function
\begin{equation}\tag{L}
    \mathcal{L}(x,y,\lambda)
    :=
    f(x,y) + \lambda^{\top}\nabla_y g(x,y),
    \label{eq: NC-L Lagraingian}
\end{equation}
where \(\lambda\in\mathbb R^{d_y}\) is the multiplier.

The Lagrangian \eqref{eq: NC-L Lagraingian} is generally nonconvex in the primal variables $(x,y)$ and affine in the dual multiplier $\lambda$. Although the equality-constrained problem can be formally expressed as
$\min_{x,y}\max_{\lambda} \mathcal{L}(x,y,\lambda)$,
the inner maximization is unbounded whenever $\nabla_y g(x,y)\neq 0$, since $\lambda$ is unconstrained. Consequently, the resulting formulation is not a finite-valued smooth minimax problem, and many standard minimax algorithm analyses cannot be applied directly. Moreover, our goal is to exploit the particular structure of this Lagrangian to obtain sharper complexity guarantees.

To this end, we apply Smoothed GDA with three key modifications. First, we restrict the multiplier $\lambda$ to a sufficiently large bounded domain. Second, we introduce a small concave quadratic regularizer in $\lambda$, which renders the inner maximization strongly concave with well-controlled conditioning. Third, because the primal gradients of $\mathcal{L}$ involve Hessian--vector products of $g$, we approximate these terms using finite differences of first-order gradients. These modifications yield a well-conditioned minimax surrogate amenable to Smoothed GDA analysis while preserving the fully first-order nature of SGHA.

Our main complexity results are summarized in Table~\ref{tab:oracle_complexity}. Here, $\kappa_y$ and $\bar{\kappa}_y$ denote the condition numbers defined in Definition~\ref{Definition: kappa}. In the deterministic setting, Theorem~\ref{Theorem: Deterministic Sample Complexity} shows that SGHA (Algorithm~\ref{SGHA}) finds an $\epsilon$-stationary point of the NC--SC bilevel problem with oracle complexity $\mathcal{O}(\bar{\kappa}_y^{5}\epsilon^{-2})$. In the stochastic setting, Theorems~\ref{Theorem: Stochastic Sample Complexity HP} and~\ref{Theorem: Stochastic Sample Complexity Expectation} show that Stoc-SGHA (Algorithm~\ref{Stoc-SGHA}) achieves oracle complexity $\mathcal{O}\left(\bar{\kappa}_y^{17}\epsilon^{-6}\rho^{-3}\right)$ with probability at least $1-\rho$ for any $\rho\in(0,1)$, and $\mathcal{O}\left(\bar{\kappa}_y^{17}\epsilon^{-6}\right)$ in expectation under an additional bounded-iterate assumption. These bounds remove the logarithmic factors appearing in the best-known complexities of \citet{chen2025near}. Moreover, when the lower-level objective $g$ satisfies the stochastic smoothness condition in Assumption~\ref{Assumption: Stochastic Smoothness of g}, Theorems~\ref{Theorem: Stochastic Sample Complexity HP with g Smoothness} and~\ref{Theorem: Stochastic Sample Complexity Expectation with g Smoothness} improve these bounds to $\mathcal{O}\left(\bar{\kappa}_y^{11}\epsilon^{-4}\rho^{-2}\right)$ with probability at least $1-\rho$, and $\mathcal{O}\left(\bar{\kappa}_y^{11}\epsilon^{-4}\right)$ in expectation. They match the lower bound in \citep{arjevani2023lower} in the dependence on $\epsilon$.

\begin{table}[t]
\centering
\caption{Oracle complexity for finding an \(\epsilon\)-stationary point in deterministic and stochastic NC--SC bilevel optimization. Shaded cells denote our methods.
}
\label{tab:oracle_complexity}

\setlength{\tabcolsep}{6pt}
\renewcommand{\arraystretch}{1.15}
\resizebox{\linewidth}{!}{
\begin{tabular}{llccl}
\toprule
Problem Class & Algorithm & Oracle & \# Loops & Oracle Complexity \\
\midrule

\multirow{7}{*}{Det. NC--SC BiO}
& F$^2$SA \citep{kwon2023fully}
& $\mathcal{FO}$
& single--loop
& $\widetilde{\mathcal{O}}(\bar{\kappa}_y^{7}\epsilon^{-3})$
\\

& MEHA \citep{liu2024moreau}
& $\mathcal{FO}$
& single--loop
& $\mathcal{O}(\mathrm{poly}(\bar{\kappa}_y)\epsilon^{-4})$
\\

& V-PBGD \citep{shen2023penalty}
& $\mathcal{FO}$
& double--loop
& $\mathcal{O}(\mathrm{poly}(\bar{\kappa}_y)\epsilon^{-3})$
\\

& F$^2$BA \citep{chen2025near}
& $\mathcal{FO}$
& double--loop
& $\widetilde{\mathcal{O}}(\bar{\kappa}_y^{4}\epsilon^{-2})$
\\

& F$^2$BA$^+$ \citep{chen2026conditionnumberdependencybilevel}
& $\mathcal{FO}$
& double--loop
& $\widetilde{\mathcal{O}}(\bar{\kappa}_y^{3.5}\epsilon^{-2})$
\\

& \cellcolor{gray!12}\textbf{SGHA (Theorem \ref{Theorem: Deterministic Sample Complexity})}
& \cellcolor{gray!12}$\mathcal{FO}$
& \cellcolor{gray!12}single--loop
& \cellcolor{gray!12}$\mathcal{O}(\bar{\kappa}_y^{5}\epsilon^{-2})$
\\


& Lower Bound \citep{chen2026conditionnumberdependencybilevel}
& $\mathcal{FO}$
& --
& $\Omega(\kappa_y^{2.5}\epsilon^{-2})$
\\

\midrule

\multirow{6}{*}{Stoc. NC--SC BiO}
& F$^2$SA \citep{kwon2023fully}
& $\mathcal{SFO}$
& double--loop
& $\widetilde{\mathcal{O}}(\mathrm{poly}(\bar{\kappa}_y)\epsilon^{-7})$
\\

& F$^2$BSA \citep{chen2025near}
& $\mathcal{SFO}$
& double--loop
& $\widetilde{\mathcal{O}}(\bar{\kappa}_y^{12}\epsilon^{-6})$
\\

& F$^3$BSA \citep{chen2025near}
& $\mathcal{SFO}$
& double--loop
& $\widetilde{\mathcal{O}}(\bar{\kappa}_y^{11}\epsilon^{-6})$
\\

& \cellcolor{gray!12}\textbf{Stoc-SGHA (Theorem \ref{Theorem: Stochastic Sample Complexity HP}/Theorem \ref{Theorem: Stochastic Sample Complexity Expectation})}
& \cellcolor{gray!12}$\mathcal{SFO}$
& \cellcolor{gray!12}single--loop
& \cellcolor{gray!12}$\mathcal{O}\left(
\bar{\kappa}_y^{17}\epsilon^{-6}\rho^{-3}
\right)$/$\mathcal{O}\left(
\bar{\kappa}_y^{17}\epsilon^{-6}
\right)$
\\

& \cellcolor{gray!12}\textbf{Stoc-SGHA (Theorem \ref{Theorem: Stochastic Sample Complexity HP with g Smoothness}/Theorem \ref{Theorem: Stochastic Sample Complexity Expectation with g Smoothness})}
& \cellcolor{gray!12}$\mathcal{SFO}$ + Assumption \ref{Assumption: Stochastic Smoothness of g}
& \cellcolor{gray!12}single--loop
& \cellcolor{gray!12}$\mathcal{O}\left(
\bar{\kappa}_y^{11}\epsilon^{-4}\rho^{-2}
\right)$/$\mathcal{O}\left(
\bar{\kappa}_y^{11}\epsilon^{-4}
\right)$
\\



& Lower Bound \citep{chen2026conditionnumberdependencybilevel}
& $\mathcal{FO}$ for \(f\) + $\mathcal{SFO}$ for \(g\)
& --
& $\Omega(\kappa_y^{4.5}\epsilon^{-4})$
\\

\bottomrule
\end{tabular}
}

\vspace{2mm}
\begin{minipage}{0.98\linewidth}
\footnotesize
\textbf{Table Notes.}
Det. and Stoc. stand for ``deterministic" and ``stochastic", respectively.
NC--SC BiO stands for nonconvex--strongly-convex bilevel optimization. \(\mathcal{FO}\) denotes an exact first-order oracle. \(\mathcal{SFO}\) denotes a stochastic first-order oracle. The analysis of F$^2$SA covers both single-loop and nested-loop variants with the same complexity rate. The \(\Omega(\kappa_y^{4.5}\epsilon^{-4})\) lower bound is proved for
zero-respecting \(\mathcal{SFO}\) algorithms on a bounded constrained nonconvex--quadratic problem.
The displayed condition-number rates treat the initial Lyapunov
gap $\Delta_V$ and the remaining problem-dependent variance and
bounded-gradient constants as fixed; the theorem statements retain their
explicit dependence on these quantities.
\end{minipage}
\end{table}

\section{Related Work}

We organize the related work around two themes: bilevel optimization and minimax optimization. For bilevel optimization, we review the best-known deterministic and stochastic first-order oracle complexities for NC--SC problems. We then discuss minimax optimization, motivated by the nonconvex--concave (linear) Lagrangian formulation \eqref{eq: NC-L Lagraingian}.

\paragraph{Bilevel Optimization with Hessian}
The implicit-differentiation formula in \eqref{eq: implicit-differentiation formula} shows that computing the
hypergradient requires estimating the sensitivity
\(\nabla y^*(x)\), or equivalently applying the inverse lower-level
Hessian to \(\nabla_y f\).  One line of work uses
\textbf{I}terative \textbf{D}ifferentiation (ITD)
\citep{gould2016differentiating,franceschi2017forward,
shaban2019truncated,bolte2021nonsmooth}, which approximates
\(y^*(x)\) by a finite lower-level trajectory and differentiates
through its iterations.  Another line, known as
\textbf{A}pproximate \textbf{I}mplicit \textbf{D}ifferentiation (AID)
\citep{domke2012generic,ghadimi2018approximation,
pedregosa2016hyperparameter,franceschi2018bilevel,
grazzi2020iteration,ji2021bilevel,dagreou2022framework,
arbel2021amortized}, approximates the inverse-Hessian action by
solving the associated linear system.

These ideas also underlie several single-loop stochastic NC--SC
bilevel algorithms.  TTSA \citep{hong2023two}, SUSTAIN
\citep{khanduri2021near}, and STABLE \citep{chen2022single} construct
implicit-differentiation-based hypergradient estimators using
Hessian-vector products, Jacobian-vector products, or related
second-order information.  SOBA/SABA \citep{dagreou2022framework}
and MA-SOBA \citep{chen2024optimal} avoid explicit Hessian inversion
by recursively tracking the solution of the linear system, but still
require second-order oracle information and an auxiliary
linear-system variable.  Consequently, these methods fall outside
the fully first-order oracle model considered in this paper.

\paragraph{Fully First-order Bilevel Optimization.}
Methods based on the hypergradient typically require Hessian information, which has motivated the development of fully first-order approaches \citep{liu2022bome,shen2023penalty,kwon2023fully}. Most of these methods are double-loop and penalty-based: they incorporate lower-level optimality into a penalized upper-level objective whose gradients can be evaluated using only first-order information. In particular, \citet{kwon2023fully} proposed F$^2$SA, which achieves a deterministic oracle complexity of $\widetilde{\mathcal{O}}(\bar{\kappa}_y^{7}\epsilon^{-3})$ and a stochastic oracle complexity of $\widetilde{\mathcal{O}}(\mathrm{poly}(\bar{\kappa}_y)\epsilon^{-7})$ for NC--SC bilevel optimization using an increasing penalty parameter. By instead fixing the penalty parameter at $\lambda=\Theta(\epsilon^{-1})$, \citet{chen2026conditionnumberdependencybilevel} improved the deterministic complexity to $\widetilde{\mathcal{O}}(\bar{\kappa}_y^{3.5}\epsilon^{-2})$, while the stochastic complexity was improved to $\widetilde{\mathcal{O}}(\mathrm{poly}(\bar{\kappa}_y)\epsilon^{-6})$ \citep{kwon2024complexity,chen2025near}. Without additional assumptions, no known fully first-order method improves these dependencies on $\epsilon$.

On the lower-bound side, \citet{chen2026conditionnumberdependencybilevel} establish $\Omega(\kappa_y^{2.5}\epsilon^{-2})$ in the deterministic setting and $\Omega(\kappa_y^{4.5}\epsilon^{-4})$ in the stochastic setting for fully first-order methods. These results show that the deterministic complexity of F$^2$BA/F$^2$BA$^+$ is nearly optimal in its dependence on $\epsilon$. In contrast, the best-known stochastic upper bound still exhibits an extra factor of $\epsilon^{-2}$ relative to the $\Omega(\epsilon^{-4})$ lower bounds. We note that \citet{kwon2024complexity} also establish an
\(\Omega(\epsilon^{-4})\) lower bound under the stochastic smoothness assumption on the lower-level objective, using a \(y^\star\)-aware oracle, which additionally returns an estimate \(\hat y\) satisfying a prescribed accuracy guarantee relative to the exact lower-level solution \(y^\star(x)\).


Most of the methods discussed above are double-loop. While the analysis of \citet{kwon2023fully} permits a single-loop variant by setting the inner-loop length to one, it has strictly worse complexity guarantees. The best-known bounds of \citet{chen2025near}, by contrast, rely essentially on inner-loop updates, as discussed in Section~\ref{sec:intro}. A closely related single-loop method is FdeHBO \citep{yang2023achieving}, which avoids iteratively solving linear-systems and uses finite-difference approximations for Hessian--vector products. However, its guarantees require variance reduction and stronger assumptions, including stochastic smoothness in the upper-level function, stochastic Hessian, and higher-order smoothness. 
Other methods, such as F$^3$SA \citep{kwon2023fully}, also operate under similar stronger assumptions. We therefore omit these methods from Table~\ref{tab:oracle_complexity}.

\paragraph{Minimax optimization.}
Minimax optimization problems take the form
\[
\min_{\omega \in \mathcal{W}}\max_{\lambda \in \Lambda} f(\omega,\lambda),
\]
where $\mathcal{W}\subseteq\mathbb{R}^{d_\omega}$ and $\Lambda\subseteq\mathbb{R}^{d_\lambda}$ are nonempty closed convex sets. Motivated by the structure of \eqref{eq: NC-L Lagraingian}, we focus on the nonconvex--concave (NC--C) setting, where $f:\mathbb{R}^{d_\omega}\times\mathbb{R}^{d_\lambda}\to\mathbb{R}$ is nonconvex in $\omega$ and concave in $\lambda$.

There is a vast literature on nonconvex--concave (NC--C) minimax optimization
\citep{rafique2022weakly,nouiehed2019solving,kong2021accelerated,bot2023alternating}.
In the deterministic setting, the best-known gradient complexity is
$\widetilde{\mathcal O}(\epsilon^{-2.5})$ under the $f$-stationarity measure
\citep{lin2020near}, and $\widetilde{\mathcal O}(\epsilon^{-3})$ under
stationarity of the Moreau envelope of the value function \citep{yang2020catalyst}. In the stochastic
setting, SAPD+ achieves the best-known stochastic oracle complexity of
$\mathcal O(\epsilon^{-6})$ \citep{zhang2022sapdplus}. However, most of
these results assume a compact dual domain $\Lambda$, and thus do not apply
directly to our original unbounded Lagrangian formulation. Moreover, by
exploiting the special structure of the bilevel problem, our method achieves
sharper complexity guarantees than those available for generic NC--C minimax
optimization.


In contrast, existing single-loop algorithms generally have worse complexity than the best-known rates discussed above~\citep{lin2025two,xu2023unified,bot2023alternating}. Our algorithm builds on Smoothed GDA~\citep{zhang2020single,yang2022faster}, which introduces a Moreau-envelope-type smoothing mechanism via an auxiliary primal sequence. In the deterministic setting, \citet{zhang2020single} establish a complexity of \(\mathcal O(\epsilon^{-4})\) for general NC-C minimax problems, which improves to \(\mathcal O(\epsilon^{-2})\) when the objective is linear in \(\lambda\) and additional regularity conditions hold, including strict complementarity and bounded level sets of \(\max_{\lambda\in\Lambda} f(\cdot,\lambda)\). Their analysis does not directly apply to our setting, as we seek an \(\mathcal O(\epsilon^{-2})\) complexity without these additional assumptions. In both deterministic and stochastic settings, \citet{yang2022faster} obtain complexities of \(\mathcal O(\kappa\epsilon^{-2})\) and \(\mathcal O(\kappa^2\epsilon^{-4})\), respectively, under a Polyak--\L{}ojasiewicz condition, where \(\kappa\) denotes the condition number. These results do not directly apply to our setting: imposing strong concavity via a quadratic regularizer of order \(\Theta(\epsilon\|\lambda\|^2)\) will lead to a worse \(\epsilon\)-dependence than ours. Moreover, the gradients of \eqref{eq: NC-L Lagraingian} requires HVPs, which must be approximated in our first-order framework.



\section{Preliminaries and Technical Background}

\paragraph{Notations.}
For a vector \(v\) and a matrix \(A\), we use \(\|v\|\) and \(\|A\|\) to denote the Euclidean norm of \(v\) and the spectral norm of \(A\), respectively. For a closed convex set \(\mathcal C\), we denote by \(\mathcal P_{\mathcal C}\) its Euclidean projection. For a differentiable function \(h\), we write \(\nabla h\) for its gradient, and use \(\nabla_x h\) and \(\nabla_y h\) to denote the partial gradients with respect to \(x\) and \(y\). Similarly, \(\nabla^2_{xy}h\) and \(\nabla^2_{yy}h\) denote the corresponding Hessian blocks. Throughout the paper, \(\mathbb E[\cdot]\) denotes expectation with respect to all randomness in the stochastic oracle and the algorithm, and \(\mathbb E_k[\cdot]\) denotes conditional expectation given the history up to iteration \(k\). \(a\asymp b\) means that \(a/b\) is bounded above and below by
positive constants. The notation \(\mathcal O(\cdot)\), \(\Omega(\cdot)\), and \(\Theta(\cdot)\) hides absolute constants independent of the target accuracy, condition number, and failure probability, while \(\tilde{\mathcal O}(\cdot)\), \(\tilde{\Omega}(\cdot)\), and \(\tilde{\Theta}(\cdot)\) additionally hide logarithmic factors.

\subsection{Basic Assumptions and Definitions}
We consider the nonconvex--strongly--convex (NC--SC) bilevel optimization problem
\begin{equation}\tag{P}
\min_{x \in \mathbb{R}^{d_x}} F(x) = f(x, y^*(x)),
\quad \text{s.t.}\quad
y^*(x) \in \arg\min_{y \in \mathbb{R}^{d_y}} g(x,y).
\end{equation}
We first state the basic assumptions used throughout the paper. These assumptions are standard in the literature and, to the best of our knowledge, are among the weakest commonly imposed for smooth NC--SC bilevel optimization \citep{chen2026conditionnumberdependencybilevel,chen2025near,kwon2023penalty,kwon2023fully}.
\begin{assumption}\label{Assumption: Bilevel Optimization}
    We have the following assumptions:
    \begin{enumerate}
    \item $\|\nabla_y f(x,y)\| \leq l_{f,0}$ for all $x$ and $y$.
    \item $g(x,y)$ is $\mu_g$-strongly convex w.r.t. $y$ for every $x$.
        \item $f$ and $g$ are $l_{f,1}$ and $l_{g,1}$-smooth, respectively.
        \item The Hessians of $f$ and $g$ are $l_{f,2}$-- and $l_{g,2}$--Lipschitz, respectively.
        \item For all $x$ and $y$, $f(x,y)$ is bounded from below by some finite constant $\underline{f} > - \infty$.
\end{enumerate}
\end{assumption}
Next, we introduce the condition-number notation and the stationarity measure used throughout the paper.
\begin{definition}\label{Definition: kappa}
    Under Assumption \ref{Assumption: Bilevel Optimization}, we define the lower-level condition number \(\kappa_y:=l_{g,1}/\mu_g\), and the global condition number $\bar{\kappa}_y := \bar L/\mu_g$ with $\bar L := \max\{l_{f,0},l_{f,1},l_{g,1},l_{f,2},l_{g,2}\}$.
\end{definition}
Since $l_{g,1}\ge\mu_g$, these definitions imply
$\bar\kappa_y\ge\kappa_y\ge1$.
\begin{definition}\label{Definition: Stationary Point}
    Given a differentiable function $\varphi(x) : \mathbb{R}^d \to \mathbb{R}$, we call $x$ an $\epsilon$--first--order stationary point of $\varphi(x)$ if $\|\nabla\varphi(x)\| \leq \epsilon$.
\end{definition}
\subsection{Equality--constrained Reformulation}

Since \(g(x,\cdot)\) is differentiable and strongly convex for every fixed
\(x\), the lower-level minimizer $y^*(x)$ is unique. Moreover, the first-order
optimality condition is both necessary and sufficient, and hence
\[
    \nabla_y g(x,y^\star(x))=0 .
\]
Therefore, the graph of the lower-level solution map \(x\mapsto y^\star(x)\)
is exactly the feasible set described by the equality constraint
\(\nabla_y g(x,y^\star(x))=0\). Equivalently, any feasible pair \((x,y)\) of this reformulated problem satisfies \(y=y^\star(x)\), and hence
\(f(x,y)=F(x)\). Thus, it preserves the
objective values and minimizers of the original bilevel formulation. Consequently, the NC--SC bilevel problem can be equivalently
written as
\[ \tag{R}
    \min_{x \in \mathbb{R}^{d_x},\, y \in \mathbb{R}^{d_y}} f(x,y),
    \qquad
    \text{s.t.}\quad \nabla_y g(x,y)=0.
\]
The Lagrangian associated with this equality-constrained bilevel problem is
\[ \tag{L}
    \mathcal{L}(x,y,\lambda)
    :=
    f(x,y)+\lambda^\top \nabla_y g(x,y),
    \qquad
    \lambda\in\mathbb R^{d_y}.
\]

The next lemma shows that the equality constraint in the equality-constrained
bilevel problem is regular. In other words, lower-level strong convexity implies
the LICQ condition for \(\nabla_y g(x,y)=0\). This regularity ensures that local solutions of the \eqref{Reformulated Bilevel Optimization} admit KKT multipliers, and hence justifies the Lagrangian stationarity system introduced below.

\begin{lemma}[LICQ for the lower-level optimality constraint]
\label{Lemma: LICQ}
Suppose \(g(x,\cdot)\) is \(\mu_g\)-strongly convex for every \(x\). Then, the Jacobian of the constraint mapping \(\nabla_y g(x,y)\) has full row rank
at every \((x,y)\). In particular, LICQ holds at every feasible point of the
equality-constrained reformulation.
\end{lemma}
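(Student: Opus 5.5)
The plan is to write down the Jacobian of the constraint map explicitly and read off its rank from the $\nabla^2_{yy} g$ block. Define $c:\mathbb{R}^{d_x}\times\mathbb{R}^{d_y}\to\mathbb{R}^{d_y}$ by $c(x,y):=\nabla_y g(x,y)$. Under the smoothness part of Assumption~\ref{Assumption: Bilevel Optimization}, $g$ is twice differentiable, so $c$ is differentiable with Jacobian
\[
J_c(x,y)=\begin{bmatrix}\nabla^2_{yx} g(x,y) & \nabla^2_{yy} g(x,y)\end{bmatrix}\in\mathbb{R}^{d_y\times(d_x+d_y)},
\]
whose rows are the gradients of the $d_y$ scalar constraints $[\nabla_y g]_i$. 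Full row rank then amounts to rank $d_y$, i.e., linear independence of these gradients, which is exactly LICQ.

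Next, I will show that the right block $\nabla^2_{yy} g(x,y)$ is nonsingular. Since $g(x,\cdot)$ is $\mu_g$-strongly convex and twice differentiable, the standard second-order characterization gives $\nabla^2_{yy} g(x,y)\succeq \mu_g I$. In particular, for any $v\neq 0$ we have $v^\top\nabla^2_{yy}g(x,y)v\ge\mu_g\|v\|^2>0$, so this block is invertible. If one wants to avoid citing that characterization, it can be derived directly: strong monotonicity $\langle \nabla_y g(x,y+tv)-\nabla_y g(x,y),tv\rangle\ge\mu_g t^2\|v\|^2$, divided by $t^2$ and with $t\to0$, yields the same bound.

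To finish, suppose $u\in\mathbb{R}^{d_y}$ satisfies $u^\top J_c(x,y)=0$. Restricting to the $y$-columns gives $u^\top\nabla^2_{yy}g(x,y)=0$, and invertibility of that block forces $u=0$. Hence the rows of $J_c$ are linearly independent at every $(x,y)$, not only at feasible points. LICQ at each feasible point of \eqref{Reformulated Bilevel Optimization} follows as a special case, since all constraints there are equalities and LICQ only requires independence of their gradients.

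The only step that needs any care is the passage from strong convexity to the positive-definite Hessian lower bound, which uses twice differentiability (guaranteed here by smoothness plus the Lipschitz Hessian assumption). Everything else is linear algebra. The observation doing the work is that the square submatrix $\nabla^2_{yy}g$ already certifies full row rank, independently of the $x$-block $\nabla^2_{yx}g$.
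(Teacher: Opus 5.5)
Your proposal is correct and follows the same route as the paper: both use that the $y$-block of the constraint Jacobian is $\nabla^2_{yy}g(x,y)\succeq\mu_g I$, so this block is nonsingular, which already forces full row rank of the whole Jacobian and hence LICQ. Your explicit $u^\top J_c(x,y)=0$ argument and your derivation of the Hessian lower bound from strong monotonicity only add detail that the paper leaves implicit.
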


We note that several works also impose the lower-level first-order optimality condition as a constraint, either designing algorithms based on the KKT system of the resulting constrained problem \citep{liu2023averaged} or applying penalty methods \citep{kim2020mpec,mehra2021penalty}. Our approach differs in that we construct a regularized Lagrangian reformulation and develop a specific single-loop, fully first-order method for finding an approximate stationary point of \(\mathcal L\).

\subsection{From Lagrangian Stationarity to Bilevel Stationarity}
The following theorem shows that the equality-constrained reformulation allows us to reduce the analysis of stationarity for the hyperobjective \(F\) to that of the Lagrangian \(\mathcal L\).

\begin{theorem}
\label{Theorem: Approximation of Hypergradient}
Suppose Assumption \ref{Assumption: Bilevel Optimization} holds. Then
\[
\|\nabla F(x)\|
\le
\|\nabla_x \mathcal{L}(x, y, \lambda)\|
+
\kappa_y\|\nabla_y \mathcal{L}(x, y, \lambda)\|
+
\frac{L_{\bar F,y}}{\mu_g}\|\nabla_\lambda \mathcal{L}(x, y, \lambda)\|.
\]
with
\[
L_{\bar F,y}
=
l_{f,1}(1+\kappa_y)
+
l_{f,0}l_{g,2}
\left(
\frac{1}{\mu_g}
+
\frac{l_{g,1}}{\mu_g^2}
\right).
\]
\end{theorem}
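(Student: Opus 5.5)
Our approach is to compare $\nabla F(x)$ with the Lagrangian gradients evaluated at an arbitrary $(x,y,\lambda)$. We introduce the surrogate
\[
\bar F(x,y) := \nabla_x f(x,y) - \nabla^2_{xy} g(x,y)\,[\nabla^2_{yy} g(x,y)]^{-1}\nabla_y f(x,y),
\]
so that $\nabla F(x)=\bar F(x,y^*(x))$ by \eqref{eq: implicit-differentiation formula}. We also introduce the "ideal" multiplier $\lambda^*(x,y) := -[\nabla^2_{yy} g(x,y)]^{-1}\nabla_y f(x,y)$. The partial gradients of the Lagrangian are
\[
\nabla_x\mathcal L = \nabla_x f + \nabla^2_{xy} g\,\lambda,\qquad
\nabla_y\mathcal L = \nabla_y f + \nabla^2_{yy} g\,\lambda,\qquad
\nabla_\lambda\mathcal L = \nabla_y g(x,y).
\]
The key algebraic identity follows by solving the second equation for $\lambda$: $\lambda = \lambda^*(x,y) + [\nabla^2_{yy}g]^{-1}\nabla_y\mathcal L$. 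Substituting this into the first equation gives
\[
\bar F(x,y) = \nabla_x\mathcal L(x,y,\lambda) - \nabla^2_{xy} g(x,y)[\nabla^2_{yy} g(x,y)]^{-1}\nabla_y \mathcal L(x,y,\lambda).
\]
Since $\|\nabla^2_{xy}g\|\le l_{g,1}$ and $\|[\nabla^2_{yy}g]^{-1}\|\le 1/\mu_g$, we get $\|\bar F(x,y)\|\le \|\nabla_x\mathcal L\|+\kappa_y\|\nabla_y\mathcal L\|$.

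It remains to bound $\|\nabla F(x)-\bar F(x,y)\|=\|\bar F(x,y^*(x))-\bar F(x,y)\|$ by $L_{\bar F,y}\|y-y^*(x)\|$. By strong convexity and $\nabla_y g(x,y^*(x))=0$, we have $\mu_g\|y-y^*(x)\|\le\|\nabla_y g(x,y)\|=\|\nabla_\lambda\mathcal L\|$. This yields the third term with coefficient $L_{\bar F,y}/\mu_g$.

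The main step is verifying the Lipschitz constant $L_{\bar F,y}$ of $\bar F(x,\cdot)$. We split the difference term by term:
\begin{itemize}
\item $\nabla_x f$ is $l_{f,1}$-Lipschitz in $y$, contributing $l_{f,1}$.
\item For the product $A H^{-1} v$, with $A=\nabla^2_{xy}g$, $H=\nabla^2_{yy}g$, $v=\nabla_y f$, we telescope:
\[
A_1H_1^{-1}v_1 - A_2H_2^{-1}v_2 = (A_1-A_2)H_1^{-1}v_1 + A_2(H_1^{-1}-H_2^{-1})v_1 + A_2H_2^{-1}(v_1-v_2).
\]
\end{itemize}
The three pieces are bounded using Hessian Lipschitzness of $g$ ($l_{g,2}$), $\|v\|\le l_{f,0}$, and $H_1^{-1}-H_2^{-1}=H_1^{-1}(H_2-H_1)H_2^{-1}$. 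They give, respectively, $l_{g,2}l_{f,0}/\mu_g$, $l_{g,1}l_{g,2}l_{f,0}/\mu_g^2$, and $l_{g,1}l_{f,1}/\mu_g=\kappa_y l_{f,1}$. Summing all contributions reproduces exactly $L_{\bar F,y}=l_{f,1}(1+\kappa_y)+l_{f,0}l_{g,2}(1/\mu_g+l_{g,1}/\mu_g^2)$.

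Finally, the triangle inequality $\|\nabla F(x)\|\le\|\bar F(x,y)\|+\|\nabla F(x)-\bar F(x,y)\|$ combines the two bounds and gives the claim. The only delicate point is bookkeeping in the matrix-product Lipschitz estimate. Lemma~\ref{Lemma: LICQ} is not needed beyond the invertibility of $\nabla^2_{yy}g$.
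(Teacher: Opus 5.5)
Your proposal is correct and follows the paper's proof essentially step for step. Both use the same surrogate $\bar F(x,y)$ and the same identity $\lambda+[\nabla^2_{yy}g]^{-1}\nabla_y f=[\nabla^2_{yy}g]^{-1}\nabla_y\mathcal L$ to get $\|\bar F(x,y)\|\le\|\nabla_x\mathcal L\|+\kappa_y\|\nabla_y\mathcal L\|$, then the same term-by-term Lipschitz estimate (your telescoping order differs trivially but gives the identical constant $L_{\bar F,y}$), and finally the same transfer via $\mu_g\|y-y^*(x)\|\le\|\nabla_y g(x,y)\|$.
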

The following theorem shows that for any pairs of $(x^*,y^*,\lambda^*)$ satisfying the KKT conditions of the equality-constrained bilevel problem, $\lambda^*$ is always bounded by a constant determined by the problem itself.
This provides one motivation for studying stationarity of the Lagrangian associated with the lower-level first-order optimality constraint, rather than penalizing the lower-level value gap as in \citep{kwon2023fully}.

\begin{theorem}[Boundedness of $\lambda^*$]\label{Theorem: Boundedness of lambda star}
Under Assumption \ref{Assumption: Bilevel Optimization}, the optimal Lagrangian multiplier $\lambda^*$ satisfying the KKT conditions is bounded with
\[
\|\lambda^*\| \leq \frac{l_{f,0}}{\mu_g}.
\]
\end{theorem}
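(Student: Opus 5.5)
The plan is to write out the KKT system of the equality-constrained reformulation \eqref{Reformulated Bilevel Optimization} and solve the $y$-block for the multiplier. Lemma~\ref{Lemma: LICQ} gives LICQ at every feasible point, so any local solution $(x^*,y^*)$ admits a multiplier $\lambda^*$ with
\[
\nabla_x f(x^*,y^*) + \nabla^2_{xy} g(x^*,y^*)\lambda^* = 0,\qquad
\nabla_y f(x^*,y^*) + \nabla^2_{yy} g(x^*,y^*)\lambda^* = 0,\qquad
\nabla_y g(x^*,y^*) = 0 .
\]
Here $\nabla^2_{xy} g$ is the $d_x\times d_y$ block, matching the convention in \eqref{eq: implicit-differentiation formula}. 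The statement should be read as concerning any triple satisfying these conditions; the argument below uses only the second equation, so it applies to every KKT triple, not just to minimizers.

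From the $y$-block condition we get $\nabla^2_{yy} g(x^*,y^*)\lambda^* = -\nabla_y f(x^*,y^*)$. By item~2 of Assumption~\ref{Assumption: Bilevel Optimization}, together with twice differentiability from items~3--4, the matrix $\nabla^2_{yy} g(x^*,y^*)$ is symmetric with $\nabla^2_{yy} g \succeq \mu_g I$. Hence it is invertible and $\|[\nabla^2_{yy} g]^{-1}\|\le 1/\mu_g$. This gives
\[
\lambda^* = -\bigl[\nabla^2_{yy} g(x^*,y^*)\bigr]^{-1}\nabla_y f(x^*,y^*),
\]
so $\lambda^*$ is uniquely determined by $(x^*,y^*)$. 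Item~1 of Assumption~\ref{Assumption: Bilevel Optimization} bounds $\|\nabla_y f\|\le l_{f,0}$ everywhere, and combining the two estimates yields $\|\lambda^*\|\le l_{f,0}/\mu_g$.

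No step here is a real obstacle. The only point needing care is the spectral-norm bound on the inverse Hessian. Strong convexity of $g(x,\cdot)$ together with $C^2$ smoothness gives $\nabla^2_{yy} g\succeq \mu_g I$, and for a symmetric positive definite matrix the spectral norm of the inverse equals the reciprocal of the smallest eigenvalue, which is at most $1/\mu_g$. It is also worth noting, as a consistency check, that substituting this $\lambda^*$ into the $x$-block recovers exactly $\nabla F(x^*)=0$ via \eqref{eq: implicit-differentiation formula}.
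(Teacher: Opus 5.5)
Your proposal is correct and follows essentially the same route as the paper: isolate the $y$-block stationarity condition $\nabla_y f + \nabla^2_{yy} g\,\lambda^* = 0$, invert $\nabla^2_{yy} g \succeq \mu_g I$ using $\|[\nabla^2_{yy} g]^{-1}\|\le 1/\mu_g$, and apply $\|\nabla_y f\|\le l_{f,0}$. Your version is slightly cleaner than the paper's because it keeps the minus sign in $\lambda^* = -[\nabla^2_{yy} g]^{-1}\nabla_y f$; the paper drops this sign, which is harmless since only the norm is needed.
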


\section{Deterministic NC--SC Bilevel Optimization}

In this section, we aim to solve deterministic nonconvex-strongly-convex (NC--SC) bilevel optimization problems. We first introduce SGHA and establish its main complexity guarantees, and then present the key analytical tools together with a proof sketch. The algorithm-specific notation used in the analysis of SGHA is summarized in Table~\ref{Notations}.

\subsection{Algorithm}

We first note that $\mathcal{L}$ is generally nonconvex in the primal variables $x$ and $y$, and affine in the multiplier $\lambda$. Because the maximization over an unconstrained multiplier is unbounded whenever the constraint is violated, \eqref{eq: NC-L Lagraingian} does not define a standard finite-valued minimax problem. Instead, we use it as a Lagrangian stationarity model for the equality-constrained reformulation.

However, Theorem~\ref{Theorem: Boundedness of lambda star} shows that every KKT multiplier of Problem~\eqref{Reformulated Bilevel Optimization} is bounded by a problem-dependent constant. Therefore, we restrict the dual variable to $\Lambda=\{\lambda:\|\lambda\|\le C_\lambda\}$, where $C_\lambda$ is chosen sufficiently large to contain all KKT multipliers of Problem~\eqref{Reformulated Bilevel Optimization}. This bounded domain provides uniform smoothness control of $\mathcal{L}$ along the algorithmic trajectory. We further add a small quadratic regularization term to $\lambda$
with a sufficiently small regularization parameter \(p_\lambda>0\). This term turns the projected affine dual maximization into a strongly concave
one, avoiding boundary-saturating or nonunique multiplier responses, while
only introducing a controllable perturbation to the original Lagrangian stationarity. This leads to the regularized surrogate
\begin{equation}
    \min_{x \in \mathbb{R}^{d_x}, y \in \mathbb{R}^{d_y}}\max_{\lambda \in \Lambda}\ f(x,y) + \lambda^{\top}\nabla_y g(x,y) - \frac{p_{\lambda}}{2}\|\lambda\|^2.
    \label{eq:reformed lagrangian}
\end{equation}

We then apply Smoothed Gradient Descent Ascent (GDA) \citep{zhang2020single}. Specifically, we introduce auxiliary smoothing variables \(\hat{x}\) and \(\hat{y}\), together with quadratic proximal terms on the primal variables using a regularization parameter \(p_w > 0 \). With an appropriate choice of \(p_w\), the resulting surrogate is strongly convex in \(w = (x,y)\) and strongly concave in \(\lambda\). We then perform gradient descent on the primal variables $w$, and projected gradient ascent on the dual variable \(\lambda\), for the problem
\begin{equation}
    \min_{x \in \mathbb{R}^{d_x}, y \in \mathbb{R}^{d_y}}\max_{\lambda \in \Lambda}\ f(x,y) + \lambda^{\top}\nabla_y g(x,y) + \frac{p_w}{2}\|x- \hat{x}\|^2 + \frac{p_w}{2}\|y- \hat{y}\|^2 - \frac{p_{\lambda}}{2}\|\lambda\|^2.
    \label{eq: K minimax}
\end{equation}

Furthermore, we note that the gradients induced by \eqref{eq: K minimax} involve Hessian-vector product terms associated with \(g\), namely $\nabla^2_{xy} g(x,y) \lambda$ and $\nabla^2_{yy} g(x,y)\lambda$. To retain a fully first-order method, we approximate these Hessian-vector products by central finite differences. Specifically, we use
\[
\tilde{\nabla}^2_{xy} g_{\lambda}(x,y)
:=
\frac{
\nabla_x g(x,y+\tau\lambda)-\nabla_x g(x,y-\tau\lambda)
}{
2\tau
} \approx \nabla_{xy}^2 g(x,y)\lambda,
\]
and
\[
\tilde{\nabla}^2_{yy} g_{\lambda}(x,y)
:=
\frac{
\nabla_y g(x,y+\tau\lambda)-\nabla_y g(x,y-\tau\lambda)
}{
2\tau
} \approx \nabla_{yy}^2 g(x,y)\lambda,
\]
where \(\tau>0\) is the finite-difference radius, whose value will be specified later.

The complete procedure is summarized in Algorithm~\ref{SGHA}, which we call \textbf{SGHA} (\textbf{S}moothed \textbf{G}DA with \textbf{H}essian \textbf{A}pproximation). In the convergence analysis, we use a common stepsize for the primal updates, namely \(\gamma_{x,k}=\gamma_{y,k}=:\gamma_{w,k}\), while the dual stepsize \(\gamma_{\lambda,k}\) is chosen separately and may scale differently. After each primal--dual update, the auxiliary variables \(\hat{x}\) and \(\hat{y}\) are updated by an averaging step with parameter \(\beta\in(0,1)\).

\begin{algorithm}[htbp]
\caption{\textbf{SGHA} (\textbf{S}moothed \textbf{G}DA with \textbf{H}essian \textbf{A}pproximation)}
\label{SGHA}
\textbf{Input:} Total iterations $K$, step sizes
$\{\gamma_{x,k}\}_k, \{\gamma_{y,k}\}_k, \{\gamma_{\lambda,k}\}_k$, initialization points $x^0,\hat{x}^0 \in \mathbb{R}^{d_x}, y^0,\hat{y}^0 \in \mathbb{R}^{d_y}, \lambda^0 \in \Lambda$. $\tau > 0$, $p_w, p_{\lambda} > 0$, $0 < \beta < 1$, $\Lambda := \left\{\lambda: \|\lambda\| \leq C_{\lambda}\right\}$ with $C_{\lambda} > 0$.
\begin{algorithmic}[1]
    \For{$k = 0,\ldots,K-1$}
            \State $x^{k+1} \leftarrow x^k - \gamma_{x,k}\left(\nabla_x f(x^k,y^k) + \tilde{\nabla}_{xy}^2 g_{\lambda^k}(x^k,y^k) + p_w(x^k - \hat{x}^k)\right)$
            \State $y^{k+1} \leftarrow y^k - \gamma_{y,k}\left(\nabla_y f(x^k,y^k) + \tilde{\nabla}_{yy}^2 g_{\lambda^k}(x^k,y^k) + p_w(y^k - \hat{y}^k)\right)$
        \State $\hat{x}^{k+1} = \hat{x}^k + \beta(x^{k+1} - \hat{x}^k)$
        \State $\hat{y}^{k+1} = \hat{y}^k + \beta(y^{k+1} - \hat{y}^k)$
        \State $\lambda^{k+1} = \mathcal{P}_{\Lambda}\left(\lambda^k + \gamma_{\lambda,k}\left(\nabla_y g(x^k,y^k) - p_{\lambda}\lambda^k\right)\right)$
    \EndFor
\end{algorithmic}
\end{algorithm}

\subsection{Complexity Guarantees}
We are now ready to state the oracle complexity of Algorithm \ref{SGHA} for solving deterministic nonconvex-strongly-convex bilevel optimization problems.

\begin{theorem}
\label{Theorem: Deterministic Sample Complexity}
Suppose that Assumption~\ref{Assumption: Bilevel Optimization} holds.
Fix any target accuracy $0<\epsilon\le 1$.
Let
$\Lambda:=
\left\{
\lambda:\|\lambda\|\le C_\lambda
\right\}$ with $
C_\lambda:=2\bar\kappa_y$.
Choose $
p_w=16\bar L\bar\kappa_y$
and $p_\lambda
=
\frac{1}{8}
\min\left\{
\mu_g,\frac{\epsilon}{2^{11}(1+\bar L)\bar\kappa_y^4}
\right\}$.
For every $k\ge 0$, use the constant stepsizes
\[
\gamma_{x,k}
=
\gamma_{y,k}
=
\frac{1}{2^8\bar L\bar\kappa_y}, \quad
\gamma_{\lambda,k}
=
\frac{\bar\kappa_y}{2^8\bar L}.
\]
Choose the averaging parameter and finite-difference radius as
\[
\beta
=
\frac{1}{2^{30}\bar\kappa_y^2},
\qquad
\tau
=
\frac{\epsilon}
{2^{24}(1+\bar L)\bar\kappa_y^4}.
\]
Let $\Delta_V:=V_0-\underline f$, where $V_0$ is the initial
Lyapunov value defined below, and run Algorithm~\ref{SGHA} for
\[
K
:=
\max\left\{
1,
\left\lceil
\frac{2^{52}(1+\bar L)^2\Delta_V}{\bar L}
\bar\kappa_y^5\epsilon^{-2}
\right\rceil
\right\}.
\]
Then there exists an index
$k_\star\in\{0,\ldots,K-1\}$ such that $
\left\|\nabla F(x^{k_\star})\right\|\le\epsilon$.

\end{theorem}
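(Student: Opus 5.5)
The plan is to follow the Smoothed GDA Lyapunov analysis of \citet{zhang2020single,yang2022faster}, adapted to the regularized surrogate \eqref{eq: K minimax}, and then pass from surrogate stationarity to hypergradient stationarity through Theorem~\ref{Theorem: Approximation of Hypergradient}.

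\textbf{Step 1: properties of the surrogate.} Write $w=(x,y)$ and $K(w,\lambda;\hat w)$ for the objective in \eqref{eq: K minimax}. On $\Lambda$ with $C_\lambda=2\bar\kappa_y$, the map $w\mapsto f+\lambda^\top\nabla_y g$ is $L_w$-smooth with $L_w\le l_{f,1}+C_\lambda l_{g,2}\le 3\bar L\bar\kappa_y$. Hence the choice $p_w=16\bar L\bar\kappa_y$ makes $K$ strongly convex in $w$, with modulus at least $p_w/2$, and smooth with constant $O(\bar L\bar\kappa_y)$. In $\lambda$, $K$ is $p_\lambda$-strongly concave. The finite-difference errors satisfy $\|\tilde\nabla^2 g_\lambda-\nabla^2 g\,\lambda\|\le l_{g,2}\tau C_\lambda^2/2$, since the central difference has an error term controlled by the Hessian Lipschitz constant, uniformly for $\lambda\in\Lambda$. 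With the prescribed $\tau$ this is $O(\epsilon/\bar\kappa_y^2)$, small enough to be absorbed.

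\textbf{Step 2: Lyapunov descent.} I will define the standard potential
\[
V_k=K(w^k,\lambda^k;\hat w^k)-2d(\lambda^k;\hat w^k)+2P(\hat w^k),
\]
where $d(\lambda;\hat w)=\min_w K$ and $P(\hat w)=\min_w\max_{\lambda\in\Lambda}K$. I will show $V_k\ge \underline f-O(p_\lambda C_\lambda^2)$, which fixes the offset defining $\Delta_V$.

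Next comes the one-step descent, proved via primal descent, dual ascent and the proximal-point step on $\hat w$:
\[
V_k-V_{k+1}\ge c_1\|w^k-w^{k+1}\|^2/\gamma_w+c_2\|\lambda^k-\lambda^{k+1}\|^2/\gamma_\lambda+c_3 p_w\beta\|\hat w^k-w^{k+1}\|^2-(\text{error terms}).
\]
Following \citet{zhang2020single}, the error terms are dual-error quantities of the form $p_w\beta\|w(\lambda^{k+1}_+,\hat w^k)-w^\star(\hat w^k)\|^2$ and the finite-difference bias. The bias is handled by Young's inequality together with the choice of $\tau$.

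\textbf{Step 3: the main obstacle.} This is controlling $\|w(\lambda_+;\hat w)-w^\star(\hat w)\|$ without the strict-complementarity assumptions of \citet{zhang2020single}. I will exploit $p_\lambda$-strong concavity. The dual function $d(\cdot;\hat w)$ is strongly concave, so the quantity is bounded by $O(L_w/(p_w p_\lambda))\|\lambda-\lambda_+\|/\gamma_\lambda$. This is an error bound, and the issue is that $p_\lambda\sim\epsilon$ makes the constant large.

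Instead, I will use the bilevel structure. The $\lambda$-coupling enters $w$ only through $\nabla_y g$, whose $y$-Jacobian $\nabla^2_{yy}g\succeq\mu_g I$. This gives a Lipschitz bound of the form $\|\lambda(w)-\lambda(w')\|\lesssim(\bar\kappa_y/\mu_g)\|w-w'\|$ for the best-response multiplier, uniform in $p_\lambda$. The same LICQ-type argument behind Lemma~\ref{Lemma: LICQ} and Theorem~\ref{Theorem: Boundedness of lambda star} yields that, at near-stationary points, the multiplier stays strictly inside $\Lambda$, because $\|\lambda\|\le l_{f,0}/\mu_g\le\bar\kappa_y<C_\lambda$, so projection is inactive. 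With $\beta\asymp\bar\kappa_y^{-2}$ the error term is then dominated by the descent terms.

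\textbf{Step 4: conclusion.} Summing the descent inequality over $k$ gives
\[
\min_k\Big(\|w^k-w^{k+1}\|^2/\gamma_w^2+\|\lambda^k-\lambda^{k+1}\|^2/\gamma_\lambda^2+p_w^2\|\hat w^k-w^{k+1}\|^2\Big)\lesssim \bar L\bar\kappa_y\Delta_V/K.
\]
These residuals bound $\|\nabla_x\mathcal L\|$, $\|\nabla_y\mathcal L\|$ and $\|\nabla_\lambda\mathcal L\|$ at $(x^{k_\star},y^{k_\star},\lambda^{k_\star})$, up to the following additive terms:
\begin{itemize}
\item the finite-difference bias;
\item $p_w\|w-\hat w\|$;
\item $p_\lambda C_\lambda$, which is $O(\epsilon/\bar\kappa_y^3)$ by the choice of $p_\lambda$.
\end{itemize}
Applying Theorem~\ref{Theorem: Approximation of Hypergradient}, the weights $\kappa_y$ and $L_{\bar F,y}/\mu_g=O(\bar\kappa_y^2)$ multiply these residuals. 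The stated $K\asymp\bar\kappa_y^5\epsilon^{-2}\Delta_V$ then makes the total at most $\epsilon$, and the remaining work is bookkeeping of the constants.
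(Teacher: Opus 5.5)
Your architecture (Smoothed-GDA potential, one-step descent, multiplier interiority at near-stationary points, then Theorem~\ref{Theorem: Approximation of Hypergradient}) matches the paper's. You also correctly identify the crux: bounding $\|w^\star(\hat w^k)-w^\star(\hat w^k,\lambda^+(\hat w^k))\|$ by the dual step with a constant independent of $p_\lambda$. But Step~3 does not establish this bound.

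The telescoping in Step~4 needs the error term $48p_w\beta\|w^\star(\hat w^k)-w^\star(\hat w^k,\lambda^+(\hat w^k))\|^2$ to be absorbed at \emph{every} iteration. The mechanism you invoke (the multiplier is interior, so the projection is inactive) is available only at near-stationary iterates, which is exactly what the telescoped bound is supposed to produce. In fact, the saddle multiplier $\hat\lambda^\star(\hat w^k)$ can be interior only if $\|\nabla_y g(w^\star(\hat w^k))\|=p_\lambda\|\hat\lambda^\star(\hat w^k)\|<p_\lambda C_\lambda=O(\epsilon)$. So at generic iterates the projection is active, and the argument is circular.

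The Lipschitz claim is also problematic as written. If "best-response multiplier" means $\arg\max_{\lambda\in\Lambda}K(w,\hat w,\lambda)=\mathcal P_\Lambda(\nabla_y g(w)/p_\lambda)$, its Lipschitz constant scales like $1/p_\lambda$. The statement that really is uniform in $p_\lambda$ is the lower response bound $\|w^\star(\hat w,\lambda_1)-w^\star(\hat w,\lambda_2)\|\ge\frac{\mu_g}{p_w+l_{\mathcal L,1}}\|\lambda_1-\lambda_2\|$, which follows from $J(w)J(w)^\top\succeq\mu_g^2I$ with $J=\nabla_w(\nabla_y g)$. On its own, this bounds the dual displacement by the primal one, which is the wrong direction for an error bound.

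The missing ingredient is Lemma~\ref{Lemma: Error Bound 1}, which holds at every iterate and never uses interiority. It works as follows:
\begin{itemize}
\item Strong convexity of $K$ in $w$ gives $-\langle\nabla_y g(w^\star(\hat w,\lambda_1))-\nabla_y g(w^\star(\hat w,\lambda_2)),\lambda_1-\lambda_2\rangle\ge(p_w-l_{\mathcal L,1})\|w^\star(\hat w,\lambda_1)-w^\star(\hat w,\lambda_2)\|^2$.
\item Pairing this with the normal-cone inclusions for $\hat\lambda^\star(\hat w^k)$ and $\lambda^+(\hat w^k)$ gives $(p_w-l_{\mathcal L,1})\|\Delta w\|^2\le(l_{\Psi,1}+\gamma_\lambda^{-1})\|\lambda^+(\hat w^k)-\lambda^k\|\,\|\Delta\lambda\|$. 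Here $\Delta\lambda=\lambda^+(\hat w^k)-\hat\lambda^\star(\hat w^k)$ and $\Delta w=w^\star(\hat w^k,\lambda^+(\hat w^k))-w^\star(\hat w^k)$. Monotonicity of $N_\Lambda$ takes care of an active projection.
\item Only then is the lower response bound used to replace $\|\Delta\lambda\|$ by $\frac{p_w+l_{\mathcal L,1}}{\mu_g}\|\Delta w\|$. This yields $\gamma_3\le 3(l_{\Psi,1}+\gamma_\lambda^{-1})/\mu_g=O(1)$, which is what permits $\beta=2^{-30}\bar\kappa_y^{-2}$.
\end{itemize}

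The order matters. If you first extract a strong-concavity modulus $\asymp\mu_g^2/p_w$ for $\Psi(\hat w,\cdot)$, bound $\|\Delta\lambda\|$, and then multiply by $\gamma_2$, then $\gamma_3$ grows by a factor of order $\kappa_y$. That forces $\beta\lesssim\kappa_y^{-2}\bar\kappa_y^{-2}$, which is incompatible with the stated $\beta$ and $K$. Interiority of $\lambda$ is needed only afterwards, in Lemma~\ref{Lemma: Deterministic Stationary Point Bridge}.

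There are also a few smaller bookkeeping issues:
\begin{itemize}
\item $V_k\ge\underline f$ holds exactly because $0\in\Lambda$, so no offset is needed.
\item $L_{\bar F,y}/\mu_g$ is $O(\bar\kappa_y^3)$, not $O(\bar\kappa_y^2)$.
\item The descent weight on $\|w^k-\hat w^k\|^2$ is of order $p_w\beta$. So you only get $p_w^2\|w^k-\hat w^k\|^2\lesssim\bar L\bar\kappa_y^3\Delta_V/K$, not $\bar L\bar\kappa_y\Delta_V/K$.
\end{itemize}
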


The resulting dependence on \(\epsilon\) matches the lower bound \(\Omega(\kappa_y^{2.5}\epsilon^{-2})\) established in \citet{chen2026conditionnumberdependencybilevel}. To the best of our knowledge, SGHA is the first single-loop method for deterministic NC--SC bilevel optimization to achieve the optimal \(\epsilon^{-2}\) dependence. Compared with the best-known double-loop upper bound \(\widetilde{ \mathcal{O}}\left(
\bar\kappa_y^{3.5}\epsilon^{-2}
\right)\) \citep{chen2026conditionnumberdependencybilevel}, our result also removes the logarithmic factor. We note, however, that the dependence on \(\bar{\kappa}_y\) remains worse than that of existing double-loop methods; improving the condition-number dependence of single-loop methods is left for future work.

\subsection{Analysis and Proof Sketch}

\begin{table}[t]
\centering
\caption{Notations in algorithms and analysis}
\begin{tabular}{ll}
\toprule
\textbf{Notation} & \textbf{Definition} \\
\midrule
$w$ & $(x,y)$ \\
$\hat{w}$ & $(\hat{x},\hat{y})$ \\
$\mathcal{L}(w,\lambda)$ & $f(w) + \lambda^{\top}\nabla_y g(w)$ \\
$\gamma_{w}$ & Common stepsize for the update of $x$ and $y$ \\
$\gamma_{\lambda}$ & Stepsize for the update of $\lambda$ \\
$K(w,\hat{w},\lambda)$ & $\mathcal{L}(w,\lambda) + \frac{p_w}{2}\|w-\hat{w}\|^2 - \frac{p_{\lambda}}{2}\|\lambda\|^2$  \\
$\Psi(\hat{w},\lambda)$ & $\min_{w} K(w,\hat{w},\lambda)$ \\
$\Phi(w,\hat{w})$ & $\max_{\lambda \in \Lambda} K(w,\hat{w},\lambda)$  \\
$P(\hat{w})$ & $\min_{w}\max_{\lambda \in \Lambda} K(w,\hat{w},\lambda)$  \\
$V_t$ & $K(w^t,\hat{w}^t,\lambda^t) - 2\Psi(\hat{w}^t,\lambda^t) + 2P(\hat{w}^t)$ \\
$w^*(\hat{w},\lambda)$ & $\arg\min_{w} K(w,\hat{w},\lambda)$ \\
$w^*(\hat{w})$ & $\arg\min_{w} \Phi(w,\hat{w})$ \\
$\hat{\lambda}^*(\hat{w})$ & $\arg\max_{\lambda \in \Lambda} \Psi(\hat{w},\lambda)$  \\
$w^+(\hat{w},\lambda)$ & $w - \gamma_{w} \nabla K(w,\hat{w},\lambda)$\\
$\lambda^+(\hat{w})$ & $\mathcal{P}_{\Lambda}\left(\lambda + \gamma_{\lambda}\left(\nabla_{y}g(w^*(\hat{w},\lambda)) - p_{\lambda}\lambda\right)\right)$ \\
$\epsilon_{\mathcal{L}}$ & Target accuracy for $\mathcal{L}$\\
$\epsilon$ & Target stationarity accuracy for $F$ \\
\bottomrule
\label{Notations}
\end{tabular}
\end{table}

We now provide a proof sketch of Theorem~\ref{Theorem: Deterministic Sample Complexity}. The same analysis framework will later be extended to the stochastic settings. We use the notation summarized in Table~\ref{Notations}. Following \citet{zhang2020single}, we introduce the following Lyapunov function:
\[
V_t = K(w^t,\hat{w}^t,\lambda^t) - 2\Psi(\hat{w}^t,\lambda^t) + 2P(\hat{w}^t).
\]
The following lemma shows that the Lyapunov function is lower bounded.

\begin{lemma}
    \label{Lemma: Lower Bound of Lyapunov Function}
Under Assumption \ref{Assumption: Bilevel Optimization}, the Lyapunov function satisfies
\[
    V_t
    \geq \underline{f}
\]
\end{lemma}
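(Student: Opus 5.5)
The Lyapunov function is $V_t = K(w^t,\hat w^t,\lambda^t) - 2\Psi(\hat w^t,\lambda^t) + 2P(\hat w^t)$. The plan is to regroup it as
\[
V_t = \bigl[K(w^t,\hat w^t,\lambda^t) - \Psi(\hat w^t,\lambda^t)\bigr] + 2\bigl[P(\hat w^t) - \Psi(\hat w^t,\lambda^t)\bigr] + \Psi(\hat w^t,\lambda^t)
\]
and to bound each bracket from below separately.

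The first bracket is nonnegative by the definition $\Psi(\hat w,\lambda)=\min_w K(w,\hat w,\lambda)$, which gives $K(w^t,\hat w^t,\lambda^t)\ge \Psi(\hat w^t,\lambda^t)$. For the second bracket, weak duality gives
\[
P(\hat w)=\min_w\max_{\lambda\in\Lambda}K(w,\hat w,\lambda)\ \ge\ \max_{\lambda\in\Lambda}\min_w K(w,\hat w,\lambda)\ \ge\ \Psi(\hat w,\lambda)
\]
for every $\lambda\in\Lambda$. Since $\lambda^t\in\Lambda$ because of the projection step, this bracket is also nonnegative. Hence $V_t\ge \Psi(\hat w^t,\lambda^t)$.

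It then remains to show $\Psi(\hat w,\lambda)\ge \underline f$ for every $\lambda\in\Lambda$. This is the delicate step. The term $-\frac{p_\lambda}{2}\|\lambda\|^2$ and the sign-indefinite term $\lambda^\top\nabla_y g(w)$ prevent a direct bound $K\ge f\ge\underline f$, so one cannot simply drop terms. I would therefore choose a different grouping and use $V_t\ge P(\hat w^t)$ instead. This follows because $K(w^t,\hat w^t,\lambda^t)-\Psi(\hat w^t,\lambda^t)\ge0$ and $P(\hat w^t)-\Psi(\hat w^t,\lambda^t)\ge0$, so $V_t = [K-\Psi]+[P-\Psi]+P \ge P$. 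The task then reduces to bounding $P$ from below.

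Since $0\in\Lambda$, for every $w$ we have
\[
\max_{\lambda\in\Lambda}K(w,\hat w,\lambda)\ \ge\ K(w,\hat w,0)=f(w)+\tfrac{p_w}{2}\|w-\hat w\|^2\ \ge\ \underline f,
\]
using Assumption~\ref{Assumption: Bilevel Optimization}(5). Taking the minimum over $w$ gives $P(\hat w^t)\ge\underline f$.

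The main obstacle is to ensure that $P$ and $\Psi$ are well defined, i.e.\ that the minima are attained or at least finite. This requires $K(\cdot,\hat w,\lambda)$ to be strongly convex in $w$, which should follow from $p_w=16\bar L\bar\kappa_y$ exceeding the smoothness constant of $\mathcal L(\cdot,\lambda)$ on $\Lambda$, namely $l_{f,1}+C_\lambda l_{g,2}$. Even without attainment, the inequality argument above works with infima, since $P\ge\underline f$ holds as an infimum bound.
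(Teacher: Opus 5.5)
Your final argument is correct and follows the paper's proof: the paper uses the same regrouping $V_t=[K-\Psi]+[P-\Psi]+P\ge P(\hat w^t)$, and then bounds $P(\hat w)\ge\min_w\{f(w)+\frac{p_w}{2}\|w-\hat w\|^2\}\ge\underline{f}$ using $0\in\Lambda$. The only difference is that the paper invokes Sion's minimax theorem to write $P(\hat w)=\max_{\lambda\in\Lambda}\Psi(\hat w,\lambda)$, whereas you correctly note that weak duality, i.e.\ $P(\hat w)\ge\Psi(\hat w,\lambda)$ for $\lambda\in\Lambda$, is all that is needed; you were also right to abandon your first grouping $[K-\Psi]+2[P-\Psi]+\Psi$, since it would require a lower bound on $\Psi$ that is not available.
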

By analyzing the one-step progress bounds for $K(w,\hat{w},\lambda)$, $\Psi(\hat{w},\lambda)$ and $P(\hat{w})$, we obtain the following one-step descent inequality for the Lyapunov function:

\begin{proposition}
\label{Proposition: Recursion of Lyapunov Fucntion}
Suppose that Assumption~\ref{Assumption: Bilevel Optimization} holds,
and let
$
\Lambda:=\{\lambda:\|\lambda\|\le C_\lambda\}$ with $
C_\lambda>\frac{l_{f,0}}{\mu_g}$.
Let $l_{\mathcal L,1}$ be any constant satisfying $l_{\mathcal L,1}
\ge
\sqrt{2}\bigl(l_{f,1}+C_\lambda l_{g,2}\bigr)$ and let the primal regularization parameter $
p_w = 2l_{\mathcal L,1}$. Introduce the auxiliary constants
\[
\gamma_1
:=
\frac{p_w}{p_w-l_{\mathcal L,1}} = 2,
\quad
\gamma_2
:=
\frac{l_{g,1}}{p_w-l_{\mathcal L,1}} = \frac{l_{g,1}}{l_{\mathcal L,1}},
\]
\[
l_{\Psi,1}
:=
p_\lambda+l_{g,1}\gamma_2, \quad
\gamma_{3}
:=
\frac{
l_{\Psi,1}+\gamma_{\lambda,k}^{-1}
}{
\sqrt{
2(p_w-l_{\mathcal L,1})p_\lambda
+
\frac{(p_w-l_{\mathcal L,1})^2\mu_g^2}
{(p_w+l_{\mathcal L,1})^2}
}
}.
\]
Choose positive stepsizes
$\gamma_{w,k}$ and $\gamma_{\lambda,k}$ such that
\[
     \begin{aligned}
     \gamma_{w,k}
&\le
\min\left\{
\frac{1}{8p_w\beta},
\frac{1}{8(p_w+l_{\mathcal L,1})}
\right\},\\
\gamma_{\lambda,k}
&\le
\min\left\{
\frac{1}{2\left(2l_{\Psi,1}+p_\lambda + \frac{l_{g,1}^2}{p_w+l_{\mathcal{L},1}}\right)},
\frac{l_{\mathcal L,1}^2}{16l_{g,1}^2}\gamma_{w,k},
\frac{1}{1536p_w\beta\gamma_2^2},
\right\}.
\end{aligned}
     \]
Suppose that parameter $\beta\in(0,1)$ satisfies
\[
\beta
\le
\min\left\{
\frac{\sqrt{5}-1}{48\gamma_1},
\frac{1}{3072p_w\gamma_{\lambda,k}\gamma_{3}^2}
\right\}.
\]
Then, we have
\begin{align}
V_k-V_{k+1}
\ge\;&
\frac{1}{32\gamma_{w,k}}
\left\|
w^k-w^+(\hat w^k,\lambda^k)
\right\|^2
+
\frac{1}{64\gamma_{\lambda,k}}
\left\|
\lambda^+(\hat w^k)-\lambda^k
\right\|^2
\nonumber\\
&+
\frac{p_w\beta}{8}
\left\|
w^k-\hat w^k
\right\|^2
-
C_{\tau,k}\tau^2,
\label{eq: one-step Lyapunov inequality}
\end{align}
where
$
C_{\tau,k}
:=
\left(
\frac{\gamma_{w,k}}{4}
+
(p_w+l_{\mathcal L,1})\gamma_{w,k}^2
+
\frac{p_w\beta\gamma_{w,k}^2}{4}
\right)
l_{g,2}^2C_\lambda^4$.
\end{proposition}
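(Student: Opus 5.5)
The plan follows the Smoothed GDA analysis of \citet{zhang2020single}, adapted to the regularized Lagrangian $K$ and to the inexact primal gradient. I split $V_k-V_{k+1}$ into three pieces: the change in $K$, the change in the dual function $\Psi$, and the change in the proximal value function $P$. Each is bounded separately and then the bounds are combined.

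\textbf{Preliminary facts.}
\begin{enumerate}[(i)]
\item On $\Lambda$, $\mathcal L(\cdot,\lambda)$ is $l_{\mathcal L,1}$-smooth in $w$. Indeed, $\nabla^2_w(\lambda^\top\nabla_y g)$ is bounded by $C_\lambda l_{g,2}$ via Assumption~\ref{Assumption: Bilevel Optimization}(4). With $p_w=2l_{\mathcal L,1}$, $K(\cdot,\hat w,\lambda)$ is therefore $(p_w-l_{\mathcal L,1})$-strongly convex and $(p_w+l_{\mathcal L,1})$-smooth.
\item $K(w,\hat w,\cdot)$ is $p_\lambda$-strongly concave. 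Its $\lambda$-gradient $\nabla_y g(w)-p_\lambda\lambda$ is $l_{g,1}$-Lipschitz in $w$.
\item By the standard implicit/Danskin argument:
 \begin{itemize}
 \item $w^*(\hat w,\lambda)$ is $\gamma_1$-Lipschitz in $\hat w$ and $\gamma_2$-Lipschitz in $\lambda$;
 \item $\Psi(\hat w,\cdot)$ is concave and $l_{\Psi,1}$-smooth, with $\nabla_\lambda\Psi=\nabla_y g(w^*(\hat w,\lambda))-p_\lambda\lambda$;
 \item $\nabla_{\hat w}P(\hat w)=p_w(\hat w-w^*(\hat w))$.
 \end{itemize}
\item Central finite differences with the $l_{g,2}$-Lipschitz Hessian give a primal gradient error $e_k$ with $\|e_k\|\le \frac{l_{g,2}}{2}\tau\|\lambda^k\|^2\le \frac{l_{g,2}C_\lambda^2}{2}\tau$ (up to the constant). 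Hence $\|e_k\|^2\lesssim l_{g,2}^2C_\lambda^4\tau^2$, which is the origin of $C_{\tau,k}$.
\end{enumerate}

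\textbf{Step 1: change in $K$.} I use descent in $w$ with the inexact gradient. Young's inequality bounds the cross term $\langle\nabla_w K,e_k\rangle$, producing $-\frac{1}{c\gamma_w}\|w^k-w^{k+1}\|^2+\gamma_w\|e_k\|^2$. Next, the ascent step in $\lambda$ uses the projection inequality and $l_{g,1}$-Lipschitzness to relate to $\lambda^{k+1}-\lambda^k$. Finally, the $\hat w$-update gives $K(w^{k+1},\hat w^{k+1},\lambda^{k+1})-K(w^{k+1},\hat w^k,\lambda^{k+1})\le -\frac{p_w}{2\beta}\|\hat w^{k+1}-\hat w^k\|^2$ (up to a constant).

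\textbf{Step 2: changes in $\Psi$ and $P$.}
\begin{itemize}
\item A lower bound on $\Psi(\hat w^{k+1},\lambda^{k+1})-\Psi(\hat w^k,\lambda^k)$ follows from $l_{\Psi,1}$-smoothness in $\lambda$. It contains the term $\langle\nabla_y g(w^*(\hat w^k,\lambda^k))-p_\lambda\lambda^k,\lambda^{k+1}-\lambda^k\rangle$, plus a $\hat w$-term written through $w^*(\hat w,\lambda)$.
\item An upper bound on $P(\hat w^{k+1})-P(\hat w^k)$ uses $\nabla P$ evaluated at the pair $w^*(\hat w^k)$, $\hat\lambda^*(\hat w^k)$.
\item Combining these with weights $1,-2,2$, the $\hat w$-inner products collapse to $p_w\langle \hat w^{k+1}-\hat w^k,\, w^*(\hat w^{k+1},\lambda^{k+1})-w^*(\hat w^k)\rangle$-type terms. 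Since $\hat w^{k+1}-\hat w^k=\beta(w^{k+1}-\hat w^k)$, these produce the positive $\frac{p_w\beta}{8}\|w^k-\hat w^k\|^2$ together with error terms of the form $p_w\beta\|w^*(\hat w^k,\lambda^k)-w^*(\hat w^k)\|^2$.
\item Primal error bound: $\|w^k-w^*(\hat w^k,\lambda^k)\|$ is controlled by $\|w^k-w^+\|/(\gamma_w(p_w-l_{\mathcal L,1}))$ using strong convexity.
\end{itemize}

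\textbf{Step 3 (main obstacle): dual error bound.} I need
\[
\|w^*(\hat w,\lambda)-w^*(\hat w)\|\le \gamma_2\|\lambda-\lambda^+(\hat w)\|\cdot\gamma_3\text{-type factor}.
\]
This is where the regularizer replaces strict complementarity. Because $\Psi(\hat w,\cdot)$ is strongly concave with modulus at least $p_\lambda$, plus the curvature inherited from $\mu_g$-strong convexity (which gives the $\mu_g^2(p_w-l)^2/(p_w+l)^2$ term via $\nabla^2_{yy}g$ acting through $w^*$), the projected-gradient residual controls the distance $\|\lambda-\hat\lambda^*(\hat w)\|\le \gamma_3\|\lambda-\lambda^+(\hat w)\|$. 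Lipschitzness of $w^*(\hat w,\cdot)$ then transfers this to $w$. I expect extracting this strong concavity of $\Psi$ from the lower-level strong convexity, with explicit constants, to be the delicate step. I would try to show $-\nabla^2_{\lambda\lambda}\Psi\succeq p_\lambda I+\nabla_y^2 g\,(\nabla^2_wK)^{-1}\nabla_y^2g$ restricted to the $y$-block, and lower bound it by $(p_w+l_{\mathcal L,1})^{-1}\mu_g^2$.

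\textbf{Assembly.} Plugging the error bounds into the combined inequality, the stepsize conditions ($\gamma_\lambda\le \frac{l_{\mathcal L,1}^2}{16l_{g,1}^2}\gamma_w$, $\gamma_\lambda\le 1/(1536p_w\beta\gamma_2^2)$, $\beta\le 1/(3072p_w\gamma_\lambda\gamma_3^2)$, and the remaining ones) let each negative cross term absorb at most half of the corresponding positive term. This yields the constants $1/32$, $1/64$, and $p_w\beta/8$. Finally, collecting all $\|e_k\|^2$ contributions from Steps 1–2 gives the three coefficients in $C_{\tau,k}$.
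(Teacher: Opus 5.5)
The gap is in Step 3. Your route bounds the dual distance first, via strong concavity of $\Psi(\hat w,\cdot)$, and then transfers to $w$ through the $\gamma_2$-Lipschitz continuity of $w^\star(\hat w,\cdot)$. That route does not produce the constant $\gamma_3$ that appears in the hypothesis on $\beta$.

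Write $a:=p_w-l_{\mathcal L,1}=l_{\mathcal L,1}$ and $b:=p_w+l_{\mathcal L,1}=3l_{\mathcal L,1}$. Let $m$ be the concavity modulus: at best $p_\lambda+\mu_g^2/b$ from your Hessian heuristic, while the monotonicity computation gives $p_\lambda+a\mu_g^2/b^2$. The projected-gradient error bound then gives $\|\lambda^+(\hat w)-\hat\lambda^\star(\hat w)\|\le m^{-1}(l_{\Psi,1}+\gamma_{\lambda,k}^{-1})\|\lambda^+(\hat w)-\lambda\|$, and hence
\[
\|w^\star(\hat w)-w^\star(\hat w,\lambda^+(\hat w))\|\le \frac{\gamma_2\,(l_{\Psi,1}+\gamma_{\lambda,k}^{-1})}{m}\,\|\lambda^+(\hat w)-\lambda\| .
\]
Take small $p_\lambda$, which is the regime where the proposition is used ($p_\lambda=\mathcal O(\epsilon)$). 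Your constant then tends to $3\kappa_y(l_{\Psi,1}+\gamma_{\lambda,k}^{-1})/\mu_g$ with the optimistic $m$, and to $9\kappa_y(\cdots)/\mu_g$ with the other. The stated $\gamma_3$ tends to $3(l_{\Psi,1}+\gamma_{\lambda,k}^{-1})/\mu_g$.

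So you lose a factor of order $\kappa_y$. The hypothesis $\beta\le 1/(3072p_w\gamma_{\lambda,k}\gamma_3^2)$ then no longer lets you absorb $48p_w\beta\|w^\star(\hat w^k)-w^\star(\hat w^k,\lambda^+(\hat w^k))\|^2$ into $\frac{1}{32\gamma_{\lambda,k}}\|\lambda^+(\hat w^k)-\lambda^k\|^2$. You would need $\beta$ smaller by a factor of about $\kappa_y^2$. That is a weaker proposition, and downstream it would worsen the $\bar\kappa_y$-dependence of the final complexity.

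Your intermediate claim $\|\lambda-\hat\lambda^\star(\hat w)\|\le\gamma_3\|\lambda-\lambda^+(\hat w)\|$ also does not follow. The denominator of $\gamma_3$ tends to $a\mu_g/b$, which exceeds $m\approx\mu_g^2/b$ by a factor $l_{\mathcal L,1}/\mu_g$. The loss comes from combining two worst cases that cannot occur together: weak dual curvature occurs exactly when $w^\star(\hat w,\cdot)$ responds weakly to $\lambda$, whereas $\gamma_2$ is the worst-case strong response.

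The missing idea is to keep the primal term from the monotonicity computation rather than discard it. Adding the strong-convexity inequalities of $K(\cdot,\hat w,\lambda_1)$ and $K(\cdot,\hat w,\lambda_2)$ gives
\[
\langle -\nabla_\lambda\Psi(\hat w,\lambda_1)+\nabla_\lambda\Psi(\hat w,\lambda_2),\lambda_1-\lambda_2\rangle\ge p_\lambda\|\lambda_1-\lambda_2\|^2+a\|w^\star(\hat w,\lambda_1)-w^\star(\hat w,\lambda_2)\|^2 .
\]
Apply this at $\lambda_1=\lambda^+(\hat w)$, $\lambda_2=\hat\lambda^\star(\hat w)$, together with the normal-cone conditions for $\hat\lambda^\star$ and for the projection defining $\lambda^+$. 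With $u:=\|\lambda^+-\hat\lambda^\star\|$ and $v:=\|w^\star(\hat w,\lambda^+)-w^\star(\hat w)\|$, this yields $p_\lambda u^2+av^2\le (l_{\Psi,1}+\gamma_{\lambda,k}^{-1})\|\lambda^+-\lambda\|\,u$.

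Next use the response lower bound $v\ge(\mu_g/b)u$. It follows from $\|J(w)^\top\xi\|\ge\mu_g\|\xi\|$, where $J(w)=\nabla_w(\nabla_y g(w))$, and the $b$-smoothness of $K$ in $w$. Combined with the elementary inequality $(p_\lambda u^2+av^2)^2\ge(2ap_\lambda+a^2\mu_g^2/b^2)u^2v^2$, it gives $v\le\gamma_3\|\lambda^+-\lambda\|$ directly, with exactly the stated constant.

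Two smaller points. Your Hessian formula for $-\nabla^2_{\lambda\lambda}\Psi$ requires $\nabla_wK$ to be differentiable, i.e. third derivatives of $g$, which Assumption~\ref{Assumption: Bilevel Optimization} does not provide; the monotonicity argument avoids this. You should also make explicit the conversion of the actual dual step $\|\lambda^{k+1}-\lambda^k\|^2$ into the ghost step $\|\lambda^+(\hat w^k)-\lambda^k\|^2$. This uses nonexpansiveness of $\mathcal P_\Lambda$ and $\|w^k-w^\star(\hat w^k,\lambda^k)\|\le a^{-1}\|\nabla_wK(w^k,\hat w^k,\lambda^k)\|$.
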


The proposition also remains valid if $l_{\mathcal L,1}$ is replaced
consistently by any deterministic upper bound on the smoothness of
$\mathcal L(\cdot,\lambda)$ over $\lambda\in\Lambda$. This is the form used
in Theorem~\ref{Theorem: Deterministic Sample Complexity}, with
$L_0=8\bar L\bar\kappa_y$.

It follows from \eqref{eq: one-step Lyapunov inequality} that the one-step Lyapunov decrease controls three nonnegative stationarity and tracking residuals, up to the finite-difference error $C_{\tau,k}\tau^2$. For a target Lagrangian stationarity accuracy $\epsilon_{\mathcal{L}}$, choosing constant stepsizes and a finite-difference radius
\(\tau=\Theta(\bar\kappa_y^{-1}\epsilon_{\mathcal L})\), and telescoping \eqref{eq: one-step Lyapunov inequality} over \(K=\Theta(\bar\kappa_y^{-1}\epsilon_{\mathcal L}^{-2})\) iterations yields residuals of order \(\epsilon_{\mathcal L}^2\). The full proof is deferred to the appendix. A key ingredient is the special structure of the regularized surrogate \eqref{eq:reformed lagrangian}, which allows us to retain the coupled primal--dual coercivity in Lemma~\ref{Lemma: Error Bound 1} rather than bounding the dual error first.

Then, the following lemma shows that three residual terms controlled by the one-step Lyapunov inequality are sufficient to guarantee $\epsilon_{\mathcal{L}}$-stationarity of the original Lagrangian.
\begin{lemma}\label{Lemma: Deterministic Stationary Point Bridge}
Under the same assumption and hyperparameter setting as Proposition~\ref{Proposition: Recursion of Lyapunov Fucntion},
suppose further that $\epsilon_{\mathcal{L}}$ is sufficiently small such that
\[
\left(\gamma_{w,k}^{-1}+\bar{\kappa}_y p_w\right)\epsilon_{\mathcal{L}}
\le
\frac{\mu_g}{4}
\left(
C_\lambda-\frac{l_{f,0}}{\mu_g}
\right),
\quad
\bar{\kappa}_y\epsilon_{\mathcal{L}}
\le
\frac14
\left(
C_\lambda-\frac{l_{f,0}}{\mu_g}
\right).
\]
Then, if
\[
\|w^k-w^+(\hat{w}^k,\lambda^k)\|
\le \epsilon_{\mathcal{L}},\quad
\|\lambda^+(\hat{w}^k)-\lambda^k\|
\le \bar{\kappa}_y\epsilon_{\mathcal{L}},\quad
\|w^k-\hat{w}^k\|
\le \bar{\kappa}_y\epsilon_{\mathcal{L}},
\]
we have
\[
\begin{aligned}
\|\nabla_w\mathcal L(w^k,\lambda^k)\|
&\le
\left(\gamma_{w,k}^{-1}+\bar{\kappa}_y p_w\right)\epsilon_{\mathcal L},\\
\|\nabla_\lambda \mathcal L(w^k,\lambda^k)\|
&\le
\left(
\frac{l_{g,1}}{(p_w-l_{\mathcal L,1})\gamma_{w,k}}
+
\bar{\kappa}_y\gamma_{\lambda,k}^{-1}
\right)\epsilon_{\mathcal L}
+
p_\lambda C_\lambda.
\end{aligned}
\]
\end{lemma}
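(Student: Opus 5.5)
The plan is to read each conclusion directly off the two residuals. The step map $w^+$ uses the exact gradient of $K$, and $\lambda^+$ becomes an unprojected ascent step once we show that $\lambda^k$ lies strictly inside $\Lambda$. Throughout, write $w^k=(x^k,y^k)$, $\hat w^k$, $\lambda^k$, and fix $\gamma_w=\gamma_{w,k}$, $\gamma_\lambda=\gamma_{\lambda,k}$.

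\emph{Primal bound.} By definition $w^+(\hat w^k,\lambda^k)=w^k-\gamma_w\nabla_w K(w^k,\hat w^k,\lambda^k)$, so
\[
\|\nabla_w K(w^k,\hat w^k,\lambda^k)\|=\gamma_w^{-1}\|w^k-w^+(\hat w^k,\lambda^k)\|\le \gamma_w^{-1}\epsilon_{\mathcal L}.
\]
Since $\nabla_w K=\nabla_w\mathcal L+p_w(w-\hat w)$, the triangle inequality together with $\|w^k-\hat w^k\|\le\bar\kappa_y\epsilon_{\mathcal L}$ gives the first claimed inequality, $\|\nabla_w\mathcal L(w^k,\lambda^k)\|\le(\gamma_w^{-1}+\bar\kappa_y p_w)\epsilon_{\mathcal L}$.

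\emph{Interior of the dual ball.} Use the $y$-block of this bound. We have $\nabla_y\mathcal L=\nabla_y f+\nabla^2_{yy}g\,\lambda$ and $\nabla^2_{yy}g\succeq\mu_g I$, so
\[
\mu_g\|\lambda^k\|\le l_{f,0}+\|\nabla_y\mathcal L(w^k,\lambda^k)\|.
\]
The first smallness hypothesis then yields
\[
\|\lambda^k\|\le \frac{l_{f,0}}{\mu_g}+\frac14\Bigl(C_\lambda-\frac{l_{f,0}}{\mu_g}\Bigr),
\]
which is the same mechanism as in Theorem~\ref{Theorem: Boundedness of lambda star}.

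\emph{Projection is inactive.} Let $u:=\lambda^k+\gamma_\lambda(\nabla_y g(w^*(\hat w^k,\lambda^k))-p_\lambda\lambda^k)$, so that $\lambda^+(\hat w^k)=\mathcal P_\Lambda(u)$. Suppose $u\notin\Lambda$. Then $\|\lambda^+\|=C_\lambda$, hence
\[
\|\lambda^+-\lambda^k\|\ge C_\lambda-\|\lambda^k\|\ge\tfrac34\Bigl(C_\lambda-\frac{l_{f,0}}{\mu_g}\Bigr).
\]
By the second smallness hypothesis this is strictly larger than $\bar\kappa_y\epsilon_{\mathcal L}\ge\|\lambda^+-\lambda^k\|$, a contradiction. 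Therefore $\lambda^+=u$, and
\[
\|\nabla_y g(w^*(\hat w^k,\lambda^k))\|\le \gamma_\lambda^{-1}\|\lambda^+-\lambda^k\|+p_\lambda\|\lambda^k\|\le\bar\kappa_y\gamma_\lambda^{-1}\epsilon_{\mathcal L}+p_\lambda C_\lambda .
\]

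\emph{Dual bound.} It remains to transfer this bound from $w^*:=w^*(\hat w^k,\lambda^k)$ to $w^k$. Because $p_w>l_{\mathcal L,1}$, the map $K(\cdot,\hat w^k,\lambda^k)$ is $(p_w-l_{\mathcal L,1})$-strongly convex and $w^*$ is its minimizer. Hence
\[
\|w^k-w^*\|\le\frac{\|\nabla_w K(w^k,\hat w^k,\lambda^k)\|}{p_w-l_{\mathcal L,1}}\le\frac{\epsilon_{\mathcal L}}{(p_w-l_{\mathcal L,1})\gamma_w}.
\]
By $l_{g,1}$-Lipschitzness of $\nabla_y g$, and since $\nabla_\lambda\mathcal L=\nabla_y g$,
\[
\|\nabla_\lambda\mathcal L(w^k,\lambda^k)\|\le l_{g,1}\|w^k-w^*\|+\|\nabla_y g(w^*)\|,
\]
which gives exactly the stated bound.

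I expect the main obstacle to be the projection step. Without the a priori interior bound on $\lambda^k$, a small $\|\lambda^+-\lambda^k\|$ could reflect boundary saturation rather than near-feasibility. This is precisely why the two smallness conditions on $\epsilon_{\mathcal L}$ are imposed, and why the argument has to pass through the primal bound first.
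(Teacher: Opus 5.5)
Your proposal is correct and follows the paper's proof essentially step for step. Like the paper, you bound $\nabla_w K$ through $w^+$, add the proximal term, and use the $y$-block together with $\nabla^2_{yy}g\succeq\mu_g I$ to place $\lambda^k$ in the interior of $\Lambda$. You then show the projection defining $\lambda^+(\hat w^k)$ is inactive, and transfer the bound on $\nabla_y g(w^\star(\hat w^k,\lambda^k))$ to $w^k$ via strong convexity of $K$ and $l_{g,1}$-Lipschitzness of $\nabla_y g$. The only cosmetic difference is in the inactivity step: you argue by contradiction that a saturated projection would force $\|\lambda^+(\hat w^k)-\lambda^k\|\ge\tfrac34\bigl(C_\lambda-l_{f,0}/\mu_g\bigr)$, whereas the paper shows $\|\lambda^+(\hat w^k)\|<C_\lambda$ directly, which is the same fact.
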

Lemma \ref{Lemma: Deterministic Stationary Point Bridge} establishes the bridge between the residual terms in the one-step Lyapunov inequality \eqref{eq: one-step Lyapunov inequality} and the stationarity of the Lagrangian function. Finally, by Theorem~\ref{Theorem: Approximation of Hypergradient}, the convergence of \(\nabla \mathcal L\) can be further translated into the convergence of the hypergradient in the original bilevel problem \eqref{Bilevel Formulation}.

\section{Stochastic NC--SC Bilevel Optimization}

We now extend SGHA to the stochastic setting, where exact gradients of
\(f\) and \(g\) are not directly available. We first specify the stochastic
first-order oracle ($\mathcal{SFO}$) used throughout this paper.

\begin{assumption}
\label{Assumption: Stochastic First-order Oracle}
We access the gradients of \(f\) and \(g\) via unbiased estimators
\(\nabla f(x,y;\xi)\) and \(\nabla g(x,y;\zeta)\) such that
\[
    \mathbb E_{\xi}\!\left[\nabla f(x,y;\xi)\right]
    =
    \nabla f(x,y),
    \quad
    \mathbb E_{\zeta}\!\left[\nabla g(x,y;\zeta)\right]
    =
    \nabla g(x,y).
\]
Moreover, the variance of stochastic gradients is bounded:
\[
    \mathbb E_{\xi}\!\left[
    \|\nabla f(x,y;\xi)-\nabla f(x,y)\|^2
    \right]
    \le
    \sigma_f^2,
    \quad
    \mathbb E_{\zeta}\!\left[
    \|\nabla g(x,y;\zeta)-\nabla g(x,y)\|^2
    \right]
    \le
    \sigma_g^2.
\]
\end{assumption}

We propose
Stoc-SGHA, stated in Algorithm~\ref{Stoc-SGHA}, for solving NC-SC bilevel optimization. Algorithm~\ref{Stoc-SGHA} has the same single--loop structure as
Algorithm~\ref{SGHA}. The only change is that the exact gradients used in SGHA
are replaced by mini-batch stochastic estimators. In particular, the gradients
of \(f\) are estimated using a mini-batch \(\mathcal B_f\), while the
finite-difference approximations of the Hessian-vector products of \(g\) are
computed using a mini-batch \(\mathcal B_g\). For simplicity, we take
\(|\mathcal B_f|=|\mathcal B_g|=B\). With \(\mathcal B_f=\{\xi_1,\ldots,\xi_B\}\), we use
\[
\nabla_x f(x^k,y^k;\mathcal{B}_f^k) = \frac{1}{B}\sum_{i = 1}^B \nabla_x f(x^k,y^k;\xi_i),\quad \nabla_y f(x^k,y^k;\mathcal{B}_f^k) = \frac{1}{B}\sum_{i = 1}^B \nabla_y f(x^k,y^k;\xi_i).
\]
With
\(\mathcal B_g=\{\zeta_1,\ldots,\zeta_B\}\), we use
\[
\tilde{\nabla}_{xy}^2 g_{\lambda}(x,y;\mathcal B_g)
    :=
    \frac{1}{B}\sum_{i=1}^B
    \frac{
    \nabla_x g(x,y+\tau\lambda;\zeta_i)
    -
    \nabla_x g(x,y-\tau\lambda;\zeta_i)
    }{2\tau}
    \approx
    \nabla_{xy}^2g(x,y)\lambda,
\]
and
\[
\tilde{\nabla}_{yy}^2 g_{\lambda}(x,y;\mathcal B_g)
    :=
    \frac{1}{B}\sum_{i=1}^B
    \frac{
    \nabla_y g(x,y+\tau\lambda;\zeta_i)
    -
    \nabla_y g(x,y-\tau\lambda;\zeta_i)
    }{2\tau}
    \approx
    \nabla_{yy}^2g(x,y)\lambda.
\]

The same sample \(\zeta_i\) is used for the two gradient evaluations in each
central difference. This coupling is essential for the sharper variance bound
under Assumption~\ref{Assumption: Stochastic Smoothness of g}.
Under the bounded-variance oracle in
Assumption~\ref{Assumption: Stochastic First-order Oracle}, the stochastic
finite-difference approximation introduces an additional variance term of order
$\sigma_g^2/(B\tau^2)$ relative to the deterministic setting. With
$\tau\asymp \bar\kappa_y^{-1}\epsilon_{\mathcal L}$, controlling this term requires
$B\asymp \epsilon_{\mathcal L}^{-4}$. Combined with
$K\asymp \bar\kappa_y^{-1}\epsilon_{\mathcal L}^{-2}$, this yields an overall sample complexity
of order $\bar\kappa_y^{-1}\epsilon_{\mathcal L}^{-6}$. This additional variance also prevents
a direct combination of existing minimax analyses with finite-difference
approximations, as such an approach would generally lead to worse complexity
guarantees.


\begin{algorithm}[htbp]
\caption{\textbf{Stoc-SGHA} (\textbf{Stoc}hastic \textbf{S}moothed \textbf{G}DA with \textbf{H}essian \textbf{A}pproximation)}
\label{Stoc-SGHA}
\textbf{Input:} Total iterations $K$, mini-batch set $\mathcal{B}_f^k = \{\xi_1^k,\ldots,\xi_B^k\}$, $\mathcal{B}_g^k = \{\zeta_1^k,\ldots,\zeta_B^k\}$, stepsizes
$\{\gamma_{x,k}\}_k, \{\gamma_{y,k}\}_k, \{\gamma_{\lambda,k}\}_k$, initialization points $x^0, \hat{x}^0 \in \mathbb{R}^{d_x}, y^0, \hat{y}^0 \in \mathbb{R}^{d_y}, \lambda^0 \in \Lambda$. $\tau > 0$, $p_w, p_{\lambda} > 0$, $0 < \beta < 1$, $\Lambda := \left\{\lambda: \|\lambda\| \leq C_{\lambda}\right\}$ with $C_{\lambda} > 0$.
\begin{algorithmic}[1]
    \For{$k = 0,\ldots,K-1$}
            \State $x^{k+1} \leftarrow x^k - \gamma_{x,k}\left(\nabla_x f(x^k,y^k;\mathcal{B}_f^k) + \tilde{\nabla}_{xy}^2 g_{\lambda^k}(x^k,y^k;\mathcal{B}_g^k) + p_w(x^k - \hat{x}^k)\right)$
            \State $y^{k+1} \leftarrow y^k - \gamma_{y,k}\left(\nabla_y f(x^k,y^k;\mathcal{B}_f^k) + \tilde{\nabla}_{yy}^2 g_{\lambda^k}(x^k,y^k;\mathcal{B}_g^k) + p_w(y^k - \hat{y}^k)\right)$
        \State $\hat{x}^{k+1} = \hat{x}^k + \beta(x^{k+1} - \hat{x}^k)$
        \State $\hat{y}^{k+1} = \hat{y}^k + \beta(y^{k+1} - \hat{y}^k)$
        \State $\lambda^{k+1} = \mathcal{P}_{\Lambda}(\lambda^k + \gamma_{\lambda,k}\left(\nabla_y g(x^k,y^k;\mathcal{B}_g^k) - p_{\lambda}\lambda^k\right))$
    \EndFor
\end{algorithmic}
\end{algorithm}

The stochastic analysis follows the same general framework as the deterministic case, with Proposition~\ref{Proposition: Stochastic Lyapunov Function Descent After Error Bound} establishing a one-step Lyapunov descent bound in expectation. A subtle difference, however, arises in translating the residual bounds into Lagrangian stationarity. In the deterministic setting, Lemma~\ref{Lemma: Deterministic Stationary Point Bridge} applies pathwise: if the three residuals in \eqref{eq: one-step Lyapunov inequality} are sufficiently small at an iterate, then that iterate is approximately stationary for \(\mathcal L\). In the stochastic setting, the telescoping argument only controls these residuals in expectation, which does not directly imply expected stationarity of \(\mathcal L\). We therefore use Markov's inequality to convert the expected residual bound into a high-probability bound, and then apply Lemma~\ref{Lemma: Deterministic Stationary Point Bridge} pathwise on this event. This yields the following high-probability oracle complexity guarantee for Algorithm~\ref{Stoc-SGHA}.

\begin{theorem}
\label{Theorem: Stochastic Sample Complexity HP}
Suppose that Assumptions \ref{Assumption: Bilevel Optimization} and \ref{Assumption: Stochastic First-order Oracle} hold.
Fix any target accuracy $0<\epsilon\le 1$ and failure probability
$\rho\in(0,1)$, and define
$
\bar\sigma^2:=1+\sigma_f^2+\sigma_g^2$.
Let
$
\Lambda:=\{\lambda:\|\lambda\|\le C_\lambda\}$ with $
C_\lambda:=2\bar\kappa_y$.
Choose
$
p_w=16\bar L\bar\kappa_y$ and $
p_\lambda
=
\frac18
\min\left\{
\mu_g,\,
\frac{\epsilon\sqrt{\rho}}
{2^{12}(1+\bar L)\bar\kappa_y^4}
\right\}$.
For every $k\ge0$, use the constant stepsizes
\[
\gamma_{x,k}
=
\gamma_{y,k}
=
\frac{1}{2^8\bar L\bar\kappa_y},
\qquad
\gamma_{\lambda,k}
=
\frac{\bar\kappa_y}{2^8\bar L}.
\]
Choose the averaging parameter, finite-difference radius, and batch size as
\[
\beta
=
\frac{1}{2^{30}\bar\kappa_y^2},
\qquad
\tau
=
\frac{\epsilon\sqrt{\rho}}
{2^{25}(1+\bar L)\bar\kappa_y^4}, \qquad B
=
\left\lceil
\frac{
2^{98}\bar\sigma^2(1+\bar L)^4
}{
\bar L^2
}
\bar\kappa_y^{12}\epsilon^{-4}\rho^{-2}
\right\rceil.
\]
Let $\Delta_V:=\mathbb E[V_0]-\underline f$, and run
Algorithm~\ref{Stoc-SGHA} for
\[
K
:=
\max\left\{
1,
\left\lceil
\frac{
5\cdot2^{53}(1+\bar L)^2\Delta_V
}{
\bar L
}
\bar\kappa_y^5\epsilon^{-2}\rho^{-1}
\right\rceil
\right\}.
\]
Then there exists
$k_\star\in\{0,\ldots,K-1\}$ such that
$
\mathbb P\left(
\|\nabla F(x^{k_\star})\|\le\epsilon
\right)
\ge 1-\rho$.
Therefore, the stochastic first-order oracle complexity is
\[
\mathcal O\left(
\bar\kappa_y^{17}\epsilon^{-6}\rho^{-3}
\right).
\]
\end{theorem}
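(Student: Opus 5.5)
The plan mirrors the deterministic proof of Theorem~\ref{Theorem: Deterministic Sample Complexity}, with one extra step that converts expectation bounds into a probability bound.

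\textbf{Step 1: expected one-step descent.} I would first establish the stochastic analogue of Proposition~\ref{Proposition: Recursion of Lyapunov Fucntion}, namely the descent bound referred to as Proposition~\ref{Proposition: Stochastic Lyapunov Function Descent After Error Bound}. Conditioning on the history up to iteration $k$, it should read
\[
\mathbb E_k[V_k-V_{k+1}]\ge \frac{1}{32\gamma_w}\|w^k-w^+(\hat w^k,\lambda^k)\|^2+\frac{1}{64\gamma_\lambda}\|\lambda^+(\hat w^k)-\lambda^k\|^2+\frac{p_w\beta}{8}\|w^k-\hat w^k\|^2-C_\tau\tau^2-C_\sigma\frac{\bar\sigma^2}{B\tau^2}.
\]
The last term has two sources:
\begin{itemize}
\item[(i)] the mini-batch variance of the $f$-gradients and of $\nabla_y g$, which is $O(\bar\sigma^2/B)$;
\item[(ii)] the variance of the central differences, bounded by $\|\lambda\|^2$-free quantities as $\sigma_g^2/(B\tau^2)$ per coordinate block, since each difference is $(2\tau)^{-1}$ times a difference of two noisy gradients.
\end{itemize}
Because the estimators are unbiased, the cross terms in the descent of $K$, $\Psi$ and $P$ vanish in conditional expectation. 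What remains are second-moment terms of the form $\gamma_w\cdot\mathrm{Var}$ and $\gamma_\lambda\cdot\mathrm{Var}$, which are absorbed into $C_\sigma$.

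\textbf{Step 2: telescoping and parameter bookkeeping.} I would then take total expectation, telescope over $k=0,\ldots,K-1$, and use Lemma~\ref{Lemma: Lower Bound of Lyapunov Function} ($V_K\ge\underline f$). This gives
\[
\frac1K\sum_{k=0}^{K-1}\mathbb E[R_k]\le \frac{\Delta_V}{K}+C_\tau\tau^2+C_\sigma\frac{\bar\sigma^2}{B\tau^2},
\]
where $R_k$ denotes the weighted residual sum. Hence some index $k_\star$ has $\mathbb E[R_{k_\star}]$ at most the right-hand side. I would apply Markov's inequality with failure level $\rho$ to $R_{k_\star}$: with probability at least $1-\rho$, $R_{k_\star}\le \rho^{-1}\cdot\mathrm{RHS}$.

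To make the three residuals at most $\epsilon_{\mathcal L}$, $\bar\kappa_y\epsilon_{\mathcal L}$ and $\bar\kappa_y\epsilon_{\mathcal L}$ respectively, I need each of the three terms on the right to be of order $\rho\,\epsilon_{\mathcal L}^2$ after the weights $\gamma_w$, $\gamma_\lambda$, $p_w\beta$ are accounted for. Setting $\epsilon_{\mathcal L}\asymp \epsilon/\bar\kappa_y^{3}$ produces the stated choices:
\begin{itemize}
\item the factor $\sqrt\rho$ in $\tau$ and $p_\lambda$;
\item $K\propto \rho^{-1}$;
\item $B\propto \rho^{-2}\epsilon^{-4}$, because the variance term scales as $1/(B\tau^2)$ and $\tau^2\propto\rho\epsilon^2$.
\end{itemize}
This bookkeeping is mechanical, given the constants already fixed in Theorem~\ref{Theorem: Deterministic Sample Complexity}.

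\textbf{Step 3: pathwise conclusion.} On the high-probability event, Lemma~\ref{Lemma: Deterministic Stationary Point Bridge} applies pathwise. Its smallness conditions hold because $C_\lambda=2\bar\kappa_y>2l_{f,0}/\mu_g$ and $\epsilon_{\mathcal L}$ is small. The lemma bounds $\|\nabla_w\mathcal L\|$ and $\|\nabla_\lambda\mathcal L\|$; the term $p_\lambda C_\lambda$ is controlled by the choice of $p_\lambda$. Theorem~\ref{Theorem: Approximation of Hypergradient} then converts these bounds into $\|\nabla F(x^{k_\star})\|\le\epsilon$. The oracle count is $KB=O(\bar\kappa_y^5\epsilon^{-2}\rho^{-1}\cdot\bar\kappa_y^{12}\epsilon^{-4}\rho^{-2})$, which gives the stated $O(\bar\kappa_y^{17}\epsilon^{-6}\rho^{-3})$.

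\textbf{Main obstacle.} I expect Step 1 to be the hard part. Smoothed GDA's descent of $\Psi$ and $P$ relies on error bounds linking $\lambda^k$ to $\lambda^+(\hat w^k)$ and $\hat\lambda^*(\hat w^k)$, and in the stochastic case these now involve noisy $\lambda$-updates. In addition, the projection $\mathcal P_\Lambda$ makes $\lambda^{k+1}$ a nonlinear function of the noise. I would handle this using the nonexpansiveness of $\mathcal P_\Lambda$: compare $\lambda^{k+1}$ with the noiseless projected step, bound their distance by $\gamma_\lambda$ times the noise norm, and split every inner product into a zero-mean part plus a Young's-inequality part. The goal is that all noise enters only as $\gamma^2\bar\sigma^2/(B\tau^2)$, so that the deterministic weights $1/32$, $1/64$ and $1/8$ survive with halved constants.
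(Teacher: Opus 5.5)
Your proposal is correct and follows essentially the same route as the paper's proof. You establish expected Lyapunov descent, using a ghost dual iterate to handle noise inside $\mathcal P_\Lambda$, and telescope to get a deterministic index $k_\star$ with small expected residuals. You then apply Markov's inequality, followed by Lemma~\ref{Lemma: Deterministic Stationary Point Bridge} and Theorem~\ref{Theorem: Approximation of Hypergradient} pathwise. The only cosmetic differences are two: you apply Markov once to the weighted residual sum $R_{k_\star}$, whereas the paper applies it to each of the three residuals with a union bound (hence its $\delta=\sqrt{3}\epsilon_{\mathcal L}/\sqrt{\rho}$); and your $\epsilon_{\mathcal L}$ plays the role of the paper's $\delta$.
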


We can also establish an in-expectation guarantee under the additional assumption that the iterates are almost surely bounded, i.e., $\|w^k\|\le D_w$ almost surely for all \(k\). This condition is weaker than that typically imposed in analyses of Smoothed--GDA--type methods \citep{zhang2020single}, which require both the primal and dual iterates to remain bounded. We use a good-event decomposition directly at the level of the bilevel hypergradient. On the event where the residuals are small, Lemma~\ref{Lemma: Deterministic Stationary Point Bridge} and Theorem~\ref{Theorem: Approximation of Hypergradient} give a small hypergradient. On the complementary event, boundedness of $w^k$, lower-level strong convexity, and global smoothness provide a uniform bound on $\|\nabla F(x^k)\|$. This sharper treatment avoids an unnecessary condition-number loss from separately bounding the Lagrangian-gradient components on the bad event. It leads to the following stochastic first-order oracle complexity guarantee in expectation for Algorithm~\ref{Stoc-SGHA}.

\begin{theorem}
\label{Theorem: Stochastic Sample Complexity Expectation}
Suppose that Assumptions~\ref{Assumption: Bilevel Optimization}
and~\ref{Assumption: Stochastic First-order Oracle} hold. Further assume
that there exists $D_w>0$ such that the iterates generated by
Algorithm~\ref{Stoc-SGHA} satisfy
\(\|w^k\|\le D_w\) almost surely for all \(k\ge0\).
Fix any target accuracy
\(0<\epsilon\le1\), and let
\(\Lambda:=\{\lambda:\|\lambda\|\le C_\lambda\}\) with
\(C_\lambda:=2\bar\kappa_y\).
Let \(\mathcal G\) be a problem-dependent constant, and denote
\(\bar\sigma^2:=\max\{1,\sigma_f^2,\sigma_g^2\}\).
Choose $p_w=16\bar L\bar\kappa_y$ and $p_\lambda
=
\frac18
\min\left\{
\mu_g,\,
\frac{\epsilon}
{2^{13}(1+\bar L)\mathcal G\bar\kappa_y^4}
\right\}$.
For every $k\ge0$, use the constant stepsizes
\[
\gamma_{x,k}
=
\gamma_{y,k}
=
\frac{1}{2^8\bar L\bar\kappa_y},
\qquad
\gamma_{\lambda,k}
=
\frac{\bar\kappa_y}{2^8\bar L}.
\]
Choose the averaging parameter, finite-difference radius, and batch size as
\[
\beta
=
\frac{1}{2^{30}\bar\kappa_y^2},
\qquad
\tau
=
\frac{\epsilon}
{2^{26}(1+\bar L)\mathcal G\bar\kappa_y^4},
\qquad
B
:=
\left\lceil
\frac{2^{102}\bar\sigma^2(1+\bar L)^4\mathcal G^4}
{\bar L^2}
\bar\kappa_y^{12}\epsilon^{-4}
\right\rceil.
\]
Let $\Delta_V:=\mathbb E[V_0]-\underline f$ and run
Algorithm~\ref{Stoc-SGHA} for
\[
K
:=
\max\left\{
1,\,
\left\lceil
\frac{5\cdot2^{55}(1+\bar L)^2\mathcal G^2\Delta_V}{\bar L}
\bar\kappa_y^5\epsilon^{-2}
\right\rceil
\right\}.
\]
Then there exists $k_\star\in\{0,\ldots,K-1\}$ such that $\mathbb E\left[
\|\nabla F(x^{k_\star})\|
\right]
\le\epsilon$.
Consequently, the stochastic first-order oracle complexity is
\[
\mathcal O\left(
\bar\kappa_y^{17}\epsilon^{-6}
\right).
\]
\end{theorem}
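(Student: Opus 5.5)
The plan follows the high-probability argument behind Theorem~\ref{Theorem: Stochastic Sample Complexity HP}, except that on the bad event we bound the hypergradient directly instead of discarding it. First I would invoke the stochastic one-step descent (Proposition~\ref{Proposition: Stochastic Lyapunov Function Descent After Error Bound}) with the stated parameters. It gives, in expectation, the same decrease as \eqref{eq: one-step Lyapunov inequality}, with two error terms: the finite-difference error $C_{\tau,k}\tau^2$ and a variance term of order $\gamma\bar\sigma^2/(B\tau^2)$. I would telescope over $k=0,\dots,K-1$ and use Lemma~\ref{Lemma: Lower Bound of Lyapunov Function} to get $\mathbb E[V_K]\ge\underline f$. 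This bounds the averaged expected residual sum
\[
R_k:=\tfrac{1}{32\gamma_w}\|w^k-w^+\|^2+\tfrac{1}{64\gamma_\lambda}\|\lambda^+-\lambda^k\|^2+\tfrac{p_w\beta}{8}\|w^k-\hat w^k\|^2
\]
by $\Delta_V/K$ plus the two error terms. With $\tau\propto\epsilon/(\mathcal G\bar\kappa_y^4)$, $B\propto\mathcal G^4\bar\kappa_y^{12}\epsilon^{-4}$ and $K\propto\mathcal G^2\bar\kappa_y^5\epsilon^{-2}$, each contribution is at most a small constant times $\delta\,\epsilon_{\mathcal L}^2$-scaled thresholds. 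Here I choose $\epsilon_{\mathcal L}\asymp\epsilon/\bar\kappa_y^{\,c}$ so that the bound of Lemma~\ref{Lemma: Deterministic Stationary Point Bridge} composed with Theorem~\ref{Theorem: Approximation of Hypergradient} yields $\|\nabla F\|\le\epsilon/2$. I then pick $k_\star$ as an index minimizing $\mathbb E[R_k]$.

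Next I split on the good event $\mathcal E:=\{\|w^{k_\star}-w^+\|\le\epsilon_{\mathcal L},\ \|\lambda^+-\lambda^{k_\star}\|\le\bar\kappa_y\epsilon_{\mathcal L},\ \|w^{k_\star}-\hat w^{k_\star}\|\le\bar\kappa_y\epsilon_{\mathcal L}\}$. By Markov's inequality applied to each of the three residuals (scaled by its coefficient in $R_k$), $\mathbb P(\mathcal E^c)\le \mathbb E[R_{k_\star}]/\min\text{threshold}$. The parameters are tuned so this is at most $\epsilon/(2\mathcal G)$; the extra factors of $\mathcal G$ in $\tau,p_\lambda,B,K$ relative to Theorem~\ref{Theorem: Stochastic Sample Complexity HP} play exactly the role of $\rho=\epsilon/(2\mathcal G)$ there. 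On $\mathcal E$, Lemma~\ref{Lemma: Deterministic Stationary Point Bridge} applies pathwise, since its smallness conditions hold because $C_\lambda=2\bar\kappa_y>l_{f,0}/\mu_g$ with margin. It bounds $\|\nabla_w\mathcal L\|$ and $\|\nabla_\lambda\mathcal L\|$, with the $p_\lambda C_\lambda$ term kept below $\epsilon/(4\,\cdot)$ by the choice of $p_\lambda$. Theorem~\ref{Theorem: Approximation of Hypergradient} then gives $\|\nabla F(x^{k_\star})\|\le\epsilon/2$ on $\mathcal E$.

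On $\mathcal E^c$, I define $\mathcal G$ as a uniform bound on $\|\nabla F(x)\|$ over $\|x\|\le D_w$, and this is where the iterate bound enters. Using \eqref{eq: implicit-differentiation formula}, $\|\nabla F(x)\|\le\|\nabla_x f(x,y^*(x))\|+\kappa_y l_{f,0}$. Then $\|\nabla_x f(x,y^*(x))\|\le\|\nabla_x f(0,y^*(0))\|+l_{f,1}(\|x\|+\|y^*(x)-y^*(0)\|)$, and $y^*$ is $\kappa_y$-Lipschitz, so $\mathcal G:=\|\nabla_x f(0,y^*(0))\|+l_{f,1}(1+\kappa_y)D_w+\kappa_y l_{f,0}$ works. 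Combining the two events,
\[
\mathbb E\|\nabla F(x^{k_\star})\|\le\tfrac{\epsilon}{2}+\mathcal G\,\mathbb P(\mathcal E^c)\le\epsilon.
\]
The complexity is $2BK=\mathcal O(\bar\kappa_y^{17}\epsilon^{-6})$, treating $\mathcal G,\Delta_V,\bar\sigma$ as constants.

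The main obstacle is the bookkeeping in the telescoping step. The variance term $\sigma_g^2/(B\tau^2)$ must be pushed below $\epsilon_{\mathcal L}^2\epsilon/\mathcal G$ times the Markov thresholds without disturbing the stepsize constraints of the proposition. Checking that the powers of $2$, $\bar\kappa_y$ and $\mathcal G$ in $B$, $\tau$ and $K$ exactly close this chain is the delicate part. I would handle it by reusing the high-probability proof verbatim with $\rho$ replaced by $\epsilon/(2\mathcal G)$. Note that this substitution alone gives $\epsilon^{-3}$ in $K$, so the sharper $\epsilon^{-2}$ must come from taking Markov's inequality at threshold $\epsilon/\mathcal G$ only on squared residuals. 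I would verify this carefully: $\mathbb P(R>t)$ with $t\asymp\epsilon_{\mathcal L}^2$, and $\mathbb E R\lesssim\Delta_V/K$, requires $K\gtrsim\mathcal G\,\epsilon^{-1}\epsilon_{\mathcal L}^{-2}$. So if the stated $K$ is only $\epsilon^{-2}$, then $\epsilon_{\mathcal L}$ must be taken as $\epsilon/\mathcal G^{1/2}$-scaled, and I would adjust the split accordingly.
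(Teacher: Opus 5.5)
There is a genuine gap, and it sits in the step you flagged at the end: your event split cannot reach the stated $K$ and $B$. You define $\mathcal E$ at the $\epsilon$-scaled threshold $\epsilon_{\mathcal L}$ and bound $\|\nabla F\|$ by a constant on $\mathcal E^c$, so you need $\mathbb P(\mathcal E^c)\lesssim\epsilon/\mathcal G$. Markov at level $\epsilon_{\mathcal L}^2$ then forces the expected squared residuals down to $O(\epsilon\,\epsilon_{\mathcal L}^2/\mathcal G)$. Requiring the bridge bound on $\mathcal E$ to be at most $\epsilon/2$ pins $\epsilon_{\mathcal L}$ to $\Theta(\epsilon/\bar\kappa_y^3)$. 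So your route costs $K\propto\epsilon^{-3}$, $\tau\propto\epsilon^{3/2}$ and $B\propto\epsilon^{-6}$. That is Theorem~\ref{Theorem: Stochastic Sample Complexity HP} with $\rho\asymp\epsilon$, for a total of $\epsilon^{-9}$. With the stated $K$ and $B$, the telescoped bound only gives squared residuals of order $\epsilon_{\mathcal L}^2$, where $\epsilon_{\mathcal L}=\epsilon/(2^{13}(1+\bar L)\mathcal G\bar\kappa_y^3)$. Your Markov step then yields only $\mathbb P(\mathcal E^c)\lesssim\mathcal G^{-2}$, an $\epsilon$-independent constant, so the bad-event term does not vanish. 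Rescaling $\epsilon_{\mathcal L}$ by powers of $\mathcal G$ changes constants only and cannot remove the extra $\epsilon^{-1}$. In fact the $\mathcal G$-factors in $\tau,p_\lambda,B,K$ correspond to $\rho=1/(4\mathcal G^2)$ in Theorem~\ref{Theorem: Stochastic Sample Complexity HP}, not to $\rho=\epsilon/(2\mathcal G)$.

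The missing idea is to take the good event at an $\epsilon$-independent radius. The smallness hypotheses of Lemma~\ref{Lemma: Deterministic Stationary Point Bridge} are needed only to make the projection defining $\lambda^+(\hat w^k)$ inactive. Once they hold, its proof gives bounds that are \emph{linear} in the actual residuals:
$\|\nabla_w\mathcal L\|\le\gamma_w^{-1}\|w^k-w^+\|+p_w\|w^k-\hat w^k\|$ and
$\|\nabla_\lambda\mathcal L\|\le\frac{l_{g,1}}{(p_w-l_{\mathcal L,1})\gamma_w}\|w^k-w^+\|+\gamma_\lambda^{-1}\|\lambda^+-\lambda^k\|+p_\lambda C_\lambda$.
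So define the good event by residual levels $r_0,\bar\kappa_y r_0,\bar\kappa_y r_0$, with $A_w=\gamma_w^{-1}+\bar\kappa_y p_w$ and $r_0:=\min\bigl\{\tfrac{\mu_g}{4A_w},\tfrac{1}{4\bar\kappa_y}\bigr\}\bigl(C_\lambda-\tfrac{l_{f,0}}{\mu_g}\bigr)\ge\tfrac{1}{1088\bar\kappa_y^2}$. On this event, combine the linear bounds with Theorem~\ref{Theorem: Approximation of Hypergradient} and $\mathbb E\|X\|\le(\mathbb E\|X\|^2)^{1/2}$. This bounds the good-event contribution by $2^{11}(1+\bar L)\bar\kappa_y^3\epsilon_{\mathcal L}=\epsilon/(4\mathcal G)$. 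Markov at the fixed level $r_0$ gives $\mathbb P(\text{bad})\le 3\epsilon_{\mathcal L}^2/r_0^2=O(\bar\kappa_y^4\epsilon_{\mathcal L}^2)$. The bad event therefore contributes only $O(\epsilon^2)$ times the uniform hypergradient bound. Your uniform bound via the $\kappa_y$-Lipschitzness of $y^\ast$ on $\|x\|\le D_w$ works for this step, taking $\mathcal G\ge1$. This quadratic tail is what makes $K\propto\epsilon^{-2}$ and $B\propto\epsilon^{-4}$ suffice.
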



The resulting \(\epsilon\)-dependence matches the best-known fully first-order stochastic upper bound, \(\widetilde{\mathcal O}(\bar{\kappa}_y^{11}\epsilon^{-6})\), achieved by F\(^3\)BSA \citep{chen2025near}, while removing the logarithmic factor. To the best of our knowledge, this is the first single-loop fully first-order method to attain the same \(\epsilon^{-6}\) dependence. However, compared with the stochastic lower bound \(\Omega(\kappa_y^{4.5}\epsilon^{-4})\) in \citep{chen2026conditionnumberdependencybilevel}, an additional \(\epsilon^{-2}\) gap remains, together with a substantial gap in the condition-number dependence. Closing the gap between the best-known upper and lower bounds for stochastic NC--SC bilevel optimization remains an open problem.


\section{Improved Complexities with Lower-Level Stochastic Smoothness}

In this section, we show that the stochastic complexity of Stoc-SGHA can be
improved under an additional stochastic smoothness condition on the lower-level
objective \(g\). This condition is stronger than the standard bounded-variance
oracle model by controlling the mean-square variation of stochastic gradients
across different points.

\begin{assumption}
\label{Assumption: Stochastic Smoothness of g}
There exists \(\tilde l_{g,1}>0\) such that, for all
\((x_1,y_1),(x_2,y_2)\),
\[
\mathbb E_\zeta
\left[
\|\nabla g(x_1,y_1;\zeta)-\nabla g(x_2,y_2;\zeta)\|^2
\right]
\le
\tilde l_{g,1}
\left(
\|x_1-x_2\|^2+\|y_1-y_2\|^2
\right).
\]
\end{assumption}

Assumption~\ref{Assumption: Stochastic Smoothness of g} yields sharper variance control for the stochastic finite-difference approximation of the Hessian--vector products of \(g\). The key is that the two gradient evaluations in each central difference use the same sample in the mini-batch \(\mathcal B_g\).
Under the standard bounded-variance oracle in Assumption~\ref{Assumption: Stochastic First-order Oracle}, the finite-difference error contains a variance term of order \(\sigma_g^2/(B\tau^2)\). Under Assumption~\ref{Assumption: Stochastic Smoothness of g}, this improves to \(\tilde l_{g,1}C_\lambda^2/B\), where \(C_\lambda\) is the radius of the multiplier domain \(\Lambda\). Thus, the variance is no longer amplified by the small finite-difference radius \(\tau\), and it suffices to choose \(B\asymp \epsilon_{\mathcal L}^{-2}\). Combined with \(K\asymp \bar\kappa_y^{-1}\epsilon_{\mathcal L}^{-2}\) iterations from the refined Lyapunov argument, this yields an internal stochastic first-order oracle complexity of order \(\bar\kappa_y^{-1}\epsilon_{\mathcal L}^{-4}\). We first state the corresponding high-probability guarantee.

\begin{theorem}
\label{Theorem: Stochastic Sample Complexity HP with g Smoothness}
Suppose that Assumptions~\ref{Assumption: Bilevel Optimization},
\ref{Assumption: Stochastic First-order Oracle}, and
\ref{Assumption: Stochastic Smoothness of g} hold.
Fix any target accuracy $0<\epsilon\le1$ and failure probability
$\rho\in(0,1)$, and define
$
\bar M_g
:=
\max\left\{
1,\widetilde l_{g,1},\sigma_f^2,\sigma_g^2
\right\}$.
Let
$
\Lambda:=\{\lambda:\|\lambda\|\le C_\lambda\}$ with $
C_\lambda:=2\bar\kappa_y$.
Choose
$
p_w=16\bar L\bar\kappa_y$ and $
p_\lambda
=
\frac18
\min\left\{
\mu_g,\,
\frac{\epsilon\sqrt{\rho}}
{2^{12}(1+\bar L)\bar\kappa_y^4}
\right\}$.
For every $k\ge0$, use the constant stepsizes
\[
\gamma_{x,k}
=
\gamma_{y,k}
=
\frac{1}{2^8\bar L\bar\kappa_y},
\qquad
\gamma_{\lambda,k}
=
\frac{\bar\kappa_y}{2^8\bar L}.
\]
Choose the averaging parameter,  finite-difference radius and batch size as
\[
\beta
=
\frac{1}{2^{30}\bar\kappa_y^2},
\qquad
\tau
=
\frac{\epsilon\sqrt{\rho}}
{2^{25}(1+\bar L)\bar\kappa_y^4}, \qquad
B =
\left\lceil
\frac{
2^{50}\bar M_g(1+\bar L)^2
}{
\bar L^2
}
\bar\kappa_y^6\epsilon^{-2}\rho^{-1}
\right\rceil.
\]
Let $\Delta_V:=\mathbb E[V_0]-\underline f$, and run
Algorithm~\ref{Stoc-SGHA} for
\[
K
:=
\max\left\{
1,\,
\left\lceil
\frac{
5\cdot2^{53}(1+\bar L)^2\Delta_V
}{
\bar L
}
\bar\kappa_y^5\epsilon^{-2}\rho^{-1}
\right\rceil
\right\}.
\]
Then there exists
$k_\star\in\{0,\ldots,K-1\}$ such that
$
\mathbb P\left(
\|\nabla F(x^{k_\star})\|\le\epsilon
\right)
\ge1-\rho$.
Therefore, the stochastic first-order oracle complexity is
\[
\mathcal O\left(
\bar\kappa_y^{11}\epsilon^{-4}\rho^{-2}
\right).
\]
\end{theorem}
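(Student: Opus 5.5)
The argument follows the deterministic proof and the high-probability proof of Theorem~\ref{Theorem: Stochastic Sample Complexity HP}. The only change is a sharper variance bound for the stochastic finite-difference estimator, which comes from Assumption~\ref{Assumption: Stochastic Smoothness of g}.

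\textbf{Step 1: variance of the gradient estimators.} Let $G_k$ denote the stochastic primal direction and $\bar G_k$ its deterministic counterpart, i.e., the exact gradients of $f$ plus the exact central differences. For the $f$-part the variance is $\sigma_f^2/B$. For the $g$-part, because the same sample $\zeta_i$ is used for both evaluations in a central difference, each summand can be written as
\[
\frac{1}{2\tau}\bigl[\nabla g(x,y+\tau\lambda;\zeta_i)-\nabla g(x,y-\tau\lambda;\zeta_i)\bigr].
\]
Its mean-square deviation from its mean is at most its second moment. By Assumption~\ref{Assumption: Stochastic Smoothness of g}, that second moment is bounded by $\tilde l_{g,1}\|2\tau\lambda\|^2/(4\tau^2)\le \tilde l_{g,1}C_\lambda^2$. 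Averaging over the batch gives
\[
\mathbb E_k\|G_k-\bar G_k\|^2\le 2(\sigma_f^2+\tilde l_{g,1}C_\lambda^2)/B .
\]
The dual direction uses $\nabla_y g$ with batch $\mathcal B_g$, so its variance is $\sigma_g^2/B$. The key point is that $\tau$ no longer appears in the denominator, in contrast with the $\sigma_g^2/(B\tau^2)$ term of Theorem~\ref{Theorem: Stochastic Sample Complexity HP}.

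\textbf{Step 2: Lyapunov descent and telescoping.} I will invoke the stochastic one-step Lyapunov descent, Proposition~\ref{Proposition: Stochastic Lyapunov Function Descent After Error Bound}. In expectation it gives the same residual decrease as \eqref{eq: one-step Lyapunov inequality}, minus $C_{\tau,k}\tau^2$ and minus a variance term of the form $c(\gamma_{w}\,\mathrm{Var}_w+\gamma_\lambda\,\mathrm{Var}_\lambda)$. Into this I substitute the variance from Step 1, now bounded by roughly $\bar M_g\bar\kappa_y^2/B$. With the stated $\gamma_w,\gamma_\lambda,\beta,\tau$, I telescope over $K$ iterations and use Lemma~\ref{Lemma: Lower Bound of Lyapunov Function}, which gives $V_K\ge\underline f$. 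This bounds the averaged expected weighted residual sum $R_k$ by
\[
\frac{\Delta_V}{K}+C_\tau\tau^2+\frac{c\bar M_g\bar\kappa_y^{2}}{\bar L\bar\kappa_y B}.
\]
The parameter choices must make each of these three terms at most a small constant times $\rho\,\epsilon_{\mathcal L}^2$, with $\epsilon_{\mathcal L}\asymp\epsilon/((1+\bar L)\bar\kappa_y^{3})$ dictated by Theorem~\ref{Theorem: Approximation of Hypergradient}. The first term gives $K\asymp\bar\kappa_y^5\epsilon^{-2}\rho^{-1}$, the same as before, and the stated $\tau$ handles the second. For the third term, the stated $B\asymp\bar M_g\bar\kappa_y^6\epsilon^{-2}\rho^{-1}$ should suffice. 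Checking the exact powers of $\bar\kappa_y$ and the constants $2^{50}$, $5\cdot 2^{53}$ against the weights $1/(32\gamma_w)$, $1/(64\gamma_\lambda)$, $p_w\beta/8$ is the main bookkeeping obstacle. I would set it up exactly as in the proof of Theorem~\ref{Theorem: Stochastic Sample Complexity HP}, replacing only the variance constant.

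\textbf{Step 3: from residuals to $\|\nabla F\|$.} I pick $k_\star$ minimizing $\mathbb E[R_k]$, so $\mathbb E[R_{k_\star}]\le\rho\,\epsilon_{\mathcal L}^2\cdot c$. By Markov's inequality, with probability at least $1-\rho$ the three residuals satisfy the thresholds of Lemma~\ref{Lemma: Deterministic Stationary Point Bridge}. The smallness conditions of that lemma hold because $C_\lambda-l_{f,0}/\mu_g\ge\bar\kappa_y$. On this event the lemma bounds $\|\nabla_w\mathcal L\|$, and it bounds $\|\nabla_\lambda\mathcal L\|$ up to the term $p_\lambda C_\lambda$, which the choice of $p_\lambda$ makes $O(\epsilon/\bar\kappa_y^3)$. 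Theorem~\ref{Theorem: Approximation of Hypergradient} then gives $\|\nabla F(x^{k_\star})\|\le\epsilon$.

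The total oracle cost is $4BK$, hence
\[
\mathcal O\bigl(\bar\kappa_y^{6}\epsilon^{-2}\rho^{-1}\cdot\bar\kappa_y^5\epsilon^{-2}\rho^{-1}\bigr)=\mathcal O\bigl(\bar\kappa_y^{11}\epsilon^{-4}\rho^{-2}\bigr).
\]
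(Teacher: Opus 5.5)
Your proposal is correct and follows the paper's proof. The same-sample coupling replaces $\sigma_g^2/(B\tau^2)$ by $\tilde l_{g,1}C_\lambda^2/B$ in the stochastic Lyapunov descent, telescoping with $V_K\ge\underline f$ yields the residual bound at some $k_\star$, and Markov's inequality with the pathwise use of Lemma~\ref{Lemma: Deterministic Stationary Point Bridge} and Theorem~\ref{Theorem: Approximation of Hypergradient} finishes the argument.

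The differences are cosmetic. The paper builds $\sqrt{\rho}$ into $\epsilon_{\mathcal L}$ and applies Markov to each residual with a union bound ($\delta=\sqrt{3}\epsilon_{\mathcal L}/\sqrt{\rho}$). You instead apply Markov once to the weighted sum. That also works, but only if each error term is made at most a small multiple of $a\rho\epsilon_{\mathcal L}^2$, where $a=\bar L\bar\kappa_y/2^{29}$ is the smallest normalized weight, rather than of $\rho\epsilon_{\mathcal L}^2$ as you wrote. This normalization is what yields your stated $\bar\kappa_y^5$ in $K$ and $\bar\kappa_y^6$ in $B$.
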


Finally, with the additional bounded-iterate condition, we obtain the following stochastic first-order oracle complexity guarantee in expectation
for Algorithm \ref{Stoc-SGHA}.

\begin{theorem}
\label{Theorem: Stochastic Sample Complexity Expectation with g Smoothness}
Suppose that Assumptions~\ref{Assumption: Bilevel Optimization},
\ref{Assumption: Stochastic First-order Oracle}, and
\ref{Assumption: Stochastic Smoothness of g} hold.
Further assume that there exists $D_w>0$ such that the iterates
generated by Algorithm~\ref{Stoc-SGHA} satisfy
$\|w^k\|\le D_w$ almost surely for all $k\ge0$.
Let $\mathcal G$ be a problem-dependent constant, and let $\bar M_g
:=
\max\left\{
1,\widetilde l_{g,1},\sigma_f^2,\sigma_g^2
\right\}$.
Fix any target accuracy
$0<\epsilon\le1$.
Let
$\Lambda:=\{\lambda:\|\lambda\|\le C_\lambda\}$ with
$C_\lambda:=2\bar\kappa_y$.
Choose
$
p_w=16\bar L\bar\kappa_y$ and $
p_\lambda
=
\frac18
\min\left\{
\mu_g,\,
\frac{\epsilon}
{2^{13}(1+\bar L)\mathcal G\bar\kappa_y^4}
\right\}.
$
For every $k\ge0$, use the constant stepsizes
\[
\gamma_{x,k}
=
\gamma_{y,k}
=
\frac{1}{2^8\bar L\bar\kappa_y},
\qquad
\gamma_{\lambda,k}
=
\frac{\bar\kappa_y}{2^8\bar L}.
\]
Choose the averaging parameter, finite-difference radius, and batch
size as
\[
\beta
=
\frac{1}{2^{30}\bar\kappa_y^2},
\qquad
\tau
=
\frac{\epsilon}
{2^{26}(1+\bar L)\mathcal G\bar\kappa_y^4},
\qquad
B
=
\left\lceil
\frac{
2^{52}\bar M_g(1+\bar L)^2\mathcal G^2
}{
\bar L^2
}
\bar\kappa_y^6\epsilon^{-2}
\right\rceil.
\]
Let $\Delta_V:=\mathbb E[V_0]-\underline f$, and run
Algorithm~\ref{Stoc-SGHA} for
\[
K
:=
\max\left\{
1,\,
\left\lceil
\frac{
5\cdot2^{55}(1+\bar L)^2\mathcal G^2\Delta_V
}{
\bar L
}
\bar\kappa_y^5\epsilon^{-2}
\right\rceil
\right\}.
\]
Then there exists
$k_\star\in\{0,\ldots,K-1\}$ such that
$
\mathbb E\left[
\|\nabla F(x^{k_\star})\|
\right]
\le\epsilon$.
Therefore, the stochastic first-order oracle complexity is
\[
\mathcal O\left(
\bar\kappa_y^{11}\epsilon^{-4}
\right).
\]
\end{theorem}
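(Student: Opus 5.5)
The plan mirrors the proof of Theorem~\ref{Theorem: Stochastic Sample Complexity HP with g Smoothness}, replacing the Markov step with a good-event/bad-event split at the level of $\|\nabla F(x^k)\|$.

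\textbf{Step 1: expected Lyapunov descent and averaged residuals.} Invoke the stochastic analogue of Proposition~\ref{Proposition: Recursion of Lyapunov Fucntion}, namely Proposition~\ref{Proposition: Stochastic Lyapunov Function Descent After Error Bound}, with the stated parameters. This gives
\[
\mathbb E[V_k-V_{k+1}]\ge \tfrac{1}{32\gamma_w}\mathbb E\|w^k-w^+\|^2+\tfrac{1}{64\gamma_\lambda}\mathbb E\|\lambda^+-\lambda^k\|^2+\tfrac{p_w\beta}{8}\mathbb E\|w^k-\hat w^k\|^2-C_\tau\tau^2-C_B/B .
\]
Under Assumption~\ref{Assumption: Stochastic Smoothness of g}, the coupled central-difference variance is of order $\tilde l_{g,1}C_\lambda^2/B$ rather than $\sigma_g^2/(B\tau^2)$, so the batch error is $C_B/B$ with $C_B=\mathcal O(\gamma_w \bar M_g\bar\kappa_y^2)$. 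Telescope over $K$ iterations using Lemma~\ref{Lemma: Lower Bound of Lyapunov Function}, and normalize the three residuals by $\epsilon_{\mathcal L}$, $\bar\kappa_y\epsilon_{\mathcal L}$, $\bar\kappa_y\epsilon_{\mathcal L}$. With $\gamma_w^{-1}\asymp\bar\kappa_y$, $\gamma_\lambda\asymp\bar\kappa_y$, $p_w\beta\asymp\bar\kappa_y^{-1}$, this yields an index $k_\star$ such that
\[
\mathbb E\Big[\tfrac{\|w^{k_\star}-w^+\|^2}{\epsilon_{\mathcal L}^2}+\tfrac{\|\lambda^+-\lambda^{k_\star}\|^2}{\bar\kappa_y^2\epsilon_{\mathcal L}^2}+\tfrac{\|w^{k_\star}-\hat w^{k_\star}\|^2}{\bar\kappa_y^2\epsilon_{\mathcal L}^2}\Big]\le \eta .
\]
Here $\eta$ is a small parameter and $\epsilon_{\mathcal L}\asymp\epsilon/((1+\bar L)\bar\kappa_y^{3})$. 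Each contribution must be $\mathcal O(\eta)$:
\begin{itemize}
\item the initial-gap term $\Delta_V/(K\gamma_w\epsilon_{\mathcal L}^2\cdots)$, which is guaranteed by the stated $K\propto\bar\kappa_y^5\mathcal G^2\epsilon^{-2}$;
\item the finite-difference term $C_\tau\tau^2$, guaranteed by the stated $\tau$;
\item the batch term $C_B/B$, guaranteed by $B\propto\bar\kappa_y^6\mathcal G^2\epsilon^{-2}$.
\end{itemize}
The free constant $\mathcal G$ is what makes $\eta\asymp\mathcal G^{-2}$.

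\textbf{Step 2: good event.} Let $E$ be the event that the three residuals are within their thresholds. By Markov's inequality, $\mathbb P(E^c)\le\eta$. On $E$, Lemma~\ref{Lemma: Deterministic Stationary Point Bridge} applies pathwise; its smallness conditions on $\epsilon_{\mathcal L}$ hold because $C_\lambda-l_{f,0}/\mu_g\ge\bar\kappa_y$. The lemma bounds $\|\nabla_w\mathcal L\|$ and $\|\nabla_\lambda\mathcal L\|$ by $\mathcal O(\bar\kappa_y^2\bar L\epsilon_{\mathcal L})+p_\lambda C_\lambda$. Theorem~\ref{Theorem: Approximation of Hypergradient} then gives
\[
\|\nabla F(x^{k_\star})\|\le(1+\kappa_y)\|\nabla_w\mathcal L\|+\tfrac{L_{\bar F,y}}{\mu_g}\|\nabla_\lambda\mathcal L\|,
\qquad \tfrac{L_{\bar F,y}}{\mu_g}=\mathcal O(\bar\kappa_y^3).
\]
With the stated $p_\lambda$ and $\epsilon_{\mathcal L}$, this is at most $\epsilon/2$.

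\textbf{Step 3: bad event.} On $E^c$, use $\|w^k\|\le D_w$ a.s. to bound $\|\nabla F(x^k)\|$ uniformly. Since $\|\nabla F(x)\|\le l_{f,1}\|\nabla y^*\|+\ldots\le l_{f,0}+\kappa_y l_{f,0}$, the bound is in fact $\nabla F$-bounded by $\mathcal O(\kappa_y l_{f,0})$ even without $D_w$, using bounded $\nabla_y f$ together with $\|\nabla_x f\|\le\|\nabla_x f(0,y^*(0))\|+l_{f,1}(\|x\|+\|y^*(x)\|)$. This is where $D_w$ enters, giving a constant $\mathcal G\epsilon$-independent bound $\|\nabla F(x^k)\|\le \mathcal G'$. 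Then
\[
\mathbb E\|\nabla F(x^{k_\star})\|\le \epsilon/2+\mathcal G'\,\mathbb P(E^c).
\]

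\textbf{Main obstacle.} The delicate step is reconciling $\mathbb P(E^c)\le\eta$ with the target $\epsilon/(2\mathcal G')$, which is $\epsilon$-dependent. Markov alone would demand $\eta\asymp\epsilon$, inflating $K$ and $B$.

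The first remedy I would try is to avoid Markov on the indicator and instead bound $\mathbb E[\|\nabla F\|]$ directly. On $E$, the hypergradient bound is linear in the residuals, so Cauchy--Schwarz gives a contribution of order $\epsilon\sqrt{\eta}$. The bad event only needs $\mathbb E[\mathcal G'\mathbf 1_{E^c}]\le \mathcal G'\,\mathbb E[\text{residual ratio}]$, where the residual ratio is normalized by thresholds that themselves scale with $\epsilon$. The threshold should be chosen relative to $D_w$-based constants so that linear-in-residual bounds hold globally. Defining $\mathcal G$ to absorb $\mathcal G'$, $D_w$ and $l_{f,0}$, with the factor $\mathcal G^2$ in $K$ and $B$ producing $\sqrt{\eta}\lesssim 1/\mathcal G$, should close the argument with $\epsilon/2+\epsilon/2$.
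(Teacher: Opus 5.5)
There is a genuine gap at exactly the step you call the main obstacle, and the remedy you sketch does not close it. Suppose the good event uses thresholds of order $\epsilon_{\mathcal L}$, as in your Step 2 and in your remedy, where the residual ratio is ``normalized by thresholds that themselves scale with $\epsilon$''. Then $\mathbb E[\text{residual ratio}]$ is of order $\sqrt{\eta}$ with no factor of $\epsilon$. The bad-event term $\mathcal G'\,\mathbb E[\text{ratio}]\lesssim \mathcal G'/\mathcal G$ is therefore $O(1)$, not $\le \epsilon/2$. The factor $\mathcal G^2$ in $K$ and $B$ only buys $\sqrt{\eta}\lesssim 1/\mathcal G$, which does not depend on $\epsilon$. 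Tying the threshold to ``$D_w$-based constants'' is also the wrong constraint: $D_w$ only controls the size $G_F$ of $\|\nabla F\|$ on the bad event, not the region where the linear bounds hold.

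The missing idea is a good event whose radius is independent of $\epsilon$ and is dictated by the interiority requirement of Lemma~\ref{Lemma: Deterministic Stationary Point Bridge}. The slack $C_\lambda-l_{f,0}/\mu_g\ge\bar\kappa_y$ does not depend on $\epsilon$. Hence $\lambda^k$ and $\lambda^+(\hat w^k)$ stay strictly inside $\Lambda$, so the projection is inactive, whenever $\|w^k-w^+\|\le r_0$, $\|\lambda^+-\lambda^k\|\le\bar\kappa_y r_0$ and $\|w^k-\hat w^k\|\le\bar\kappa_y r_0$. Here $r_0=\min\{\frac{\mu_g}{4A_w},\frac{1}{4\bar\kappa_y}\}(C_\lambda-\frac{l_{f,0}}{\mu_g})\ge\frac{1}{1088\bar\kappa_y^2}$ with $A_w=\gamma_w^{-1}+\bar\kappa_y p_w$.

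On this event, the proof of that lemma gives bounds linear in the actual residuals:
$\|\nabla_w\mathcal L\|\le\gamma_w^{-1}\|w^k-w^+\|+p_w\|w^k-\hat w^k\|$ and
$\|\nabla_\lambda\mathcal L\|\le\frac{l_{g,1}}{(p_w-l_{\mathcal L,1})\gamma_w}\|w^k-w^+\|+\gamma_\lambda^{-1}\|\lambda^+-\lambda^k\|+p_\lambda C_\lambda$.
By Theorem~\ref{Theorem: Approximation of Hypergradient} and Jensen, this part of $\mathbb E\|\nabla F(x^{k_\star})\|$ is $O((1+\bar L)\bar\kappa_y^3\epsilon_{\mathcal L})$.

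On the complement, Markov applied to the squared residuals gives probability at most $3\epsilon_{\mathcal L}^2/r_0^2$. With $G_F\le 2(1+\bar L)\mathcal G\bar\kappa_y$, the bad event contributes $O(\mathcal G(1+\bar L)\bar\kappa_y^5\epsilon_{\mathcal L}^2)=O(\epsilon^2)$. This is second order and needs no inflation of $K$ or $B$; it is exactly Lemma~\ref{Lemma: Expected Stationary Point Bridge}. With such a fixed radius, even the first-moment bound $\mathbf 1_{E^c}\le(\text{residuals})/r_0$ would work up to constants. That may be what ``linear-in-residual bounds hold globally'' was aiming at, but the radius must be fixed and capped by the interiority radius, not scaled with $\epsilon$.

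Separately, your aside in Step 3 that $\|\nabla F\|$ is bounded without $D_w$ is false, since only $\nabla_y f$ is assumed bounded. What you need is the $D_w$-based bound $G_F=G_f(D_w)+\frac{l_{f,1}}{\mu_g}G_g(D_w)+\kappa_y l_{f,0}$, obtained via $\|y^k-y^\star(x^k)\|\le\|\nabla_y g(w^k)\|/\mu_g$.
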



The theorems show that imposing stochastic smoothness only on the lower-level objective is sufficient to achieve an \(\epsilon^{-4}\) oracle complexity. This \(\epsilon\)-dependence is optimal, matching the rate for smooth nonconvex single-level optimization~\citep{arjevani2023lower}. Previously, such a rate was achieved only when the lower-level gradient is noise-free \citep{chen2025near}, a special case in which Assumption~\ref{Assumption: Stochastic Smoothness of g} holds trivially. 
Nevertheless, the \(\bar{\kappa}_y\)-dependence of Algorithm~\ref{Stoc-SGHA} remains worse than the lower bound, and closing this condition-number gap for single-loop fully first-order methods remains an open problem.

\section{Numerical Experiments}
\label{sec:experiments}


We conduct two sets of experiments: learn-to-regularize logistic regression and data hyper-cleaning. For each experiment, we tune the stepsizes and method-specific parameters of all baselines under a common evaluation protocol and report the best stable performance achieved by each method.

\subsection{Learn-to-Regularize Logistic Regression}
\label{sec:exp-l2reg}

We consider the learnable regularization problem for multiclass logistic regression,
which has been widely used in bilevel hyperparameter optimization.  Let
$\mathcal D_{\rm tr}$ and $\mathcal D_{\rm val}$ denote the training and validation sets.
For a feature vector $a_i \in \mathbb R^d$ and label $b_i$, let
$\ell(W; a_i,b_i)$ be the cross-entropy loss of a linear classifier
$W \in \mathbb R^{d \times C}$.  We learn one regularization parameter per feature by
solving
\begin{equation}
\label{eq:l2reg-bilevel}
\begin{aligned}
    \min_{x \in \mathbb R^d} \quad
    & f(x,W^*(x))
      := \frac{1}{|\mathcal D_{\rm val}|}
         \sum_{(a_i,b_i)\in \mathcal D_{\rm val}}
         \ell(W^*(x);a_i,b_i), \\
    \text{s.t.}\quad
    & W^*(x)
      = \arg\min_{W \in \mathbb R^{d\times C}}
      \frac{1}{|\mathcal D_{\rm tr}|}
      \sum_{(a_i,b_i)\in \mathcal D_{\rm tr}}
      \ell(W;a_i,b_i)
      + \frac{1}{2dC}\sum_{j=1}^d \exp(x_j)\|W_{j,\cdot}\|_2^2 .
\end{aligned}
\end{equation}
Here $W_{j,\cdot}\in\mathbb R^C$ denotes the $j$-th row of $W$, i.e., the vector of
classifier weights associated with the $j$-th TF-IDF feature across all classes.
The lower-level objective is strongly convex in $W$ due to the positive feature-wise ridge
regularization, while the upper-level objective is nonconvex in $x$ through both the
exponential parameterization and the solution map $W^*(x)$.

We conduct the experiment on the 20 Newsgroups dataset, which has been commonly used in
bilevel hyperparameter optimization \citep{grazzi2020iteration, ji2021bilevel}.  The dataset contains approximately
$18{,}000$ documents from $20$ classes. Each document is represented by a
TF--IDF vector with $d=101{,}631$ features. We split the original training set
into $7{,}354$ training samples and $3{,}960$ validation samples, and use the
$7{,}532$ documents in the by-date test set for evaluation.

We compare SGHA (Algorithm \ref{SGHA}) with penalty-based methods F$^2$BSA \citep{chen2025near} and F$^2$SA \citep{kwon2023fully}, the HVP-based methods stocBiO
\citep{ji2021bilevel} and MRBO \citep{yang2021provably}, the variance-reduced bilevel
method VRBO \citep{yang2021provably}, and a baseline without hyperparameter optimization.
We use $5$ or $10$ inner loops for the stochastic bilevel baselines with inner updates, and tune the stepsizes of all methods from preliminary logarithmic grids. The hyperparameters of SGHA are selected as $\beta=0.2$, $p_w=0.01$, $p_\lambda=10^{-5}$, and $\tau=10^{-4}$. We run all methods for $1000$ outer iterations.

Figure~\ref{fig:l2reg} summarizes the results, where the dashed line labeled ``w/o Reg'' denotes performance without hyperparameter tuning. SGHA consistently outperforms all baselines in terms of both test loss and test accuracy. Among the compared methods, F$^2$BSA is the strongest baseline, improving over F$^2$SA and the other stochastic bilevel methods, while SGHA still achieves the best overall performance.

\begin{figure}[t]
    \centering
    \includegraphics[width=\linewidth]{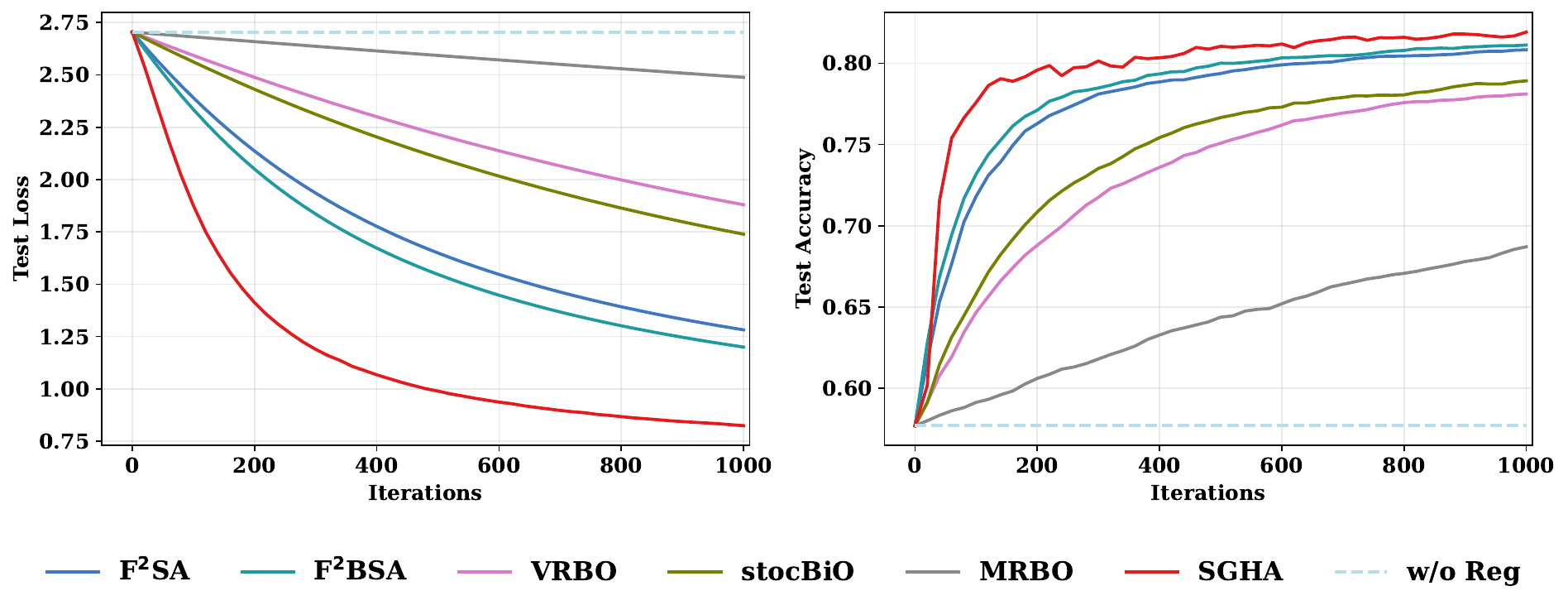}
    \caption{Learn-to-regularize logistic regression on the 20 Newsgroups dataset with
    all $20$ classes and $101,631$ TF-IDF features.  SGHA obtains the lowest test loss and
    the highest test accuracy among all compared methods.}
    \label{fig:l2reg}
\end{figure}

\subsection{Data Hyper-Cleaning on MNIST}
\label{subsec:mnist-hyperclean}

We further evaluate whether SGHA can identify corrupted training examples in a data hyper-cleaning problem. This experiment follows the ``weights on garbage'' diagnostic used in data-cleaning bilevel optimization: when a portion of the training labels is corrupted, an effective hyperparameter optimizer should assign small weights to corrupted examples while retaining useful information from the clean validation set. This experiment is inspired by the data hyper-cleaning formulation \citep{kwon2023fully, chen2025near}, where data-source weights are learned by minimizing validation loss and the performance is evaluated by the weights assigned to garbage data.  We adapt this idea to a stochastic nonconvex--strongly--convex MNIST hyper-cleaning problem with per-example weights and a strongly convex ridge-regularized lower-level logistic regression model.

Let $\widetilde{\mathcal D}_{\rm tr}=\{(a_i,\widetilde b_i)\}_{i=1}^n$ denote a
training set whose labels are independently corrupted with probability $p$, and
let $\mathcal D_{\rm val}$ be a clean validation set.  We associate each training
example with a learnable weight $\alpha_i(x_i)=2\sigma(x_i)$ with $\sigma(t)=\frac{1}{1+\exp(-t)}$. With a multiclass linear classifier $W\in\mathbb R^{d\times C}$ and cross-entropy
loss $\ell$, the bilevel problem is
\begin{equation}
\label{eq:mnist-hyperclean-bilevel}
\begin{aligned}
    \min_{x\in\mathbb R^n}\quad
    & f(x,W^*(x))
      := \frac{1}{|\mathcal D_{\rm val}|}
         \sum_{(a_i,b_i)\in\mathcal D_{\rm val}}
         \ell(W^*(x);a_i,b_i)
      + \frac{\gamma}{2n}\sum_{i=1}^n(\alpha_i(x_i)-1)^2, \\
    \mathrm{s.t.}\quad
    & W^*(x)
      = \arg\min_{W\in\mathbb R^{d\times C}}
      \frac{1}{n}\sum_{(a_i,\widetilde b_i)\in\widetilde{\mathcal D}_{\rm tr}}
      \alpha_i(x_i)\ell(W;a_i,\widetilde b_i)
      + \frac{\mu}{2}\|W\|_F^2 .
\end{aligned}
\end{equation}
The lower-level problem is strongly convex in $W$ due to the ridge term, while
the upper-level problem is nonconvex in $x$ because of the nonlinear sample-weight
map and the solution mapping $W^*(x)$.

We use the full MNIST dataset \citep{lecun1998gradient}, obtained from OpenML
\citep{vanschoren2014openml}, consisting of $70,000$ images with $784$ features and $10$ classes.  The data are split into $38,500$ training examples, $15,750$ validation examples, and $15,750$ test examples. The
validation and test labels are kept clean, while the training labels are randomly corrupted with $p\in\{0.5,0.9,0.99\}$.  We compare SGHA with penalty-based methods F$^2$BSA \citep{chen2025near} and
F$^2$SA~\citep{kwon2023fully}, the HVP-based methods stocBiO~\citep{ji2021bilevel}
and MRBO~\citep{yang2021provably}, the variance-reduced bilevel method
VRBO~\citep{yang2021provably}, and a baseline without learned data weights. All methods are run for $20,000$ iterations using stochastic mini-batches. For SGHA, we use different hyperparameter settings for different corruption ratios. Specifically, for $p=0.5$, $0.9$ and $0.99$, we set $(\beta,p_w)=(0.16,0.02)$, $(0.24,0.005)$, $(0.32,0.005)$, respectively, while fixing $p_\lambda=10^{-4}$ and $\tau=10^{-3}$.

To quantify whether an algorithm suppresses corrupted examples, we report
\[
    \mathrm{GarbageWeight}(x)
    := \frac{1}{|\mathcal G|}\sum_{i\in\mathcal G}2\sigma(x_i),
\]
where $\mathcal G$ is the set of corrupted training examples. Lower values are better.

\begin{figure}[t]
    \centering
    \includegraphics[width=\linewidth]{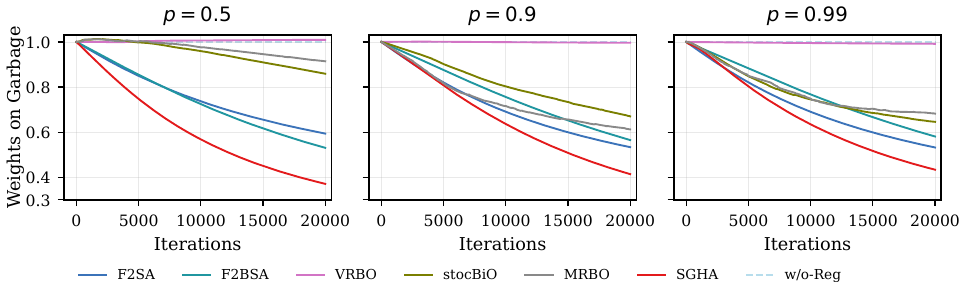}
    \caption{Data hyper-cleaning on the full MNIST dataset under corruption ratios
    $p\in\{0.5,0.9,0.99\}$.  The vertical axis is the average weight assigned to
    corrupted training examples; lower is better.}
    \label{fig:mnist-corruption-sweep}
\end{figure}

Figure~\ref{fig:mnist-corruption-sweep} reports the data hyper-cleaning results, where the dashed line labeled ``w/o Reg'' denotes performance without learning data weights. Across all corruption ratios, SGHA assigns substantially smaller weights to corrupted examples than the competing methods, with the advantage becoming more pronounced as the corruption level increases. Among the baselines, F$^2$BSA performs best overall, but still assigns noticeably larger weights to corrupted samples than SGHA.

\section{Conclusions and Future Directions}

This paper proposes SGHA and Stoc--SGHA, the first single-loop, fully first-order algorithms for NC--SC bilevel optimization that achieve the optimal $\epsilon$-dependence in the deterministic setting and match the best-known $\epsilon$-dependence of existing nested-loop methods in the stochastic setting. Our approach starts from a Lagrangian formulation of the equality-constrained problem induced by the lower-level first-order optimality condition. We then apply Smoothed GDA to a regularized minimax surrogate and replace the resulting Hessian--vector products with finite-difference approximations of first-order gradients. The analysis is tailored to this specific structure, which is key to obtaining the stated complexity guarantees.

Several questions remain open. First, although our algorithms match the
best-known $\epsilon$-dependence in the corresponding settings, their dependence
on $\kappa_y$ and $\bar{\kappa}_y$ remains worse than existing lower bounds and
double-loop penalty methods. Improving the condition-number dependence while
preserving the single-loop, fully first-order structure is an important
direction. Second, the optimal $\epsilon$-dependence for stochastic NC--SC
bilevel optimization under the standard bounded-variance oracle is still
unknown. Existing $\Omega(\kappa_y^{4.5}\epsilon^{-4})$ \citep{chen2026conditionnumberdependencybilevel} lower bound constructs a hard instance with a deterministic upper--level objective and a quadratic lower-level objective, and therefore does not fully capture the difficulty of
upper-level stochasticity and applies to a restricted problem class. We conjecture that, without these special structures, the tight complexity is
$\Omega(\mathrm{poly}(\kappa_y)\epsilon^{-6})$. Finally, it remains interesting
to determine whether the SGHA framework can be extended beyond strongly convex
lower-level problems.


\begingroup
\raggedright
\bibliographystyle{abbrvnat}
\bibliography{SGHA}
\endgroup

\appendix

\section{Properties of Reformulated NC--SC Bilevel Optimization}
\subsection{LICQ of NC-SC Bilevel Optimization}
\paragraph{Proof of Lemma \ref{Lemma: LICQ}}
Since \(g(x,\cdot)\) is \(\mu_g\)-strongly convex, the Jacobian of the constraint mapping with respect to \(y\) is $\nabla_{yy}g(x,y)$, which satisfies
\[
    \nabla_{yy}^2 g(x,y)\succeq \mu_g I.
\]
Hence, \(\nabla_{yy}^2 g(x,y)\) is nonsingular, and the gradients of the equality constraints are linearly independent. Therefore, the full Jacobian of \(\nabla_y g\) has full row rank, which implies LICQ at
every feasible point.

\subsection{Approximation of Hypergradient}
\paragraph{Proof of Theorem \ref{Theorem: Approximation of Hypergradient}}
First, we justify the stated value of $L_{\bar F,y}$. We let
\[
\bar\nabla F(x,y)
=
\nabla_x f(x,y)-\nabla_{xy} g(x,y)\nabla_{yy}^2 g(x,y)^{-1}\nabla_y f(x,y).
\]
For any $y_1,y_2$, by the $l_{f,1}$-smoothness of $f$,
\[
\|\nabla_x f(x,y_1)-\nabla_x f(x,y_2)\|
\le
l_{f,1}\|y_1-y_2\|.
\]
Moreover,
\[
\|\nabla_{xy} g(x,y)\nabla_{yy}^2 g(x,y)^{-1}\|
\le
\frac{l_{g,1}}{\mu_g}
=
\kappa_y,
\]
and hence
\[
\|\nabla_{xy} g(x,y_1)\nabla_{yy} g(x,y_1)^{-1}(\nabla_y f(x,y_1)-\nabla_y f(x,y_2))\|
\le
\kappa_y l_{f,1}\|y_1-y_2\|.
\]
Also, by $A^{-1} - B^{-1} = A^{-1}(B - A)B^{-1}$,
\[
\begin{aligned}
&\|\nabla_{xy} g(x,y_1)\nabla_{yy} g(x,y_1)^{-1}-\nabla_{xy} g(x,y_2)\nabla_{yy} g(x,y_2)^{-1}\|                         \\
&\le
\|\nabla_{xy} g(x,y_1)-\nabla_{xy} g(x,y_2)\|\|\nabla_{yy} g(x,y_1)^{-1}\|
+
\|\nabla_{xy} g(x,y_2)\|\|\nabla_{yy} g(x,y_1)^{-1}-\nabla_{yy} g(x,y_2)^{-1}\|                              \\
&\le
\frac{l_{g,2}}{\mu_g}\|y_1-y_2\|
+
\frac{l_{g,1}}{\mu_g^2}
\|\nabla_{yy} g(x,y_1)-\nabla_{yy} g(x,y_2)\|                                                    \\
&\le
l_{g,2}
\left(
\frac{1}{\mu_g}
+
\frac{l_{g,1}}{\mu_g^2}
\right)
\|y_1-y_2\|.
\end{aligned}
\]
Using $\|\nabla_y f(x,y)\|\le l_{f,0}$, we get
\[
\begin{aligned}
&\|\nabla_{xy} g(x,y_1)\nabla_{yy} g(x,y_1)^{-1}\nabla_y f(x,y_1)
-
\nabla_{xy} g(x,y_2)\nabla_{yy} g(x,y_2)^{-1}\nabla_y f(x,y_2)\|                                \\
&\le
\kappa_y l_{f,1}\|y_1-y_2\|
+
l_{f,0}l_{g,2}
\left(
\frac{1}{\mu_g}
+
\frac{l_{g,1}}{\mu_g^2}
\right)
\|y_1-y_2\|.
\end{aligned}
\]
Combining the above inequalities yields
\[
\|\bar\nabla F(x,y_1)-\bar\nabla F(x,y_2)\|
\le
L_{\bar F,y}\|y_1-y_2\|,
\]
with
\[
L_{\bar F,y}
=
l_{f,1}(1+\kappa_y)
+
l_{f,0}l_{g,2}
\left(
\frac{1}{\mu_g}
+
\frac{l_{g,1}}{\mu_g^2}
\right).
\]
Moreover,
\[
\lambda+\nabla_{yy}^2 g(x,y)^{-1}\nabla_y f(x,y)
=
\nabla_{yy}^2 g(x,y)^{-1}\left(\nabla_y f(x,y)+\nabla_{yy}^2 g(x,y)\lambda\right),
\]
Since $g(x,\cdot)$ is $\mu_g$-strongly convex, we have
\[
\left\|
\lambda+\nabla_{yy}^2 g(x,y)^{-1}\nabla_y f(x,y)
\right\|
\le
\frac{1}{\mu_g}\|\nabla_y f(x,y)+\nabla_{yy}^2 g(x,y)\lambda\|.
\]
Then, we obtain
\[
\begin{aligned}
\bar\nabla F(x,y)
&=
\nabla_x f(x,y)
-
\nabla_{xy} g(x,y)\nabla_{yy}^2 g(x,y)^{-1}\nabla_y f(x,y)                                      \\
&=
\nabla_x f(x,y)+\nabla_{xy} g(x,y)\lambda
-
\nabla_{xy} g(x,y)
\left(
\lambda+\nabla_{yy}^2 g(x,y)^{-1}\nabla_y f(x,y)
\right).
\end{aligned}
\]
Thus,
\[
\begin{aligned}
\|\bar\nabla F(x,y)\|
&\le
\|\nabla_x f(x,y)+\nabla_{xy} g(x,y)\lambda\|
+
\|\nabla_{xy} g(x,y)\|
\left\|
\lambda+\nabla_{yy}^2 g(x,y)^{-1}\nabla_y f(x,y)
\right\|                                                            \\
&\le
\|\nabla_x f(x,y)+\nabla_{xy} g(x,y)\lambda\|
+
\frac{l_{g,1}}{\mu_g}\|
\nabla_y f(x,y)+\nabla_{yy}^2 g(x,y)\lambda\|                                      \\
&=
\|\nabla_x f(x,y)+\nabla_{xy} g(x,y)\lambda\|+\kappa_y\|
\nabla_y f(x,y)+\nabla_{yy}^2 g(x,y)\lambda\|.
\end{aligned}
\]

Next, we transfer the bound from the current $y$ to the lower-level solution
$y^\star(x)$. Since $g(x,\cdot)$ is $\mu_g$-strongly convex,
\[
\nabla_y g(x,y^\star(x))=0
\]
and
\[
\|y-y^\star(x)\|
\le
\frac{1}{\mu_g}
\|\nabla_y g(x,y)-\nabla_y g(x,y^\star(x))\|
=
\frac{1}{\mu_g}\|\nabla_y g(x,y)\|.
\]
Moreover,
\[
\nabla F(x)
=
\bar\nabla F(x,y^\star(x)).
\]
Hence, by the Lipschitz continuity of $\bar\nabla F(x,\cdot)$ with respect to
$y$,
\[
\begin{aligned}
\|\nabla F(x)\|
&=
\|\bar\nabla F(x,y^\star(x))\|                                      \\
&\le
\|\bar\nabla F(x,y)\|
+
\|\bar\nabla F(x,y^\star(x))-\bar\nabla F(x,y)\|                    \\
&\le
\|\nabla_x f(x,y)+\nabla_{xy} g(x,y)\lambda\|
+
\kappa_y\|\nabla_y f(x,y)+\nabla_{yy}^2 g(x,y)\lambda\|
+
L_{\bar F,y}\|y-y^\star(x)\|                                       \\
&\le
\|\nabla_x f(x,y)+\nabla_{xy} g(x,y)\lambda\|
+
\kappa_y\|\nabla_y f(x,y)+\nabla_{yy}^2 g(x,y)\lambda\|
+
\frac{L_{\bar F,y}}{\mu_g}\|\nabla_y g(x,y)\|.
\end{aligned}
\]

\subsection{Boundedness of \texorpdfstring{\(\lambda^*\)}{lambda*}}
\paragraph{Proof of Theorem \ref{Theorem: Boundedness of lambda star}}
At a KKT point $(x^*,y^*,\lambda^*)$, we have the stationarity condition:
\[
\nabla_y \mathcal{L}(x^*,y^*,\lambda^*) = \nabla_y f(x^*,y^*) + (\lambda^*)^{\top}\nabla^2_{yy} g(x^*,y^*)  = 0,
\]
which implies
\[
(\lambda^*)^{\top}\nabla^2_{yy} g(x^*,y^*) = -\nabla_y f(x^*,y^*).
\]
By the strong convexity of $g(x,y)$ with respect to $y$, $\nabla^2_{yy} g(x^*,y^*)$ is positive definite and invertible with $\|[\nabla^2_{yy} g(x^*,y^*)]^{-1}\| \leq \frac{1}{\mu_g}$. Thus, we have
\[
\|\lambda^*\| = \|[\nabla^2_{yy} g(x^*,y^*)]^{-1} \nabla_y f(x^*,y^*)\| \leq \|[\nabla^2_{yy} g(x^*,y^*)]^{-1}\| \cdot \|\nabla_y f(x^*,y^*)\| \leq \frac{l_{f,0}}{\mu_g}.
\]
Thus, $\lambda^*$ is bounded by a constant independent of the specific KKT point.
\section{Deterministic NC-SC Bilevel Optimization}
\subsection{Lower Bound of the Lyapunov Function}
\paragraph{Proof of Lemma \ref{Lemma: Lower Bound of Lyapunov Function}.}
Since \(K(\cdot,\hat w,\lambda)\) is strongly convex and coercive in \(w\)
for each \(\lambda\in\Lambda\), while \(K(w,\hat w,\cdot)\) is continuous
and strongly concave on the compact convex set \(\Lambda\), by Sion's minimax theorem,
\[
P(\hat w)
=
\min_w\max_{\lambda\in\Lambda}K(w,\hat w,\lambda)
=
\max_{\lambda\in\Lambda}\min_wK(w,\hat w,\lambda)
=
\max_{\lambda\in\Lambda}\Psi(\hat w,\lambda).
\]
Then, it is easy to obtain
\begin{align*}
    V =V(w,\hat{w},\lambda)
    &= K(w,\hat{w},\lambda) - 2\Psi(\hat{w},\lambda) + 2P(\hat{w}) \\
    &= P(\hat{w}) + (K(w,\hat{w},\lambda) - \Psi(\hat{w},\lambda))
    + (P(\hat{w}) - \Psi(\hat{w},\lambda)) \\
    &\geq P(\hat{w}),
\end{align*}

Next, by the definition of \(P\), we have
\[
\begin{aligned}
    P(\hat w)
    &=
    \min_w \max_{\lambda\in\Lambda}
    \left\{
        f(w)
        +
        \lambda^\top \nabla_y g(w) +
        \frac{p_w}{2}\|w-\hat w\|^2
        -
        \frac{p_\lambda}{2}\|\lambda\|^2.
    \right\}
\end{aligned}
\]
Since $\Lambda=\{\lambda:\|\lambda\|\le C_\lambda\}$ with \(C_\lambda>0\), we have \(0\in\Lambda\). Therefore, for any fixed \(w\),
\[
\begin{aligned}
    \max_{\lambda\in\Lambda}
    \left\{
        \lambda^\top \nabla_y g(w)
        -
        \frac{p_\lambda}{2}\|\lambda\|^2
    \right\}
    &\ge
    0^\top \nabla_y g(w)
    -
    \frac{p_\lambda}{2}\|0\|^2  \\
    &=0.
\end{aligned}
\]
Consequently,
\[
\begin{aligned}
    P(\hat w)
    &\ge
    \min_w
    \left\{
        f(w)
        +
        \frac{p_w}{2}\|w-\hat w\|^2
    \right\}.
\end{aligned}
\]
Since \(\frac{p_w}{2}\|w-\hat w\|^2\ge 0\), Assumption \ref{Assumption: Bilevel Optimization} implies
\[
    P(\hat w) \ge \underline{f},
    \qquad \forall \hat w.
\]
Together with \(V(w,\hat{w},\lambda) \geq P(\hat w)\), this gives
\[
    V_t\ge \underline{f}.
\]
This completes the proof.

\subsection{Properties of Deterministic Finite Difference Approximation}
\begin{lemma}
\label{Lemma: Deterministic Finite Difference Estimator}
Define the deterministic finite-difference approximation of  $\nabla_{xy}^2g(x,y)\lambda$ and $\nabla_{yy}^2g(x,y)\lambda$ as
\[
\tilde{\nabla}^2_{xy} g_{\lambda}(x,y)
:=
\frac{
\nabla_x g(x,y+\tau\lambda)-\nabla_x g(x,y-\tau\lambda)
}{
2\tau
},
\]
and
\[
\tilde{\nabla}^2_{yy} g_{\lambda}(x,y)
:=
\frac{
\nabla_y g(x,y+\tau\lambda)-\nabla_y g(x,y-\tau\lambda)
}{
2\tau
}.
\]
Under Assumption \ref{Assumption: Bilevel Optimization} and
\[
\lambda\in\Lambda=\{\lambda:\|\lambda\|\le C_\lambda\},
\]
we have
\begin{equation}
\label{Lemma: Dete FD 1}
\left\|
\nabla^2_{xy} g(x,y)\lambda
-
\tilde{\nabla}^2_{xy} g_{\lambda}(x,y)
\right\|
\le
\frac{l_{g,2}C_\lambda^2}{2}\tau,
\end{equation}
and
\begin{equation}
\label{Lemma: Dete FD 2}
\left\|
\nabla^2_{yy} g(x,y)\lambda
-
\tilde{\nabla}^2_{yy} g_{\lambda}(x,y)
\right\|
\le
\frac{l_{g,2}C_\lambda^2}{2}\tau.
\end{equation}
\end{lemma}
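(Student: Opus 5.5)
The plan is to treat each central difference as an averaged integral of the Hessian along the segment from $y-\tau\lambda$ to $y+\tau\lambda$, and then to use the $l_{g,2}$-Lipschitz continuity of $\nabla^2 g$ from Assumption~\ref{Assumption: Bilevel Optimization}. Fix $(x,y)$ and $\lambda\in\Lambda$, and set $h(s):=\nabla g(x,y+s\lambda)$ for $s\in[-\tau,\tau]$. Since $g$ is twice continuously differentiable, $h'(s)=\nabla^2 g(x,y+s\lambda)\,(0,\lambda)$, where $(0,\lambda)$ denotes the direction acting only on the $y$-block. Its $x$-block is $\nabla^2_{xy}g(x,y+s\lambda)\lambda$ and its $y$-block is $\nabla^2_{yy}g(x,y+s\lambda)\lambda$. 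The fundamental theorem of calculus then gives
\[
\frac{h(\tau)-h(-\tau)}{2\tau}=\frac{1}{2\tau}\int_{-\tau}^{\tau}\nabla^2 g(x,y+s\lambda)(0,\lambda)\,ds ,
\]
and the $x$- and $y$-components of this identity are exactly $\tilde\nabla^2_{xy}g_\lambda(x,y)$ and $\tilde\nabla^2_{yy}g_\lambda(x,y)$.

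Next, subtract $\nabla^2 g(x,y)(0,\lambda)=\frac{1}{2\tau}\int_{-\tau}^{\tau}\nabla^2 g(x,y)(0,\lambda)\,ds$ and bound the integrand. By the Lipschitz Hessian,
\[
\|\nabla^2 g(x,y+s\lambda)-\nabla^2 g(x,y)\|\le l_{g,2}|s|\|\lambda\|,
\]
so the integrand has norm at most $l_{g,2}|s|\|\lambda\|^2\le l_{g,2}|s|C_\lambda^2$. Integrating gives $\frac{1}{2\tau}\int_{-\tau}^{\tau}|s|\,ds=\frac{\tau}{2}$, and hence the full-vector error is at most $\frac{l_{g,2}C_\lambda^2}{2}\tau$. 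Each block is a sub-vector of the full vector, so its norm is no larger, and both \eqref{Lemma: Dete FD 1} and \eqref{Lemma: Dete FD 2} follow. This is slightly cruder than the $\tau^2$ rate that central differences would give under a third-order smoothness assumption, but Lipschitz Hessians yield only the linear rate, which is what the lemma claims.

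No step is a real obstacle. The only point needing care is the interpretation of the Hessian blocks: $\nabla^2_{xy}g\,\lambda$ must be read as the $x$-component of $\nabla^2 g\,(0,\lambda)$, which is consistent with the paper's usage. The Lipschitz bound must also be applied to the full Hessian in operator norm, or equivalently to each block, since a block's operator norm is at most that of the full matrix.
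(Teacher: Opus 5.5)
Your proof is correct and follows essentially the same route as the paper. Both write the central difference as $\frac{1}{2\tau}\int_{-\tau}^{\tau}$ of the directional Hessian along $s\mapsto(x,y+s\lambda)$, subtract the $s=0$ value, and bound the integrand by $l_{g,2}|s|\|\lambda\|^2$ to obtain $\frac{l_{g,2}}{2}\tau\|\lambda\|^2\le\frac{l_{g,2}C_\lambda^2}{2}\tau$. The only cosmetic difference is that you treat the full gradient $\nabla g$ at once and then restrict to each block, whereas the paper works directly with $G_x(s)=\nabla_x g(x,y+s\lambda)$ and calls the $y$-block identical.
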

\begin{proof}
We first prove \eqref{Lemma: Dete FD 1}. Define
\[
G_x(s):=\nabla_x g(x,y+s\lambda).
\]
Then
\[
G_x'(s)
=
\nabla^2_{xy}g(x,y+s\lambda)\lambda.
\]
Hence,
\[
\tilde{\nabla}^2_{xy} g_{\lambda}(x,y)
=
\frac{G_x(\tau)-G_x(-\tau)}{2\tau}
=
\frac{1}{2\tau}\int_{-\tau}^{\tau}G_x'(s)\,ds.
\]
Therefore,
\[
\begin{aligned}
\left\|
\tilde{\nabla}^2_{xy} g_{\lambda}(x,y)
-
\nabla^2_{xy}g(x,y)\lambda
\right\|
&=
\left\|
\frac{1}{2\tau}
\int_{-\tau}^{\tau}
\left[
G_x'(s)-G_x'(0)
\right]ds
\right\|                                                       \\
&\le
\frac{1}{2\tau}
\int_{-\tau}^{\tau}
\left\|
\left(
\nabla^2_{xy}g(x,y+s\lambda)
-
\nabla^2_{xy}g(x,y)
\right)\lambda
\right\|ds                                                     \\
&\le
\frac{1}{2\tau}
\int_{-\tau}^{\tau}
l_{g,2}|s|\|\lambda\|^2\,ds                                    \\
&=
\frac{l_{g,2}}{2}\tau\|\lambda\|^2                              \\
&\le
\frac{l_{g,2}C_\lambda^2}{2}\tau.
\end{aligned}
\]
This proves \eqref{Lemma: Dete FD 1}. The proof of \eqref{Lemma: Dete FD 2} is identical to that of
\eqref{Lemma: Dete FD 1}.
\end{proof}
\subsection{Preliminary Lemmas}
\begin{lemma}\label{Lemma: Smoothness of L(w,lambda)}
    Under Assumption \ref{Assumption: Bilevel Optimization}, and
\[
\lambda\in\Lambda=\{\lambda:\|\lambda\|\le C_\lambda\},
\]
let $l_{\mathcal L,1}$ be any constant satisfying
\[
l_{\mathcal L,1}
\ge
\sqrt{2}(l_{f,1}+C_{\lambda}l_{g,2}).
\]
Then $\nabla_w \mathcal{L}(w,\lambda)$ satisfies
    \[
    \|\nabla_w \mathcal{L}(w,\lambda) - \nabla_w \mathcal{L}(w^{\prime},\lambda)\| \leq l_{\mathcal L,1}\|w - w^{\prime}\|.
    \]
    \[
    \|\nabla_w \mathcal{L}(w,\lambda) - \nabla_w \mathcal{L}(w,\lambda')\| \leq l_{g,1}\|\lambda - \lambda^{\prime}\|.
    \]
\end{lemma}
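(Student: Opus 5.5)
We will prove both inequalities directly from the explicit form of the gradient,
\[
\nabla_w\mathcal L(w,\lambda)
=
\begin{pmatrix}
\nabla_x f(w)+\nabla^2_{xy}g(w)\lambda\\
\nabla_y f(w)+\nabla^2_{yy}g(w)\lambda
\end{pmatrix}
=\nabla f(w)+J(w)^\top\lambda,
\]
where \(J(w)=\nabla_w(\nabla_y g)(w)\in\mathbb R^{d_y\times(d_x+d_y)}\) is the Jacobian of the constraint map. In other words, \(J(w)^\top\) stacks the blocks \(\nabla^2_{xy}g(w)\) and \(\nabla^2_{yy}g(w)\).

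\textbf{Second inequality (Lipschitz in \(\lambda\)).} For fixed \(w\), the difference is \(\nabla_w\mathcal L(w,\lambda)-\nabla_w\mathcal L(w,\lambda')=J(w)^\top(\lambda-\lambda')\). Since \(J(w)^\top\) is a submatrix of the full Hessian \(\nabla^2 g(w)\), namely its last \(d_y\) columns, its spectral norm is at most \(\|\nabla^2 g(w)\|\le l_{g,1}\) by \(l_{g,1}\)-smoothness of \(g\). This gives the bound \(l_{g,1}\|\lambda-\lambda'\|\) directly.

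\textbf{First inequality (Lipschitz in \(w\)).} For fixed \(\lambda\in\Lambda\), we split
\[
\nabla_w\mathcal L(w,\lambda)-\nabla_w\mathcal L(w',\lambda)
=\bigl(\nabla f(w)-\nabla f(w')\bigr)+\bigl(J(w)-J(w')\bigr)^\top\lambda .
\]
The first term is bounded by \(l_{f,1}\|w-w'\|\). For the second term, we use the \(l_{g,2}\)-Lipschitz continuity of \(\nabla^2 g\). Again \(J(w)^\top-J(w')^\top\) is a column block of \(\nabla^2 g(w)-\nabla^2 g(w')\), so its norm is at most \(l_{g,2}\|w-w'\|\). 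Since \(\|\lambda\|\le C_\lambda\), the second term is at most \(C_\lambda l_{g,2}\|w-w'\|\).

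Adding the two pieces gives the constant \(l_{f,1}+C_\lambda l_{g,2}\), which is at most \(l_{\mathcal L,1}\) by the assumed \(l_{\mathcal L,1}\ge\sqrt2(l_{f,1}+C_\lambda l_{g,2})\). The factor \(\sqrt2\) is slack. It would arise if one instead bounded the \(x\)- and \(y\)-blocks separately, each by \(l_{f,1}+C_\lambda l_{g,2}\), and combined them via \(\|(a,b)\|\le\sqrt{\|a\|^2+\|b\|^2}\). We may present that blockwise route to match the stated constant, but the direct route already suffices.

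\textbf{Main point to check.} The only subtle step is the submatrix-norm claim: the spectral norm of a column block of a matrix is at most that of the full matrix. This holds because restricting the matrix to a coordinate subspace cannot increase its operator norm. Equally, we must read the Hessian Lipschitz assumption in operator norm on the joint variable \(w\). Everything else is the triangle inequality.
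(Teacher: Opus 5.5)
Your proof is correct and follows essentially the same route as the paper: separate the $\nabla f$ difference from the $\lambda$-weighted Hessian-block difference, then apply smoothness, Hessian-Lipschitzness and $\|\lambda\|\le C_\lambda$; the $\lambda$-bound is likewise the norm of the stacked Hessian blocks. The one difference is that you measure the displacement jointly in $w$ via the column-block norm bound, which gives the sharper constant $l_{f,1}+C_\lambda l_{g,2}$. The paper instead bounds by $(l_{f,1}+C_\lambda l_{g,2})(\|x-x'\|+\|y-y'\|)$ and then uses $\|x-x'\|+\|y-y'\|\le\sqrt{2}\|w-w'\|$, and that is where its $\sqrt{2}$ comes from, not from bounding the output blocks separately as you guessed.
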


\begin{proof}
By the definition of $\mathcal{L}(x,y,\lambda)$, we have
    \begin{align*}
    \left\|\nabla_w \mathcal{L}(w,\lambda) - \nabla_w \mathcal{L}(w^{\prime},\lambda)\right\| &= \left\|\begin{pmatrix}
        \nabla_x f(x,y) - \nabla_x f(x^{\prime},y^{\prime}) \\
        \nabla_y f(x,y) - \nabla_y f(x^{\prime},y^{\prime})
    \end{pmatrix} + \begin{pmatrix}
        \nabla_{xy}^2 g(x,y) - \nabla_{xy}^2 g(x^{\prime},y^{\prime})  \\
        \nabla_{yy}^2 g(x,y) - \nabla_{yy}^2 g(x^{\prime},y^{\prime})
    \end{pmatrix}\lambda\right\|\\
    &\leq (l_{f,1} + C_{\lambda}l_{g,2})\left(\|x - x^{\prime}\| + \|y - y^{\prime}\|\right) \\
    &\leq \sqrt{2}(l_{f,1} + C_{\lambda}l_{g,2})\sqrt{\|x - x^{\prime}\|^2 + \|y - y^{\prime}\|^2} \\
    &= \sqrt{2}(l_{f,1} + C_{\lambda}l_{g,2})\|w - w^{\prime}\|.
    \end{align*}
    Similarly,
    \begin{align*}
    \left\|\nabla_w \mathcal{L}(w,\lambda) - \nabla_w \mathcal{L}(w,\lambda^{\prime})\right\| = \left\|\begin{pmatrix}
        \nabla_{xy}^2 g(x,y)(\lambda - \lambda^{\prime})  \\
    \nabla_{yy}^2 g(x,y)(\lambda - \lambda^{\prime})
    \end{pmatrix}\right\| \leq l_{g,1}\|\lambda - \lambda^{\prime}\|.
    \end{align*}
    The first bound remains valid for every larger deterministic upper
    bound $l_{\mathcal L,1}$, which proves the stated form.
\end{proof}

\begin{lemma}\label{Lemma: SC property of K}
    When $p_w > l_{\mathcal{L},1}$, $K(w,\hat{w},\lambda)$ is $(p_w - l_{\mathcal{L},1})$-strongly convex w.r.t. $w$, and $p_{\lambda}$-strongly concave w.r.t. $\lambda$. Moreover, we have
    \[
    \|w^{\prime} - w^{\prime\prime}\| \leq \frac{1}{p_w - l_{\mathcal{L},1}}\|\nabla_w K(w^{\prime},\hat{w},\lambda) - \nabla_w K(w^{\prime\prime},\hat{w},\lambda)\|.
    \]
\end{lemma}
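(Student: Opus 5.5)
The plan is to split $K$ into its pieces and check each property directly. Recall that
\[
K(w,\hat w,\lambda)=\mathcal L(w,\lambda)+\tfrac{p_w}{2}\|w-\hat w\|^2-\tfrac{p_\lambda}{2}\|\lambda\|^2 .
\]

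\emph{Concavity in $\lambda$.} For fixed $(w,\hat w)$, the map $\lambda\mapsto \mathcal L(w,\lambda)=f(w)+\lambda^\top\nabla_y g(w)$ is affine. The term $-\frac{p_\lambda}{2}\|\lambda\|^2$ is $p_\lambda$-strongly concave. Adding an affine function preserves the modulus, so $K$ is $p_\lambda$-strongly concave in $\lambda$. No smoothness assumptions are needed here.

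\emph{Convexity in $w$.} Fix $\lambda\in\Lambda$. By Lemma~\ref{Lemma: Smoothness of L(w,lambda)}, $\nabla_w\mathcal L(\cdot,\lambda)$ is $l_{\mathcal L,1}$-Lipschitz. Cauchy--Schwarz then gives
\[
\langle \nabla_w\mathcal L(w',\lambda)-\nabla_w\mathcal L(w'',\lambda),\,w'-w''\rangle\ge -l_{\mathcal L,1}\|w'-w''\|^2 .
\]
The proximal term contributes $\nabla_w\bigl(\tfrac{p_w}{2}\|w-\hat w\|^2\bigr)=p_w(w-\hat w)$. Its monotonicity inner product is exactly $p_w\|w'-w''\|^2$. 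Summing, $\nabla_w K(\cdot,\hat w,\lambda)$ is $(p_w-l_{\mathcal L,1})$-strongly monotone, and $p_w-l_{\mathcal L,1}>0$ by hypothesis. Strong monotonicity of the gradient of a differentiable function is equivalent to strong convexity with the same modulus. This avoids needing twice-differentiability of $\mathcal L$ in the $\lambda$-dependent part, although Assumption~\ref{Assumption: Bilevel Optimization} would provide it anyway.

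\emph{Error bound.} Take the strong-monotonicity inequality
\[
(p_w-l_{\mathcal L,1})\|w'-w''\|^2\le \langle \nabla_wK(w',\hat w,\lambda)-\nabla_wK(w'',\hat w,\lambda),\,w'-w''\rangle ,
\]
bound the right-hand side by Cauchy--Schwarz, and divide by $\|w'-w''\|$. The case $w'=w''$ is trivial.

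There is no real obstacle here. The only point to watch is that the Lipschitz constant $l_{\mathcal L,1}$ from Lemma~\ref{Lemma: Smoothness of L(w,lambda)} is uniform over $\lambda\in\Lambda$. That uniformity is what makes the strong-convexity modulus independent of $\lambda$, and it is exactly where the bounded dual domain is used.
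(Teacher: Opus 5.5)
Your proposal is correct and follows the paper's route: strong monotonicity of $\nabla_w K(\cdot,\hat w,\lambda)$ combined with Cauchy--Schwarz, then dividing by $\|w'-w''\|$. You also justify the $(p_w-l_{\mathcal L,1})$-strong convexity and the $p_\lambda$-strong concavity, using Lemma~\ref{Lemma: Smoothness of L(w,lambda)} uniformly over $\lambda\in\Lambda$; the paper simply asserts both.
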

\begin{proof}
Since $K(w,\hat{w},\lambda)$ is $(p_w - l_{\mathcal{L},1})$-strongly convex with respect to $w$, its gradient satisfies the strong monotonicity property
\begin{equation}\label{Lemma: SC property of K 1}
\left\langle
\nabla_w K(w^{\prime},\hat{w},\lambda) - \nabla_w K(w^{\prime\prime},\hat{w},\lambda),
\, w^{\prime} - w^{\prime\prime}
\right\rangle
\ge
(p_w - l_{\mathcal{L},1}) \|w^{\prime} - w^{\prime\prime}\|^2.
\end{equation}
On the other hand, by the Cauchy--Schwarz inequality, we have
\begin{equation}\label{Lemma: SC property of K 2}
\left\langle
\nabla_w K(w^{\prime},\hat{w},\lambda) - \nabla_w K(w^{\prime\prime},\hat{w},\lambda),
\, w^{\prime} - w^{\prime\prime}
\right\rangle
\le
\|\nabla_w K(w^{\prime},\hat{w},\lambda) - \nabla_w K(w^{\prime\prime},\hat{w},\lambda)\|
\cdot
\|w^{\prime} - w^{\prime\prime}\|.
\end{equation}
Combining \eqref{Lemma: SC property of K 1} and \eqref{Lemma: SC property of K 2}, we have
\[
(p_w - l_{\mathcal{L},1}) \|w^{\prime} - w^{\prime\prime}\|^2
\le
\|\nabla_w K(w^{\prime},\hat{w},\lambda) - \nabla_w K(w^{\prime\prime},\hat{w},\lambda)\|
\cdot
\|w^{\prime} - w^{\prime\prime}\|.
\]
Dividing both sides by $\|w^{\prime} - w^{\prime\prime}\|$ completes the proof.
\end{proof}

\begin{lemma}\label{Lemma: Lipschitz continuity of w star}
When $p_w > l_{\mathcal{L},1}$, we have
\begin{align*}
&\|w^*(\hat{w},\lambda) - w^*(\hat{w}^{\prime},\lambda)\| \le \gamma_1 \|\hat{w} - \hat{w}^{\prime}\|, \\
&\|w^*(\hat{w}) - w^*(\hat{w}^{\prime})\| \le \gamma_1 \|\hat{w} - \hat{w}^{\prime}\|, \\
&\|w^*(\hat{w},\lambda) - w^*(\hat{w},\lambda^{\prime})\| \le \gamma_2 \|\lambda - \lambda^{\prime}\|,
\end{align*}
where $\gamma_1 = \frac{p_w}{p_w -l_{\mathcal{L},1}}$, $\gamma_2 = \frac{l_{g,1}}{p_w -l_{\mathcal{L},1}}$.
\end{lemma}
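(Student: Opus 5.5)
We prove all three bounds with the same device: compare the first-order optimality conditions of two strongly convex minimization problems in $w$. Lemma~\ref{Lemma: SC property of K} supplies the contraction-type inequality. Throughout, $p_w>l_{\mathcal L,1}$, so $K(\cdot,\hat w,\lambda)$ is $(p_w-l_{\mathcal L,1})$-strongly convex and each minimizer is unique.

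\textbf{First bound.} Write $w_1=w^*(\hat w,\lambda)$ and $w_2=w^*(\hat w',\lambda)$. Optimality gives $\nabla_w K(w_1,\hat w,\lambda)=0$ and $\nabla_w K(w_2,\hat w',\lambda)=0$. Since $\nabla_w K(w,\hat w,\lambda)=\nabla_w\mathcal L(w,\lambda)+p_w(w-\hat w)$, the second condition can be rewritten as
\[
\nabla_w K(w_2,\hat w,\lambda)=p_w(\hat w'-\hat w).
\]
Lemma~\ref{Lemma: SC property of K} applied with $w'=w_1$, $w''=w_2$ and fixed $(\hat w,\lambda)$ then yields
\[
\|w_1-w_2\|\le \frac{p_w}{p_w-l_{\mathcal L,1}}\|\hat w-\hat w'\|=\gamma_1\|\hat w-\hat w'\|.
\]

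\textbf{Third bound.} Here $w_1=w^*(\hat w,\lambda)$ and $w_2=w^*(\hat w,\lambda')$. Using $\nabla_wK(w_2,\hat w,\lambda')=0$ and the $\lambda$-Lipschitz estimate of Lemma~\ref{Lemma: Smoothness of L(w,lambda)},
\[
\|\nabla_w K(w_2,\hat w,\lambda)\|=\|\nabla_w\mathcal L(w_2,\lambda)-\nabla_w\mathcal L(w_2,\lambda')\|\le l_{g,1}\|\lambda-\lambda'\|.
\]
Lemma~\ref{Lemma: SC property of K} then gives the constant $\gamma_2$. Since both multipliers lie in $\Lambda$, the lemma applies.

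\textbf{Second bound.} This is the main obstacle, because $w^*(\hat w)$ minimizes $\Phi(\cdot,\hat w)=\max_{\lambda\in\Lambda}K(\cdot,\hat w,\lambda)$, a max over $\lambda$, rather than $K$ at a fixed multiplier. The function $\Phi(\cdot,\hat w)$ is still $(p_w-l_{\mathcal L,1})$-strongly convex, being a pointwise max of such functions. Moreover, $K(w,\hat w,\cdot)$ is $p_\lambda$-strongly concave on the compact set $\Lambda$, so the maximizer $\lambda^*(w)$ is unique and Danskin's theorem applies. Consequently $\Phi$ is differentiable with
\[
\nabla_w\Phi(w,\hat w)=\nabla_w\mathcal L(w,\lambda^*(w))+p_w(w-\hat w).
\]
The key observation is that $\Phi(w,\hat w)=\Phi_0(w)+\frac{p_w}{2}\|w-\hat w\|^2$, where $\Phi_0(w)=\max_{\lambda\in\Lambda}\{\mathcal L(w,\lambda)-\frac{p_\lambda}{2}\|\lambda\|^2\}$ does not depend on $\hat w$. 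Changing $\hat w$ to $\hat w'$ therefore shifts the gradient by exactly $p_w(\hat w'-\hat w)$, just as in the first bound.

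Let $w_1=w^*(\hat w)$ and $w_2=w^*(\hat w')$. Then
\[
\nabla_w\Phi(w_2,\hat w)=p_w(\hat w'-\hat w).
\]
Strong monotonicity of $\nabla_w\Phi(\cdot,\hat w)$, with the same Cauchy--Schwarz argument as in Lemma~\ref{Lemma: SC property of K}, gives $\|w_1-w_2\|\le\gamma_1\|\hat w-\hat w'\|$.

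If one prefers to avoid differentiability, the same conclusion follows from subgradient monotonicity of the strongly convex function $\Phi(\cdot,\hat w)$ together with the optimality conditions $0\in\partial_w\Phi(w_1,\hat w)$ and $p_w(\hat w'-\hat w)\in\partial_w\Phi(w_2,\hat w)$.
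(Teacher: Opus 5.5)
Your proposal is correct and takes essentially the same approach as the paper. For each bound you compare the two first-order optimality conditions, absorb the change in $\hat w$ or $\lambda$ into an explicit gradient shift ($p_w(\hat w'-\hat w)$, or a term bounded via the $\lambda$-Lipschitz estimate of Lemma~\ref{Lemma: Smoothness of L(w,lambda)}), and conclude with the strong-monotonicity estimate of Lemma~\ref{Lemma: SC property of K}; your Danskin argument and the decomposition $\Phi(w,\hat w)=\Phi_0(w)+\frac{p_w}{2}\|w-\hat w\|^2$ supply the differentiability of $\Phi(\cdot,\hat w)$ and the gradient-shift identity that the paper uses without justification in the second bound.
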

\begin{proof}
By Lemma \ref{Lemma: SC property of K}, we have
\begin{equation}\label{Lemma: Lipschitz continuity of w star 1}
(p_w - l_{\mathcal{L},1}) \|w^*(\hat{w},\lambda) - w^*(\hat{w}^{\prime},\lambda)\|
\le \|\nabla_w K(w^*(\hat{w},\lambda),\hat{w},\lambda)  - \nabla_w K(w^*(\hat{w}^{\prime},\lambda),\hat{w},\lambda)\|.
\end{equation}
Using the first-order optimality condition
\[
\nabla_w K(w^*(\hat{w},\lambda),\hat{w},\lambda) = 0,
\quad
\nabla_w K(w^*(\hat{w}^{\prime},\lambda),\hat{w}^{\prime},\lambda) = 0,
\]
we have
\begin{align*}
\nabla_w K(w^*(\hat{w},\lambda),\hat{w},\lambda)
- \nabla_w K(w^*(\hat{w}^{\prime},\lambda),\hat{w},\lambda)
&= \nabla_w K(w^*(\hat{w},\lambda),\hat{w},\lambda)  - \nabla_w K(w^*(\hat{w}^{\prime},\lambda),\hat{w}^{\prime},\lambda) \\
&\quad + \nabla_w K(w^*(\hat{w}^{\prime},\lambda),\hat{w}^{\prime},\lambda)
- \nabla_w K(w^*(\hat{w}^{\prime},\lambda),\hat{w},\lambda) \\
&= \nabla_w K(w^*(\hat{w}^{\prime},\lambda),\hat{w}^{\prime},\lambda)
- \nabla_w K(w^*(\hat{w}^{\prime},\lambda),\hat{w},\lambda).
\end{align*}
Then, \eqref{Lemma: Lipschitz continuity of w star 1} reduces to
\begin{equation}\label{Lemma: Lipschitz continuity of w star 2}
(p_w - l_{\mathcal{L},1}) \|w^*(\hat{w},\lambda) - w^*(\hat{w}^{\prime},\lambda)\|
\le \|\nabla_w K(w^*(\hat{w}^{\prime},\lambda),\hat{w}^{\prime},\lambda)
- \nabla_w K(w^*(\hat{w}^{\prime},\lambda),\hat{w},\lambda)\|.
\end{equation}
Using
\[
\nabla_w K(w,\hat{w},\lambda)
= \nabla_w \mathcal{L}(w,\lambda) + p_w(w - \hat{w}),
\]
\eqref{Lemma: Lipschitz continuity of w star 2} reduces to
\[
(p_w - l_{\mathcal{L},1}) \|w^*(\hat{w},\lambda) - w^*(\hat{w}^{\prime},\lambda)\|
\le p_w \|\hat{w} - \hat{w}^{\prime}\|,
\]
which implies
\[
\|w^*(\hat{w},\lambda) - w^*(\hat{w}^{\prime},\lambda)\|
\le \frac{p_w}{p_w - l_{\mathcal{L},1}} \|\hat{w} - \hat{w}^{\prime}\| = \gamma_1\|\hat{w} - \hat{w}^{\prime}\|.
\]
This proves the first claim.

Similarly, since $\Phi(w,\hat{w})$ is still $(p_w - l_{\mathcal{L},1})$-strongly convex with respect to $w$, we have
\begin{equation}\label{Lemma: Lipschitz continuity of w star 4}
(p_w - l_{\mathcal{L},1}) \|w^*(\hat{w}) - w^*(\hat{w}^{\prime})\| \le \|\nabla_w \Phi(w^*(\hat{w}),\hat{w}) - \nabla_w \Phi(w^*(\hat{w}^{\prime}),\hat{w})\|.
\end{equation}
Using the first-order optimality conditions
\[
\nabla_w \Phi(w^*(\hat{w}),\hat{w}) = 0,
\quad
\nabla_w \Phi(w^*(\hat{w}^{\prime}),\hat{w}^{\prime}) = 0,
\]
we have
\begin{align*}
\nabla_w \Phi(w^*(\hat{w}),\hat{w}) - \nabla_w \Phi(w^*(\hat{w}^{\prime}),\hat{w})
&= \nabla_w \Phi(w^*(\hat{w}),\hat{w})
- \nabla_w \Phi(w^*(\hat{w}^{\prime}),\hat{w}^{\prime}) \\
&\quad + \nabla_w \Phi(w^*(\hat{w}^{\prime}),\hat{w}^{\prime})
- \nabla_w \Phi(w^*(\hat{w}^{\prime}),\hat{w}) \\
&= \nabla_w \Phi(w^*(\hat{w}^{\prime}),\hat{w}^{\prime})
- \nabla_w \Phi(w^*(\hat{w}^{\prime}),\hat{w}).
\end{align*}
Then, \eqref{Lemma: Lipschitz continuity of w star 4} reduces to
\begin{align}\label{Lemma: Lipschitz continuity of w star 5}
(p_w - l_{\mathcal{L},1}) \|w^*(\hat{w}) - w^*(\hat{w}^{\prime})\| \le \|\nabla_w \Phi(w^*(\hat{w}^{\prime}),\hat{w}^{\prime})
- \nabla_w \Phi(w^*(\hat{w}^{\prime}),\hat{w})\|.
\end{align}
Using
\[
\nabla_w \Phi(w^*(\hat{w}^{\prime}),\hat{w}^{\prime})
- \nabla_w \Phi(w^*(\hat{w}^{\prime}),\hat{w})
= p_w(\hat{w} - \hat{w}^{\prime}),
\]
\eqref{Lemma: Lipschitz continuity of w star 5} reduces to
\[
(p_w - l_{\mathcal{L},1}) \|w^*(\hat{w}) - w^*(\hat{w}^{\prime})\|
\le p_w \|\hat{w} - \hat{w}^{\prime}\|,
\]
which implies
\[
\|w^*(\hat{w}) - w^*(\hat{w}^{\prime})\|
\le \frac{p_w}{p_w - l_{\mathcal{L},1}} \|\hat{w} - \hat{w}^{\prime}\| = \gamma_1\|\hat{w} - \hat{w}^{\prime}\|.
\]
This proves the second claim.

Similarly, since
\[
\nabla_w K(w^*(\hat{w},\lambda),\hat{w},\lambda) = 0, \quad
\nabla_w K(w^*(\hat{w},\lambda'),\hat{w},\lambda') = 0.
\]
we have
\begin{equation}\label{Lemma: Lipschitz continuity of w star 6}
\nabla_w K(w^*(\hat{w},\lambda),\hat{w},\lambda)
- \nabla_w K(w^*(\hat{w},\lambda'),\hat{w},\lambda) = - \left(\nabla_w K(w^*(\hat{w},\lambda'),\hat{w},\lambda)
- \nabla_w K(w^*(\hat{w},\lambda'),\hat{w},\lambda')\right).
\end{equation}
By taking norms over both sides of \eqref{Lemma: Lipschitz continuity of w star 6}, we have
\begin{equation}\label{Lemma: Lipschitz continuity of w star 7}
\|\nabla_w K(w^*(\hat{w},\lambda),\hat{w},\lambda)
- \nabla_w K(w^*(\hat{w},\lambda'),\hat{w},\lambda)\| = \|\nabla_w K(w^*(\hat{w},\lambda'),\hat{w},\lambda)
- \nabla_w K(w^*(\hat{w},\lambda'),\hat{w},\lambda')\|.
\end{equation}
By strong convexity of $K(w,\hat{w},\lambda)$ w.r.t. $w$, we have
\begin{equation}\label{Lemma: Lipschitz continuity of w star 8}
    (p_w - l_{\mathcal{L},1})\|w^*(\hat{w},\lambda) - w^*(\hat{w},\lambda')\| \leq \|\nabla_w K(w^*(\hat{w},\lambda),\hat{w},\lambda)
- \nabla_w K(w^*(\hat{w},\lambda'),\hat{w},\lambda)\|
\end{equation}
Recall Lemma \ref{Lemma: Smoothness of L(w,lambda)}, we have
\begin{equation}\label{Lemma: Lipschitz continuity of w star 9}
\|\nabla_w K(w^*(\hat{w},\lambda'),\hat{w},\lambda)
- \nabla_w K(w^*(\hat{w},\lambda'),\hat{w},\lambda')\|
\le l_{g,1}\|\lambda - \lambda'\|.
\end{equation}
Plugging \eqref{Lemma: Lipschitz continuity of w star 8} and \eqref{Lemma: Lipschitz continuity of w star 9} into \eqref{Lemma: Lipschitz continuity of w star 7}, we have
\[
(p_w - l_{\mathcal{L},1})
\|w^*(\hat{w},\lambda) - w^*(\hat{w},\lambda')\|
\le l_{g,1}\|\lambda - \lambda'\|,
\]
which implies
\[
\|w^*(\hat{w},\lambda) - w^*(\hat{w},\lambda')\|
\le \frac{l_{g,1}}{p_w - l_{\mathcal{L},1}} \|\lambda - \lambda'\| = \gamma_2\|\lambda - \lambda'\|.
\]
This proves the third claim.
\end{proof}

\begin{lemma}\label{Lemma: Deterministic Iterates Gap}
    Under Assumption \ref{Assumption: Bilevel Optimization} and the update rule of Algorithm \ref{SGHA}, we have
    \begin{align*}
    &\left\|x^{k+1} - x^k\right\|^2 \leq 2\gamma_{x,k}^2\left\|\nabla_x K(w^k,\hat{w}^k,\lambda^k)\right\|^2 + \frac{\gamma_{x,k}^2l_{g,2}^2C_{\lambda}^4\tau^2}{2}, \\
&\left\|y^{k+1} - y^k\right\|^2 \leq 2\gamma_{y,k}^2\left\|\nabla_y K(w^k,\hat{w}^k,\lambda^k)\right\|^2 + \frac{\gamma_{y,k}^2l_{g,2}^2C_{\lambda}^4\tau^2}{2},\\
&\left\|\lambda^{k+1} - \lambda^k\right\|^2 \leq 2\gamma_{\lambda,k}^2\|\nabla_y  g(x^k,y^k)\|^2 + 2\gamma_{\lambda,k}^2p_{\lambda}^2C_{\lambda}^2.
\end{align*}
\end{lemma}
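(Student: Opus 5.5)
The plan is to write each update as an exact gradient step of $K$ plus a finite-difference error term, and then apply $\|a+b\|^2\le 2\|a\|^2+2\|b\|^2$.

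\textbf{Primal $x$-block.} From Algorithm~\ref{SGHA},
\[
x^{k+1}-x^k=-\gamma_{x,k}\bigl(\nabla_x K(w^k,\hat w^k,\lambda^k)+e_x^k\bigr),
\qquad
e_x^k:=\tilde\nabla^2_{xy}g_{\lambda^k}(x^k,y^k)-\nabla^2_{xy}g(x^k,y^k)\lambda^k .
\]
This uses $\nabla_x K(w,\hat w,\lambda)=\nabla_x f(w)+\nabla^2_{xy}g(w)\lambda+p_w(x-\hat x)$. The $\hat w$-dependence enters only through the proximal term, which the algorithm computes exactly, so it causes no error.

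Since $\lambda^k\in\Lambda$ (it is produced by the projection, or is the initialization), Lemma~\ref{Lemma: Deterministic Finite Difference Estimator} gives
\[
\|e_x^k\|\le \tfrac{l_{g,2}C_\lambda^2}{2}\tau .
\]
Hence
\[
\|x^{k+1}-x^k\|^2\le 2\gamma_{x,k}^2\|\nabla_xK\|^2+2\gamma_{x,k}^2\cdot\tfrac{l_{g,2}^2C_\lambda^4\tau^2}{4},
\]
which is exactly the stated bound.

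\textbf{Primal $y$-block.} The argument is identical, using the second estimate \eqref{Lemma: Dete FD 2} of Lemma~\ref{Lemma: Deterministic Finite Difference Estimator}.

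\textbf{Dual block.} Because $\lambda^k\in\Lambda$, we have $\lambda^k=\mathcal P_\Lambda(\lambda^k)$. Since the projection onto a closed convex set is nonexpansive,
\[
\|\lambda^{k+1}-\lambda^k\|\le \gamma_{\lambda,k}\|\nabla_y g(x^k,y^k)-p_\lambda\lambda^k\|.
\]
Squaring, splitting with $2\|a\|^2+2\|b\|^2$, and using $\|\lambda^k\|\le C_\lambda$ gives the third inequality.

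None of the steps is a real obstacle. The only points to check are that $\lambda^k$ stays in $\Lambda$ at every iteration, which holds by induction via the projection, and that the partial gradients of $K$ are identified correctly so that the error is purely the finite-difference error.
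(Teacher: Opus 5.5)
Your proposal is correct and follows essentially the same route as the paper's proof. The paper also writes each primal step as the exact partial gradient of $K$ plus the finite-difference error, applies $\|a+b\|^2\le 2\|a\|^2+2\|b\|^2$ with the $\tfrac{l_{g,2}C_\lambda^2}{2}\tau$ bound, and handles the dual step by nonexpansiveness of $\mathcal P_\Lambda$ followed by the same splitting and $\|\lambda^k\|\le C_\lambda$; your explicit check that $\lambda^k\in\Lambda$ for every $k$ (needed for the finite-difference bound) makes precise a point the paper leaves implicit.
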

\begin{proof}
According to the update rule of $x^k$ in Algorithm \ref{SGHA} and by \eqref{Lemma: Dete FD 1} in Lemma \ref{Lemma: Deterministic Finite Difference Estimator}, we have
    \begin{align*}
        &\|x^{k+1} - x^k\|^2 \nonumber \\
        &= \gamma_{x,k}^2\left\|\nabla_x f(x^k,y^k) + \tilde{\nabla}_{xy}^2 g_{\lambda^k}(x^k,y^k) + p_w(x^k - \hat{x}^k)\right\|^2\nonumber\\
        &\leq 2\gamma_{x,k}^2\left\|\nabla_x f(x^k,y^k) + \nabla_{xy}^2g(x^k,y^k)\lambda^k + p_w(x^k - \hat{x}^k)\right\|^2 + 2\gamma_{x,k}^2\left\|\nabla_{xy}^2g(x^k,y^k)\lambda^k - \tilde{\nabla}_{xy}^2 g_{\lambda^k}(x^k,y^k)\right\|^2 \\
        &\leq 2\gamma_{x,k}^2\left\|\nabla_x K(w^k,\hat{w}^k,\lambda^k)\right\|^2 + \frac{\gamma_{x,k}^2l_{g,2}^2C_{\lambda}^4\tau^2}{2}.
    \end{align*}
    Similarly, for $y^k$,
\[
\|y^{k+1} - y^k\|^2 \leq 2\gamma_{y,k}^2\left\|\nabla_y K(w^k,\hat{w}^k,\lambda^k)\right\|^2 + \frac{\gamma_{y,k}^2l_{g,2}^2C_{\lambda}^4\tau^2}{2}.
\]
For $\lambda^k$, by nonexpansiveness of projection operator and $\|\lambda^k\| \leq C_{\lambda}$,
\begin{align*}
    \|\lambda^{k+1} - \lambda^k\|^2
    \leq  \gamma_{\lambda,k}^2\|\nabla_y g(x^k,y^k) - p_{\lambda}\lambda^k\|^2 \leq 2\gamma_{\lambda,k}^2\|\nabla_y  g(x^k,y^k)\|^2 + 2\gamma_{\lambda,k}^2p_{\lambda}^2C_{\lambda}^2.
\end{align*}
\end{proof}
\subsection{Intermediate Lemmas for Lyapunov Function}
\begin{lemma}\label{Lemma: Deterministic Primal Descent w lambda}
    Under Assumption \ref{Assumption: Bilevel Optimization}, suppose $\gamma_{w,k} \leq \frac{1}{8(p_w + l_{\mathcal{L},1})}$. Then, we have
    \begin{align*}
        &K(w^k,\hat{w}^k,\lambda^k) - K(w^{k+1},\hat{w}^k,\lambda^{k+1})\\
        &\geq \frac{\gamma_{w,k}}{4}\left\|\nabla_w K(w^k,\hat{w}^k,\lambda^k)\right\|^2  -  \left(\frac{\gamma_{w,k}}{4} + (p_w + l_{\mathcal{L},1})\gamma_{w,k}^2\right)l_{g,2}^2C_{\lambda}^4\tau^2  \\
        &\quad + \langle \nabla_y g(x^k,y^k)- p_{\lambda}\lambda^k, \lambda^k - \lambda^{k+1}\rangle - \left(\frac{p_{\lambda}}{2} + \frac{l_{g,1}^2}{2(p_w+l_{\mathcal{L},1})}\right)\|\lambda^{k+1} - \lambda^k\|^2.
    \end{align*}
\end{lemma}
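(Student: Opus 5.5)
The plan is to split the change in $K$ into a primal step with $\lambda$ frozen at $\lambda^k$ and a dual step with $w$ frozen at $w^{k+1}$:
\[
K(w^k,\hat w^k,\lambda^k)-K(w^{k+1},\hat w^k,\lambda^{k+1})
=\bigl[K(w^k,\hat w^k,\lambda^k)-K(w^{k+1},\hat w^k,\lambda^k)\bigr]
+\bigl[K(w^{k+1},\hat w^k,\lambda^k)-K(w^{k+1},\hat w^k,\lambda^{k+1})\bigr].
\]

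\textbf{Primal step.} $K(\cdot,\hat w^k,\lambda^k)$ is $(p_w+l_{\mathcal L,1})$-smooth by Lemma~\ref{Lemma: Smoothness of L(w,lambda)}. The descent lemma gives
\[
K(w^{k+1})\le K(w^k)+\langle\nabla_wK,w^{k+1}-w^k\rangle+\tfrac{p_w+l_{\mathcal L,1}}2\|w^{k+1}-w^k\|^2 .
\]
Write $w^{k+1}-w^k=-\gamma_{w,k}(\nabla_wK+e^k)$, where $e^k$ is the finite-difference error. By Lemma~\ref{Lemma: Deterministic Finite Difference Estimator}, applied to both the $x$ and $y$ blocks, $\|e^k\|^2\le \tfrac12 l_{g,2}^2C_\lambda^4\tau^2$.

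For the linear term, use $-\gamma\langle\nabla K,e\rangle\le \tfrac{\gamma}{2}\|\nabla K\|^2+\tfrac\gamma2\|e\|^2$, which yields
\[
-\gamma\|\nabla K\|^2-\gamma\langle\nabla K,e\rangle\le -\tfrac\gamma2\|\nabla K\|^2+\tfrac\gamma2\|e\|^2 .
\]
For the quadratic term, use $\|w^{k+1}-w^k\|^2\le 2\gamma^2\|\nabla K\|^2+2\gamma^2\|e\|^2$, as in Lemma~\ref{Lemma: Deterministic Iterates Gap}. Since $\gamma_{w,k}\le 1/(8(p_w+l_{\mathcal L,1}))$, the term $(p_w+l_{\mathcal L,1})\gamma^2\|\nabla K\|^2\le \tfrac\gamma8\|\nabla K\|^2$. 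The primal decrease is then at least $\tfrac{3\gamma}{8}\|\nabla K\|^2$, hence at least $\tfrac\gamma4\|\nabla K\|^2$. The error terms total $\bigl(\tfrac\gamma2+(p_w+l_{\mathcal L,1})\gamma^2\bigr)\|e\|^2\le\bigl(\tfrac\gamma4+(p_w+l_{\mathcal L,1})\gamma^2\bigr)l_{g,2}^2C_\lambda^4\tau^2$, which matches the claimed constant.

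\textbf{Dual step.} $K(w^{k+1},\hat w^k,\cdot)$ is a concave quadratic with Hessian $-p_\lambda I$, so
\[
K(w^{k+1},\lambda^k)-K(w^{k+1},\lambda^{k+1})=\langle\nabla_y g(w^{k+1})-p_\lambda\lambda^k,\lambda^k-\lambda^{k+1}\rangle+\tfrac{p_\lambda}2\|\lambda^{k+1}-\lambda^k\|^2 .
\]
This holds exactly, and the positive quadratic may be discarded (or the statement's $-\tfrac{p_\lambda}{2}$ kept as a looser bound). The main obstacle is that the statement evaluates the gradient at $w^k$, not $w^{k+1}$. The mismatch is
\[
\langle\nabla_yg(w^{k+1})-\nabla_yg(w^k),\lambda^k-\lambda^{k+1}\rangle\ge -l_{g,1}\|w^{k+1}-w^k\|\|\lambda^{k+1}-\lambda^k\|.
\]
Apply Young's inequality with weight $p_w+l_{\mathcal L,1}$: the cross term is bounded by $\tfrac{p_w+l_{\mathcal L,1}}2\|w^{k+1}-w^k\|^2+\tfrac{l_{g,1}^2}{2(p_w+l_{\mathcal L,1})}\|\lambda^{k+1}-\lambda^k\|^2$. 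The second piece is exactly the stated $\lambda$-penalty. The first piece is again at most $(p_w+l_{\mathcal L,1})\gamma^2(\|\nabla K\|^2+\|e\|^2)$, and it must be absorbed by the slack left in the primal step.

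\textbf{Budget check.} This is where care is needed. The slack is $\tfrac{3\gamma}8-\tfrac\gamma4=\tfrac\gamma8$, and the new $\|\nabla K\|^2$ term is at most $\tfrac\gamma8\|\nabla K\|^2$, so the gradient budget closes. The extra $\|e\|^2$ term, however, appears to double the $(p_w+l_{\mathcal L,1})\gamma^2$ error coefficient. To fix this, I would use a sharper Young split in the primal linear term, $-\gamma\langle\nabla K,e\rangle\le\tfrac{\gamma}{4}\|\nabla K\|^2+\gamma\|e\|^2$, together with the factor $\tfrac12$ in $\|e\|^2\le\tfrac12 l_{g,2}^2C_\lambda^4\tau^2$. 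This rebalances the total to the stated $\bigl(\tfrac\gamma4+(p_w+l_{\mathcal L,1})\gamma^2\bigr)l_{g,2}^2C_\lambda^4\tau^2$, since $\tfrac12\cdot 2(p_w+l_{\mathcal L,1})\gamma^2$ cancels the doubling. The gradient terms then sum to coefficient $\tfrac\gamma4$. Tuning these Young weights so that both the $\|\nabla K\|^2$ and the $\tau^2$ constants come out exactly as stated is the step I expect to require the most fiddling.
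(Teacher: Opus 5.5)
Your argument is the paper's proof. Both use the same split into a $w$-step at $\lambda^k$ and a $\lambda$-step at $w^{k+1}$, and the same descent lemma with constant $p_w+l_{\mathcal L,1}$. Both apply Young's inequality with weight $p_w+l_{\mathcal L,1}$ to $\langle\nabla_y g(w^{k+1})-\nabla_y g(w^k),\lambda^k-\lambda^{k+1}\rangle$. Both use the symmetric split $-\gamma\langle\nabla_wK,e^k\rangle\le\frac{\gamma}{2}\|\nabla_wK\|^2+\frac{\gamma}{2}\|e^k\|^2$ and the iterate bound of Lemma~\ref{Lemma: Deterministic Iterates Gap}. The paper also keeps the looser $-\frac{p_\lambda}{2}$ in the dual step. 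The only problem is your budget-check paragraph, which raises a false alarm and then proposes a fix that does not work.

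There is no doubling issue. It appears only because, in the primal step, you rounded $\frac12(p_w+l_{\mathcal L,1})\gamma^2 l_{g,2}^2C_\lambda^4\tau^2$ up to $(p_w+l_{\mathcal L,1})\gamma^2 l_{g,2}^2C_\lambda^4\tau^2$. Keep $\|e^k\|^2\le\frac12 l_{g,2}^2C_\lambda^4\tau^2$ exact until the end. Then the descent-lemma term and the cross-term piece together cost $2(p_w+l_{\mathcal L,1})\gamma^2\bigl(\|\nabla_wK\|^2+\|e^k\|^2\bigr)$, and you obtain
\[
\Bigl(\tfrac{\gamma}{2}-2(p_w+l_{\mathcal L,1})\gamma^2\Bigr)\|\nabla_wK\|^2-\Bigl(\tfrac{\gamma}{2}+2(p_w+l_{\mathcal L,1})\gamma^2\Bigr)\tfrac12\, l_{g,2}^2C_\lambda^4\tau^2 .
\]
Using $\gamma\le 1/(8(p_w+l_{\mathcal L,1}))$, this gives exactly $\frac{\gamma}{4}\|\nabla_wK\|^2-\bigl(\frac{\gamma}{4}+(p_w+l_{\mathcal L,1})\gamma^2\bigr)l_{g,2}^2C_\lambda^4\tau^2$, which is precisely the paper's bookkeeping.

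Your proposed split $-\gamma\langle\nabla_wK,e^k\rangle\le\frac{\gamma}{4}\|\nabla_wK\|^2+\gamma\|e^k\|^2$ moves weight the wrong way. The coefficient of $\|e^k\|^2$ becomes $\gamma+2(p_w+l_{\mathcal L,1})\gamma^2$. The error term is therefore $\bigl(\frac{\gamma}{2}+(p_w+l_{\mathcal L,1})\gamma^2\bigr)l_{g,2}^2C_\lambda^4\tau^2$, twice the stated $\frac{\gamma}{4}$ part. The gradient coefficient becomes at least $\frac{\gamma}{2}$, not $\frac{\gamma}{4}$ as you claim. That version does not prove the stated inequality, so keep the symmetric split and drop the fix.
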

\begin{proof}
Since $K(w,\hat{w},\lambda)$ is $(p_w + l_{\mathcal{L},1})$-smooth w.r.t. $w$, and $p_{\lambda}$-smooth w.r.t. $\lambda$, we have
    \begin{align*}
        &K(w^k,\hat{w}^k,\lambda^k)  - K(w^{k+1},\hat{w}^k,\lambda^{k+1})\\
        &\geq K(w^k,\hat{w}^k,\lambda^k)  - K(w^{k+1},\hat{w}^k,\lambda^k) + K(w^{k+1},\hat{w}^k,\lambda^k)  - K(w^{k+1},\hat{w}^k,\lambda^{k+1}) \\
        &\geq \langle \nabla_w K(w^k,\hat{w}^k,\lambda^k), w^k - w^{k+1}\rangle + \langle \nabla_y g(x^k,y^k) - p_{\lambda}\lambda^k, \lambda^k  - \lambda^{k+1}\rangle \\
        &\quad + \langle \nabla_y g(x^{k+1},y^{k+1}) - \nabla_y g(x^k,y^k),\lambda^k - \lambda^{k+1}\rangle- \frac{p_w+l_{\mathcal{L},1}}{2}\|w^{k+1} - w^k\|^2 - \frac{p_{\lambda}}{2}\|\lambda^{k+1} - \lambda^k\|^2.
    \end{align*}
    By Young's inequality,
    \begin{align*}
    &\langle \nabla_y g(x^{k+1},y^{k+1}) - \nabla_y g(x^k,y^k),\lambda^k - \lambda^{k+1}\rangle \nonumber\\&\geq - \frac{p_w+l_{\mathcal{L},1}}{2}\|w^{k+1} - w^k\|^2 - \frac{l_{g,1}^2}{2(p_w+l_{\mathcal{L},1})}\|\lambda^{k+1} - \lambda^k\|^2.
\end{align*}
    Then, according to the update rules of $x^k$ and $y^k$, we have
    \begin{align*}
        &K(w^k,\hat{w}^k,\lambda^k)  - K(w^{k+1},\hat{w}^k,\lambda^{k+1})\\
        &\geq \gamma_{x,k}\left\langle \nabla_x K(w^k,\hat{w}^k,\lambda^k), \nabla_x f(x^k,y^k) + \tilde{\nabla}_{xy}^2 g_{\lambda^k}(x^k,y^k) + p_w(x^k - \hat{x}^k)\right\rangle\\
        &\quad + \gamma_{y,k}\left\langle \nabla_y K(w^k,\hat{w}^k,\lambda^k), \nabla_y f(x^k,y^k) + \tilde{\nabla}_{yy}^2 g_{\lambda^k}(x^k,y^k) + p_w(y^k - \hat{y}^k)\right\rangle\\
        &\quad + \langle \nabla_y g(x^k,y^k)- p_{\lambda}\lambda^k, \lambda^k - \lambda^{k+1}\rangle - \frac{p_w+l_{\mathcal{L},1}}{2}\|w^{k+1} - w^k\|^2 - \frac{l_{g,1}^2}{2(p_w+l_{\mathcal{L},1})}\|\lambda^{k+1} - \lambda^k\|^2 \\
        &\quad - \frac{p_w+l_{\mathcal{L},1}}{2}\|w^{k+1} - w^k\|^2 - \frac{p_{\lambda}}{2}\|\lambda^{k+1} - \lambda^k\|^2\\
        &= \gamma_{x,k}\left\|\nabla_x K(w^k,\hat{w}^k,\lambda^k)\right\|^2 + \gamma_{y,k}\left\|\nabla_y K(w^k,\hat{w}^k,\lambda^k)\right\|^2\\
        &\quad - \gamma_{x,k}\left\langle \nabla_x K(w^k,\hat{w}^k,\lambda^k), \nabla_{xy}^2 g(x^k,y^k)\lambda^k -\tilde{\nabla}_{xy}^2 g_{\lambda^k}(x^k,y^k)\right\rangle \\
        &\quad - \gamma_{y,k}\left\langle \nabla_y K(w^k,\hat{w}^k,\lambda^k), \nabla_{yy}^2 g(x^k,y^k)\lambda^k - \tilde{\nabla}_{yy}^2 g_{\lambda^k}(x^k,y^k)\right\rangle \\
        &\quad + \langle \nabla_y g(x^k,y^k)- p_{\lambda}\lambda^k, \lambda^k - \lambda^{k+1}\rangle  - (p_w+l_{\mathcal{L},1})\|w^{k+1} - w^k\|^2 - \left(\frac{p_{\lambda}}{2} + \frac{l_{g,1}^2}{2(p_w+l_{\mathcal{L},1})}\right)\|\lambda^{k+1} - \lambda^k\|^2.
    \end{align*}
    By using that fact that $\langle x,y\rangle \leq \frac{1}{2}\|x\|^2 + \frac{1}{2}\|y\|^2$, we have
    \begin{align*}
        &K(w^k,\hat{w}^k,\lambda^k)  - K(w^{k+1},\hat{w}^k,\lambda^{k+1})\\
        &\geq \frac{\gamma_{x,k}}{2}\left\|\nabla_x K(w^k,\hat{w}^k,\lambda^k)\right\|^2 + \frac{\gamma_{y,k}}{2}\left\|\nabla_y K(w^k,\hat{w}^k,\lambda^k)\right\|^2\\
        &\quad -  \frac{\gamma_{x,k}}{2}\left\|\nabla_{xy}^2 g(x^k,y^k)\lambda^k -\tilde{\nabla}_{xy}^2 g_{\lambda^k}(x^k,y^k)\right\|^2 -  \frac{\gamma_{y,k}}{2}\left\|\nabla_{yy}^2 g(x^k,y^k)\lambda^k -\tilde{\nabla}_{yy}^2g_{\lambda^k}(x^k,y^k)\right\|^2 \\
        &\quad + \langle \nabla_y g(x^k,y^k)- p_{\lambda}\lambda^k, \lambda^k - \lambda^{k+1}\rangle - (p_w+l_{\mathcal{L},1})\|w^{k+1} - w^k\|^2 - \left(\frac{p_{\lambda}}{2} + \frac{l_{g,1}^2}{2(p_w+l_{\mathcal{L},1})}\right)\|\lambda^{k+1} - \lambda^k\|^2.
    \end{align*}
    By Lemma \ref{Lemma: Deterministic Iterates Gap}, and choosing $\gamma_{w,k} = \gamma_{x,k} = \gamma_{y,k} \leq \frac{1}{8(p_w + l_{\mathcal{L},1})}$, we have
    \begin{align*}
        &K(w^k,\hat{w}^k,\lambda^k) - K(w^{k+1},\hat{w}^k,\lambda^{k+1})\\
        &\geq \left(\frac{\gamma_{x,k}}{2} - 2(p_w+l_{\mathcal{L},1})\gamma_{x,k}^2\right)\left\|\nabla_x K(w^k,\hat{w}^k,\lambda^k)\right\|^2 + \left(\frac{\gamma_{y,k}}{2} - 2(p_w+l_{\mathcal{L},1})\gamma_{y,k}^2\right)\left\|\nabla_y K(w^k,\hat{w}^k,\lambda^k)\right\|^2\\
        &\quad - \left(\frac{\gamma_{x,k}}{2} + \frac{\gamma_{y,k}}{2}\right)\frac{l_{g,2}^2C_{\lambda}^4}{4}\tau^2 - \frac{(p_w + l_{\mathcal{L},1})(\gamma_{x,k}^2 + \gamma_{y,k}^2)}{2}l_{g,2}^2C_{\lambda}^4\tau^2 \\
        &\quad+ \langle \nabla_y g(x^k,y^k)- p_{\lambda}\lambda^k, \lambda^k - \lambda^{k+1}\rangle - \left(\frac{p_{\lambda}}{2} + \frac{l_{g,1}^2}{2(p_w+l_{\mathcal{L},1})}\right)\|\lambda^{k+1} - \lambda^k\|^2 \\
        &\geq \frac{\gamma_{w,k}}{4}\left\|\nabla_w K(w^k,\hat{w}^k,\lambda^k)\right\|^2  -  \left(\frac{\gamma_{w,k}}{4} + (p_w + l_{\mathcal{L},1})\gamma_{w,k}^2\right)l_{g,2}^2C_{\lambda}^4\tau^2  \\
        &\quad + \langle \nabla_y g(x^k,y^k)- p_{\lambda}\lambda^k, \lambda^k - \lambda^{k+1}\rangle - \left(\frac{p_{\lambda}}{2} + \frac{l_{g,1}^2}{2(p_w+l_{\mathcal{L},1})}\right)\|\lambda^{k+1} - \lambda^k\|^2.
    \end{align*}
\end{proof}

\begin{lemma}\label{Lemma: Primal Descent hat w}
    The update of $\hat{w}$ in Algorithm \ref{SGHA} yields
    \[
    K(w^{k+1},\hat{w}^{k},\lambda^{k+1}) - K(w^{k+1},\hat{w}^{k+1},\lambda^{k+1}) \geq \frac{p_w}{2\beta}\|\hat{w}^{k+1} - \hat{w}^k\|^2.
    \]
\end{lemma}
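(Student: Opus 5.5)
The claim depends only on the quadratic structure of $K$ in $\hat w$. With $w:=w^{k+1}$ and $\lambda:=\lambda^{k+1}$ fixed, the terms $\mathcal L(w,\lambda)$ and $-\frac{p_\lambda}{2}\|\lambda\|^2$ appear identically on both sides of the difference and cancel. We are left with
\[
K(w^{k+1},\hat w^{k},\lambda^{k+1})-K(w^{k+1},\hat w^{k+1},\lambda^{k+1})
=\frac{p_w}{2}\left(\|w^{k+1}-\hat w^k\|^2-\|w^{k+1}-\hat w^{k+1}\|^2\right).
\]
So the whole proof reduces to comparing these two distances using the averaging rule for $\hat w$.

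Lines 4--5 of Algorithm~\ref{SGHA} give $\hat w^{k+1}-\hat w^k=\beta(w^{k+1}-\hat w^k)$. Hence $w^{k+1}-\hat w^k=\beta^{-1}(\hat w^{k+1}-\hat w^k)$, and
\[
w^{k+1}-\hat w^{k+1}=(w^{k+1}-\hat w^k)-(\hat w^{k+1}-\hat w^k)=\left(\tfrac1\beta-1\right)(\hat w^{k+1}-\hat w^k).
\]
Both distances are therefore scalar multiples of the single vector $d:=\hat w^{k+1}-\hat w^k$. Substituting them, the difference above equals
\[
\frac{p_w}{2}\left(\frac{1}{\beta^2}-\frac{(1-\beta)^2}{\beta^2}\right)\|d\|^2=\frac{p_w}{2}\cdot\frac{2-\beta}{\beta}\|d\|^2 .
\]

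It remains to compare constants. Since $\beta\in(0,1)$, we have $2-\beta\ge1$, so the right-hand side is at least $\frac{p_w}{2\beta}\|d\|^2$, which is the claim. There is no real obstacle here: the proof is an exact algebraic identity followed by this one-line comparison. The only care needed is to keep $w^{k+1}$ and $\lambda^{k+1}$ fixed on both sides, which the statement already does, so that no smoothness or finite-difference error terms enter.
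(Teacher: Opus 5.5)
Your proposal is correct and follows essentially the same argument as the paper: cancel the common terms, write both $w^{k+1}-\hat w^k$ and $w^{k+1}-\hat w^{k+1}$ as multiples of $\hat w^{k+1}-\hat w^k$ via the averaging step, and bound the coefficient using $0<\beta<1$. Your explicit simplification to $\frac{2-\beta}{\beta}$ just makes visible the final inequality that the paper states without computing.
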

\begin{proof}
By the definition of $K(w,\hat{w},\lambda)$ and the update of $\hat{w}$, as $0 < \beta < 1$, we have
    \begin{align*}
        K(w^{k+1},\hat{w}^{k},\lambda^{k+1}) - K(w^{k+1},\hat{w}^{k+1},\lambda^{k+1})
        &= \frac{p_w}{2}\left[\|w^{k+1} - \hat{w}^k\|^2 - \|w^{k+1} - \hat{w}^{k+1}\|^2\right] \\
        &= \frac{p_w}{2}\left[\frac{1}{\beta^2}\|\hat{w}^{k+1} - \hat{w}^k\|^2 - \frac{(1 - \beta)^2}{\beta^2}\|\hat{w}^{k+1} - \hat{w}^k\|^2\right] \\
        &\geq \frac{p_w}{2\beta}\|\hat{w}^{k+1} - \hat{w}^k\|^2.
    \end{align*}
\end{proof}

\begin{lemma}\label{Lemma: Dual Ascent hat w lambda}
    With $l_{\Psi,1} = p_{\lambda} + l_{g,1}\gamma_2$, we have
    \[
    \Psi(\hat{w}^k,\lambda^{k+1}) - \Psi(\hat{w}^k,\lambda^{k}) \geq \left\langle \nabla_{\lambda}\Psi(\hat{w}^k,\lambda^{k}),\lambda^{k+1} - \lambda^k \right\rangle - \frac{l_{\Psi,1}}{2}\|\lambda^{k+1} - \lambda^k\|^2,
    \]
    \[
    \Psi(\hat{w}^{k+1},\lambda^{k+1}) - \Psi(\hat{w}^{k},\lambda^{k+1}) \geq \frac{p_w}{2}(\hat{w}^{k+1} - \hat{w}^k)^{\top}\left[\hat{w}^{k+1} + \hat{w}^k - 2w^*(\hat{w}^{k+1},\lambda^{k+1})\right].
    \]
\end{lemma}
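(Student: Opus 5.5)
The plan is to prove the two inequalities separately, using the structure $\Psi(\hat w,\lambda)=\min_w K(w,\hat w,\lambda)$ together with the strong convexity of $K(\cdot,\hat w,\lambda)$ from Lemma~\ref{Lemma: SC property of K} and the Lipschitz bounds on $w^*$ in Lemma~\ref{Lemma: Lipschitz continuity of w star}.

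\emph{First inequality.} The goal is to show that $\Psi(\hat w,\cdot)$ is differentiable with $l_{\Psi,1}$-Lipschitz gradient; the claim then follows from the standard descent lemma (the lower quadratic bound $\Psi(\lambda')\ge \Psi(\lambda)+\langle\nabla\Psi(\lambda),\lambda'-\lambda\rangle-\frac{l_{\Psi,1}}{2}\|\lambda'-\lambda\|^2$).
\begin{enumerate}
\item For fixed $\hat w$, the minimizer $w^*(\hat w,\lambda)$ is unique because $p_w>l_{\mathcal L,1}$. Danskin's theorem therefore gives
\[
\nabla_\lambda\Psi(\hat w,\lambda)=\nabla_\lambda K(w^*(\hat w,\lambda),\hat w,\lambda)=\nabla_y g(w^*(\hat w,\lambda))-p_\lambda\lambda .
\]
\item To bound the Lipschitz constant, combine three facts:
\begin{itemize}
\item $\nabla_y g$ is $l_{g,1}$-Lipschitz in $w$;
\item by Lemma~\ref{Lemma: Lipschitz continuity of w star}, $\|w^*(\hat w,\lambda)-w^*(\hat w,\lambda')\|\le\gamma_2\|\lambda-\lambda'\|$;
\item the regularizer contributes a term $p_\lambda\|\lambda-\lambda'\|$.
\end{itemize}
Together these give $\|\nabla_\lambda\Psi(\hat w,\lambda)-\nabla_\lambda\Psi(\hat w,\lambda')\|\le (l_{g,1}\gamma_2+p_\lambda)\|\lambda-\lambda'\|=l_{\Psi,1}\|\lambda-\lambda'\|$.
\item Apply the descent lemma with $\lambda=\lambda^k$ and $\lambda'=\lambda^{k+1}$.
\end{enumerate}

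\emph{Second inequality.} This is a direct comparison of minimum values, and the main point is to evaluate both $K$'s at the \emph{new} minimizer.
\begin{enumerate}
\item Set $w^+:=w^*(\hat w^{k+1},\lambda^{k+1})$. Then $\Psi(\hat w^{k+1},\lambda^{k+1})=K(w^+,\hat w^{k+1},\lambda^{k+1})$.
\item Since $\Psi(\hat w^k,\lambda^{k+1})$ is a minimum over $w$, it is bounded above by the value at $w^+$: $\Psi(\hat w^k,\lambda^{k+1})\le K(w^+,\hat w^k,\lambda^{k+1})$.
\item Subtracting, the $\mathcal L$ and $p_\lambda$ terms cancel, and
\[
\Psi(\hat w^{k+1},\lambda^{k+1})-\Psi(\hat w^k,\lambda^{k+1})\ge \frac{p_w}{2}\left(\|w^+-\hat w^{k+1}\|^2-\|w^+-\hat w^k\|^2\right).
\]
\item Expand with $\|a-b\|^2-\|a-c\|^2=(c-b)^\top(2a-b-c)$, taking $a=w^+$, $b=\hat w^{k+1}$, $c=\hat w^k$. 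This gives $(\hat w^k-\hat w^{k+1})^\top(2w^+-\hat w^{k+1}-\hat w^k)$, which equals $(\hat w^{k+1}-\hat w^k)^\top(\hat w^{k+1}+\hat w^k-2w^+)$. That is exactly the stated right-hand side.
\end{enumerate}

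\emph{Main obstacle.} The only delicate step is justifying Danskin's formula and the gradient-Lipschitz bound. This relies on uniqueness of the minimizer and on the Lipschitz continuity of $w^*(\hat w,\cdot)$, both of which the earlier lemmas already supply. Everything else is algebra.
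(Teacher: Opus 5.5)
Your proposal is correct and follows the paper's proof step for step. For the first inequality, Danskin's formula combined with the $\gamma_2$-Lipschitz bound on $w^*(\hat w,\cdot)$ gives $l_{\Psi,1}$-smoothness of $\Psi(\hat w,\cdot)$; for the second, you evaluate both $K$ values at $w^*(\hat w^{k+1},\lambda^{k+1})$ and expand the difference of squares, exactly as the paper does (one cosmetic fix: your shorthand $w^+$ clashes with the paper's $w^+(\hat w,\lambda)$, the gradient-step point, so use another symbol).
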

\begin{proof}
By Danskin’s theorem, we have
    \[
    \nabla_{\lambda}\Psi(\hat{w},\lambda) = \nabla_{\lambda}K(w^*(\hat{w},\lambda),\hat{w},\lambda).
    \]
    Thus, by Lemma \ref{Lemma: Lipschitz continuity of w star}, we have
    \begin{align*}
        \|\nabla_{\lambda}\Psi(\hat{w},\lambda^{\prime}) - \nabla_{\lambda}\Psi(\hat{w},\lambda^{\prime\prime})\| &= \|\nabla_{\lambda}K(w^*(\hat{w},\lambda^{\prime}),\hat{w},\lambda^{\prime}) - \nabla_{\lambda}K(w^*(\hat{w},\lambda^{\prime\prime}),\hat{w},\lambda^{\prime\prime}\| \\
        &\leq \|\nabla_{\lambda}K(w^*(\hat{w},\lambda^{\prime}),\hat{w},\lambda^{\prime}) - \nabla_{\lambda}K(w^*(\hat{w},\lambda^{\prime}),\hat{w},\lambda^{\prime\prime}\| \\
        &\quad+ \|\nabla_{\lambda}K(w^*(\hat{w},\lambda^{\prime}),\hat{w},\lambda^{\prime\prime}) - \nabla_{\lambda}K(w^*(\hat{w},\lambda^{\prime\prime}),\hat{w},\lambda^{\prime\prime}\| \\
        &\leq p_{\lambda}\|\lambda^{\prime} - \lambda^{\prime\prime}\| + \|\nabla_y g(w^*(\hat{w},\lambda^{\prime})) - \nabla_y g(w^*(\hat{w},\lambda^{\prime\prime}))\| \\
        &\leq p_{\lambda}\|\lambda^{\prime} - \lambda^{\prime\prime}\| +l_{g,1}\|w^*(\hat{w},\lambda^{\prime}) - w^*(\hat{w},\lambda^{\prime\prime})\| \\
        &\leq (p_{\lambda} + l_{g,1}\gamma_2)\|\lambda^{\prime} - \lambda^{\prime\prime}\|,
    \end{align*}
    which implies $\Psi(\hat{w},\lambda^{\prime})$ is $l_{\Psi,1}$-smooth w.r.t. $\lambda$.

    On the other hand, by the definition of $w^*(\hat{w}^k,\lambda^{k+1})$, we have
    \begin{align*}
        \Psi(\hat{w}^{k+1},\lambda^{k+1}) - \Psi(\hat{w}^{k},\lambda^{k+1})
        &= K(w^*(\hat{w}^{k+1},\lambda^{k+1}),\hat{w}^{k+1},\lambda^{k+1}) - K(w^*(\hat{w}^{k},\lambda^{k+1}),\hat{w}^{k},\lambda^{k+1}) \\
        &\geq K(w^*(\hat{w}^{k+1},\lambda^{k+1}),\hat{w}^{k+1},\lambda^{k+1}) - K(w^*(\hat{w}^{k+1},\lambda^{k+1}),\hat{w}^{k},\lambda^{k+1}) \\
        &= \frac{p_w}{2}\left[\|\hat{w}^{k+1} - w^*(\hat{w}^{k+1},\lambda^{k+1})\|^2 - \|\hat{w}^{k} - w^*(\hat{w}^{k+1},\lambda^{k+1})\|^2\right] \\
        &= \frac{p_w}{2}(\hat{w}^{k+1} - \hat{w}^k)^{\top}[\hat{w}^{k+1} + \hat{w}^k - 2w^*(\hat{w}^{k+1},\lambda^{k+1})].
    \end{align*}
\end{proof}

\begin{lemma}\label{Lemma: Proximal Descent hat w}
    The proximal descent satisfies
    \[
    P(\hat{w}^{k+1}) - P(\hat{w}^k) \leq \frac{p_w}{2}(\hat{w}^{k+1} - \hat{w}^k)^{\top}\left[\hat{w}^{k+1} + \hat{w}^k - 2w^*(\hat{w}^k,\hat{\lambda}^*(\hat{w}^{k+1}))\right].
    \]
\end{lemma}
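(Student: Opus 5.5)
The bound follows by combining the minimax representation of $P$ with a sandwich argument of the same type as the second part of Lemma~\ref{Lemma: Dual Ascent hat w lambda}.

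\textbf{Step 1: minimax representation.} By the Sion argument used in the proof of Lemma~\ref{Lemma: Lower Bound of Lyapunov Function}, $P(\hat w)=\max_{\lambda\in\Lambda}\Psi(\hat w,\lambda)$, and the maximizer is $\hat\lambda^*(\hat w)$. Hence
\[
P(\hat w^{k+1})=\Psi(\hat w^{k+1},\hat\lambda^*(\hat w^{k+1})),
\qquad
P(\hat w^{k})\ge \Psi(\hat w^{k},\hat\lambda^*(\hat w^{k+1})).
\]
Writing $\bar\lambda:=\hat\lambda^*(\hat w^{k+1})$, it follows that
\[
P(\hat w^{k+1})-P(\hat w^k)\le \Psi(\hat w^{k+1},\bar\lambda)-\Psi(\hat w^{k},\bar\lambda).
\]

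\textbf{Step 2: bound the $\Psi$-difference.} Set $\bar w:=w^*(\hat w^k,\bar\lambda)$, the minimizer defining $\Psi(\hat w^k,\bar\lambda)$. Then
\[
\Psi(\hat w^k,\bar\lambda)=K(\bar w,\hat w^k,\bar\lambda),
\]
while $\Psi(\hat w^{k+1},\bar\lambda)=\min_w K(w,\hat w^{k+1},\bar\lambda)\le K(\bar w,\hat w^{k+1},\bar\lambda)$. Subtracting these, only the proximal terms survive:
\[
P(\hat w^{k+1})-P(\hat w^k)\le \frac{p_w}{2}\left[\|\bar w-\hat w^{k+1}\|^2-\|\bar w-\hat w^k\|^2\right].
\]

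\textbf{Step 3: expand.} Apply the difference-of-squares identity $\|a-u\|^2-\|a-v\|^2=(u-v)^\top(u+v-2a)$ with $u=\hat w^{k+1}$, $v=\hat w^k$, $a=\bar w$. This gives exactly
\[
\frac{p_w}{2}(\hat w^{k+1}-\hat w^k)^\top\left[\hat w^{k+1}+\hat w^k-2w^*(\hat w^k,\hat\lambda^*(\hat w^{k+1}))\right],
\]
which is the claimed bound.

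\textbf{Main obstacle.} The only delicate point is justifying the minimax interchange and the existence of $\hat\lambda^*$ and $w^*$. Existence of $w^*$ follows from the $(p_w-l_{\mathcal L,1})$-strong convexity of $K$ in $w$ (Lemma~\ref{Lemma: SC property of K}). Existence of $\hat\lambda^*$ follows from compactness of $\Lambda$ together with continuity of $\Psi$ in $\lambda$, which is guaranteed by Lemma~\ref{Lemma: Dual Ascent hat w lambda}. The interchange itself is exactly the step already established in the proof of Lemma~\ref{Lemma: Lower Bound of Lyapunov Function}.
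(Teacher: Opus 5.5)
Your proof is correct and follows the paper's argument: write $P(\hat w)=\Psi(\hat w,\hat\lambda^*(\hat w))$ via the minimax interchange, replace $\hat\lambda^*(\hat w^k)$ by $\hat\lambda^*(\hat w^{k+1})$ to obtain an upper bound, then evaluate $\Psi(\hat w^{k+1},\cdot)$ at the suboptimal point $w^*(\hat w^k,\hat\lambda^*(\hat w^{k+1}))$ so that only the proximal terms remain. Expanding the difference of squares then gives exactly the stated bound.
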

\begin{proof}
By the definition of $\hat{\lambda}^*(\hat{w}^k)$, we have
    \begin{align*}
        P(\hat{w}^{k+1}) - P(\hat{w}^k) &= \Psi(\hat{w}^{k+1},\hat{\lambda}^*(\hat{w}^{k+1})) - \Psi(\hat{w}^{k},\hat{\lambda}^*(\hat{w}^{k})) \\
        &\leq \Psi(\hat{w}^{k+1},\hat{\lambda}^*(\hat{w}^{k+1})) - \Psi(\hat{w}^{k},\hat{\lambda}^*(\hat{w}^{k+1})) \\
        &= K(w^*(\hat{w}^{k+1},\hat{\lambda}^*(\hat{w}^{k+1})),\hat{w}^{k+1},\hat{\lambda}^*(\hat{w}^{k+1})) - K(w^*(\hat{w}^{k},\hat{\lambda}^*(\hat{w}^{k+1})),\hat{w}^{k},\hat{\lambda}^*(\hat{w}^{k+1})) \\
        &\leq K(w^*(\hat{w}^{k},\hat{\lambda}^*(\hat{w}^{k+1})),\hat{w}^{k+1},\hat{\lambda}^*(\hat{w}^{k+1})) - K(w^*(\hat{w}^{k},\hat{\lambda}^*(\hat{w}^{k+1})),\hat{w}^{k},\hat{\lambda}^*(\hat{w}^{k+1})) \\
        &= \frac{p_w}{2}\left[\left\|w^*(\hat{w}^{k},\hat{\lambda}^*(\hat{w}^{k+1})) - \hat{w}^{k+1}\right\|^2 - \left\|w^*(\hat{w}^{k},\hat{\lambda}^*(\hat{w}^{k+1})) - \hat{w}^{k}\right\|^2\right] \\
        &= \frac{p_w}{2}(\hat{w}^{k+1} - \hat{w}^k)^{\top}\left[\hat{w}^{k+1} + \hat{w}^k - 2w^*(\hat{w}^{k},\hat{\lambda}^*(\hat{w}^{k+1}))\right].
    \end{align*}
\end{proof}

\begin{lemma}\label{Lemma: Dual Ascent - Proximal Descent}
    The following inequality holds:
    \begin{align*}
        &2\Psi(\hat{w}^{k+1},\lambda^{k+1}) - 2\Psi(\hat{w}^{k},\lambda^{k+1}) - 2P(\hat{w}^{k+1}) + 2P(\hat{w}^k) \\
        &\geq - \left(2p_w\gamma_1 + \frac{p_w}{6\beta} + 48p_w\beta \gamma_1^2\right)\|\hat{w}^{k+1} - \hat{w}^k\|^2 - 24p_w\beta \|w^*(\hat{w}^{k}) - w^*(\hat{w}^{k},\lambda^{k})\|^2 - 24p_w\beta\gamma_2^2\|\lambda^{k+1} - \lambda^k\|^2.
    \end{align*}
\end{lemma}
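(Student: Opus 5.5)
Combining the two inequalities of Lemma~\ref{Lemma: Dual Ascent hat w lambda} and Lemma~\ref{Lemma: Proximal Descent hat w}, the $\hat w^{k+1}+\hat w^k$ parts cancel, and we get
\[
2\Psi(\hat{w}^{k+1},\lambda^{k+1}) - 2\Psi(\hat{w}^{k},\lambda^{k+1}) - 2P(\hat{w}^{k+1}) + 2P(\hat{w}^k)
\ge 2p_w(\hat w^{k+1}-\hat w^k)^\top\bigl[w^*(\hat w^k,\hat\lambda^*(\hat w^{k+1}))-w^*(\hat w^{k+1},\lambda^{k+1})\bigr].
\]
So the whole task is to lower bound this cross term. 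By Cauchy--Schwarz it suffices to bound $2p_w\|\hat w^{k+1}-\hat w^k\|$ times the norm of the bracket, and I will split the bracket by the triangle inequality into pieces, each handled with a tailored Young's inequality.

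I will write
\[
w^*(\hat w^{k+1},\lambda^{k+1})-w^*(\hat w^k,\hat\lambda^*(\hat w^{k+1}))
= A_1+A_2+A_3+A_4 .
\]
Here
\begin{align*}
A_1&=w^*(\hat w^{k+1},\lambda^{k+1})-w^*(\hat w^{k},\lambda^{k+1}),\\
A_2&=w^*(\hat w^{k},\lambda^{k+1})-w^*(\hat w^{k},\lambda^{k}),\\
A_3&=w^*(\hat w^{k},\lambda^{k})-w^*(\hat w^{k}),\\
A_4&=w^*(\hat w^{k})-w^*(\hat w^k,\hat\lambda^*(\hat w^{k+1})).
\end{align*}
By Lemma~\ref{Lemma: Lipschitz continuity of w star}, $\|A_1\|\le\gamma_1\|\hat w^{k+1}-\hat w^k\|$, giving the term $2p_w\gamma_1\|\hat w^{k+1}-\hat w^k\|^2$ directly. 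Likewise $\|A_2\|\le\gamma_2\|\lambda^{k+1}-\lambda^k\|$. For $A_3$ no further reduction is needed, since it appears as is in the claim. For $A_2$ and $A_3$ I will use Young's inequality in the form $2p_w ab\le \frac{p_w}{24\beta}a^2+24p_w\beta b^2$. Each then contributes $\frac{p_w}{24\beta}\|\hat w^{k+1}-\hat w^k\|^2$ together with $24p_w\beta\gamma_2^2\|\lambda^{k+1}-\lambda^k\|^2$ and $24p_w\beta\|w^*(\hat w^k)-w^*(\hat w^k,\lambda^k)\|^2$, respectively.

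The main obstacle is $A_4$. I need to relate $w^*(\hat w^k)$, the minimizer of $\Phi(\cdot,\hat w^k)$, to $w^*(\hat w^k,\hat\lambda^*(\hat w^{k+1}))$. By strong convexity--strong concavity and the minimax equality (Sion, as in Lemma~\ref{Lemma: Lower Bound of Lyapunov Function}), $w^*(\hat w)=w^*(\hat w,\hat\lambda^*(\hat w))$, because the saddle point is unique. Hence I will insert $w^*(\hat w^{k+1})=w^*(\hat w^{k+1},\hat\lambda^*(\hat w^{k+1}))$ and write
\[
A_4=\bigl[w^*(\hat w^k)-w^*(\hat w^{k+1})\bigr]+\bigl[w^*(\hat w^{k+1},\hat\lambda^*(\hat w^{k+1}))-w^*(\hat w^k,\hat\lambda^*(\hat w^{k+1}))\bigr].
\]
Both brackets are bounded by $\gamma_1\|\hat w^{k+1}-\hat w^k\|$ via the first two claims of Lemma~\ref{Lemma: Lipschitz continuity of w star}.

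This yields a bound of order $4p_w\gamma_1\|\hat w^{k+1}-\hat w^k\|^2$, which is larger than the stated $2p_w\gamma_1$. To match the stated constants, I will instead apply Young's inequality to the $\|\hat w^{k+1}-\hat w^k\|\cdot\|A_4\|$ term as $\frac{p_w}{12\beta}\|\hat w^{k+1}-\hat w^k\|^2+12p_w\beta\|A_4\|^2$. Using $\|A_4\|^2\le 4\gamma_1^2\|\hat w^{k+1}-\hat w^k\|^2$, this gives $48p_w\beta\gamma_1^2\|\hat w^{k+1}-\hat w^k\|^2$. The three $\beta^{-1}$ contributions then sum to $\frac{p_w}{24\beta}+\frac{p_w}{24\beta}+\frac{p_w}{12\beta}=\frac{p_w}{6\beta}$, which matches the claim exactly.
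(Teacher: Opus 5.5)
Your proof is correct and is essentially the paper's argument. Both combine the two $\hat w$-lemmas into the cross term $2p_w(\hat w^{k+1}-\hat w^k)^\top[w^*(\hat w^k,\hat\lambda^*(\hat w^{k+1}))-w^*(\hat w^{k+1},\lambda^{k+1})]$, use the identity $w^*(\hat w)=w^*(\hat w,\hat\lambda^*(\hat w))$, and bound the same five elementary differences via Lemma~\ref{Lemma: Lipschitz continuity of w star}, Cauchy--Schwarz, and Young's inequality.

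The only difference is cosmetic. The paper applies Cauchy--Schwarz to the $\hat w$-variation at the fixed multiplier $\hat\lambda^*(\hat w^{k+1})$, rather than at $\lambda^{k+1}$ as in your $A_1$. It then uses a single Young step with weight $\frac{p_w}{6\beta}$ on $w^*(\hat w^{k+1})-w^*(\hat w^{k+1},\lambda^{k+1})$, followed by a four-term split. This gives exactly the same constants as your three separately weighted Young steps.
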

\begin{proof}
Combining Lemma \ref{Lemma: Dual Ascent hat w lambda} and Lemma \ref{Lemma: Proximal Descent hat w}, and by Lemma \ref{Lemma: Lipschitz continuity of w star}, we have
    \begin{align*}
        &2\Psi(\hat{w}^{k+1},\lambda^{k+1}) - 2\Psi(\hat{w}^{k},\lambda^{k+1}) - 2P(\hat{w}^{k+1}) + 2P(\hat{w}^k) \\
        &\geq 2p_w(\hat{w}^{k+1} - \hat{w}^k)^{\top}\left[w^*(\hat{w}^{k},\hat{\lambda}^*(\hat{w}^{k+1})) - w^*(\hat{w}^{k+1},\lambda^{k+1})\right] \\
        &= 2p_w(\hat{w}^{k+1} - \hat{w}^k)^{\top}\left[w^*(\hat{w}^{k},\hat{\lambda}^*(\hat{w}^{k+1})) - w^*(\hat{w}^{k+1},\hat{\lambda}^*(\hat{w}^{k+1}))\right] \\
        &\quad + 2p_w(\hat{w}^{k+1} - \hat{w}^k)^{\top}\left[w^*(\hat{w}^{k+1},\hat{\lambda}^*(\hat{w}^{k+1})) - w^*(\hat{w}^{k+1},\lambda^{k+1})\right] \\
        &\geq -2p_w\gamma_1\|\hat{w}^{k+1} - \hat{w}^k\|^2 + 2p_w(\hat{w}^{k+1} - \hat{w}^k)^{\top}\left[w^*(\hat{w}^{k+1},\hat{\lambda}^*(\hat{w}^{k+1})) - w^*(\hat{w}^{k+1},\lambda^{k+1})\right] \\
        &\geq - \left(2p_w\gamma_1 + \frac{p_w}{6\beta}\right)\|\hat{w}^{k+1} - \hat{w}^k\|^2 - 6p_w\beta \|w^*(\hat{w}^{k+1}) - w^*(\hat{w}^{k+1},\lambda^{k+1})\|^2 \\
        &\geq - \left(2p_w\gamma_1 + \frac{p_w}{6\beta}\right)\|\hat{w}^{k+1} - \hat{w}^k\|^2 - 24p_w\beta \|w^*(\hat{w}^{k}) - w^*(\hat{w}^{k},\lambda^{k})\|^2 - 24p_w\beta \|w^*(\hat{w}^{k}) - w^*(\hat{w}^{k+1})\|^2\\
        &\quad  - 24p_w\beta\|w^*(\hat{w}^{k+1},\lambda^{k+1}) - w^*(\hat{w}^{k},\lambda^{k+1})\|^2 - 24p_w\beta \|w^*(\hat{w}^{k},\lambda^{k+1}) - w^*(\hat{w}^{k},\lambda^{k})\|^2 \\
        &\geq - \left(2p_w\gamma_1 + \frac{p_w}{6\beta} + 48p_w\beta \gamma_1^2\right)\|\hat{w}^{k+1} - \hat{w}^k\|^2 - 24p_w\beta \|w^*(\hat{w}^{k}) - w^*(\hat{w}^{k},\lambda^{k})\|^2 - 24p_w\beta\gamma_2^2\|\lambda^{k+1} - \lambda^k\|^2.
    \end{align*}
\end{proof}

\begin{lemma}\label{Lemma: Primal Descent - Dual Ascent}
Set $p_w=2l_{\mathcal L,1}$. Suppose
\[
\gamma_{w,k}\le \frac{1}{8(p_w+l_{\mathcal L,1})},
\quad
\gamma_{\lambda,k}
\le
\min\left\{
\frac{1}{2\left(2l_{\Psi,1}+p_\lambda + \frac{l_{g,1}^2}{p_w+l_{\mathcal{L},1}}\right)},
\frac{l_{\mathcal L,1}^2}{16l_{g,1}^2}\gamma_{w,k}
\right\}.
\]
Then, we have
\[
\begin{aligned}
&K(w^k,\hat{w}^k,\lambda^k)
-
K(w^{k+1},\hat{w}^k,\lambda^{k+1})
+
2\Psi(\hat{w}^k,\lambda^{k+1})
-
2\Psi(\hat{w}^k,\lambda^k)                                      \\
&\ge
\frac{\gamma_{w,k}}{8}
\left\|
\nabla_wK(w^k,\hat{w}^k,\lambda^k)
\right\|^2
+
\frac{1}{4\gamma_{\lambda,k}}
\left\|
\lambda^{k+1}-\lambda^k
\right\|^2
-
C_{V,k}\tau^2,
\end{aligned}
\]
where
\[
C_{V,k} = \left(
\frac{\gamma_{w,k}}{4}
+
(p_w+l_{\mathcal L,1})\gamma_{w,k}^2
\right)
l_{g,2}^2C_\lambda^4.
\]
\end{lemma}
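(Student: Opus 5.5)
The plan is to add the two available one-step bounds and merge their dual inner products into a single projected-ascent term. That term is then controlled by the variational inequality of the projection onto $\Lambda$.

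\textbf{Step 1 (combine the two bounds).} For the $K$-part, I invoke Lemma~\ref{Lemma: Deterministic Primal Descent w lambda}, whose hypothesis $\gamma_{w,k}\le 1/(8(p_w+l_{\mathcal L,1}))$ is assumed. It gives
\begin{itemize}
\item the gain $\frac{\gamma_{w,k}}{4}\|\nabla_wK(w^k,\hat w^k,\lambda^k)\|^2$;
\item the finite-difference loss $C_{V,k}\tau^2$;
\item the inner product $\langle \nabla_y g(w^k)-p_\lambda\lambda^k,\lambda^k-\lambda^{k+1}\rangle$;
\item the quadratic loss $\bigl(\frac{p_\lambda}{2}+\frac{l_{g,1}^2}{2(p_w+l_{\mathcal L,1})}\bigr)\|\lambda^{k+1}-\lambda^k\|^2$.
\end{itemize}
For the $\Psi$-part, I use the first inequality of Lemma~\ref{Lemma: Dual Ascent hat w lambda}. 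By Danskin's theorem, $\nabla_\lambda\Psi(\hat w^k,\lambda^k)=\nabla_y g(w^*(\hat w^k,\lambda^k))-p_\lambda\lambda^k$. This gives
\[
2\Psi(\hat w^k,\lambda^{k+1})-2\Psi(\hat w^k,\lambda^k)\ge 2\langle \nabla_y g(w^*(\hat w^k,\lambda^k))-p_\lambda\lambda^k,\lambda^{k+1}-\lambda^k\rangle-l_{\Psi,1}\|\lambda^{k+1}-\lambda^k\|^2 .
\]
Adding the two inner products yields
\[
\langle d^k,\lambda^{k+1}-\lambda^k\rangle+2\langle \nabla_y g(w^*(\hat w^k,\lambda^k))-\nabla_y g(w^k),\lambda^{k+1}-\lambda^k\rangle,
\]
where $d^k:=\nabla_y g(w^k)-p_\lambda\lambda^k$ is exactly the ascent direction used in the algorithm.

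\textbf{Step 2 (projection inequality).} Since $\lambda^{k+1}=\mathcal P_\Lambda(\lambda^k+\gamma_{\lambda,k}d^k)$ and $\lambda^k\in\Lambda$, the projection inequality gives
\[
\langle d^k,\lambda^{k+1}-\lambda^k\rangle\ge \gamma_{\lambda,k}^{-1}\|\lambda^{k+1}-\lambda^k\|^2 .
\]

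\textbf{Step 3 (cross term).} This is the main step. I bound the cross term by
\[
2l_{g,1}\|w^k-w^*(\hat w^k,\lambda^k)\|\,\|\lambda^{k+1}-\lambda^k\|
\]
using $l_{g,1}$-smoothness of $g$. Then I apply Lemma~\ref{Lemma: SC property of K} with $w''=w^*(\hat w^k,\lambda^k)$, where $\nabla_wK$ vanishes. With $p_w=2l_{\mathcal L,1}$ this gives
\[
\|w^k-w^*(\hat w^k,\lambda^k)\|\le l_{\mathcal L,1}^{-1}\|\nabla_wK(w^k,\hat w^k,\lambda^k)\| .
\]
Young's inequality then bounds the cross term by
\[
\frac{\gamma_{w,k}}{8}\|\nabla_wK\|^2+\frac{8l_{g,1}^2}{l_{\mathcal L,1}^2\gamma_{w,k}}\|\lambda^{k+1}-\lambda^k\|^2 .
\]
The condition $\gamma_{\lambda,k}\le \frac{l_{\mathcal L,1}^2}{16l_{g,1}^2}\gamma_{w,k}$ makes the second coefficient at most $\frac{1}{2\gamma_{\lambda,k}}$. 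This leaves $\frac{\gamma_{w,k}}{8}\|\nabla_wK\|^2$ from the primal gain, as claimed.

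\textbf{Step 4 (collect coefficients).} The net coefficient of $\|\lambda^{k+1}-\lambda^k\|^2$ is at least
\[
\frac{1}{2\gamma_{\lambda,k}}-\frac12\Bigl(2l_{\Psi,1}+p_\lambda+\frac{l_{g,1}^2}{p_w+l_{\mathcal L,1}}\Bigr).
\]
The first stepsize condition on $\gamma_{\lambda,k}$ makes the bracketed term at most $\frac{1}{4\gamma_{\lambda,k}}$, so the net coefficient is at least $\frac{1}{4\gamma_{\lambda,k}}$. The only delicate points are the sign bookkeeping in Step 1 and matching the Young constant to the ratio condition in Step 3; the rest is direct.
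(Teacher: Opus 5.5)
Your proposal is correct and follows the paper's proof essentially step for step. You combine Lemma~\ref{Lemma: Deterministic Primal Descent w lambda} with the first inequality of Lemma~\ref{Lemma: Dual Ascent hat w lambda}, apply the projection inequality to $\langle d^k,\lambda^{k+1}-\lambda^k\rangle$, control the cross term via Danskin's theorem, $l_{g,1}$-Lipschitzness and Lemma~\ref{Lemma: SC property of K}, and collect coefficients using the two stepsize conditions. The only difference is the weighting in Young's inequality: you fix the primal loss at $\frac{\gamma_{w,k}}{8}$ and use the ratio condition on the dual side, while the paper fixes the dual loss at $\frac{1}{2\gamma_{\lambda,k}}$ and uses it on the primal side, which gives the same bound.
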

\begin{proof}
Combining Lemma \ref{Lemma: Deterministic Primal Descent w lambda} and Lemma \ref{Lemma: Dual Ascent hat w lambda} gives
\[
\begin{aligned}
&K(w^k,\hat{w}^k,\lambda^k)
-
K(w^{k+1},\hat{w}^k,\lambda^{k+1})
+
2\Psi(\hat{w}^k,\lambda^{k+1})
-
2\Psi(\hat{w}^k,\lambda^k)                                      \\
&\ge
\frac{\gamma_{w,k}}{4}
\left\|
\nabla_wK(w^k,\hat{w}^k,\lambda^k)
\right\|^2
-
\left(
\frac{\gamma_{w,k}}{4}
+
(p_w+l_{\mathcal L,1})\gamma_{w,k}^2
\right)
l_{g,2}^2C_\lambda^4\tau^2                                  \\
&\quad
+
\left\langle
\nabla_y g(x^k,y^k)-p_\lambda\lambda^k,
\lambda^{k+1}-\lambda^k
\right\rangle                                                    \\
&\quad
+
2\left\langle
\nabla_\lambda\Psi(\hat{w}^k,\lambda^k)
-
\left(
\nabla_y g(x^k,y^k)-p_\lambda\lambda^k
\right),
\lambda^{k+1}-\lambda^k
\right\rangle                                                    \\
&\quad
-
\left(
l_{\Psi,1}+\frac{p_\lambda}{2} + \frac{l_{g,1}^2}{2(p_w+l_{\mathcal{L},1})}
\right)
\left\|
\lambda^{k+1}-\lambda^k
\right\|^2 .
\end{aligned}
\]
Since
\[
\lambda^{k+1}
=
\mathcal{P}_\Lambda\left(
\lambda^k+
\gamma_{\lambda,k}
\left(
\nabla_y g(x^k,y^k)-p_\lambda\lambda^k
\right)
\right),
\]
the first-order optimality condition of the projection gives
\[
\left\langle
\nabla_y g(x^k,y^k)-p_\lambda\lambda^k,
\lambda^{k+1}-\lambda^k
\right\rangle
\ge
\frac{1}{\gamma_{\lambda,k}}
\left\|
\lambda^{k+1}-\lambda^k
\right\|^2 .
\]
Therefore,
\[
\begin{aligned}
&\left\langle
\nabla_y g(x^k,y^k)-p_\lambda\lambda^k,
\lambda^{k+1}-\lambda^k
\right\rangle
-
\left(
l_{\Psi,1}+\frac{p_\lambda}{2}
\right)
\left\|
\lambda^{k+1}-\lambda^k
\right\|^2                                                       \\
&\ge
\left(
\frac{1}{\gamma_{\lambda,k}}
-
l_{\Psi,1}
-
\frac{p_\lambda}{2}
\right)
\left\|
\lambda^{k+1}-\lambda^k
\right\|^2 .
\end{aligned}
\]

Next, by Danskin's theorem,
\[
\nabla_\lambda\Psi(\hat{w}^k,\lambda^k)
=
\nabla_y g(w^\star(\hat{w}^k,\lambda^k))
-
p_\lambda\lambda^k .
\]
Hence, by the Lipschitz continuity of $\nabla_y g$ and Lemma \ref{Lemma: SC property of K},
\[
\begin{aligned}
\left\|
\nabla_\lambda\Psi(\hat{w}^k,\lambda^k)
-
\left(
\nabla_y g(x^k,y^k)-p_\lambda\lambda^k
\right)
\right\|  &=
\left\|
\nabla_y g(w^\star(\hat{w}^k,\lambda^k))
-
\nabla_y g(w^k)
\right\|                                                        \\
&\le
l_{g,1}
\left\|
w^\star(\hat{w}^k,\lambda^k)-w^k
\right\|                                                        \\
&\le
\frac{l_{g,1}}{p_w-l_{\mathcal L,1}}
\left\|
\nabla_wK(w^k,\hat{w}^k,\lambda^k)
\right\|.
\end{aligned}
\]
Using Young's inequality, we obtain
\[
\begin{aligned}
&2\left\langle
\nabla_\lambda\Psi(\hat{w}^k,\lambda^k)
-
\left(
\nabla_y g(x^k,y^k)-p_\lambda\lambda^k
\right),
\lambda^{k+1}-\lambda^k
\right\rangle                                                   \\
&\ge
-
\frac{1}{2\gamma_{\lambda,k}}
\left\|
\lambda^{k+1}-\lambda^k
\right\|^2
-
2\gamma_{\lambda,k}
\left\|
\nabla_\lambda\Psi(\hat{w}^k,\lambda^k)
-
\left(
\nabla_y g(x^k,y^k)-p_\lambda\lambda^k
\right)
\right\|^2                                                      \\
&\ge
-
\frac{1}{2\gamma_{\lambda,k}}
\left\|
\lambda^{k+1}-\lambda^k
\right\|^2
-
\frac{2\gamma_{\lambda,k}l_{g,1}^2}{(p_w-l_{\mathcal L,1})^2}
\left\|
\nabla_wK(w^k,\hat{w}^k,\lambda^k)
\right\|^2 .
\end{aligned}
\]
Since
\[
\gamma_{\lambda,k}
\le
\frac{1}{2\left(2l_{\Psi,1}+p_\lambda + \frac{l_{g,1}^2}{p_w+l_{\mathcal{L},1}}\right)},
\]
we have
\[
\frac{1}{\gamma_{\lambda,k}}
-
l_{\Psi,1}
-
\frac{p_\lambda}{2}
-
\frac{l_{g,1}^2}{2(p_w+l_{\mathcal{L},1})}
-
\frac{1}{2\gamma_{\lambda,k}}
\ge
\frac{1}{4\gamma_{\lambda,k}}.
\]
Therefore,
\[
\begin{aligned}
&K(w^k,\hat{w}^k,\lambda^k)
-
K(w^{k+1},\hat{w}^k,\lambda^{k+1})
+
2\Psi(\hat{w}^k,\lambda^{k+1})
-
2\Psi(\hat{w}^k,\lambda^k)                                      \\
&\ge
\frac{\gamma_{w,k}}{4}
\left\|
\nabla_wK(w^k,\hat{w}^k,\lambda^k)
\right\|^2
-
\frac{2\gamma_{\lambda,k}l_{g,1}^2}{(p_w-l_{\mathcal L,1})^2}
\left\|
\nabla_wK(w^k,\hat{w}^k,\lambda^k)
\right\|^2          +
\frac{1}{4\gamma_{\lambda,k}}
\left\|
\lambda^{k+1}-\lambda^k
\right\|^2                                                       \\
&\quad
-
\left(
\frac{\gamma_{w,k}}{4}
+
(p_w+l_{\mathcal L,1})\gamma_{w,k}^2
\right)
l_{g,2}^2C_\lambda^4\tau^2 .
\end{aligned}
\]
Finally, since $p_w=2l_{\mathcal L,1}$ and
\[
\gamma_{\lambda,k}
\le
\frac{(p_w-l_{\mathcal L,1})^2}{16l_{g,1}^2}\gamma_{w,k} =\frac{l_{\mathcal L,1}^2}{16l_{g,1}^2}\gamma_{w,k},
\]
we have
\[
\frac{2\gamma_{\lambda,k}l_{g,1}^2}{(p_w-l_{\mathcal L,1})^2}
\le
\frac{\gamma_{w,k}}{8}.
\]
Thus,
\[
\begin{aligned}
&K(w^k,\hat{w}^k,\lambda^k)
-
K(w^{k+1},\hat{w}^k,\lambda^{k+1})
+
2\Psi(\hat{w}^k,\lambda^{k+1})
-
2\Psi(\hat{w}^k,\lambda^k)                                      \\
&\ge
\frac{\gamma_{w,k}}{8}
\left\|
\nabla_wK(w^k,\hat{w}^k,\lambda^k)
\right\|^2
+
\frac{1}{4\gamma_{\lambda,k}}
\left\|
\lambda^{k+1}-\lambda^k
\right\|^2      -
\left(
\frac{\gamma_{w,k}}{4}
+
(p_w+l_{\mathcal L,1})\gamma_{w,k}^2
\right)
l_{g,2}^2C_\lambda^4\tau^2 .
\end{aligned}
\]
This completes the proof.
\end{proof}

\begin{proposition}\label{Proposition: Deterministic Lyapunov Function Descent}
Define the Lyapunov function
\[
V_k
=
K(w^k,\hat{w}^k,\lambda^k)
-
2\Psi(\hat{w}^k,\lambda^k)
+
2P(\hat{w}^k).
\]
Set $p_w=2l_{\mathcal L,1}$. Suppose
\[
\gamma_{w,k}
\le
\min\left\{
\frac{1}{8p_w\beta},
\frac{1}{8(p_w+l_{\mathcal L,1})}
\right\},\quad \gamma_{\lambda,k}
\le
\min\left\{
\frac{1}{2(2l_{\Psi,1}+p_\lambda + \frac{l_{g,1}^2}{p_w+l_{\mathcal{L},1}})},
\frac{l_{\mathcal L,1}^2}{16l_{g,1}^2}\gamma_{w,k},
\frac{1}{1536p_w\beta\gamma_2^2}
\right\},
\]
and
\[
\beta
\le
\frac{\sqrt{5}-1}{48\gamma_1}.
\]
Then, we have
\[
\begin{aligned}
V_k-V_{k+1}
&\geq
\frac{1}{32\gamma_{w,k}}
\left\|
w^k-w^+(\hat{w}^k,\lambda^k)
\right\|^2
+
\frac{1}{32\gamma_{\lambda,k}}
\left\|
\lambda^+(\hat{w}^k)-\lambda^k
\right\|^2         +
\frac{p_w\beta}{8}
\left\|
w^k-\hat{w}^k
\right\|^2                                          \\
&\quad
-
48p_w\beta
\left\|
w^\star(\hat{w}^k)
-
w^\star(\hat{w}^k,\lambda^+(\hat{w}^k))
\right\|^2
-
C_{\tau,k}\tau^2,
\end{aligned}
\]
where
\[
C_{\tau,k}
:=
\left(
\frac{\gamma_{w,k}}{4}
+
(p_w+l_{\mathcal L,1})\gamma_{w,k}^2
\right)
l_{g,2}^2C_\lambda^4 + \frac{p_{w}\beta\gamma_{w,k}^2l_{g,2}^2C_{\lambda}^4}{4}.
\]
\end{proposition}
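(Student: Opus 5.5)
We will follow the Smoothed GDA template of \citet{zhang2020single} and assemble the already proven one-step estimates. Write
\[
V_k-V_{k+1}
=\bigl[K(w^k,\hat w^k,\lambda^k)-K(w^{k+1},\hat w^k,\lambda^{k+1})+2\Psi(\hat w^k,\lambda^{k+1})-2\Psi(\hat w^k,\lambda^k)\bigr]
+\bigl[K(w^{k+1},\hat w^k,\lambda^{k+1})-K(w^{k+1},\hat w^{k+1},\lambda^{k+1})\bigr]
+\bigl[2\Psi(\hat w^{k+1},\lambda^{k+1})-2\Psi(\hat w^k,\lambda^{k+1})-2P(\hat w^{k+1})+2P(\hat w^k)\bigr].
\]
We bound the first bracket by Lemma~\ref{Lemma: Primal Descent - Dual Ascent}, the second by Lemma~\ref{Lemma: Primal Descent hat w}, and the third by Lemma~\ref{Lemma: Dual Ascent - Proximal Descent}. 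This gives
\[
V_k-V_{k+1}\ge \tfrac{\gamma_{w,k}}{8}\|\nabla_wK^k\|^2+\Bigl(\tfrac1{4\gamma_{\lambda,k}}-24p_w\beta\gamma_2^2\Bigr)\|\lambda^{k+1}-\lambda^k\|^2+\Bigl(\tfrac{p_w}{2\beta}-\tfrac{p_w}{6\beta}-2p_w\gamma_1-48p_w\beta\gamma_1^2\Bigr)\|\hat w^{k+1}-\hat w^k\|^2-24p_w\beta\|w^*(\hat w^k)-w^*(\hat w^k,\lambda^k)\|^2-C_{V,k}\tau^2 .
\]
The conditions $\beta\le(\sqrt5-1)/(48\gamma_1)$ and $\gamma_{\lambda,k}\le 1/(1536p_w\beta\gamma_2^2)$ are designed to make these coefficients at least, for instance, $\tfrac{p_w}{4\beta}$ and $\tfrac1{8\gamma_{\lambda,k}}$.

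Next we convert these terms into the residuals appearing in the statement. First, $\|w^k-w^+(\hat w^k,\lambda^k)\|=\gamma_{w,k}\|\nabla_wK^k\|$, so $\frac{\gamma_{w,k}}8\|\nabla_wK^k\|^2$ supplies the term $\frac1{32\gamma_{w,k}}\|w^k-w^+\|^2$ with room to spare. Second, since $\hat w^{k+1}-\hat w^k=\beta(w^{k+1}-\hat w^k)$, we have $\|\hat w^{k+1}-\hat w^k\|^2/\beta\gtrsim\beta\|w^k-\hat w^k\|^2-2\beta\|w^{k+1}-w^k\|^2$. We control $\|w^{k+1}-w^k\|^2$ by Lemma~\ref{Lemma: Deterministic Iterates Gap}. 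The resulting gradient term is absorbed into the spare $\nabla_wK$ term using $\gamma_{w,k}\le 1/(8p_w\beta)$, and this step also produces the extra $\frac{p_w\beta\gamma_{w,k}^2}{4}l_{g,2}^2C_\lambda^4\tau^2$ in $C_{\tau,k}$. This yields $\frac{p_w\beta}{8}\|w^k-\hat w^k\|^2$.

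Third, we relate $\lambda^{k+1}$ to $\lambda^+(\hat w^k)$. Since they differ only through $\nabla_y g(w^k)$ versus $\nabla_yg(w^*(\hat w^k,\lambda^k))$, nonexpansiveness of the projection and Lemma~\ref{Lemma: SC property of K} give
\[
\|\lambda^{k+1}-\lambda^+\|\le\gamma_{\lambda,k}l_{g,1}\|w^k-w^*(\hat w^k,\lambda^k)\|\le\gamma_{\lambda,k}\gamma_2\|\nabla_wK^k\|.
\]
Hence $\|\lambda^+-\lambda^k\|^2\le 2\|\lambda^{k+1}-\lambda^k\|^2+2\gamma_{\lambda,k}^2\gamma_2^2\|\nabla_wK^k\|^2$. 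The extra gradient piece is absorbed using $\gamma_{\lambda,k}\le \frac{l_{\mathcal L,1}^2}{16 l_{g,1}^2}\gamma_{w,k}$, which gives the $\frac1{32\gamma_{\lambda,k}}$ term.

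Finally, we replace $\lambda^k$ by $\lambda^+(\hat w^k)$ inside the last error term:
\[
\|w^*(\hat w^k)-w^*(\hat w^k,\lambda^k)\|^2\le2\|w^*(\hat w^k)-w^*(\hat w^k,\lambda^+)\|^2+2\gamma_2^2\|\lambda^+-\lambda^k\|^2 .
\]
This produces the stated $-48p_w\beta\|\cdot\|^2$ term. The remaining $48p_w\beta\gamma_2^2\|\lambda^+-\lambda^k\|^2$ is dominated by part of the $\frac1{\gamma_{\lambda,k}}$ term, again by $\gamma_{\lambda,k}\le 1/(1536p_w\beta\gamma_2^2)$.

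The main obstacle is the constant bookkeeping. Each absorption must leave enough of the budgets $\frac{\gamma_{w,k}}8\|\nabla_wK\|^2$ and $\frac1{8\gamma_{\lambda,k}}\|\lambda^{k+1}-\lambda^k\|^2$ for the later steps, and the $\hat w$ coefficient must stay positive under the $\beta$ bound. We would check the $\hat w$ coefficient first, since its positivity hinges on the quadratic inequality in $\beta\gamma_1$ behind $(\sqrt5-1)/48$.
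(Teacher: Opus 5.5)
Your proposal is correct and follows the paper's proof essentially step for step: the same three-bracket split bounded by Lemmas~\ref{Lemma: Primal Descent - Dual Ascent}, \ref{Lemma: Primal Descent hat w} and \ref{Lemma: Dual Ascent - Proximal Descent}; the same conversion of $\|\hat w^{k+1}-\hat w^k\|^2$ into $\|w^k-\hat w^k\|^2$ via Lemma~\ref{Lemma: Deterministic Iterates Gap}; the same ghost-step comparison $\|\lambda^{k+1}-\lambda^+(\hat w^k)\|\le\gamma_{\lambda,k}\gamma_2\|\nabla_wK\|$; and the same final swap of $\lambda^k$ for $\lambda^+(\hat w^k)$ in the $w^\star$ error term. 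One bookkeeping detail: the ghost-step absorption leaves $\frac{1}{16\gamma_{\lambda,k}}$, not $\frac{1}{32\gamma_{\lambda,k}}$; the stated $\frac{1}{32\gamma_{\lambda,k}}$ appears only after you subtract $48p_w\beta\gamma_2^2$ in the last step.
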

\begin{proof}
Combining Lemma \ref{Lemma: Primal Descent hat w}, Lemma \ref{Lemma: Dual Ascent - Proximal Descent} and Lemma \ref{Lemma: Primal Descent - Dual Ascent}, and using Lemma \ref{Lemma: Deterministic Iterates Gap}, we have
\[
\begin{aligned}
V_k-V_{k+1}
&=
K(w^{k+1},\hat{w}^k,\lambda^{k+1})
-
K(w^{k+1},\hat{w}^{k+1},\lambda^{k+1})                         \\
&\quad+
2\bigl(
\Psi(\hat{w}^{k+1},\lambda^{k+1})
-
\Psi(\hat{w}^k,\lambda^{k+1})
\bigr)
-
2\bigl(
P(\hat{w}^{k+1})-P(\hat{w}^k)
\bigr)                                                        \\
&\quad+
K(w^k,\hat{w}^k,\lambda^k)
-
K(w^{k+1},\hat{w}^k,\lambda^{k+1})        +
2\left(
\Psi(\hat{w}^k,\lambda^{k+1})
-
\Psi(\hat{w}^k,\lambda^k)
\right)                                                        \\
&\geq
\left(
\frac{p_w}{2\beta}
-
2p_w\gamma_1
-
\frac{p_w}{6\beta}
-
48p_w\beta\gamma_1^2
\right)
\left\|
\hat{w}^{k+1}-\hat{w}^k
\right\|^2                                                 +
\left(
\frac{1}{4\gamma_{\lambda,k}}
-
24p_w\beta\gamma_2^2
\right)
\left\|
\lambda^{k+1}-\lambda^k
\right\|^2                                                     \\
&\quad-
24p_w\beta
\left\|
w^\star(\hat{w}^k)
-
w^\star(\hat{w}^k,\lambda^k)
\right\|^2  +
\frac{\gamma_{w,k}}{8}
\left\|
\nabla_wK(w^k,\hat{w}^k,\lambda^k)
\right\|^2
-
C_{V,k}\tau^2.
\end{aligned}
\]
By
\[
\beta\le \frac{\sqrt{5}-1}{48\gamma_1},
\]
we have
\[
\frac{p_w}{2\beta}
-
2p_w\gamma_1
-
\frac{p_w}{6\beta}
-
48p_w\beta\gamma_1^2
\ge
\frac{p_w}{4\beta}.
\]
Moreover, by
\[
\gamma_{\lambda,k}
\le
\frac{1}{192p_w\beta\gamma_2^2},
\]
we have
\[
\frac{1}{4\gamma_{\lambda,k}}
-
24p_w\beta\gamma_2^2
\ge
\frac{1}{8\gamma_{\lambda,k}}.
\]
Therefore,
\[
\begin{aligned}
V_k-V_{k+1}
&\geq
\frac{\gamma_{w,k}}{8}
\left\|
\nabla_wK(w^k,\hat{w}^k,\lambda^k)
\right\|^2
+
\frac{p_w}{4\beta}
\left\|
\hat{w}^{k+1}-\hat{w}^k
\right\|^2           +
\frac{1}{8\gamma_{\lambda,k}}
\left\|
\lambda^{k+1}-\lambda^k
\right\|^2                                          \\
&\quad
-
24p_w\beta
\left\|
w^\star(\hat{w}^k)
-
w^\star(\hat{w}^k,\lambda^k)
\right\|^2
-
C_{V,k}\tau^2.
\end{aligned}
\]
By the update rule of $\hat{w}^k$,
\[
\hat{w}^{k+1}-\hat{w}^k
=
\beta(w^{k+1}-\hat{w}^k).
\]
Using $\|a\|^2\ge \frac12\|a-b\|^2-\|b\|^2$, we obtain
\[
\begin{aligned}
\frac{p_w}{4\beta}
\left\|
\hat{w}^{k+1}-\hat{w}^k
\right\|^2
&=
\frac{p_w\beta}{4}
\left\|
w^{k+1}-\hat{w}^k
\right\|^2                                                     \\
&\ge
\frac{p_w\beta}{8}
\left\|
w^k-\hat{w}^k
\right\|^2
-
\frac{p_w\beta}{4}
\left\|
w^{k+1}-w^k
\right\|^2 .
\end{aligned}
\]
By Lemma \ref{Lemma: Deterministic Iterates Gap} and $\gamma_{x,k}= \gamma_{y,k} = \gamma_{w,k}$,
\[
\left\|
w^{k+1}-w^k
\right\|^2
\le
2\gamma_{w,k}^2
\left\|
\nabla_wK(w^k,\hat{w}^k,\lambda^k)
\right\|^2
+
\gamma_{w,k}^2l_{g,2}^2C_{\lambda}^4\tau^2.
\]
Denote $C_{w,k} = \frac{p_{w}\beta\gamma_{w,k}^2l_{g,2}^2C_{\lambda}^4}{4}$, and $C_{\tau,k} = C_{V,k} + C_{w,k}$. Thus,
\[
\begin{aligned}
V_k-V_{k+1}
&\geq
\left(
\frac{\gamma_{w,k}}{8}
-
\frac{p_w\beta\gamma_{w,k}^2}{2}
\right)
\left\|
\nabla_wK(w^k,\hat{w}^k,\lambda^k)
\right\|^2 +
\frac{1}{8\gamma_{\lambda,k}}
\left\|
\lambda^{k+1}-\lambda^k
\right\|^2
+
\frac{p_w\beta}{8}
\left\|
w^k-\hat{w}^k
\right\|^2                                                     \\
&\quad-
24p_w\beta
\left\|
w^\star(\hat{w}^k)
-
w^\star(\hat{w}^k,\lambda^k)
\right\|^2
-
C_{\tau,k}\tau^2.
\end{aligned}
\]
Since
\[
\gamma_{w,k}
\le
\frac{1}{8p_w\beta},
\]
we have
\[
\frac{\gamma_{w,k}}{8}
-
\frac{p_w\beta\gamma_{w,k}^2}{2}
\ge
\frac{\gamma_{w,k}}{16}.
\]
Therefore,
\[
\begin{aligned}
V_k-V_{k+1}
&\geq
\frac{\gamma_{w,k}}{16}
\left\|
\nabla_wK(w^k,\hat{w}^k,\lambda^k)
\right\|^2
+
\frac{1}{8\gamma_{\lambda,k}}
\left\|
\lambda^{k+1}-\lambda^k
\right\|^2  +
\frac{p_w\beta}{8}
\left\|
w^k-\hat{w}^k
\right\|^2                                                   \\
&\quad-
24p_w\beta
\left\|
w^\star(\hat{w}^k)
-
w^\star(\hat{w}^k,\lambda^k)
\right\|^2
-
C_{\tau,k}\tau^2.
\end{aligned}
\]

Next, we convert the actual projected dual step $(\lambda^{k+1}-\lambda^k)$ into the ghost projected dual step
$(\lambda^+(\hat{w}^k)-\lambda^k)$. By non-expansiveness of $\mathcal{P}_\Lambda$,
\[
\begin{aligned}
&\left\|
\lambda^{k+1}-\lambda^+(\hat{w}^k)
\right\|                                                       \\
&=
\left\|
\mathcal{P}_\Lambda\left(
\lambda^k+\gamma_{\lambda,k}
\left(
\nabla_y g(w^k)-p_\lambda\lambda^k
\right)
\right)
-
\mathcal{P}_\Lambda\left(
\lambda^k+\gamma_{\lambda,k}
\left(
\nabla_y g(w^\star(\hat{w}^k,\lambda^k))-p_\lambda\lambda^k
\right)
\right)
\right\|                                                       \\
&\le
\gamma_{\lambda,k}
\left\|
\nabla_y g(w^k)
-
\nabla_y g(w^\star(\hat{w}^k,\lambda^k))
\right\|                                                       \\
&\le
\gamma_{\lambda,k}l_{g,1}
\left\|
w^k-w^\star(\hat{w}^k,\lambda^k)
\right\|                                                       \\
&\le
\frac{\gamma_{\lambda,k}l_{g,1}}{p_w-l_{\mathcal L,1}}
\left\|
\nabla_wK(w^k,\hat{w}^k,\lambda^k)
\right\|.
\end{aligned}
\]
Hence, using $\|a - b\|^2 \geq \frac{1}{2}\|a\|^2 - \|b\|^2$,
\[
\begin{aligned}
\left\|
\lambda^{k+1}-\lambda^k
\right\|^2
&\ge
\frac12
\left\|
\lambda^+(\hat{w}^k)-\lambda^k
\right\|^2
-
\left\|
\lambda^{k+1}-\lambda^+(\hat{w}^k)
\right\|^2                                                     \\
&\ge
\frac12
\left\|
\lambda^+(\hat{w}^k)-\lambda^k
\right\|^2
-
\frac{\gamma_{\lambda,k}^2l_{g,1}^2}{(p_w-l_{\mathcal L,1})^2}
\left\|
\nabla_wK(w^k,\hat{w}^k,\lambda^k)
\right\|^2 .
\end{aligned}
\]
Therefore,
\[
\begin{aligned}
V_k-V_{k+1}
\ge\;&
\left(
\frac{\gamma_{w,k}}{16}
-
\frac{\gamma_{\lambda,k}l_{g,1}^2}{8(p_w-l_{\mathcal L,1})^2}
\right)
\left\|
\nabla_wK(w^k,\hat{w}^k,\lambda^k)
\right\|^2  +
\frac{1}{16\gamma_{\lambda,k}}
\left\|
\lambda^+(\hat{w}^k)-\lambda^k
\right\|^2
+
\frac{p_w\beta}{8}
\left\|
w^k-\hat{w}^k
\right\|^2                                                     \\
&-
24p_w\beta
\left\|
w^\star(\hat{w}^k)
-
w^\star(\hat{w}^k,\lambda^k)
\right\|^2
-
C_{\tau,k}\tau^2.
\end{aligned}
\]
Since
\[
p_w=2l_{\mathcal L,1},
\quad
\gamma_{\lambda,k}
\le\frac{l_{\mathcal L,1}^2}{16l_{g,1}^2}\gamma_{w,k}\leq  \frac{l_{\mathcal L,1}^2}{4l_{g,1}^2}\gamma_{w,k} =
\frac{(p_w-l_{\mathcal L,1})^2}{4l_{g,1}^2}\gamma_{w,k},
\]
we have
\[
\frac{\gamma_{w,k}}{16}
-
\frac{\gamma_{\lambda,k}l_{g,1}^2}{8(p_w-l_{\mathcal L,1})^2}
\ge
\frac{\gamma_{w,k}}{32}.
\]
Thus,
\[
\begin{aligned}
V_k-V_{k+1}
&\geq
\frac{\gamma_{w,k}}{32}
\left\|
\nabla_wK(w^k,\hat{w}^k,\lambda^k)
\right\|^2
+
\frac{1}{16\gamma_{\lambda,k}}
\left\|
\lambda^+(\hat{w}^k)-\lambda^k
\right\|^2    +
\frac{p_w\beta}{8}
\left\|
w^k-\hat{w}^k
\right\|^2                                                 \\
&\quad  -
24p_w\beta
\left\|
w^\star(\hat{w}^k)
-
w^\star(\hat{w}^k,\lambda^k)
\right\|^2
-
C_{\tau,k}\tau^2.
\end{aligned}
\]

Finally, by Lemma \ref{Lemma: Lipschitz continuity of w star},
\[
\begin{aligned}
&\left\|
w^\star(\hat{w}^k)
-
w^\star(\hat{w}^k,\lambda^k)
\right\|^2                                                     \\
&\le
2
\left\|
w^\star(\hat{w}^k)
-
w^\star(\hat{w}^k,\lambda^+(\hat{w}^k))
\right\|^2
+
2
\left\|
w^\star(\hat{w}^k,\lambda^+(\hat{w}^k))
-
w^\star(\hat{w}^k,\lambda^k)
\right\|^2                                                     \\
&\le
2
\left\|
w^\star(\hat{w}^k)
-
w^\star(\hat{w}^k,\lambda^+(\hat{w}^k))
\right\|^2
+
2\gamma_2^2
\left\|
\lambda^+(\hat{w}^k)-\lambda^k
\right\|^2 .
\end{aligned}
\]
Hence,
\[
\begin{aligned}
V_k-V_{k+1}
\ge\;&
\frac{\gamma_{w,k}}{32}
\left\|
\nabla_wK(w^k,\hat{w}^k,\lambda^k)
\right\|^2
+
\left(
\frac{1}{16\gamma_{\lambda,k}}
-
48p_w\beta\gamma_2^2
\right)
\left\|
\lambda^+(\hat{w}^k)-\lambda^k
\right\|^2                                                     \\
&+
\frac{p_w\beta}{8}
\left\|
w^k-\hat{w}^k
\right\|^2
-
48p_w\beta
\left\|
w^\star(\hat{w}^k)
-
w^\star(\hat{w}^k,\lambda^+(\hat{w}^k))
\right\|^2
-
C_{\tau,k}\tau^2.
\end{aligned}
\]
By
\[
\gamma_{\lambda,k}
\le
\frac{1}{1536p_w\beta\gamma_2^2},
\]
we have
\[
\frac{1}{16\gamma_{\lambda,k}}
-
48p_w\beta\gamma_2^2
\ge
\frac{1}{32\gamma_{\lambda,k}}.
\]
Moreover, by the definition
\[
w^+(\hat{w}^k,\lambda^k)
=
w^k-\gamma_{w,k}\nabla_wK(w^k,\hat{w}^k,\lambda^k),
\]
we have
\[
\frac{\gamma_{w,k}}{32}
\left\|
\nabla_wK(w^k,\hat{w}^k,\lambda^k)
\right\|^2
=
\frac{1}{32\gamma_{w,k}}
\left\|
w^k-w^+(\hat{w}^k,\lambda^k)
\right\|^2 .
\]
Therefore,
\[
\begin{aligned}
V_k-V_{k+1}
&\geq
\frac{1}{32\gamma_{w,k}}
\left\|
w^k-w^+(\hat{w}^k,\lambda^k)
\right\|^2
+
\frac{1}{32\gamma_{\lambda,k}}
\left\|
\lambda^+(\hat{w}^k)-\lambda^k
\right\|^2       +
\frac{p_w\beta}{8}
\left\|
w^k-\hat{w}^k
\right\|^2                                              \\
&\quad  -
48p_w\beta
\left\|
w^\star(\hat{w}^k)
-
w^\star(\hat{w}^k,\lambda^+(\hat{w}^k))
\right\|^2
-
C_{\tau,k}\tau^2.
\end{aligned}
\]
This completes the proof.
\end{proof}
\subsection{Oracle Complexity under Deterministic Setting}
\begin{lemma}
\label{Lemma: Error Bound 1}
Under Assumption \ref{Assumption: Bilevel Optimization}, suppose that
\[
\Lambda=\{\lambda:\|\lambda\|\le C_\lambda\},
\]
and let $p_w>l_{\mathcal L,1}$ and $\gamma_{\lambda,k}>0$. Define
\[
\lambda^+(\hat w^k)
=
\mathcal P_\Lambda
\left(
\lambda^k+\gamma_{\lambda,k}\nabla_\lambda\Psi(\hat w^k,\lambda^k)
\right).
\]
Then
\[
\left\|
w^\star(\hat w^k)
-
w^\star(\hat w^k,\lambda^+(\hat w^k))
\right\|
\le
\gamma_3
\left\|
\lambda^+(\hat w^k)-\lambda^k
\right\|,
\]
where
\[
\gamma_3
:=
\frac{
l_{\Psi,1}+\gamma_{\lambda,k}^{-1}
}{
\sqrt{
2(p_w-l_{\mathcal L,1})p_\lambda
+
\frac{(p_w-l_{\mathcal L,1})^2\mu_g^2}
{(p_w+l_{\mathcal L,1})^2}
}
}.
\]
\end{lemma}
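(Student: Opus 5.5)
The plan is a coupled strong-monotonicity argument for the saddle operator of $K(\cdot,\hat w^k,\cdot)$, followed by a reverse bound linking the dual gap to the primal gap. Fix $\hat w=\hat w^k$ and write $\hat\lambda^*=\hat\lambda^*(\hat w)$, $\bar w=w^\star(\hat w)$, $w'=w^\star(\hat w,\lambda^+)$ with $\lambda^+=\lambda^+(\hat w^k)$. Set
\[
a=\|\bar w-w'\|,\qquad b=\|\lambda^+-\hat\lambda^*\|,\qquad d=\|\lambda^+-\lambda^k\|,\qquad \mu_w=p_w-l_{\mathcal L,1},\qquad c=l_{\Psi,1}+\gamma_{\lambda,k}^{-1}.
\]

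\emph{Step 1 (saddle point).} Since $K$ is $\mu_w$-strongly convex in $w$ (Lemma~\ref{Lemma: SC property of K}) and $p_\lambda$-strongly concave in $\lambda$ on the compact set $\Lambda$, Sion's theorem (as in the proof of Lemma~\ref{Lemma: Lower Bound of Lyapunov Function}) gives a unique saddle point. That saddle point is $(\bar w,\hat\lambda^*)$, so in particular $\bar w=w^\star(\hat w,\hat\lambda^*)$. This yields
\[
\nabla_wK(\bar w,\hat w,\hat\lambda^*)=0,\qquad \langle\nabla_\lambda K(\bar w,\hat w,\hat\lambda^*),\lambda-\hat\lambda^*\rangle\le 0\quad\text{for all }\lambda\in\Lambda .
\]
At the other point we have $\nabla_wK(w',\hat w,\lambda^+)=0$, and by Danskin $\nabla_\lambda K(w',\hat w,\lambda^+)=\nabla_\lambda\Psi(\hat w,\lambda^+)$.

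\emph{Step 2 (upper bound via monotonicity).} Add the strong convexity inequality in $w$ and the strong concavity inequality in $\lambda$ between the points $(\bar w,\hat\lambda^*)$ and $(w',\lambda^+)$. Using the relations from Step 1, this gives
\[
\mu_w a^2+p_\lambda b^2\le \langle \nabla_\lambda\Psi(\hat w,\lambda^+),\hat\lambda^*-\lambda^+\rangle .
\]
Next split $\nabla_\lambda\Psi(\hat w,\lambda^+)=\bigl[\nabla_\lambda\Psi(\hat w,\lambda^+)-\nabla_\lambda\Psi(\hat w,\lambda^k)\bigr]+\nabla_\lambda\Psi(\hat w,\lambda^k)$. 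The first bracket is controlled by the $l_{\Psi,1}$-smoothness of $\Psi$ in $\lambda$ (Lemma~\ref{Lemma: Dual Ascent hat w lambda}). For the second term, the projection optimality condition gives
\[
\langle\lambda^k+\gamma_{\lambda,k}\nabla_\lambda\Psi(\hat w,\lambda^k)-\lambda^+,\hat\lambda^*-\lambda^+\rangle\le 0,
\]
hence $\langle\nabla_\lambda\Psi(\hat w,\lambda^k),\hat\lambda^*-\lambda^+\rangle\le\gamma_{\lambda,k}^{-1}\,d\,b$. Combining, I expect
\[
\mu_w a^2+p_\lambda b^2\le c\,d\,b .
\]

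\emph{Step 3 (the $\mu_g$ term).} This is the key structural step and the main obstacle, because it must extract curvature information from the $\lambda$-coupling. Subtract the two $y$-blocks of the stationarity conditions $\nabla_wK=0$ at $(\bar w,\hat\lambda^*)$ and at $(w',\lambda^+)$. This gives
\[
\nabla^2_{yy}g(\bar w)(\hat\lambda^*-\lambda^+)=-\bigl[\nabla_y\mathcal L(\bar w,\lambda^+)-\nabla_y\mathcal L(w',\lambda^+)\bigr]-p_w(\bar y-y').
\]
Using $\nabla^2_{yy}g\succeq\mu_g I$ on the left and Lemma~\ref{Lemma: Smoothness of L(w,lambda)} on the right, I obtain $\mu_g b\le(p_w+l_{\mathcal L,1})a$, i.e. $a^2\ge \mu_g^2 b^2/(p_w+l_{\mathcal L,1})^2$.

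\emph{Step 4 (combine).} Square the inequality from Step 2 and expand:
\[
(\mu_w a^2+p_\lambda b^2)^2\ge 2\mu_wp_\lambda a^2b^2+\mu_w^2a^4\ge\Bigl(2\mu_wp_\lambda+\tfrac{\mu_w^2\mu_g^2}{(p_w+l_{\mathcal L,1})^2}\Bigr)a^2b^2,
\]
where the last step uses Step 3 on the factor $a^4$. Together with $(\mu_w a^2+p_\lambda b^2)^2\le c^2d^2b^2$ this gives $a\le c\,d\big/\sqrt{2\mu_wp_\lambda+\mu_w^2\mu_g^2/(p_w+l_{\mathcal L,1})^2}=\gamma_3\,d$ when $b>0$. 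If $b=0$, then $\lambda^+=\hat\lambda^*$, so $w'=w^\star(\hat w,\hat\lambda^*)=\bar w$ and $a=0$, and the claimed bound holds trivially.
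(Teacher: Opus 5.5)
Your proposal is correct and follows essentially the same argument as the paper. Both derive $\mu_w a^2+p_\lambda b^2\le\bigl(l_{\Psi,1}+\gamma_{\lambda,k}^{-1}\bigr)\,d\,b$ from a joint primal--dual strong-monotonicity inequality plus the projection and normal-cone optimality conditions. Both prove the lower response bound $\mu_g b\le(p_w+l_{\mathcal L,1})\,a$: the paper uses $J(w)J(w)^\top\succeq\mu_g^2 I$, while you use the $y$-block of the two stationarity conditions, with the same constant. Both then finish with the identical squaring step, and your explicit identification $w^\star(\hat w)=w^\star(\hat w,\hat\lambda^\star)$ via the saddle point and your separate handling of $b=0$ fill in two details the paper leaves implicit.
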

\begin{proof}
By Danskin's theorem,
\[
\nabla_\lambda\Psi(\hat w^k,\lambda)
=
\nabla_y g(w^\star(\hat w^k,\lambda))-p_\lambda\lambda.
\]
We first show that $-\nabla_\lambda\Psi(\hat w^k,\lambda)$ is $\left(p_\lambda+
\frac{(p_w-l_{\mathcal L,1})\mu_g^2}{(p_w+l_{\mathcal L,1})^2}\right)$-strongly monotone on $\Lambda$. Since $K(\cdot,\hat w^k,\lambda)$ is
$(p_w-l_{\mathcal L,1})$-strongly convex and
$(p_w+l_{\mathcal L,1})$-smooth with respect to $w$, we have
\[
K(w^*(\hat{w}^k,\lambda_2),\hat w^k,\lambda_1)-K(w^*(\hat{w}^k,\lambda_1),\hat w^k,\lambda_1)
\ge
\frac{p_w-l_{\mathcal L,1}}{2}\|w^*(\hat{w}^k,\lambda_1) - w^*(\hat{w}^k,\lambda_2)\|^2,
\]
and
\[
K(w^*(\hat{w}^k,\lambda_1),\hat w^k,\lambda_2)-K(w^*(\hat{w}^k,\lambda_2),\hat w^k,\lambda_2)
\ge
\frac{p_w-l_{\mathcal L,1}}{2}\|w^*(\hat{w}^k,\lambda_1) - w^*(\hat{w}^k,\lambda_2)\|^2.
\]
Adding the two inequalities, and for any fixed $w$, we have
\[
K(w,\hat w^k,\lambda_1)-K(w,\hat w^k,\lambda_2)
=
(\lambda_1 - \lambda_2)^\top\nabla_y g(w)
-
\frac{p_\lambda}{2}
\left(
\|\lambda_1\|^2-\|\lambda_2\|^2
\right).
\]
Then, we obtain
\[
-\left\langle \nabla_y g(w^*(\hat{w}^k,\lambda_1))-\nabla_y g(w^*(\hat{w}^k,\lambda_2)),\lambda_1 - \lambda_2\right\rangle
\ge
(p_w-l_{\mathcal L,1})\|w^*(\hat{w}^k,\lambda_1) - w^*(\hat{w}^k,\lambda_2)\|^2.
\]
Hence,
\[
\begin{aligned}
&\left\langle
-\nabla_\lambda\Psi(\hat w^k,\lambda_1)-(-\nabla_\lambda\Psi(\hat w^k,\lambda_2)),
\lambda_1 - \lambda_2
\right\rangle\\
&=
p_\lambda\|\lambda_1 - \lambda_2\|^2
-
\left\langle
\nabla_y g(w^*(\hat{w}^k,\lambda_1))-\nabla_y g(w^*(\hat{w}^k,\lambda_2)),\lambda_1 - \lambda_2
\right\rangle                                      \\
&\ge
p_\lambda\|\lambda_1 - \lambda_2\|^2
+
(p_w-l_{\mathcal L,1})\|w^*(\hat{w}^k,\lambda_1) - w^*(\hat{w}^k,\lambda_2)\|^2.
\end{aligned}
\]
By the first-order optimality of
$w^*(\hat{w}^k,\lambda_1)$ and $w^*(\hat{w}^k,\lambda_2)$,
\[
\nabla_w K(w^*(\hat{w}^k,\lambda_1),\hat w^k,\lambda_1)=0,
\quad
\nabla_w K(w^*(\hat{w}^k,\lambda_2),\hat w^k,\lambda_2)=0.
\]
Moreover,
\[
\nabla_w K(w,\hat w^k,\lambda_1)
-
\nabla_w K(w,\hat w^k,\lambda_2)
=
J(w)^\top(\lambda_1 - \lambda_2),
\]
where
\[
J(w):=\nabla_w(\nabla_y g(w)).
\]
Thus,
\[
\nabla_w K(w^*(\hat{w}^k,\lambda_2),\hat w^k,\lambda_1)
=
J(w^*(\hat{w}^k,\lambda_2))^\top(\lambda_1 - \lambda_2).
\]
Since $K(\cdot,\hat w^k,\lambda_1)$ is
$(p_w+l_{\mathcal L,1})$-smooth and
\[
\nabla_w K(w^*(\hat{w}^k,\lambda_1),\hat w^k,\lambda_1)=0,
\]
we have
\begin{align*}
    \|J(w^*(\hat{w}^k,\lambda_2))^\top(\lambda_1 - \lambda_2)\|
&=
\|\nabla_w K(w^*(\hat{w}^k,\lambda_2),\hat w^k,\lambda_1)
-
\nabla_w K(w^*(\hat{w}^k,\lambda_1),\hat w^k,\lambda_1)\| \\
&\le
(p_w+l_{\mathcal L,1})\|w^*(\hat{w}^k,\lambda_1) - w^*(\hat{w}^k,\lambda_2)\|.
\end{align*}
On the other hand, by the strong convexity of $g(x,\cdot)$,
\[
J(w)J(w)^\top\succeq \mu_g^2 I.
\]
Indeed, for any $v\in\mathbb R^{d_y}$,
\[
\|J(w)^\top v\|^2
=
\|\nabla_{xy}^2g(w) v\|^2
+
\|\nabla_{yy}^2g(w)v\|^2
\ge
\mu_g^2\|v\|^2.
\]
Therefore,
\[
\|J(w^*(\hat{w}^k,\lambda_2))^\top(\lambda_1 - \lambda_2)\|
\ge
\mu_g\|\lambda_1 - \lambda_2\|.
\]
Combining the last two displays gives
\[
\|w^*(\hat{w}^k,\lambda_1) - w^*(\hat{w}^k,\lambda_2)\|
\ge
\frac{\mu_g}{p_w+l_{\mathcal L,1}}
\|\lambda_1 - \lambda_2\|.
\]
Consequently,
\[
\left\langle
-\nabla_\lambda\Psi(\hat w^k,\lambda_1)-(-\nabla_\lambda\Psi(\hat w^k,\lambda_2)),
\lambda_1-\lambda_2
\right\rangle
\ge
\left(
p_\lambda
+
\frac{(p_w-l_{\mathcal L,1})\mu_g^2}{(p_w+l_{\mathcal L,1})^2}
\right)
\|\lambda_1-\lambda_2\|^2.
\]
Thus, $-\nabla_\lambda\Psi(\hat w,\lambda)$ is $\left(
p_\lambda
+
\frac{(p_w-l_{\mathcal L,1})\mu_g^2}{(p_w+l_{\mathcal L,1})^2}
\right)$-strongly monotone on $\Lambda$.

Next, let
\[
\hat\lambda^\star(\hat w^k)
=
\arg\max_{\lambda\in\Lambda}\Psi(\hat w^k,\lambda).
\]
Equivalently,
\[
0\in -\nabla_\lambda\Psi(\hat{w}^k,\hat\lambda^\star(\hat{w}^k))+N_\Lambda(\hat\lambda^\star(\hat w^k)).
\]
Since $-\nabla_\lambda\Psi(\hat w,\lambda)$ is $\left(
p_\lambda
+
\frac{(p_w-l_{\mathcal L,1})\mu_g^2}{(p_w+l_{\mathcal L,1})^2}
\right)$-strongly monotone, the error bound for strongly monotone variational inequalities of $-\nabla_\lambda\Psi(\hat w,\lambda)$ gives, for any $\lambda\in\Lambda$,
\[
\|\lambda-\hat\lambda^\star(\hat{w}^k)\|
\le
\frac{1}{p_\lambda
+
\frac{(p_w-l_{\mathcal L,1})\mu_g^2}{(p_w+l_{\mathcal L,1})^2}}
\operatorname{dist}
\left(
0,
-\nabla_\lambda\Psi(\hat{w}^k,\lambda)+N_\Lambda(\lambda)
\right).
\]
We apply this variational inequality at $\lambda=\lambda^+(\hat w^k)$. By the projection optimality condition for
\[
\lambda^+(\hat w^k)
=
\mathcal P_\Lambda
\left(
\lambda^k + \gamma_{\lambda,k}\nabla_\lambda\Psi(\hat{w}^k,\lambda^k)
\right),
\]
we have
\[
\frac{1}{\gamma_{\lambda,k}}
(\lambda^k-\lambda^+(\hat w^k))
+ \nabla_\lambda\Psi(\hat{w}^k,\lambda^k)
\in
N_\Lambda(\lambda^+(\hat w^k)).
\]
Therefore,
\[
-\nabla_\lambda\Psi(\hat{w}^k,\lambda^+(\hat w^k))
+
\frac{1}{\gamma_{\lambda,k}}
(\lambda^k-\lambda^+(\hat w^k))
+ \nabla_\lambda\Psi(\hat{w}^k,\lambda^k)
\in
-\nabla_\lambda\Psi(\hat{w}^k,\lambda^+(\hat w^k))+N_\Lambda(\lambda^+(\hat w^k)).
\]
Hence,
\[
\begin{aligned}
&\operatorname{dist}
\left(
0,
-\nabla_\lambda\Psi(\hat{w}^k,\lambda^+(\hat w^k))+N_\Lambda(\lambda^+(\hat w^k))
\right)                                                       \\
&\le
\left\|
-\nabla_\lambda\Psi(\hat{w}^k,\lambda^+(\hat w^k))-(-\nabla_\lambda\Psi(\hat{w}^k,\lambda^k))
+
\frac{1}{\gamma_{\lambda,k}}
(\lambda^k-\lambda^+(\hat w^k))
\right\|                                                      \\
&\le
\left(
l_{\Psi,1}
+
\gamma_{\lambda,k}^{-1}
\right)
\|\lambda^+(\hat w^k)-\lambda^k\|.
\end{aligned}
\]
We now use the joint primal--dual coercivity retained in the preceding
calculation. The optimality conditions give 
\[
\nabla_\lambda\Psi
(\hat w^k,\hat\lambda^\star(\hat w^k))
\in N_\Lambda(\hat\lambda^\star(\hat w^k)),\quad \frac{\lambda^k-\lambda^+(\hat w^k)}{\gamma_{\lambda,k}}
+
\nabla_\lambda\Psi(\hat w^k,\lambda^k)
\in N_\Lambda(\lambda^+(\hat w^k)).
\]
By monotonicity of
$N_\Lambda$ and the joint monotonicity inequality proved above,
\begin{align*}
&
p_\lambda
\left\|
\lambda^+(\hat w^k)-\hat\lambda^\star(\hat w^k)
\right\|^2
+
(p_w-l_{\mathcal L,1})
\left\|
w^\star(\hat w^k,\lambda^+(\hat w^k))
-
w^\star(\hat w^k,\hat\lambda^\star(\hat w^k))
\right\|^2                                                     \\
&\le
\left\langle
-\nabla_\lambda\Psi
(\hat w^k,\lambda^+(\hat w^k))+\frac{\lambda^k-\lambda^+(\hat w^k)}{\gamma_{\lambda,k}}
+
\nabla_\lambda\Psi(\hat w^k,\lambda^k),\,
\lambda^+(\hat w^k)-\hat\lambda^\star(\hat w^k)
\right\rangle                                                  \\
&\le
\left(
l_{\Psi,1}+\gamma_{\lambda,k}^{-1}
\right)
\left\|
\lambda^+(\hat w^k)-\lambda^k
\right\|
\left\|
\lambda^+(\hat w^k)-\hat\lambda^\star(\hat w^k)
\right\|.
\end{align*}
Furthermore, the lower response bound proved above implies
\[
\left\|
w^\star(\hat w^k,\lambda^+(\hat w^k))
-
w^\star(\hat w^k,\hat\lambda^\star(\hat w^k))
\right\|
\ge
\frac{\mu_g}{p_w+l_{\mathcal L,1}}
\left\|
\lambda^+(\hat w^k)-\hat\lambda^\star(\hat w^k)
\right\|.
\]
Then, we have
\begin{align*}
&\left(p_\lambda
\left\|
\lambda^+(\hat w^k)-\hat\lambda^\star(\hat w^k)
\right\|^2
+
(p_w-l_{\mathcal L,1})
\left\|
w^\star(\hat w^k,\lambda^+(\hat w^k))
-
w^\star(\hat w^k,\hat\lambda^\star(\hat w^k))
\right\|^2\right)^2 \\  
&\ge
\left(
2(p_w-l_{\mathcal L,1})p_\lambda
+
\frac{(p_w-l_{\mathcal L,1})^2\mu_g^2}
{(p_w+l_{\mathcal L,1})^2}
\right)\left\|
\lambda^+(\hat w^k)-\hat\lambda^\star(\hat w^k)
\right\|^2\left\|
w^\star(\hat w^k,\lambda^+(\hat w^k))
-
w^\star(\hat w^k,\hat\lambda^\star(\hat w^k))
\right\|^2.
\end{align*}
Combining two inequalities and
dividing by
$\|\lambda^+(\hat w^k)-\hat\lambda^\star(\hat w^k)\|$ when it is nonzero
gives
\[
\left\|
w^\star(\hat w^k)
-
w^\star(\hat w^k,\lambda^+(\hat w^k))
\right\|
\le
\frac{
l_{\Psi,1}+\gamma_{\lambda,k}^{-1}
}{
\sqrt{
2(p_w-l_{\mathcal L,1})p_\lambda
+
\frac{(p_w-l_{\mathcal L,1})^2\mu_g^2}
{(p_w+l_{\mathcal L,1})^2}
}
}
\left\|
\lambda^+(\hat w^k)-\lambda^k
\right\|.
\]
This proves the claim.
\end{proof}

\paragraph{Proof of Proposition \ref{Proposition: Recursion of Lyapunov Fucntion}.}
By Proposition \ref{Proposition: Deterministic Lyapunov Function Descent}, we know that
    \begin{align*}
        V_k - V_{k+1}
        &\geq \frac{1}{32\gamma_{w,k}}\|w^k - w^+(\hat{w}^k,\lambda^k)\|^2  + \frac{1}{32\gamma_{\lambda,k}}\|\lambda^+(\hat{w}^{k}) - \lambda^k\|^2 + \frac{p_w\beta}{8}\|w^{k} - \hat{w}^k\|^2\\
        &\quad - 48p_w\beta \|w^*(\hat{w}^{k}) - w^*(\hat{w}^{k},\lambda^{+}(\hat{w}^k))\|^2 - C_{\tau,k}\tau^2.
    \end{align*}
    By Lemma \ref{Lemma: Error Bound 1}, and choosing $\beta \leq \frac{1}{3072p_w\gamma_{\lambda,k}\gamma_3^2}$, we have
    \begin{align*}
    48p_w\beta \|w^*(\hat{w}^{k}) - w^*(\hat{w}^{k},\lambda^{+}(\hat{w}^k))\|^2 \leq 48p_w\beta\gamma_3^2\|\lambda^+(\hat{w}^{k}) - \lambda^k\|^2\leq \frac{1}{64\gamma_{\lambda,k}}\|\lambda^+(\hat{w}^{k}) - \lambda^k\|^2,
    \end{align*}
    which implies the conclusion.

\paragraph{Proof of Lemma \ref{Lemma: Deterministic Stationary Point Bridge}.}
By the definition of $w^+(\hat{w}^k,\lambda^k)$,
\[
w^+(\hat{w}^k,\lambda^k)
=
w^k-\gamma_{w,k}\nabla_wK(w^k,\hat{w}^k,\lambda^k).
\]
Hence
\[
\|\nabla_wK(w^k,\hat{w}^k,\lambda^k)\|
\le
\gamma_{w,k}^{-1}
\|w^k-w^+(\hat{w}^k,\lambda^k)\|
\le
\gamma_{w,k}^{-1}\epsilon_{\mathcal{L}}.
\]
Since
\[
\nabla_wK(w^k,\hat{w}^k,\lambda^k)
=
\nabla_w\mathcal{L}(w^k,\lambda^k)+p_w(w^k-\hat{w}^k),
\]
we have
\[
\|\nabla_w\mathcal{L}(w^k,\lambda^k)\|
\le
\|\nabla_wK(w^k,\hat{w}^k,\lambda^k)\|
+
p_w\|w^k-\hat{w}^k\|
\le
\left(\gamma_{w,k}^{-1}+\bar{\kappa}_yp_w\right)\epsilon_{\mathcal{L}}.
\]

Next, we prove that the projected dual step is inactive. From the bound on
$\nabla_y\mathcal{L}(x^k,y^k,\lambda^k)$, we have
\[
\left\|
\nabla_y f(x^k,y^k)+\nabla_{yy}^2g(x^k,y^k)\lambda^k
\right\|
\le
\left(\gamma_{w,k}^{-1}+\bar{\kappa}_yp_w\right)\epsilon_{\mathcal{L}}.
\]
Using $\nabla_{yy}^2g(x^k,y^k)\succeq \mu_g I$ and
$\|\nabla_y f(x^k,y^k)\|\le l_{f,0}$, we obtain
\[
\mu_g\|\lambda^k\|
\le
\|\nabla_{yy}^2g(x^k,y^k)\lambda^k\|
\le
l_{f,0}
+
\left(\gamma_{w,k}^{-1}+\bar{\kappa}_yp_w\right)\epsilon_{\mathcal{L}}.
\]
By the choice of $\epsilon_{\mathcal{L}}$,
\[
\|\lambda^k\|
\le
\frac{l_{f,0}}{\mu_g}
+
\frac14
\left(
C_\lambda-\frac{l_{f,0}}{\mu_g}
\right)
=
C_\lambda
-
\frac34
\left(
C_\lambda-\frac{l_{f,0}}{\mu_g}
\right).
\]
Thus $\lambda^k$ lies in the interior of $\Lambda$ with distance at least $\frac34
\left(
C_\lambda-\frac{l_{f,0}}{\mu_g}
\right)$ from the boundary. Since
\[
\|\lambda^+(\hat{w}^k)\| - \|\lambda^k\| \leq \|\lambda^+(\hat{w}^k)-\lambda^k\|
\le
\bar{\kappa}_y\epsilon_{\mathcal{L}}
\le
\frac14
\left(
C_\lambda-\frac{l_{f,0}}{\mu_g}
\right),
\]
we have
\[
\|\lambda^+(\hat{w}^k)\| \leq C_{\lambda} - \frac{1}{2}\left(
C_\lambda-\frac{l_{f,0}}{\mu_g}
\right) < C_{\lambda},
\]
which implies the projection in the definition of $\lambda^+(\hat{w}^k)$ is inactive. Therefore,
\[
\lambda^+(\hat{w}^k)
=
\lambda^k
+
\gamma_{\lambda,k}\nabla_\lambda\Psi(\hat{w}^k,\lambda^k).
\]
By Danskin's theorem,
\[
\nabla_\lambda\Psi(\hat{w}^k,\lambda^k)
=
\nabla_y g(w^\star(\hat{w}^k,\lambda^k))-p_\lambda\lambda^k.
\]
Hence
\[
\left\|
\nabla_y g(w^\star(\hat{w}^k,\lambda^k))-p_\lambda\lambda^k
\right\|
=
\gamma_{\lambda,k}^{-1}
\|\lambda^+(\hat{w}^k)-\lambda^k\|
\le
\bar{\kappa}_y\gamma_{\lambda,k}^{-1}\epsilon_{\mathcal{L}}.
\]
It remains to transfer this bound from $w^\star(\hat{w}^k,\lambda^k)$ to $w^k$.
By Lemma \ref{Lemma: SC property of K} and the optimality condition
\[
\nabla_wK(w^\star(\hat{w}^k,\lambda^k),\hat{w}^k,\lambda^k)=0,
\]
we have
\[
\begin{aligned}
\|w^k-w^\star(\hat{w}^k,\lambda^k)\|
&\le
\frac{1}{p_w-l_{\mathcal L,1}}
\|\nabla_wK(w^k,\hat{w}^k,\lambda^k)\|                                      \\
&\le
\frac{1}{(p_w-l_{\mathcal L,1})\gamma_{w,k}}\epsilon_{\mathcal{L}}.
\end{aligned}
\]
Therefore,
\[
\begin{aligned}
\|\nabla_y g(w^k)-p_\lambda\lambda^k\|
&\le
\|\nabla_y g(w^k)-\nabla_y g(w^\star(\hat{w}^k,\lambda^k))\|
+
\|\nabla_y g(w^\star(\hat{w}^k,\lambda^k))-p_\lambda\lambda^k\|              \\
&\le
\frac{l_{g,1}}{(p_w-l_{\mathcal L,1})\gamma_{w,k}}\epsilon_{\mathcal{L}}
+
\bar{\kappa}_y\gamma_{\lambda,k}^{-1}\epsilon_{\mathcal{L}}.
\end{aligned}
\]
Since $\|\lambda^k\|\le C_\lambda$, we get
\begin{align*}
    \|\nabla_\lambda \mathcal{L}(x^k,y^k,\lambda^k)\|
&=
\|\nabla_y g(x^k,y^k)\| \\
&\le
\|\nabla_y g(w^k)-p_\lambda\lambda^k\|
+
p_\lambda\|\lambda^k\| \\
&\leq \left(
\frac{l_{g,1}}{(p_w-l_{\mathcal L,1})\gamma_{w,k}}
+
\bar{\kappa}_y\gamma_{\lambda,k}^{-1}
\right)\epsilon_{\mathcal L}
+
p_\lambda C_\lambda
.
\end{align*}
This completes the proof.

\paragraph{Proof of Theorem \ref{Theorem: Deterministic Sample Complexity}.}

We first verify that the explicit parameters in the theorem satisfy the
conditions of Proposition~\ref{Proposition: Deterministic Lyapunov Function Descent}
and Lemma~\ref{Lemma: Error Bound 1}.
Under $C_\lambda=2\bar\kappa_y$, the basic smoothness bound is
\begin{align*}
\sqrt{2}\bigl(l_{f,1}+2\bar\kappa_y l_{g,2}\bigr) \le
\sqrt{2}\bar L(1+2\bar\kappa_y) \le
3\sqrt{2}\bar L\bar\kappa_y
<
8\bar L\bar\kappa_y
=:
L_0.
\end{align*}
Thus, in the generic results above we choose the admissible smoothness
upper bound $l_{\mathcal L,1}:=L_0$. In particular, the theorem's choice
\[
p_w=16\bar L\bar\kappa_y=2L_0
\]
has the required form.

Define $\epsilon_{\mathcal L} = \frac{\epsilon}{2^{11}(1+\bar L)\bar \kappa_y^3}$. The choice
$
p_\lambda
=
\frac18
\min\left\{
\mu_g,\frac{\epsilon_{\mathcal L}}{\bar\kappa_y}
\right\}
$
satisfies
\[
p_\lambda
\le
\frac{\mu_g}{8}
=
\frac{\bar L}{8\bar\kappa_y},
\qquad
p_\lambda C_\lambda
\le
\frac{\epsilon_{\mathcal L}}{4}.
\]
Moreover, the auxiliary constants used in the appendix satisfy
\[
\gamma_1
=
\frac{p_w}{p_w-L_0}
=
2,
\quad
\gamma_2
=
\frac{l_{g,1}}{p_w-L_0}
=
\frac{l_{g,1}}{L_0}
\le
\frac{1}{8\bar\kappa_y}.
\]
Also,
\begin{align*}
l_{\Psi,1}
=
p_\lambda+l_{g,1}\gamma_2 =
p_\lambda+\frac{l_{g,1}^2}{L_0} \le
\frac{\bar L}{8\bar\kappa_y}
+
\frac{\bar L}{8\bar\kappa_y}
=
\frac{\bar L}{4\bar\kappa_y}.
\end{align*}

For the constant $\gamma_3$ in
Lemma~\ref{Lemma: Error Bound 1}, its denominator satisfies
\begin{align*}
\sqrt{
2(p_w-L_0)p_\lambda
+
\frac{(p_w-L_0)^2\mu_g^2}{(p_w+L_0)^2}
}
\ge
\frac{(p_w-L_0)\mu_g}{p_w+L_0}
=
\frac{\mu_g}{3}
=
\frac{\bar L}{3\bar\kappa_y}.
\end{align*}
Since
$
\gamma_\lambda^{-1}
=
\frac{2^8\bar L}{\bar\kappa_y}$,
we obtain
\begin{align*}
\gamma_3
=
\frac{
l_{\Psi,1}+\gamma_\lambda^{-1}
}{
\sqrt{
2(p_w-L_0)p_\lambda
+
\frac{(p_w-L_0)^2\mu_g^2}{(p_w+L_0)^2}
}
} \le
\frac{
\frac{\bar L}{4\bar\kappa_y}
+
\frac{2^8\bar L}{\bar\kappa_y}
}{
\bar L/(3\bar\kappa_y)
} =
\frac{3075}{4}.
\end{align*}

We next verify the stepsize conditions in Proposition~\ref{Proposition: Deterministic Lyapunov Function Descent}. The primal stepsize satisfies
\[
\gamma_w
=
\frac{1}{2^8\bar L\bar\kappa_y}
\le
\frac{1}{8(p_w+L_0)}
=
\frac{1}{192\bar L\bar\kappa_y}.
\]
Since $\beta\le 1$, it also satisfies
$
\gamma_w
\le
\frac{1}{8p_w\beta}.
$

For the first multiplier-stepsize condition,
\begin{align*}
2\left(
2l_{\Psi,1}
+p_\lambda
+\frac{l_{g,1}^2}{p_w+L_0}
\right)
\le
2\left(
\frac{\bar L}{2\bar\kappa_y}
+
\frac{\bar L}{8\bar\kappa_y}
+
\frac{\bar L}{24\bar\kappa_y}
\right) =
\frac{4\bar L}{3\bar\kappa_y}.
\end{align*}
Hence,
\[
\gamma_\lambda \le \frac{3\bar\kappa_y}{4\bar L} \le \frac{1}{
2\left(
2l_{\Psi,1}
+p_\lambda+
l_{g,1}^2/(p_w+L_0)
\right)}.
\]
For the second condition,
\[
\frac{L_0^2}{16l_{g,1}^2}\gamma_w
\ge
\frac{\bar\kappa_y}{64\bar L}
\ge
\frac{\bar\kappa_y}{2^8\bar L}
=
\gamma_\lambda.
\]
For the third condition, as $\beta\le 2/3$,
\[
\frac{1}{1536p_w\beta\gamma_2^2}
\ge
\frac{\bar\kappa_y}{384\bar L\beta}
\ge
\frac{\bar\kappa_y}{2^8\bar L}
=
\gamma_\lambda.
\]

The parameter $\beta$ satisfies
\[
\beta
=
\frac{1}{2^{30}\bar\kappa_y^2}
\le
\frac{\sqrt{5}-1}{96}
=
\frac{\sqrt{5}-1}{48\gamma_1}.
\]
Furthermore,
\begin{align*}
3072p_w\gamma_\lambda\gamma_3^2
\le
3072
\left(16\bar L\bar\kappa_y\right)
\left(\frac{\bar\kappa_y}{2^8\bar L}\right)
\left(\frac{3075}{4}\right)^2 =
12(3075)^2\bar\kappa_y^2 <
2^{30}\bar\kappa_y^2
=
\beta^{-1}.
\end{align*}
Thus,
$
\beta
\le
\frac{1}{3072p_w\gamma_\lambda\gamma_3^2}.
$ Therefore, all the conditions of
Proposition~\ref{Proposition: Deterministic Lyapunov Function Descent}, Lemma~\ref{Lemma: Error Bound 1} and Proposition~\ref{Proposition: Recursion of Lyapunov Fucntion} are satisfied.

Because the stepsizes are constant, $C_{\tau, k} \equiv C_\tau$ in Proposition~\ref{Proposition: Deterministic Lyapunov Function Descent} is a constant.
Proposition~\ref{Proposition: Deterministic Lyapunov Function Descent}
and Lemma~\ref{Lemma: Error Bound 1} give
\begin{align}
V_k-V_{k+1}
\ge\;
\frac{1}{32\gamma_w}
\left\|
w^k-w^+(\hat w^k,\lambda^k)
\right\|^2
+
\frac{1}{64\gamma_\lambda}
\left\|
\lambda^+(\hat w^k)-\lambda^k
\right\|^2
+
\frac{p_w\beta}{8}
\left\|
w^k-\hat w^k
\right\|^2
-
C_\tau\tau^2.
\label{eq:explicit-deterministic-descent}
\end{align}

Define
\[
a
:=
\min\left\{
\frac{1}{32\gamma_w},
\frac{\bar\kappa_y^2}{64\gamma_\lambda},
\frac{\bar\kappa_y^2p_w\beta}{8}
\right\}.
\]
Under the explicit parameter choices,
\[
\frac{1}{32\gamma_w}
=
8\bar L\bar\kappa_y,
\quad
\frac{\bar\kappa_y^2}{64\gamma_\lambda}
=
4\bar L\bar\kappa_y,
\quad
\frac{\bar\kappa_y^2p_w\beta}{8}
=
\frac{\bar L\bar\kappa_y}{2^{29}}.
\]
Since $\bar\kappa_y\ge1$, it follows that
\[
a
=
\frac{\bar L\bar\kappa_y}{2^{29}}.
\]

Then \eqref{eq:explicit-deterministic-descent} implies
\[
V_k-V_{k+1}
\ge
a\max\left\{
\|w^k-w^+(\hat w^k,\lambda^k)\|^2,\,
\bar\kappa_y^{-2}
\|\lambda^+(\hat w^k)-\lambda^k\|^2,\,
\bar\kappa_y^{-2}
\|w^k-\hat w^k\|^2
\right\}
-C_\tau\tau^2.
\]

We next bound $C_\tau$. Using
$l_{g,2}\le\bar L$, $C_\lambda=2\bar\kappa_y$, and $\beta\le1$,
we obtain
\begin{align*}
C_\tau
\le
\left(
\frac{1}{2^{10}\bar L\bar\kappa_y}
+
\frac{3}{2^{13}\bar L\bar\kappa_y}
+
\frac{1}{2^{14}\bar L\bar\kappa_y}
\right)
\bar L^2(2\bar\kappa_y)^4 =
\frac{23}{2^{10}}
\bar L\bar\kappa_y^3.
\end{align*}

Summing the descent inequality from $k=0$ to $K-1$ and using
$V_K\ge\underline f$ gives
\[
a\sum_{k=0}^{K-1}
\max\left\{
\|w^k-w^+(\hat w^k,\lambda^k)\|^2,\,
\bar\kappa_y^{-2}
\|\lambda^+(\hat w^k)-\lambda^k\|^2,\,
\bar\kappa_y^{-2}
\|w^k-\hat w^k\|^2
\right\}
\le
V_0-\underline f
+
KC_\tau\tau^2.
\]
Consequently, there exists an index
$k_\star\in\{0,\ldots,K-1\}$ such that
\[
\max\left\{
\|w^{k_\star}-w^+(\hat w^{k_\star},\lambda^{k_\star})\|^2,\,
\bar\kappa_y^{-2}
\|\lambda^+(\hat w^{k_\star})-\lambda^{k_\star}\|^2,\,
\bar\kappa_y^{-2}
\|w^{k_\star}-\hat w^{k_\star}\|^2
\right\}
\le
\frac{V_0-\underline f}{aK}
+
\frac{C_\tau}{a}\tau^2.
\]

Denote
$
\Delta_V:=V_0-\underline f$.
With
\[
K
\ge
\frac{2^{30}\Delta_V}
{\bar L\bar\kappa_y\epsilon_{\mathcal L}^2},
\]
we have
$
\frac{\Delta_V}{aK}
\le
\frac12\epsilon_{\mathcal L}^2$.
Moreover, because
$
\tau
=
\frac{\epsilon_{\mathcal L}}
{2^{13}\bar\kappa_y}$,
we have
\begin{align*}
\frac{C_\tau}{a}\tau^2
\le
\frac{
(23/2^{10})\bar L\bar\kappa_y^3
}{
\bar L\bar\kappa_y/2^{29}
}
\frac{\epsilon_{\mathcal L}^2}
{2^{26}\bar\kappa_y^2} =
\frac{23}{128}\epsilon_{\mathcal L}^2.
\end{align*}
Therefore,
\[
\begin{aligned}
\max\biggl\{
\|w^{k_\star}-w^+(\hat w^{k_\star},\lambda^{k_\star})\|^2, \bar\kappa_y^{-2}
\|\lambda^+(\hat w^{k_\star})-\lambda^{k_\star}\|^2, \bar\kappa_y^{-2}
\|w^{k_\star}-\hat w^{k_\star}\|^2
\biggr\}
&\le
\left(
\frac12+\frac{23}{128}
\right)
\epsilon_{\mathcal L}^2\\
&=
\frac{87}{128}\epsilon_{\mathcal L}^2 \\
&<
\epsilon_{\mathcal L}^2.
\end{aligned}
\]
It follows that
\[
\|w^{k_\star}-w^+(\hat w^{k_\star},\lambda^{k_\star})\|
\le
\epsilon_{\mathcal L}, \quad
\|\lambda^+(\hat w^{k_\star})-\lambda^{k_\star}\|
\le
\bar\kappa_y\epsilon_{\mathcal L},
\quad
\|w^{k_\star}-\hat w^{k_\star}\|
\le
\bar\kappa_y\epsilon_{\mathcal L}.
\]

We now verify the interiority conditions in
Lemma~\ref{Lemma: Deterministic Stationary Point Bridge}. Since
$l_{f,0}\le\bar L$,
\[
C_\lambda-\frac{l_{f,0}}{\mu_g}
=
2\bar\kappa_y-\frac{l_{f,0}}{\mu_g}
\ge
\bar\kappa_y.
\]
Furthermore,
\begin{align*}
\gamma_w^{-1}+\bar\kappa_yp_w
=
2^8\bar L\bar\kappa_y
+
16\bar L\bar\kappa_y^2 \le
272\bar L\bar\kappa_y^2.
\end{align*}
Using
$
\epsilon_{\mathcal L}
=
\frac{\epsilon}
{2^{11}(1+\bar L)\bar\kappa_y^3}$ and
$0<\epsilon\le1$,
we obtain
\begin{align*}
\left(
\gamma_w^{-1}+\bar\kappa_yp_w
\right)
\epsilon_{\mathcal L}
\le
\frac{17}{128}
\frac{\bar L}{1+\bar L}
\frac{\epsilon}{\bar\kappa_y} <
\frac{\bar L}{4} =
\frac{\mu_g\bar\kappa_y}{4} \le
\frac{\mu_g}{4}
\left(
C_\lambda-\frac{l_{f,0}}{\mu_g}
\right).
\end{align*}
Similarly,
\[
\bar\kappa_y\epsilon_{\mathcal L}
=
\frac{\epsilon}
{2^{11}(1+\bar L)\bar\kappa_y^2}
<
\frac14
\left(
C_\lambda-\frac{l_{f,0}}{\mu_g}
\right).
\]
Hence, Lemma~\ref{Lemma: Deterministic Stationary Point Bridge}
applies at $k_\star$.

For the primal component, it gives
\begin{align*}
\|\nabla_w\mathcal L(w^{k_\star},\lambda^{k_\star})\|
\le
\left(
\gamma_w^{-1}+\bar\kappa_yp_w
\right)
\epsilon_{\mathcal L} \le
272\bar L\bar\kappa_y^2
\epsilon_{\mathcal L}.
\end{align*}
For the multiplier component,
\begin{align*}
\frac{l_{g,1}}{(p_w-L_0)\gamma_w}
+
\bar\kappa_y\gamma_\lambda^{-1}
=
32l_{g,1}+2^8\bar L \le
288\bar L.
\end{align*}
Since $p_\lambda C_\lambda\le\epsilon_{\mathcal L}/4$,
Lemma~\ref{Lemma: Deterministic Stationary Point Bridge} yields
\[
\|\nabla_\lambda
\mathcal L(w^{k_\star},\lambda^{k_\star})\|
\le
\left(
288\bar L+\frac14
\right)
\epsilon_{\mathcal L}.
\]

It remains to translate these bounds to stationarity of the bilevel
hyper-objective. The constant in
Theorem~\ref{Theorem: Approximation of Hypergradient} satisfies
\begin{align*}
L_{\bar F,y}
&=
l_{f,1}(1+\kappa_y)
+
l_{f,0}l_{g,2}
\left(
\frac{1}{\mu_g}
+
\frac{l_{g,1}}{\mu_g^2}
\right) \le
\bar L(1+\bar\kappa_y)^2 \le
4\bar L\bar\kappa_y^2.
\end{align*}
Consequently,
$
\frac{L_{\bar F,y}}{\mu_g}
\le
4\bar\kappa_y^3$.
Using
Theorem~\ref{Theorem: Approximation of Hypergradient}, we obtain
\begin{align*}
\|\nabla F(x^{k_\star})\|
&\le
\|\nabla_x\mathcal L(w^{k_\star},\lambda^{k_\star})\|
+
\kappa_y
\|\nabla_y\mathcal L(w^{k_\star},\lambda^{k_\star})\|
+
\frac{L_{\bar F,y}}{\mu_g}
\|\nabla_\lambda
\mathcal L(w^{k_\star},\lambda^{k_\star})\|\\
&\le
(1+\bar\kappa_y)
272\bar L\bar\kappa_y^2
\epsilon_{\mathcal L}
+
4\bar\kappa_y^3
\left(
288\bar L+\frac14
\right)
\epsilon_{\mathcal L}\\
&\le
(1696\bar L+1)
\bar\kappa_y^3
\epsilon_{\mathcal L} \le
2^{11}(1+\bar L)
\bar\kappa_y^3
\epsilon_{\mathcal L} =
\epsilon.
\end{align*}
Thus, $x^{k_\star}$ is an $\epsilon$-stationary point of $F$.
Finally, substituting
$
\epsilon_{\mathcal L}
=
\frac{\epsilon}
{2^{11}(1+\bar L)\bar\kappa_y^3}
$
into the iteration bound gives
\begin{align*}
K
\le
1+
\frac{
2^{52}(1+\bar L)^2\Delta_V
}{
\bar L
}
\bar\kappa_y^5\epsilon^{-2}.
\end{align*}
This completes the proof.

\section{Stochastic NC-SC Bilevel Optimization}
\subsection{Filtration}

For the stochastic analysis, define the pre-sampling filtration
\[
    \mathcal F_k^-
    :=
    \sigma\left(
        w^0,\hat w^0,\lambda^0,
        \{\mathcal B_f^t,\mathcal B_g^t\}_{t=0}^{k-1}
    \right).
\]
Thus, \(w^k,\hat w^k,\lambda^k\) and the stepsizes are
\(\mathcal F_k^-\)-measurable. The fresh mini-batches
\(\mathcal B_f^k\) and \(\mathcal B_g^k\) are independent of
\(\mathcal F_k^-\). After sampling and performing the \(k\)-th update, define
\[
    \mathcal F_k^+
    :=
    \sigma(\mathcal F_k^-,\mathcal B_f^k,\mathcal B_g^k),
    \quad
    \mathcal F_{k+1}^-=\mathcal F_k^+.
\]
Then, \(w^{k+1},\hat w^{k+1},\lambda^{k+1}\) are
\(\mathcal F_k^+\)-measurable. Moreover,
\[
\mathbb E\!\left[
\nabla f(x^k,y^k;\mathcal B_f^k)
\mid \mathcal F_k^-
\right]
=
\nabla f(x^k,y^k),
\quad
\mathbb E\!\left[
\nabla g(x^k,y^k;\mathcal B_g^k)
\mid \mathcal F_k^-
\right]
=
\nabla g(x^k,y^k).
\]
\subsection{Properties of Stochastic Finite Difference Approximation}
\begin{lemma}\label{Lemma: Stochastic Finite Difference Estimator}
Define the stochastic finite-difference approximation of  $\nabla_{xy}^2g(x,y)\lambda$ and $\nabla_{yy}^2g(x,y)\lambda$ as
\[
\tilde{\nabla}_{xy}^2 g_{\lambda}(x,y;\mathcal B_g)
    :=
    \frac{1}{B}\sum_{i=1}^B
    \frac{
    \nabla_x g(x,y+\tau\lambda;\zeta_{i})
    -
    \nabla_x g(x,y-\tau\lambda;\zeta_{i})
    }{2\tau},
\]
and
\[
\tilde{\nabla}_{yy}^2 g_{\lambda}(x,y;\mathcal B_g)
    :=
    \frac{1}{B}\sum_{i=1}^B
    \frac{
    \nabla_y g(x,y+\tau\lambda;\zeta_{i})
    -
    \nabla_y g(x,y-\tau\lambda;\zeta_{i})
    }{2\tau},
\]
where $\mathcal{B}_f = \{\xi_1,\ldots,\xi_B\}$, $\mathcal{B}_g = \{\zeta_1,\ldots,\zeta_B\}$ are mini-batches with the same cardinality $B$.

Under Assumptions \ref{Assumption: Bilevel Optimization} and \ref{Assumption: Stochastic First-order Oracle}, with
\[
\lambda\in\Lambda=\{\lambda:\|\lambda\|\le C_\lambda\},
\]
we have
\begin{equation}\label{Lemma: Stoc FD 1}
\mathbb{E}\left[
\left\|
\nabla_{xy}^2 g(x,y)\lambda
-
\tilde{\nabla}_{xy}^2 g_{\lambda}(x,y;\mathcal{B}_g)
\right\|^2
\right]
\le
\frac{l_{g,2}^2C_{\lambda}^4}{4}\tau^2
+
\frac{\sigma_g^2}{B\tau^2},
\end{equation}
and
\begin{equation}\label{Lemma: Stoc FD 2}
\mathbb{E}\left[
\left\|
\nabla_{yy}^2 g(x,y)\lambda
-
\tilde{\nabla}_{yy}^2 g_{\lambda}(x,y;\mathcal{B}_g)
\right\|^2
\right]
\le
\frac{l_{g,2}^2C_{\lambda}^4}{4}\tau^2
+
\frac{\sigma_g^2}{B\tau^2}.
\end{equation}
\end{lemma}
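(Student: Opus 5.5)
We use a bias--variance decomposition around the deterministic finite-difference quantity $\tilde{\nabla}^2_{xy} g_{\lambda}(x,y)$ of Lemma~\ref{Lemma: Deterministic Finite Difference Estimator}. We treat the $xy$ case; the $yy$ case is identical with $\nabla_x$ replaced by $\nabla_y$.

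Write
\[
\nabla_{xy}^2 g(x,y)\lambda-\tilde{\nabla}_{xy}^2 g_{\lambda}(x,y;\mathcal B_g)
=\underbrace{\bigl(\nabla_{xy}^2 g(x,y)\lambda-\tilde{\nabla}_{xy}^2 g_{\lambda}(x,y)\bigr)}_{=:b}
+\underbrace{\bigl(\tilde{\nabla}_{xy}^2 g_{\lambda}(x,y)-\tilde{\nabla}_{xy}^2 g_{\lambda}(x,y;\mathcal B_g)\bigr)}_{=:e}.
\]
Here $b$ is deterministic, since $(x,y,\lambda)$ are fixed or conditioned upon. By Assumption~\ref{Assumption: Stochastic First-order Oracle}, each per-sample term has mean $\tilde{\nabla}_{xy}^2 g_{\lambda}(x,y)$, so $\mathbb E[e]=0$. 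The cross term therefore vanishes:
\[
\mathbb E\|b+e\|^2=\|b\|^2+\mathbb E\|e\|^2 .
\]
Lemma~\ref{Lemma: Deterministic Finite Difference Estimator} gives $\|b\|^2\le l_{g,2}^2C_\lambda^4\tau^2/4$, which is the first term of the bound.

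For the variance term, set
\[
Z_i:=\frac{\nabla_x g(x,y+\tau\lambda;\zeta_i)-\nabla_x g(x,y-\tau\lambda;\zeta_i)}{2\tau}-\tilde{\nabla}_{xy}^2 g_{\lambda}(x,y).
\]
The $Z_i$ are i.i.d.\ with mean zero, so $\mathbb E\|e\|^2=\frac1B\mathbb E\|Z_1\|^2$. Now write $Z_1=(u_+-u_-)/(2\tau)$, where $u_\pm$ is the noise of $\nabla_x g(\cdot;\zeta)$ at the points $(x,y\pm\tau\lambda)$. Using $\|u_+-u_-\|^2\le 2\|u_+\|^2+2\|u_-\|^2$ and the fact that the variance of the $x$-block is at most $\sigma_g^2$, we get
\[
\mathbb E\|Z_1\|^2\le \frac{4\sigma_g^2}{4\tau^2}=\frac{\sigma_g^2}{\tau^2}.
\]
Combining the two pieces yields the claim.

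The main subtle point is the variance constant. The same sample $\zeta_i$ appears at both evaluation points, so $u_+$ and $u_-$ are correlated and we cannot add their variances. The crude inequality $\|a-b\|^2\le2\|a\|^2+2\|b\|^2$ sidesteps this and gives exactly the factor $\sigma_g^2/\tau^2$, with no independence required. We also condition on $\mathcal F_k^-$ when $(x,y,\lambda)$ are iterates, so that $b$ is measurable and the cross term vanishes.
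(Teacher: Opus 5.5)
Your proof is correct and matches the paper's argument. Both split the error into bias and variance around the deterministic central difference. The bias is bounded by the Hessian-Lipschitz estimate $l_{g,2}C_\lambda^2\tau/2$: you cite Lemma~\ref{Lemma: Deterministic Finite Difference Estimator}, while the paper re-derives it with the same integral. The variance is bounded by independence across the mini-batch plus $\|a-b\|^2\le 2\|a\|^2+2\|b\|^2$ (needed because the shared sample $\zeta_i$ correlates the two evaluations), which gives exactly $\sigma_g^2/(B\tau^2)$.
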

\begin{proof}
We first prove \eqref{Lemma: Stoc FD 1}. By the definition of
$\tilde{\nabla}_{xy}^2 g_{\lambda}(x,y;\mathcal{B}_g)$ and the unbiasedness of
the stochastic gradient oracle,
\[
\begin{aligned}
\mathbb{E}\left[
\left\|
\nabla_{xy}^2 g(x,y)\lambda
-
\tilde{\nabla}_{xy}^2 g_{\lambda}(x,y;\mathcal{B}_g)
\right\|^2
\right]
&=
\left\|
\nabla_{xy}^2 g(x,y)\lambda
-
\mathbb{E}\left[
\tilde{\nabla}_{xy}^2 g_{\lambda}(x,y;\mathcal{B}_g)
\right]
\right\|^2                                                     \\
&\quad
+
\mathbb{E}\left[
\left\|
\tilde{\nabla}_{xy}^2 g_{\lambda}(x,y;\mathcal{B}_g)
-
\mathbb{E}\left[
\tilde{\nabla}_{xy}^2 g_{\lambda}(x,y;\mathcal{B}_g)
\right]
\right\|^2
\right].
\end{aligned}
\]
For the bias term, we have
\[
\frac{
\nabla_x g(x,y+\tau\lambda)
-
\nabla_x g(x,y-\tau\lambda)
}{
2\tau
}
=
\frac{1}{2\tau}
\int_{-\tau}^{\tau}
\nabla_{xy}^2g(x,y+s\lambda)\lambda\,ds .
\]
Therefore,
\[
\begin{aligned}
\left\|
\nabla_{xy}^2 g(x,y)\lambda
-
\mathbb{E}\left[
\tilde{\nabla}_{xy}^2 g_{\lambda}(x,y;\mathcal{B}_g)
\right]
\right\|
&=
\left\|
\frac{1}{2\tau}
\int_{-\tau}^{\tau}
\left(
\nabla_{xy}^2g(x,y)-
\nabla_{xy}^2g(x,y+s\lambda)
\right)
\lambda\,ds
\right\|      \\
&\le
\frac{1}{2\tau}
\int_{-\tau}^{\tau}
l_{g,2}|s|\|\lambda\|^2\,ds                                    \\
&=
\frac{l_{g,2}}{2}\tau\|\lambda\|^2                              \\
&\le
\frac{l_{g,2}C_\lambda^2}{2}\tau .
\end{aligned}
\]
Thus,
\[
\left\|
\nabla_{xy}^2 g(x,y)\lambda
-
\mathbb{E}\left[
\tilde{\nabla}_{xy}^2 g_{\lambda}(x,y;\mathcal{B}_g)
\right]
\right\|^2
\le
\frac{l_{g,2}^2C_\lambda^4}{4}\tau^2.
\]
For the variance term, by the independence of samples in $\mathcal{B}_g$,
\[
\begin{aligned}
&\mathbb{E}\left[
\left\|
\tilde{\nabla}_{xy}^2 g_{\lambda}(x,y;\mathcal{B}_g)
-
\mathbb{E}\left[
\tilde{\nabla}_{xy}^2 g_{\lambda}(x,y;\mathcal{B}_g)
\right]
\right\|^2
\right]                                                        \\
&=
\frac{1}{B}
\mathbb{E}\left[
\left\|
\frac{
\left(
\nabla_x g(x,y+\tau\lambda;\zeta)
-
\nabla_x g(x,y+\tau\lambda)
\right)
-
\left(
\nabla_x g(x,y-\tau\lambda;\zeta)
-
\nabla_x g(x,y-\tau\lambda)
\right)
}{
2\tau
}
\right\|^2
\right]                                                        \\
&\le
\frac{1}{B}
\cdot
\frac{1}{4\tau^2}
\cdot
\left(2\mathbb{E}\left[
\left\|
\nabla_x g(x,y+\tau\lambda;\zeta)
-
\nabla_x g(x,y+\tau\lambda)
\right\|^2
\right]
+
2\mathbb{E}\left[
\left\|
\nabla_x g(x,y-\tau\lambda;\zeta)
-
\nabla_x g(x,y-\tau\lambda)
\right\|^2
\right]   \right)                                                     \\
&\le
\frac{\sigma_g^2}{B\tau^2}.
\end{aligned}
\]
Combining the bias and variance bounds gives
\[
\mathbb{E}\left[
\left\|
\nabla_{xy}^2 g(x,y)\lambda
-
\tilde{\nabla}_{xy}^2 g_{\lambda}(x,y;\mathcal{B}_g)
\right\|^2
\right]
\le
\frac{l_{g,2}^2C_{\lambda}^4}{4}\tau^2
+
\frac{\sigma_g^2}{B\tau^2}.
\]

The proof of \eqref{Lemma: Stoc FD 2} is similar. Indeed, by unbiasedness,
\[
\mathbb{E}\left[
\tilde{\nabla}_{yy}^2 g_{\lambda}(x,y;\mathcal{B}_g)
\right]
=
\frac{
\nabla_y g(x,y+\tau\lambda)
-
\nabla_y g(x,y-\tau\lambda)
}{
2\tau
}.
\]
Using the same integral argument and the Hessian Lipschitz continuity of $g$,
\[
\left\|
\nabla_{yy}^2 g(x,y)\lambda
-
\mathbb{E}\left[
\tilde{\nabla}_{yy}^2 g_{\lambda}(x,y;\mathcal{B}_g)
\right]
\right\|^2
\le
\frac{l_{g,2}^2C_\lambda^4}{4}\tau^2.
\]
Moreover, by the independence of samples in $\mathcal{B}_g$ and the variance bound in
Assumption \ref{Assumption: Stochastic First-order Oracle},
\[
\mathbb{E}\left[
\left\|
\tilde{\nabla}_{yy}^2 g_{\lambda}(x,y;\mathcal{B}_g)
-
\mathbb{E}\left[
\tilde{\nabla}_{yy}^2 g_{\lambda}(x,y;\mathcal{B}_g)
\right]
\right\|^2
\right]
\le
\frac{\sigma_g^2}{B\tau^2}.
\]
Therefore,
\[
\mathbb{E}\left[
\left\|
\nabla_{yy}^2 g(x,y)\lambda
-
\tilde{\nabla}_{yy}^2 g_{\lambda}(x,y;\mathcal{B}_g)
\right\|^2
\right]
\le
\frac{l_{g,2}^2C_{\lambda}^4}{4}\tau^2
+
\frac{\sigma_g^2}{B\tau^2}.
\]
This completes the proof.
\end{proof}

\begin{lemma}\label{Lemma: Stochastic Iterates Gap}
Under Assumptions \ref{Assumption: Bilevel Optimization} and \ref{Assumption: Stochastic First-order Oracle}, with the update rule of Algorithm \ref{Stoc-SGHA}, we have
\begin{align*}
\mathbb{E}[\|x^{k+1}-x^k\|^2]
\le
2\gamma_{x,k}^2
\mathbb{E}\left[
\left\|
\nabla_x K(w^k,\hat w^k,\lambda^k)
\right\|^2
\right]
+
\frac{2\gamma_{x,k}^2\sigma_f^2}{B} +
2\gamma_{x,k}^2
\left(
\frac{l_{g,2}^2C_{\lambda}^4}{4}\tau^2
+
\frac{\sigma_g^2}{B\tau^2}
\right),
\end{align*}
\begin{align*}
\mathbb{E}[\|y^{k+1}-y^k\|^2]
\le
2\gamma_{y,k}^2
\mathbb{E}\left[
\left\|
\nabla_y K(w^k,\hat w^k,\lambda^k)
\right\|^2
\right]
+
\frac{2\gamma_{y,k}^2\sigma_f^2}{B} +
2\gamma_{y,k}^2
\left(
\frac{l_{g,2}^2C_{\lambda}^4}{4}\tau^2
+
\frac{\sigma_g^2}{B\tau^2}
\right),
\end{align*}
\[
\mathbb{E}[\|\lambda^{k+1}-\lambda^k\|^2]
\le
\gamma_{\lambda,k}^2
\mathbb{E}\left[
\left\|
\nabla_y g(x^k,y^k)-p_\lambda\lambda^k
\right\|^2
\right]
+
\frac{\gamma_{\lambda,k}^2\sigma_g^2}{B}.
\]
In particular, since $\|\lambda^k\|\le C_\lambda$, we also have
\[
\mathbb{E}[\|\lambda^{k+1}-\lambda^k\|^2]
\le
2\gamma_{\lambda,k}^2
\mathbb{E}\left[
\left\|
\nabla_y g(x^k,y^k)
\right\|^2
\right]
+
2\gamma_{\lambda,k}^2p_\lambda^2C_\lambda^2
+
\frac{\gamma_{\lambda,k}^2\sigma_g^2}{B}.
\]
\end{lemma}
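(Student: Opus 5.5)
The plan mirrors the deterministic Lemma~\ref{Lemma: Deterministic Iterates Gap}, with two changes: expectations are taken conditionally on $\mathcal F_k^-$, and the stochastic error is split into a conditionally mean-zero part and a deterministic bias.

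For the $x$-update, I would write
\[
x^{k+1}-x^k=-\gamma_{x,k}\bigl(\nabla_xK(w^k,\hat w^k,\lambda^k)+e_f^k+e_g^k\bigr),
\]
with
\[
e_f^k:=\nabla_x f(x^k,y^k;\mathcal B_f^k)-\nabla_x f(x^k,y^k),\qquad
e_g^k:=\tilde\nabla^2_{xy}g_{\lambda^k}(x^k,y^k;\mathcal B_g^k)-\nabla^2_{xy}g(x^k,y^k)\lambda^k .
\]
The cross term between $\nabla_xK$ and $e_f^k$ vanishes under $\mathbb E[\cdot\mid\mathcal F_k^-]$, because $\nabla_xK$ is $\mathcal F_k^-$-measurable and $e_f^k$ is conditionally mean zero. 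The same is true of the cross term $\langle e_f^k,e_g^k\rangle$, since $\mathcal B_f^k$ and $\mathcal B_g^k$ are independent given $\mathcal F_k^-$ and $e_f^k$ has zero conditional mean. The term $e_g^k$, however, is biased, so I would not try to remove its cross term with $\nabla_xK$. Instead I would group the terms as
\[
\|\nabla_xK+e_f^k+e_g^k\|^2\le 2\|\nabla_xK+e_f^k\|^2+2\|e_g^k\|^2 .
\]

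Taking conditional expectations gives $\mathbb E_k\|\nabla_xK+e_f^k\|^2=\|\nabla_xK\|^2+\mathbb E_k\|e_f^k\|^2\le\|\nabla_xK\|^2+\sigma_f^2/B$. The last step uses independence within the mini-batch together with Assumption~\ref{Assumption: Stochastic First-order Oracle}. For the other term, $\mathbb E_k\|e_g^k\|^2\le \frac{l_{g,2}^2C_\lambda^4}{4}\tau^2+\frac{\sigma_g^2}{B\tau^2}$ by \eqref{Lemma: Stoc FD 1} of Lemma~\ref{Lemma: Stochastic Finite Difference Estimator}. That lemma applies at the $\mathcal F_k^-$-measurable point $(x^k,y^k,\lambda^k)$ with $\lambda^k\in\Lambda$, because the batch is fresh. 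Combining these bounds and taking total expectation yields exactly the stated $x$-bound. The $y$-bound follows in the same way, using \eqref{Lemma: Stoc FD 2} in place of \eqref{Lemma: Stoc FD 1}.

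For $\lambda$, $\lambda^k\in\Lambda$ gives $\lambda^k=\mathcal P_\Lambda(\lambda^k)$. Nonexpansiveness of the projection then bounds $\|\lambda^{k+1}-\lambda^k\|^2$ by $\gamma_{\lambda,k}^2\|\nabla_y g(x^k,y^k;\mathcal B_g^k)-p_\lambda\lambda^k\|^2$. I would expand this as the sum of the deterministic part $\nabla_y g(x^k,y^k)-p_\lambda\lambda^k$ and the mean-zero noise, whose cross term vanishes conditionally and whose conditional variance is at most $\sigma_g^2/B$. The final bound then follows from $\|a-b\|^2\le2\|a\|^2+2\|b\|^2$ and $\|\lambda^k\|\le C_\lambda$.

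The only delicate point is the biased finite-difference error in the primal steps, which is why I would apply Young's inequality to it rather than attempting an exact variance decomposition. I would also check that the measurability of $(w^k,\lambda^k)$ with respect to $\mathcal F_k^-$ justifies applying the pointwise expectation bounds of Lemma~\ref{Lemma: Stochastic Finite Difference Estimator} conditionally.
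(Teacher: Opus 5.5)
Your proposal is correct and follows the paper's proof essentially step for step. It uses the same split $2\|\nabla_x K+e_f^k\|^2+2\|e_g^k\|^2$, with the conditional variance bound $\sigma_f^2/B$ and Lemma~\ref{Lemma: Stochastic Finite Difference Estimator} for the biased finite-difference error, and the same projection-nonexpansiveness plus bias--variance argument for $\lambda$. Your remark that $\langle e_f^k,e_g^k\rangle$ vanishes conditionally is true but not needed, since that grouping already absorbs it.
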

\begin{proof}
We first prove the bound for $x^{k+1}-x^k$. By the update rule of Algorithm \ref{Stoc-SGHA},
\[
x^{k+1}-x^k
=
-\gamma_{x,k}
\left(
\nabla_x f(x^k,y^k;\mathcal B_f^k)
+
\tilde{\nabla}_{xy}^2 g_{\lambda^k}(x^k,y^k;\mathcal B_g^k)
+
p_w(x^k-\hat x^k)
\right).
\]
Hence,
\begin{align*}
\mathbb{E}\left[
\|x^{k+1}-x^k\|^2\mid \mathcal F_k^-
\right]
&=
\gamma_{x,k}^2
\mathbb{E}\left[
\left\|
\nabla_x f(x^k,y^k;\mathcal B_f^k)
+
\tilde{\nabla}_{xy}^2 g_{\lambda^k}(x^k,y^k;\mathcal B_g^k)
+
p_w(x^k-\hat x^k)
\right\|^2
\mid \mathcal F_k^-
\right]                                                         \\
&\le
2\gamma_{x,k}^2
\mathbb{E}\left[
\left\|
\nabla_x f(x^k,y^k;\mathcal B_f^k)
+
\nabla_{xy}^2 g(x^k,y^k)\lambda^k
+
p_w(x^k-\hat x^k)
\right\|^2
\mid \mathcal F_k^-
\right]                                                         \\
&\quad
+
2\gamma_{x,k}^2
\mathbb{E}\left[
\left\|
\tilde{\nabla}_{xy}^2 g_{\lambda^k}(x^k,y^k;\mathcal B_g^k)
-
\nabla_{xy}^2 g(x^k,y^k)\lambda^k
\right\|^2
\mid \mathcal F_k^-
\right].
\end{align*}
For the first term, by the variance bound of the stochastic gradient of $f$,
\begin{align*}
&\mathbb{E}\left[
\left\|
\nabla_x f(x^k,y^k;\mathcal B_f^k)
+
\nabla_{xy}^2 g(x^k,y^k)\lambda^k
+
p_w(x^k-\hat x^k)
\right\|^2
\mid \mathcal F_k^-
\right]                                                         \\
&\le
\left\|
\nabla_x f(x^k,y^k)
+
\nabla_{xy}^2 g(x^k,y^k)\lambda^k
+
p_w(x^k-\hat x^k)
\right\|^2
+
\frac{\sigma_f^2}{B}                                           \\
&=
\left\|
\nabla_x K(w^k,\hat w^k,\lambda^k)
\right\|^2
+
\frac{\sigma_f^2}{B}.
\end{align*}
For the second term, by Lemma \ref{Lemma: Stochastic Finite Difference Estimator},
\[
\mathbb{E}\left[
\left\|
\tilde{\nabla}_{xy}^2 g_{\lambda^k}(x^k,y^k;\mathcal B_g^k)
-
\nabla_{xy}^2 g(x^k,y^k)\lambda^k
\right\|^2
\mid \mathcal F_k^-
\right]
\le
\frac{l_{g,2}^2C_{\lambda}^4}{4}\tau^2
+
\frac{\sigma_g^2}{B\tau^2}.
\]
Combining the above inequalities gives
\begin{align*}
\mathbb{E}\left[
\|x^{k+1}-x^k\|^2\mid \mathcal F_k^-
\right]
\le
2\gamma_{x,k}^2
\left\|
\nabla_x K(w^k,\hat w^k,\lambda^k)
\right\|^2
+
\frac{2\gamma_{x,k}^2\sigma_f^2}{B}
+
2\gamma_{x,k}^2
\left(
\frac{l_{g,2}^2C_{\lambda}^4}{4}\tau^2
+
\frac{\sigma_g^2}{B\tau^2}
\right).
\end{align*}
Taking total expectation over both sides yields
\begin{align*}
\mathbb{E}[\|x^{k+1}-x^k\|^2]
\le
2\gamma_{x,k}^2
\mathbb{E}\left[
\left\|
\nabla_x K(w^k,\hat w^k,\lambda^k)
\right\|^2
\right]
+
\frac{2\gamma_{x,k}^2\sigma_f^2}{B}
+
2\gamma_{x,k}^2
\left(
\frac{l_{g,2}^2C_{\lambda}^4}{4}\tau^2
+
\frac{\sigma_g^2}{B\tau^2}
\right).
\end{align*}
Similarly,
\begin{align*}
\mathbb{E}[\|y^{k+1}-y^k\|^2]
\le
2\gamma_{y,k}^2
\mathbb{E}\left[
\left\|
\nabla_y K(w^k,\hat w^k,\lambda^k)
\right\|^2
\right]
+
\frac{2\gamma_{y,k}^2\sigma_f^2}{B}
+
2\gamma_{y,k}^2
\left(
\frac{l_{g,2}^2C_{\lambda}^4}{4}\tau^2
+
\frac{\sigma_g^2}{B\tau^2}
\right).
\end{align*}

Finally, for $\lambda^k$, by the update rule,
\[
\lambda^{k+1}
=
\mathcal P_\Lambda\left(
\lambda^k
+
\gamma_{\lambda,k}
\left(
\nabla_y g(x^k,y^k;\mathcal B_g^k)
-
p_\lambda\lambda^k
\right)
\right).
\]
Since $\lambda^k\in\Lambda$, we have $\mathcal P_\Lambda(\lambda^k)=\lambda^k$. By the non-expansiveness of the projection operator,
\begin{align*}
\mathbb{E}[\|\lambda^{k+1}-\lambda^k\|^2\mid \mathcal F_k^-]
&=
\mathbb{E}\left[
\left\|
\mathcal P_\Lambda\left(
\lambda^k
+
\gamma_{\lambda,k}
\left(
\nabla_y g(x^k,y^k;\mathcal B_g^k)
-
p_\lambda\lambda^k
\right)
\right)
-
\mathcal P_\Lambda(\lambda^k)
\right\|^2\mid \mathcal F_k^-
\right]                                                        \\
&\le
\gamma_{\lambda,k}^2
\mathbb{E}\left[
\left\|
\nabla_y g(x^k,y^k;\mathcal B_g^k)
-
p_\lambda\lambda^k
\right\|^2\mid \mathcal F_k^-
\right].
\end{align*}
By the unbiasedness and variance bound of the stochastic gradient of $g$,
\[
\mathbb{E}\left[
\left\|
\nabla_y g(x^k,y^k;\mathcal B_g^k)
-
p_\lambda\lambda^k
\right\|^2\mid \mathcal F_k^-
\right]
\le
\left\|
\nabla_y g(x^k,y^k)
-
p_\lambda\lambda^k
\right\|^2
+
\frac{\sigma_g^2}{B}.
\]
Therefore, taking total expectation yields
\[
\mathbb{E}[\|\lambda^{k+1}-\lambda^k\|^2]
\le
\gamma_{\lambda,k}^2
\mathbb{E}\left[
\left\|
\nabla_y g(x^k,y^k)-p_\lambda\lambda^k
\right\|^2
\right]
+
\frac{\gamma_{\lambda,k}^2\sigma_g^2}{B}.
\]
Furthermore, since $\|\lambda^k\|\le C_\lambda$,
\[
\left\|
\nabla_y g(x^k,y^k)-p_\lambda\lambda^k
\right\|^2
\le
2\left\|
\nabla_y g(x^k,y^k)
\right\|^2
+
2p_\lambda^2C_\lambda^2.
\]
Thus,
\[
\mathbb{E}[\|\lambda^{k+1}-\lambda^k\|^2]
\le
2\gamma_{\lambda,k}^2
\mathbb{E}\left[
\left\|
\nabla_y g(x^k,y^k)
\right\|^2
\right]
+
2\gamma_{\lambda,k}^2p_\lambda^2C_\lambda^2
+
\frac{\gamma_{\lambda,k}^2\sigma_g^2}{B}.
\]
This completes the proof.
\end{proof}
\subsection{Intermediate Lemmas for Lyapunov Function under Stochastic Setting}
\begin{lemma}\label{Lemma: Stochastic Primal Descent w lambda}
Under Assumptions \ref{Assumption: Bilevel Optimization} and \ref{Assumption: Stochastic First-order Oracle}, suppose $\gamma_{w,k}\le \frac{1}{8(p_w+l_{\mathcal L,1})}$. Then, we have
\begin{align*}
&\mathbb{E}\left[
K(w^k,\hat w^k,\lambda^k)
\right]
-
\mathbb{E}\left[
K(w^{k+1},\hat w^k,\lambda^{k+1})
\right]                                                        \\
&\ge
\frac{\gamma_{w,k}}{4}
\mathbb{E}\left[
\left\|
\nabla_wK(w^k,\hat w^k,\lambda^k)
\right\|^2
\right]                                                 \\
&\quad
-
\left(
\gamma_{w,k}
+
4(p_w+l_{\mathcal L,1})\gamma_{w,k}^2
\right)
\left(
\frac{l_{g,2}^2C_\lambda^4}{4}\tau^2
+
\frac{\sigma_g^2}{B\tau^2}
\right)      -
\frac{4(p_w+l_{\mathcal L,1})\gamma_{w,k}^2}{B}
\sigma_f^2                                                   \\
&\quad
+
\mathbb{E}\left[
\left\langle
\nabla_y g(x^k,y^k)-p_\lambda\lambda^k,
\lambda^k-\lambda^{k+1}
\right\rangle
\right]
-
\left(\frac{p_\lambda}{2} + \frac{l_{g,1}^2}{2(p_w+l_{\mathcal{L},1})}\right)
\mathbb{E}\left[
\|\lambda^{k+1}-\lambda^k\|^2
\right].
\end{align*}
\end{lemma}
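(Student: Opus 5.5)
We follow the proof of Lemma~\ref{Lemma: Deterministic Primal Descent w lambda} almost line by line, replacing the deterministic search direction by its stochastic counterpart and taking conditional expectations with respect to $\mathcal F_k^-$. Write the primal direction as $d^k := \nabla_w K(w^k,\hat w^k,\lambda^k) + e^k$. Here
\[
e^k := \bigl(e_f^k,0\bigr)+\bigl(e_{g}^k,0\bigr)+\bigl(0,\text{analogous }y\text{-parts}\bigr),
\]
where $e_f^k$ collects the mini-batch error of $\nabla f$, and $e_g^k$ collects the finite-difference errors $\tilde\nabla^2_{xy}g_{\lambda^k}(\cdot;\mathcal B_g^k)-\nabla^2_{xy}g\,\lambda^k$ together with its $yy$ analogue. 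We then have $w^{k+1}=w^k-\gamma_{w,k}d^k$. The same pathwise split as before applies:
\[
K(w^k,\hat w^k,\lambda^k)-K(w^{k+1},\hat w^k,\lambda^{k+1}) = \bigl[K(w^k,\cdot,\lambda^k)-K(w^{k+1},\cdot,\lambda^k)\bigr] + \bigl[K(w^{k+1},\cdot,\lambda^k)-K(w^{k+1},\cdot,\lambda^{k+1})\bigr].
\]
We treat the first bracket with $(p_w+l_{\mathcal L,1})$-smoothness in $w$. For the second bracket we use $p_\lambda$-smoothness in $\lambda$, with gradient $\nabla_y g(w^{k+1})-p_\lambda\lambda^k$. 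We then replace $\nabla_y g(w^{k+1})$ by $\nabla_y g(w^k)$ and handle the cross term by Young's inequality exactly as in the deterministic lemma. This yields, pathwise, the dual inner product $\langle \nabla_y g(x^k,y^k)-p_\lambda\lambda^k,\lambda^k-\lambda^{k+1}\rangle$, the penalty $-(\tfrac{p_\lambda}{2}+\tfrac{l_{g,1}^2}{2(p_w+l_{\mathcal L,1})})\|\lambda^{k+1}-\lambda^k\|^2$, and a total $-(p_w+l_{\mathcal L,1})\|w^{k+1}-w^k\|^2$. No expectation is needed for these terms, because the lemma keeps the dual inner product inside $\mathbb E$.

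For the primal inner product we have $\gamma_{w,k}\langle \nabla_wK, d^k\rangle = \gamma_{w,k}\|\nabla_wK\|^2 + \gamma_{w,k}\langle\nabla_wK,e^k\rangle$. Conditioning on $\mathcal F_k^-$ removes the $f$-noise part, since $\mathbb E[e_f^k\mid\mathcal F_k^-]=0$ and $\nabla_wK$ is $\mathcal F_k^-$-measurable. The finite-difference part is biased, so we bound it instead by $\tfrac{\gamma_{w,k}}{2}\|\nabla_wK\|^2+\tfrac{\gamma_{w,k}}{2}\|e_g^k\|^2$, and more crudely for the $g$-part we take $\tfrac{\gamma_{w,k}}{2}\cdot 2(\cdot)$ over the two blocks. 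Lemma~\ref{Lemma: Stochastic Finite Difference Estimator} then gives, per block, $\mathbb E\|e_{g}^k\|^2\le \tfrac{l_{g,2}^2C_\lambda^4}{4}\tau^2+\tfrac{\sigma_g^2}{B\tau^2}$. Summing the $x$- and $y$-blocks gives the factor $\gamma_{w,k}(\tfrac{l_{g,2}^2C_\lambda^4}{4}\tau^2+\tfrac{\sigma_g^2}{B\tau^2})$ in the statement.

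For $\|w^{k+1}-w^k\|^2$ we invoke Lemma~\ref{Lemma: Stochastic Iterates Gap} and sum its $x$- and $y$-bounds:
\[
\mathbb E\|w^{k+1}-w^k\|^2\le 2\gamma_{w,k}^2\mathbb E\|\nabla_wK\|^2+\tfrac{4\gamma_{w,k}^2\sigma_f^2}{B}+4\gamma_{w,k}^2\Bigl(\tfrac{l_{g,2}^2C_\lambda^4}{4}\tau^2+\tfrac{\sigma_g^2}{B\tau^2}\Bigr).
\]
Multiplying by $(p_w+l_{\mathcal L,1})$ produces exactly the terms $-4(p_w+l_{\mathcal L,1})\gamma_{w,k}^2(\cdots)$ and $-\tfrac{4(p_w+l_{\mathcal L,1})\gamma_{w,k}^2}{B}\sigma_f^2$ in the statement. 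The coefficient of $\mathbb E\|\nabla_wK\|^2$ is then $\tfrac{\gamma_{w,k}}{2}-2(p_w+l_{\mathcal L,1})\gamma_{w,k}^2$. The stepsize condition $\gamma_{w,k}\le 1/(8(p_w+l_{\mathcal L,1}))$ makes this at least $\tfrac{\gamma_{w,k}}{4}$. Taking total expectations concludes the proof.

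The main obstacle is careful bookkeeping of measurability. The cross term $\langle\nabla_wK,e_f^k\rangle$ vanishes only because $\nabla_wK(w^k,\hat w^k,\lambda^k)$ is $\mathcal F_k^-$-measurable. By contrast, the dual terms involve $\lambda^{k+1}$, which depends on $\mathcal B_g^k$, and therefore must not be simplified by conditioning; they are retained inside $\mathbb E$. The constants (factor $1$ versus $\tfrac14$ on the finite-difference term) depend on how Young's inequality is split, and we choose the split that matches the stated constants.
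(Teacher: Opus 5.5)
Your proposal is correct and follows the paper's proof essentially step by step. It uses the same smoothness decomposition, the same Young split of the cross term $\langle \nabla_y g(w^{k+1})-\nabla_y g(w^k),\lambda^k-\lambda^{k+1}\rangle$, Lemma~\ref{Lemma: Stochastic Iterates Gap} for $\mathbb E\|w^{k+1}-w^k\|^2$, and the stepsize condition to reach $\tfrac{\gamma_{w,k}}{4}$. The only cosmetic difference is in how you handle the finite-difference error: you apply Young's inequality to it pathwise and then take expectations, whereas the paper first conditions on $\mathcal F_k^-$, applies Young to $\mathbb E[e_g^k\mid\mathcal F_k^-]$, and then uses Jensen. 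Both routes give the identical factor $\gamma_{w,k}\bigl(\tfrac{l_{g,2}^2C_\lambda^4}{4}\tau^2+\tfrac{\sigma_g^2}{B\tau^2}\bigr)$.
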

\begin{proof}
Since $K(w,\hat w,\lambda)$ is $(p_w+l_{\mathcal L,1})$-smooth with respect to $w$
and $p_\lambda$-smooth with respect to $\lambda$, using the same descent
decomposition as in Lemma \ref{Lemma: Deterministic Primal Descent w lambda}, we have
\begin{align}
&K(w^k,\hat w^k,\lambda^k)
-
K(w^{k+1},\hat w^k,\lambda^{k+1})  \nonumber\\
&\ge
\left\langle
\nabla_wK(w^k,\hat w^k,\lambda^k),
w^k-w^{k+1}
\right\rangle
+
\left\langle
\nabla_y g(x^k,y^k)-p_\lambda\lambda^k,
\lambda^k-\lambda^{k+1}
\right\rangle                                                   \nonumber\\
&\quad + \langle \nabla_y g(x^{k+1},y^{k+1}) - \nabla_y g(x^k,y^k),\lambda^k - \lambda^{k+1}\rangle\nonumber\\
&\quad -
\frac{p_w+l_{\mathcal L,1}}{2}
\|w^{k+1}-w^k\|^2
-
\frac{p_\lambda}{2}
\|\lambda^{k+1}-\lambda^k\|^2.
\label{Lemma: Stochastic Primal Descent 1}
\end{align}
Taking conditional expectation with respect to $\mathcal F_k^-$ and using the
unbiasedness of $\nabla f(x^k,y^k;\mathcal B_f^k)$, we get
\begin{align*}
&\mathbb{E}\left[
\left\langle
\nabla_wK(w^k,\hat w^k,\lambda^k),
w^k-w^{k+1}
\right\rangle
\mid \mathcal F_k^-
\right]                                                        \\
&=
\gamma_{x,k}
\left\|
\nabla_xK(w^k,\hat w^k,\lambda^k)
\right\|^2
+
\gamma_{y,k}
\left\|
\nabla_yK(w^k,\hat w^k,\lambda^k)
\right\|^2                                                     \\
&\quad
+
\gamma_{x,k}
\left\langle
\nabla_xK(w^k,\hat w^k,\lambda^k),
\mathbb{E}\left[
\tilde{\nabla}_{xy}^2g_{\lambda^k}(x^k,y^k;\mathcal B_g^k)
-
\nabla_{xy}^2g(x^k,y^k)\lambda^k
\mid \mathcal F_k^-
\right]
\right\rangle                                                   \\
&\quad
+
\gamma_{y,k}
\left\langle
\nabla_yK(w^k,\hat w^k,\lambda^k),
\mathbb{E}\left[
\tilde{\nabla}_{yy}^2g_{\lambda^k}(x^k,y^k;\mathcal B_g^k)
-
\nabla_{yy}^2g(x^k,y^k)\lambda^k
\mid \mathcal F_k^-
\right]
\right\rangle .
\end{align*}
Using Young's inequality, we have
\begin{align*}
\mathbb{E}\left[
\left\langle
\nabla_wK(w^k,\hat w^k,\lambda^k),
w^k-w^{k+1}
\right\rangle
\mid \mathcal F_k^-
\right]
&\ge
\frac{\gamma_{x,k}}{2}
\left\|
\nabla_xK(w^k,\hat w^k,\lambda^k)
\right\|^2
+
\frac{\gamma_{y,k}}{2}
\left\|
\nabla_yK(w^k,\hat w^k,\lambda^k)
\right\|^2                                                     \\
&\quad
-
\frac{\gamma_{x,k}}{2}
\left\|
\mathbb{E}\left[
\tilde{\nabla}_{xy}^2g_{\lambda^k}(x^k,y^k;\mathcal B_g^k)
-
\nabla_{xy}^2g(x^k,y^k)\lambda^k
\mid \mathcal F_k^-
\right]
\right\|^2                                                     \\
&\quad
-
\frac{\gamma_{y,k}}{2}
\left\|
\mathbb{E}\left[
\tilde{\nabla}_{yy}^2g_{\lambda^k}(x^k,y^k;\mathcal B_g^k)
-
\nabla_{yy}^2g(x^k,y^k)\lambda^k
\mid \mathcal F_k^-
\right]
\right\|^2,
\end{align*}
and
\begin{align}\label{Lemma: Stochastic Primal Descent 2}
    &\langle \nabla_y g(x^{k+1},y^{k+1}) - \nabla_y g(x^k,y^k),\lambda^k - \lambda^{k+1}\rangle \nonumber\\&\geq - \frac{p_w+l_{\mathcal{L},1}}{2}\|w^{k+1} - w^k\|^2 - \frac{l_{g,1}^2}{2(p_w+l_{\mathcal{L},1})}\|\lambda^{k+1} - \lambda^k\|^2.
\end{align}
By Jensen's inequality and Lemma \ref{Lemma: Stochastic Finite Difference Estimator},
\begin{align}
\mathbb{E}\left[
\left\langle
\nabla_wK(w^k,\hat w^k,\lambda^k),
w^k-w^{k+1}
\right\rangle
\mid \mathcal F_k^-
\right]
&\ge
\frac{\gamma_{x,k}}{2}
\left\|
\nabla_xK(w^k,\hat w^k,\lambda^k)
\right\|^2
+
\frac{\gamma_{y,k}}{2}
\left\|
\nabla_yK(w^k,\hat w^k,\lambda^k)
\right\|^2                                                     \nonumber\\
&\quad
-
\frac{\gamma_{x,k}+\gamma_{y,k}}{2}
\left(
\frac{l_{g,2}^2C_\lambda^4}{4}\tau^2
+
\frac{\sigma_g^2}{B\tau^2}
\right).
\label{Lemma: Stochastic Primal Descent 3}
\end{align}
Moreover, by Lemma \ref{Lemma: Stochastic Iterates Gap},
\begin{align}
\mathbb{E}\left[
\|w^{k+1}-w^k\|^2
\mid \mathcal F_k^-
\right]
&\le
2\gamma_{x,k}^2
\left\|
\nabla_xK(w^k,\hat w^k,\lambda^k)
\right\|^2
+
2\gamma_{y,k}^2
\left\|
\nabla_yK(w^k,\hat w^k,\lambda^k)
\right\|^2                                     \nonumber                \\
&\quad
+
\frac{2(\gamma_{x,k}^2+\gamma_{y,k}^2)\sigma_f^2}{B}
+
2(\gamma_{x,k}^2+\gamma_{y,k}^2)
\left(
\frac{l_{g,2}^2C_\lambda^4}{4}\tau^2
+
\frac{\sigma_g^2}{B\tau^2}
\right).
\label{Lemma: Stochastic Primal Descent 4}
\end{align}
Substituting \eqref{Lemma: Stochastic Primal Descent 2}, \eqref{Lemma: Stochastic Primal Descent 3} and \eqref{Lemma: Stochastic Primal Descent 4} into
\eqref{Lemma: Stochastic Primal Descent 1}, and then taking total expectation yields
\begin{align*}
&\mathbb{E}\left[
K(w^k,\hat w^k,\lambda^k)
\right]
-
\mathbb{E}\left[
K(w^{k+1},\hat w^k,\lambda^{k+1})
\right]                                                        \\
&\ge
\left(
\frac{\gamma_{x,k}}{2}
-
2(p_w+l_{\mathcal L,1})\gamma_{x,k}^2
\right)
\mathbb{E}\left[
\left\|
\nabla_xK(w^k,\hat w^k,\lambda^k)
\right\|^2
\right]                                                        \\
&\quad
+
\left(
\frac{\gamma_{y,k}}{2}
-
2(p_w+l_{\mathcal L,1})\gamma_{y,k}^2
\right)
\mathbb{E}\left[
\left\|
\nabla_yK(w^k,\hat w^k,\lambda^k)
\right\|^2
\right]                                                        \\
&\quad
-
\left(
\gamma_{w,k}
+
4(p_w+l_{\mathcal L,1})\gamma_{w,k}^2
\right)
\left(
\frac{l_{g,2}^2C_\lambda^4}{4}\tau^2
+
\frac{\sigma_g^2}{B\tau^2}
\right)
-
\frac{2(p_w+l_{\mathcal L,1})(\gamma_{x,k}^2+\gamma_{y,k}^2)}{B}
\sigma_f^2                                                     \\
&\quad
+
\mathbb{E}\left[
\left\langle
\nabla_y g(x^k,y^k)-p_\lambda\lambda^k,
\lambda^k-\lambda^{k+1}
\right\rangle
\right]
-
\left(\frac{p_\lambda}{2} + \frac{l_{g,1}^2}{2(p_w+l_{\mathcal{L},1})}\right)
\mathbb{E}\left[
\|\lambda^{k+1}-\lambda^k\|^2
\right].
\end{align*}
By choosing $\gamma_{w,k} = \gamma_{x,k} =
\gamma_{y,k}\le \frac{1}{8(p_w+l_{\mathcal L,1})}$, we have
we have
\begin{align*}
&\mathbb{E}\left[
K(w^k,\hat w^k,\lambda^k)
\right]
-
\mathbb{E}\left[
K(w^{k+1},\hat w^k,\lambda^{k+1})
\right]                                                        \\
&\ge
\frac{\gamma_{w,k}}{4}
\mathbb{E}\left[
\left\|
\nabla_wK(w^k,\hat w^k,\lambda^k)
\right\|^2
\right]                                                      \\
&\quad
-
\left(
\gamma_{w,k}
+
4(p_w+l_{\mathcal L,1})\gamma_{w,k}^2
\right)
\left(
\frac{l_{g,2}^2C_\lambda^4}{4}\tau^2
+
\frac{\sigma_g^2}{B\tau^2}
\right)      -
\frac{4(p_w+l_{\mathcal L,1})\gamma_{w,k}^2}{B}
\sigma_f^2                                                   \\
&\quad
+
\mathbb{E}\left[
\left\langle
\nabla_y g(x^k,y^k)-p_\lambda\lambda^k,
\lambda^k-\lambda^{k+1}
\right\rangle
\right]
-
\left(\frac{p_\lambda}{2} + \frac{l_{g,1}^2}{2(p_w+l_{\mathcal{L},1})}\right)
\mathbb{E}\left[
\|\lambda^{k+1}-\lambda^k\|^2
\right].
\end{align*}
This completes the proof.
\end{proof}

\begin{lemma}\label{Lemma: Stochastic Primal Descent - Dual Ascent}
Suppose the conditions in Lemma \ref{Lemma: Stochastic Primal Descent w lambda} hold. Set $p_w=2l_{\mathcal L,1}$. Suppose further that
\[
\gamma_{\lambda,k}
\le
\min\left\{
\frac{1}{2\left(2l_{\Psi,1}+p_\lambda + \frac{l_{g,1}^2}{p_w+l_{\mathcal{L},1}}\right)},
\frac{l_{\mathcal L,1}^2}{16l_{g,1}^2}\gamma_{w,k}
\right\}.
\]
Then, we have
\begin{align*}
&\mathbb{E}\left[
K(w^k,\hat w^k,\lambda^k)
\right]
-
\mathbb{E}\left[
K(w^{k+1},\hat w^k,\lambda^{k+1})
\right]
+
2\mathbb{E}\left[
\Psi(\hat w^k,\lambda^{k+1})-\Psi(\hat w^k,\lambda^k)
\right]                                                        \\
&\ge
\frac{\gamma_{w,k}}{8}
\mathbb{E}\left[
\left\|
\nabla_wK(w^k,\hat w^k,\lambda^k)
\right\|^2
\right]
+
\frac{1}{4\gamma_{\lambda,k}}
\mathbb{E}\left[
\left\|
\lambda^{k+1}-\lambda^k
\right\|^2
\right]                                                        \\
&\quad
-
\frac{4(p_w+l_{\mathcal L,1})\gamma_{w,k}^2}{B}
\sigma_f^2
-
\left(
\gamma_{w,k}
+
4(p_w+l_{\mathcal L,1})\gamma_{w,k}^2
\right)
\left(
\frac{l_{g,2}^2C_\lambda^4}{4}\tau^2
+
\frac{\sigma_g^2}{B\tau^2}
\right)
-
\frac{\gamma_{\lambda,k}\sigma_g^2}{B}.
\end{align*}
\end{lemma}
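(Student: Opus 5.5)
The proof follows the deterministic Lemma~\ref{Lemma: Primal Descent - Dual Ascent}, with Lemma~\ref{Lemma: Stochastic Primal Descent w lambda} in place of Lemma~\ref{Lemma: Deterministic Primal Descent w lambda}. First, add Lemma~\ref{Lemma: Stochastic Primal Descent w lambda} to twice the first inequality of Lemma~\ref{Lemma: Dual Ascent hat w lambda}. The $\ell_{\Psi,1}$-smoothness of $\Psi(\hat w,\cdot)$ holds pathwise, so that inequality can be applied before taking total expectation. After this combination, the lower bound contains the following terms: $\frac{\gamma_{w,k}}{4}\mathbb E\|\nabla_wK\|^2$, the stochastic error terms already present in Lemma~\ref{Lemma: Stochastic Primal Descent w lambda}, the inner product $\mathbb E\langle \nabla_y g(x^k,y^k)-p_\lambda\lambda^k,\lambda^{k+1}-\lambda^k\rangle$, the cross term $2\mathbb E\langle\nabla_\lambda\Psi(\hat w^k,\lambda^k)-(\nabla_y g(x^k,y^k)-p_\lambda\lambda^k),\lambda^{k+1}-\lambda^k\rangle$, and $-(\ell_{\Psi,1}+\frac{p_\lambda}{2}+\frac{l_{g,1}^2}{2(p_w+l_{\mathcal L,1})})\mathbb E\|\lambda^{k+1}-\lambda^k\|^2$.

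The main difference from the deterministic case lies in the projection inequality. The step actually uses the stochastic direction $d^k:=\nabla_y g(x^k,y^k;\mathcal B_g^k)-p_\lambda\lambda^k$, so the projection optimality condition gives only
\[
\langle d^k,\lambda^{k+1}-\lambda^k\rangle\ge \gamma_{\lambda,k}^{-1}\|\lambda^{k+1}-\lambda^k\|^2 .
\]
I will write $\nabla_y g(x^k,y^k)-p_\lambda\lambda^k=d^k-e^k$, with noise $e^k:=\nabla_y g(x^k,y^k;\mathcal B_g^k)-\nabla_y g(x^k,y^k)$. The term $-\langle e^k,\lambda^{k+1}-\lambda^k\rangle$ cannot be removed by conditional unbiasedness, because $\lambda^{k+1}$ depends on $e^k$. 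Instead, compare with the noiseless projected step $\bar\lambda^{k+1}:=\mathcal P_\Lambda(\lambda^k+\gamma_{\lambda,k}(\nabla_y g(x^k,y^k)-p_\lambda\lambda^k))$, which is $\mathcal F_k^-$-measurable. Then $\mathbb E[\langle e^k,\bar\lambda^{k+1}-\lambda^k\rangle\mid\mathcal F_k^-]=0$. Nonexpansiveness gives $\|\lambda^{k+1}-\bar\lambda^{k+1}\|\le\gamma_{\lambda,k}\|e^k\|$, so
\[
\mathbb E\,\langle e^k,\lambda^{k+1}-\lambda^k\rangle\le \gamma_{\lambda,k}\,\mathbb E\|e^k\|^2\le \frac{\gamma_{\lambda,k}\sigma_g^2}{B}.
\]
This is exactly the extra $-\gamma_{\lambda,k}\sigma_g^2/B$ term in the statement. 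I expect this step to be the main obstacle: the constant must come out as exactly $1$, and the cross term should be handled through the ghost step rather than by Young's inequality, which would cost a multiple of $\mathbb E\|\lambda^{k+1}-\lambda^k\|^2$.

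The remaining steps repeat the deterministic argument pathwise. By Danskin's theorem and Lemma~\ref{Lemma: SC property of K},
\[
\|\nabla_\lambda\Psi(\hat w^k,\lambda^k)-(\nabla_y g(x^k,y^k)-p_\lambda\lambda^k)\|\le \frac{l_{g,1}}{p_w-l_{\mathcal L,1}}\|\nabla_wK(w^k,\hat w^k,\lambda^k)\|.
\]
Young's inequality then bounds the cross term below by $-\frac{1}{2\gamma_{\lambda,k}}\|\lambda^{k+1}-\lambda^k\|^2-\frac{2\gamma_{\lambda,k}l_{g,1}^2}{(p_w-l_{\mathcal L,1})^2}\|\nabla_wK\|^2$. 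The first stepsize condition on $\gamma_{\lambda,k}$ makes the net coefficient of $\mathbb E\|\lambda^{k+1}-\lambda^k\|^2$ at least $\frac{1}{4\gamma_{\lambda,k}}$. The second condition, $\gamma_{\lambda,k}\le\frac{l_{\mathcal L,1}^2}{16l_{g,1}^2}\gamma_{w,k}$ with $p_w=2l_{\mathcal L,1}$, reduces the primal coefficient from $\frac{\gamma_{w,k}}{4}$ to $\frac{\gamma_{w,k}}{8}$. Taking total expectation and collecting the $\sigma_f^2$, $\tau^2$, and $\sigma_g^2/(B\tau^2)$ terms from Lemma~\ref{Lemma: Stochastic Primal Descent w lambda}, unchanged, gives the claimed inequality.
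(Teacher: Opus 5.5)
Your proposal is correct and follows the paper's proof essentially step for step. The paper likewise combines the stochastic primal-descent lemma with twice the $l_{\Psi,1}$-smoothness inequality. It handles the noise in the projected dual step with the same ghost iterate $\bar\lambda^{k+1}$, using nonexpansiveness and conditional unbiasedness, to get exactly the $-\gamma_{\lambda,k}\sigma_g^2/B$ term. It then finishes with the same Danskin/Young bound on the cross term and the two stepsize conditions.
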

\begin{proof}
Combining Lemma \ref{Lemma: Stochastic Primal Descent w lambda} and Lemma \ref{Lemma: Dual Ascent hat w lambda}, we have
\begin{align*}
&\mathbb{E}\left[
K(w^k,\hat w^k,\lambda^k)
\right]
-
\mathbb{E}\left[
K(w^{k+1},\hat w^k,\lambda^{k+1})
\right]
+
2\mathbb{E}\left[
\Psi(\hat w^k,\lambda^{k+1})-\Psi(\hat w^k,\lambda^k)
\right]                                     \\
&\ge
\frac{\gamma_{w,k}}{4}
\mathbb{E}\left[
\left\|
\nabla_wK(w^k,\hat w^k,\lambda^k)
\right\|^2
\right]  -
\left(
l_{\Psi,1}+\frac{p_\lambda}{2} + \frac{l_{g,1}^2}{2(p_w+l_{\mathcal{L},1})}
\right)
\mathbb{E}\left[
\left\|
\lambda^{k+1}-\lambda^k
\right\|^2
\right]    \\
&\quad + \mathbb{E}\left[
\left\langle
\nabla_y g(x^k,y^k)-p_\lambda\lambda^k,
\lambda^{k+1}-\lambda^k
\right\rangle
\right]             +
2\mathbb{E}\left[
\left\langle
\nabla_\lambda\Psi(\hat w^k,\lambda^k)
-
\left(
\nabla_y g(x^k,y^k)-p_\lambda\lambda^k
\right),
\lambda^{k+1}-\lambda^k
\right\rangle
\right]    \\
&\quad -
\frac{4(p_w+l_{\mathcal L,1})\gamma_{w,k}^2}{B}
\sigma_f^2
-
\left(
\gamma_{w,k}
+
4(p_w+l_{\mathcal L,1})\gamma_{w,k}^2
\right)
\left(
\frac{l_{g,2}^2C_\lambda^4}{4}\tau^2
+
\frac{\sigma_g^2}{B\tau^2}
\right).
\end{align*}
By the update rule,
\[
\lambda^{k+1}
=
\mathcal P_\Lambda\left(
\lambda^k+
\gamma_{\lambda,k}
\left(
\nabla_y g(x^k,y^k;\mathcal B_g^k)-p_\lambda\lambda^k
\right)
\right).
\]
The optimality condition of the projection gives
\[
\left\langle
\nabla_y g(x^k,y^k;\mathcal B_g^k)-p_\lambda\lambda^k,
\lambda^{k+1}-\lambda^k
\right\rangle
\ge
\frac{1}{\gamma_{\lambda,k}}
\left\|
\lambda^{k+1}-\lambda^k
\right\|^2 .
\]
Therefore,
\begin{align*}
\left\langle
\nabla_y g(x^k,y^k)-p_\lambda\lambda^k,
\lambda^{k+1}-\lambda^k
\right\rangle
&=
\left\langle
\nabla_y g(x^k,y^k;\mathcal B_g^k)-p_\lambda\lambda^k,
\lambda^{k+1}-\lambda^k
\right\rangle                                                   \\
&\quad
-
\left\langle
\nabla_y g(x^k,y^k;\mathcal B_g^k)-\nabla_y g(x^k,y^k),
\lambda^{k+1}-\lambda^k
\right\rangle                                                   \\
&\ge
\frac{1}{\gamma_{\lambda,k}}
\left\|
\lambda^{k+1}-\lambda^k
\right\|^2
-
\left\langle
\nabla_y g(x^k,y^k;\mathcal B_g^k)-\nabla_y g(x^k,y^k),
\lambda^{k+1}-\lambda^k
\right\rangle .
\end{align*}
Define
\[
\bar{\lambda}^{k+1} = \mathcal{P}_{\Lambda}(\lambda^k + \gamma_{\lambda,k}(\nabla_y g(x^k,y^k) - p_{\lambda}\lambda^k)).
\]
Taking conditional expectation with respect to $\mathcal F_k^-$, and using the non-expansiveness of
$\mathcal P_\Lambda$, we have
\begin{align*}
&\mathbb{E}\left[
\left\langle
\nabla_y g(x^k,y^k;\mathcal B_g^k)-\nabla_y g(x^k,y^k),
\lambda^{k+1}-\lambda^k
\right\rangle
\mid \mathcal F_k^-
\right]                                                        \\
&= \mathbb{E}\left[
\left\langle
\nabla_y g(x^k,y^k;\mathcal B_g^k)-\nabla_y g(x^k,y^k),
\lambda^{k+1}-\bar{\lambda}^{k+1}
\right\rangle
\mid \mathcal F_k^-
\right] \\
&\quad +    \mathbb{E}\left[
\left\langle
\nabla_y g(x^k,y^k;\mathcal B_g^k)-\nabla_y g(x^k,y^k),
\bar{\lambda}^{k+1} - \lambda^{k}
\right\rangle
\mid \mathcal F_k^-
\right]                                            \\
&\le
\gamma_{\lambda,k}
\mathbb{E}\left[
\left\|
\nabla_y g(x^k,y^k;\mathcal B_g^k)-\nabla_y g(x^k,y^k)
\right\|^2
\mid \mathcal F_k^-
\right]        + 0                                                \\
&\le
\frac{\gamma_{\lambda,k}\sigma_g^2}{B}.
\end{align*}
Thus,
\begin{align*}
\mathbb{E}\left[
\left\langle
\nabla_y g(x^k,y^k)-p_\lambda\lambda^k,
\lambda^{k+1}-\lambda^k
\right\rangle
\right]                             \ge
\frac{1}{\gamma_{\lambda,k}}
\mathbb{E}\left[
\left\|
\lambda^{k+1}-\lambda^k
\right\|^2
\right]
-
\frac{\gamma_{\lambda,k}\sigma_g^2}{B}.
\end{align*}
Next, by Young's inequality,
\begin{align*}
&2\mathbb{E}\left[
\left\langle
\nabla_\lambda\Psi(\hat w^k,\lambda^k)
-
\left(
\nabla_y g(x^k,y^k)-p_\lambda\lambda^k
\right),
\lambda^{k+1}-\lambda^k
\right\rangle
\right]                                                        \\
&\ge
-
\frac{1}{2\gamma_{\lambda,k}}
\mathbb{E}\left[
\left\|
\lambda^{k+1}-\lambda^k
\right\|^2
\right]
-
2\gamma_{\lambda,k}
\mathbb{E}\left[
\left\|
\nabla_\lambda\Psi(\hat w^k,\lambda^k)
-
\left(
\nabla_y g(x^k,y^k)-p_\lambda\lambda^k
\right)
\right\|^2
\right].
\end{align*}
By Danskin's theorem,
\[
\nabla_\lambda\Psi(\hat w^k,\lambda^k)
=
\nabla_y g(w^\star(\hat w^k,\lambda^k))-p_\lambda\lambda^k.
\]
Hence,
\[
\begin{aligned}
\left\|
\nabla_\lambda\Psi(\hat w^k,\lambda^k)
-
\left(
\nabla_y g(x^k,y^k)-p_\lambda\lambda^k
\right)
\right\|
&=
\left\|
\nabla_y g(w^\star(\hat w^k,\lambda^k))
-
\nabla_y g(w^k)
\right\|                                                        \\
&\le
l_{g,1}
\left\|
w^\star(\hat w^k,\lambda^k)-w^k
\right\|                                                        \\
&\le
\frac{l_{g,1}}{p_w-l_{\mathcal L,1}}
\left\|
\nabla_wK(w^k,\hat w^k,\lambda^k)
\right\|.
\end{aligned}
\]
Since
\[
\gamma_{\lambda,k}
\le
\frac{1}{2\left(2l_{\Psi,1}+p_\lambda+ \frac{l_{g,1}^2}{p_w+l_{\mathcal{L},1}}\right)},
\]
we have
\[
\frac{1}{\gamma_{\lambda,k}}
-
l_{\Psi,1}
-
\frac{p_\lambda}{2}
- \frac{l_{g,1}^2}{2(p_w+l_{\mathcal{L},1})}
-
\frac{1}{2\gamma_{\lambda,k}}
\ge
\frac{1}{4\gamma_{\lambda,k}}.
\]
Therefore, we get
\begin{align*}
&\mathbb{E}\left[
K(w^k,\hat w^k,\lambda^k)
\right]
-
\mathbb{E}\left[
K(w^{k+1},\hat w^k,\lambda^{k+1})
\right]
+
2\mathbb{E}\left[
\Psi(\hat w^k,\lambda^{k+1})-\Psi(\hat w^k,\lambda^k)
\right]                                                        \\
&\ge
\left(
\frac{\gamma_{w,k}}{4}
-
\frac{2\gamma_{\lambda,k}l_{g,1}^2}{(p_w-l_{\mathcal L,1})^2}
\right)
\mathbb{E}\left[
\left\|
\nabla_wK(w^k,\hat w^k,\lambda^k)
\right\|^2
\right]
+
\frac{1}{4\gamma_{\lambda,k}}
\mathbb{E}\left[
\left\|
\lambda^{k+1}-\lambda^k
\right\|^2
\right]                                                        \\
&\quad
-
\frac{4(p_w+l_{\mathcal L,1})\gamma_{w,k}^2}{B}
\sigma_f^2
-
\left(
\gamma_{w,k}
+
4(p_w+l_{\mathcal L,1})\gamma_{w,k}^2
\right)
\left(
\frac{l_{g,2}^2C_\lambda^4}{4}\tau^2
+
\frac{\sigma_g^2}{B\tau^2}
\right)
-
\frac{\gamma_{\lambda,k}\sigma_g^2}{B}.
\end{align*}
Finally, since
\[
p_w=2l_{\mathcal L,1},
\]
we have
\[
(p_w-l_{\mathcal L,1})^2=l_{\mathcal L,1}^2.
\]
By
\[
\gamma_{\lambda,k}
\le
\frac{(p_w-l_{\mathcal L,1})^2}{16l_{g,1}^2}\gamma_{w,k} = \frac{l_{\mathcal L,1}^2}{16l_{g,1}^2}\gamma_{w,k},
\]
we have
\[
\frac{2\gamma_{\lambda,k}l_{g,1}^2}{(p_w-l_{\mathcal L,1})^2}
\le
\frac{\gamma_{w,k}}{8}.
\]
Thus,
\begin{align*}
&\mathbb{E}\left[
K(w^k,\hat w^k,\lambda^k)
\right]
-
\mathbb{E}\left[
K(w^{k+1},\hat w^k,\lambda^{k+1})
\right]
+
2\mathbb{E}\left[
\Psi(\hat w^k,\lambda^{k+1})-\Psi(\hat w^k,\lambda^k)
\right]                                                        \\
&\ge
\frac{\gamma_{w,k}}{8}
\mathbb{E}\left[
\left\|
\nabla_wK(w^k,\hat w^k,\lambda^k)
\right\|^2
\right]
+
\frac{1}{4\gamma_{\lambda,k}}
\mathbb{E}\left[
\left\|
\lambda^{k+1}-\lambda^k
\right\|^2
\right]                                                        \\
&\quad
-
\frac{4(p_w+l_{\mathcal L,1})\gamma_{w,k}^2}{B}
\sigma_f^2
-
\left(
\gamma_{w,k}
+
4(p_w+l_{\mathcal L,1})\gamma_{w,k}^2
\right)
\left(
\frac{l_{g,2}^2C_\lambda^4}{4}\tau^2
+
\frac{\sigma_g^2}{B\tau^2}
\right)
-
\frac{\gamma_{\lambda,k}\sigma_g^2}{B}.
\end{align*}
This completes the proof.
\end{proof}

\begin{proposition}\label{Proposition: Stochastic Lyapunov Function Descent}
Define the Lyapunov function
\[
V_k
=
K(w^k,\hat w^k,\lambda^k)
-
2\Psi(\hat w^k,\lambda^k)
+
2P(\hat w^k).
\]
Suppose
\[
\gamma_{w,k}
\le
\min\left\{
\frac{1}{8p_w\beta},
\frac{1}{8(p_w+l_{\mathcal L,1})}
\right\},\quad \gamma_{\lambda,k}
\le
\min\left\{\frac{1}{2\left(2l_{\Psi,1}+p_\lambda + \frac{l_{g,1}^2}{p_w+l_{\mathcal{L},1}}\right)},
\frac{l_{\mathcal L,1}^2}{16l_{g,1}^2}\gamma_{w,k},
\frac{1}{1536p_w\beta\gamma_2^2}
\right\},
\]
and
\[
\beta\le \frac{\sqrt{5}-1}{48\gamma_1}.
\]
Then, we have
\begin{align*}
&\mathbb{E}[V_k]-\mathbb{E}[V_{k+1}]                                      \\
&\ge
\frac{1}{32\gamma_{w,k}}
\mathbb{E}\left[
\left\|
w^k-w^+(\hat w^k,\lambda^k)
\right\|^2
\right]
+
\frac{1}{32\gamma_{\lambda,k}}
\mathbb{E}\left[
\left\|
\lambda^+(\hat w^k)-\lambda^k
\right\|^2
\right]                                                                  \\
&\quad
+
\frac{p_w\beta}{8}
\mathbb{E}\left[
\left\|
w^k-\hat w^k
\right\|^2
\right]
-
48p_w\beta
\mathbb{E}\left[
\left\|
w^\star(\hat w^k)
-
w^\star(\hat w^k,\lambda^+(\hat w^k))
\right\|^2
\right]                                                                  \\
&\quad
-
\left(
\gamma_{w,k}
+
4(p_w+l_{\mathcal L,1})\gamma_{w,k}^2
+
p_w\beta\gamma_{w,k}^2
\right)
\left(
\frac{l_{g,2}^2C_\lambda^4}{4}\tau^2
+
\frac{\sigma_g^2}{B\tau^2}
\right)                                                                 \\
&\quad
-
\frac{
4(p_w+l_{\mathcal L,1})\gamma_{w,k}^2
\sigma_f^2
}{B} - \frac{
p_w\beta\gamma_{w,k}^2\sigma_f^2
}{B}
-
\frac{9\gamma_{\lambda,k}\sigma_g^2}{8B}.
\end{align*}
\end{proposition}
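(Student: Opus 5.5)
The plan is to follow the proof of Proposition~\ref{Proposition: Deterministic Lyapunov Function Descent} line by line. Wherever the deterministic argument used a pathwise bound on $\|w^{k+1}-w^k\|^2$ or on the dual step, we substitute the stochastic counterparts from Lemmas~\ref{Lemma: Stochastic Iterates Gap} and~\ref{Lemma: Stochastic Primal Descent - Dual Ascent}, and we take total expectations throughout.

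\textbf{Step 1: split $V_k-V_{k+1}$.} We decompose $V_k-V_{k+1}$ exactly as in the deterministic proof, into three groups:
\begin{itemize}
\item the primal/dual group $K(w^k,\hat w^k,\lambda^k)-K(w^{k+1},\hat w^k,\lambda^{k+1})+2(\Psi(\hat w^k,\lambda^{k+1})-\Psi(\hat w^k,\lambda^k))$;
\item the $\hat w$-group $K(w^{k+1},\hat w^k,\lambda^{k+1})-K(w^{k+1},\hat w^{k+1},\lambda^{k+1})$;
\item the $\Psi/P$ group.
\end{itemize}
The last two groups are controlled by Lemmas~\ref{Lemma: Primal Descent hat w} and~\ref{Lemma: Dual Ascent - Proximal Descent}. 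These hold pathwise, since they depend only on the averaging update and on the deterministic maps $w^\star$, $\hat\lambda^\star$, so we simply take expectations of them.

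\textbf{Step 2: combine and absorb.} The primal/dual group is bounded in expectation by Lemma~\ref{Lemma: Stochastic Primal Descent - Dual Ascent}, which already carries the noise terms $\frac{4(p_w+l_{\mathcal L,1})\gamma_{w,k}^2\sigma_f^2}{B}$, $(\gamma_{w,k}+4(p_w+l_{\mathcal L,1})\gamma_{w,k}^2)(\cdot)$ and $\frac{\gamma_{\lambda,k}\sigma_g^2}{B}$. Adding the three bounds, the conditions $\beta\le(\sqrt5-1)/(48\gamma_1)$ and $\gamma_{\lambda,k}\le 1/(192p_w\beta\gamma_2^2)$ give
\[
\tfrac{\gamma_{w,k}}{8}\mathbb E\|\nabla_wK\|^2+\tfrac{p_w}{4\beta}\mathbb E\|\hat w^{k+1}-\hat w^k\|^2+\tfrac{1}{8\gamma_{\lambda,k}}\mathbb E\|\lambda^{k+1}-\lambda^k\|^2-24p_w\beta\,\mathbb E\|w^\star(\hat w^k)-w^\star(\hat w^k,\lambda^k)\|^2 ,
\]
minus the noise terms.

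\textbf{Step 3: convert $\|\hat w^{k+1}-\hat w^k\|^2$.} We write $\hat w^{k+1}-\hat w^k=\beta(w^{k+1}-\hat w^k)$ and get $\frac{p_w\beta}{8}\|w^k-\hat w^k\|^2-\frac{p_w\beta}{4}\|w^{k+1}-w^k\|^2$. We then use the stochastic iterate gap bound $\mathbb E\|w^{k+1}-w^k\|^2\le 2\gamma_{w,k}^2\mathbb E\|\nabla_wK\|^2+4\gamma_{w,k}^2\sigma_f^2/B+4\gamma_{w,k}^2(\frac{l_{g,2}^2C_\lambda^4}{4}\tau^2+\frac{\sigma_g^2}{B\tau^2})$, obtained by summing the $x$ and $y$ bounds. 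Multiplying by $p_w\beta/4$ produces exactly the extra terms $p_w\beta\gamma_{w,k}^2(\cdot)$ and $p_w\beta\gamma_{w,k}^2\sigma_f^2/B$ in the statement. The loss $\frac{p_w\beta\gamma_{w,k}^2}{2}\mathbb E\|\nabla_wK\|^2$ is absorbed using $\gamma_{w,k}\le 1/(8p_w\beta)$, leaving $\gamma_{w,k}/16$.

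\textbf{Step 4 (main obstacle): the ghost dual step.} We must replace the actual stochastic step $\lambda^{k+1}-\lambda^k$ by the deterministic ghost step $\lambda^+(\hat w^k)-\lambda^k$. By non-expansiveness of $\mathcal P_\Lambda$,
\[
\|\lambda^{k+1}-\lambda^+(\hat w^k)\|\le\gamma_{\lambda,k}\bigl(\|\nabla_yg(x^k,y^k;\mathcal B_g^k)-\nabla_yg(w^k)\|+\|\nabla_yg(w^k)-\nabla_yg(w^\star(\hat w^k,\lambda^k))\|\bigr).
\]
Squaring with $(a+b)^2\le 2a^2+2b^2$ and taking conditional expectations gives
\[
\mathbb E\|\lambda^{k+1}-\lambda^k\|^2\ge\tfrac12\mathbb E\|\lambda^+-\lambda^k\|^2-\tfrac{2\gamma_{\lambda,k}^2l_{g,1}^2}{(p_w-l_{\mathcal L,1})^2}\mathbb E\|\nabla_wK\|^2-\tfrac{2\gamma_{\lambda,k}^2\sigma_g^2}{B}.
\]
Multiplied by $\frac{1}{8\gamma_{\lambda,k}}$, the noise contributes $\frac{\gamma_{\lambda,k}\sigma_g^2}{4B}$. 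The paper's constant $\frac{9}{8}$ suggests a tighter split, with weights $(1+1/8)$ versus $(1+8)$ or similar, chosen so the total is $\frac{\gamma_{\lambda,k}\sigma_g^2}{B}(1+\frac18)$. I would tune the Young weight to hit exactly $\frac{9}{8}$, while keeping the coefficient of $\mathbb E\|\nabla_wK\|^2$ at least $\gamma_{w,k}/32$ via $\gamma_{\lambda,k}\le\frac{l_{\mathcal L,1}^2}{16l_{g,1}^2}\gamma_{w,k}$.

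\textbf{Step 5: finish.} We split $\|w^\star(\hat w^k)-w^\star(\hat w^k,\lambda^k)\|^2\le 2\|w^\star(\hat w^k)-w^\star(\hat w^k,\lambda^+)\|^2+2\gamma_2^2\|\lambda^+-\lambda^k\|^2$ (Lemma~\ref{Lemma: Lipschitz continuity of w star}). The $\gamma_2^2$ term is absorbed by $\gamma_{\lambda,k}\le 1/(1536p_w\beta\gamma_2^2)$, leaving $\frac{1}{32\gamma_{\lambda,k}}$. Finally we rewrite $\frac{\gamma_{w,k}}{32}\|\nabla_wK\|^2=\frac{1}{32\gamma_{w,k}}\|w^k-w^+\|^2$.
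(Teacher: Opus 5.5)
Your Steps 1--3 and 5 match the paper's argument, but Step 4 does not prove the inequality as stated. You apply the triangle inequality before squaring and then use $(a+b)^2\le 2a^2+2b^2$, which doubles the noise. The ghost-step conversion then contributes $\frac{\gamma_{\lambda,k}\sigma_g^2}{4B}$, and the last term becomes $\frac{5\gamma_{\lambda,k}\sigma_g^2}{4B}$ rather than $\frac{9\gamma_{\lambda,k}\sigma_g^2}{8B}$.

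Re-weighting the Young split does not fix this with the constants you have already fixed. With the coefficient $\frac{1}{8\gamma_{\lambda,k}}$ from Step 2, ending at $\frac{1}{32\gamma_{\lambda,k}}$ forces the reverse-triangle step to keep at least $\frac12\|\lambda^+(\hat w^k)-\lambda^k\|^2$. That means a coefficient of at least $1$ on $\|\lambda^{k+1}-\lambda^+(\hat w^k)\|^2$. Any split $(1+c)\|a\|^2+(1+1/c)\|b\|^2$ then puts a factor $1+1/c>1$ on the noise, so the total strictly exceeds $\frac98$.

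Your concrete weights are worse still. A factor $9$ on the gradient part costs $\frac98\gamma_{\lambda,k}\frac{l_{g,1}^2}{(p_w-l_{\mathcal L,1})^2}\|\nabla_wK\|^2$. Under $\gamma_{\lambda,k}\le\frac{l_{\mathcal L,1}^2}{16l_{g,1}^2}\gamma_{w,k}$ this can only be bounded by $\frac{9\gamma_{w,k}}{128}\|\nabla_wK\|^2$, which exceeds the available $\frac{\gamma_{w,k}}{16}\|\nabla_wK\|^2$. The noise total with those weights is $\frac{73}{64}$, still not $\frac98$.

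The missing idea is conditional orthogonality, not a sharper Young split. Square the non-expansiveness bound directly: $\|\lambda^{k+1}-\lambda^+(\hat w^k)\|^2\le\gamma_{\lambda,k}^2\|a_k+\xi_k\|^2$, where $a_k:=\nabla_yg(w^k)-\nabla_yg(w^\star(\hat w^k,\lambda^k))$ and $\xi_k:=\nabla_yg(x^k,y^k;\mathcal B_g^k)-\nabla_yg(w^k)$. Because $a_k$ is $\mathcal F_k^-$-measurable and $\mathbb E[\xi_k\mid\mathcal F_k^-]=0$, the cross term vanishes. Lemma~\ref{Lemma: SC property of K} together with the $l_{g,1}$-smoothness of $g$ then gives
\[
\mathbb E\bigl[\|\lambda^{k+1}-\lambda^+(\hat w^k)\|^2\mid\mathcal F_k^-\bigr]\le\frac{\gamma_{\lambda,k}^2l_{g,1}^2}{(p_w-l_{\mathcal L,1})^2}\|\nabla_wK(w^k,\hat w^k,\lambda^k)\|^2+\frac{\gamma_{\lambda,k}^2\sigma_g^2}{B},
\]
with unit coefficients on both terms. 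This is exactly what the paper does. Multiplying by $\frac{1}{8\gamma_{\lambda,k}}$ adds exactly $\frac{\gamma_{\lambda,k}\sigma_g^2}{8B}$ to the $\frac{\gamma_{\lambda,k}\sigma_g^2}{B}$ inherited from Lemma~\ref{Lemma: Stochastic Primal Descent - Dual Ascent}. The cost on the gradient term is at most $\frac{\gamma_{w,k}}{128}\|\nabla_wK\|^2$.

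Your $\frac54$ version would still suffice for the later theorems after adjusting constants. A Young-based route can also be rescued by using the slack $\frac{1}{4\gamma_{\lambda,k}}-24p_w\beta\gamma_2^2\ge\frac{15}{64\gamma_{\lambda,k}}$ in Step 2, which follows from $\gamma_{\lambda,k}\le\frac{1}{1536p_w\beta\gamma_2^2}$. As written, however, Step 4 does not establish the stated constant.
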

\begin{proof}
Combining the stochastic versions of Lemma \ref{Lemma: Primal Descent hat w} and
Lemma \ref{Lemma: Dual Ascent - Proximal Descent}, and
Lemma \ref{Lemma: Stochastic Primal Descent - Dual Ascent}, we have
\begin{align*}
&\mathbb{E}[V_k]-\mathbb{E}[V_{k+1}] \\
&=
\mathbb{E}[K(w^{k+1},\hat w^k,\lambda^{k+1})]
-
\mathbb{E}[K(w^{k+1},\hat w^{k+1},\lambda^{k+1})]                         \\
&\quad
+
2\mathbb{E}\left[
\Psi(\hat w^{k+1},\lambda^{k+1})
-
\Psi(\hat w^k,\lambda^{k+1})
\right]
-
2\mathbb{E}\left[
P(\hat w^{k+1})-P(\hat w^k)
\right]                                                                  \\
&\quad
+
\mathbb{E}[K(w^k,\hat w^k,\lambda^k)]
-
\mathbb{E}[K(w^{k+1},\hat w^k,\lambda^{k+1})]
+
2\mathbb{E}\left[
\Psi(\hat w^k,\lambda^{k+1})
-
\Psi(\hat w^k,\lambda^k)
\right]                                                                  \\
&\ge
\left(
\frac{p_w}{2\beta}
-
2p_w\gamma_1
-
\frac{p_w}{6\beta}
-
48p_w\beta\gamma_1^2
\right)
\mathbb{E}\left[
\|\hat w^{k+1}-\hat w^k\|^2
\right]
+
\left(
\frac{1}{4\gamma_{\lambda,k}}
-
24p_w\beta\gamma_2^2
\right)
\mathbb{E}\left[
\|\lambda^{k+1}-\lambda^k\|^2
\right]                                                                  \\
&\quad
-
24p_w\beta
\mathbb{E}\left[
\left\|
w^\star(\hat w^k)
-
w^\star(\hat w^k,\lambda^k)
\right\|^2
\right]
+
\frac{\gamma_{w,k}}{8}
\mathbb{E}\left[
\left\|
\nabla_wK(w^k,\hat w^k,\lambda^k)
\right\|^2
\right]                                                                  \\
&\quad
-
\left(
\gamma_{w,k}
+
4(p_w+l_{\mathcal L,1})\gamma_{w,k}^2
\right)
\left(
\frac{l_{g,2}^2C_\lambda^4}{4}\tau^2
+
\frac{\sigma_g^2}{B\tau^2}
\right)
-
\frac{
4(p_w+l_{\mathcal L,1})\gamma_{w,k}^2\sigma_f^2
}{B}
-
\frac{\gamma_{\lambda,k}\sigma_g^2}{B}.
\end{align*}
By
\[
\beta\le \frac{\sqrt{5}-1}{48\gamma_1},
\]
we have
\[
\frac{p_w}{2\beta}
-
2p_w\gamma_1
-
\frac{p_w}{6\beta}
-
48p_w\beta\gamma_1^2
\ge
\frac{p_w}{4\beta}.
\]
Moreover, by
\[
\gamma_{\lambda,k}
\le
\frac{1}{192p_w\beta\gamma_2^2},
\]
we have
\[
\frac{1}{4\gamma_{\lambda,k}}
-
24p_w\beta\gamma_2^2
\ge
\frac{1}{8\gamma_{\lambda,k}}.
\]
Therefore,
\begin{align}
&\mathbb{E}[V_k]-\mathbb{E}[V_{k+1}]                                      \nonumber\\
&\ge
\frac{\gamma_{w,k}}{8}
\mathbb{E}\left[
\left\|
\nabla_wK(w^k,\hat w^k,\lambda^k)
\right\|^2
\right]
+
\frac{p_w}{4\beta}
\mathbb{E}\left[
\|\hat w^{k+1}-\hat w^k\|^2
\right]
+
\frac{1}{8\gamma_{\lambda,k}}
\mathbb{E}\left[
\|\lambda^{k+1}-\lambda^k\|^2
\right]                                                                  \nonumber\\
&\quad
-
24p_w\beta
\mathbb{E}\left[
\left\|
w^\star(\hat w^k)
-
w^\star(\hat w^k,\lambda^k)
\right\|^2
\right]                                                                  \nonumber\\
&\quad
-
\left(
\gamma_{w,k}
+
4(p_w+l_{\mathcal L,1})\gamma_{w,k}^2
\right)
\left(
\frac{l_{g,2}^2C_\lambda^4}{4}\tau^2
+
\frac{\sigma_g^2}{B\tau^2}
\right)
-
\frac{
4(p_w+l_{\mathcal L,1})\gamma_{w,k}^2\sigma_f^2
}{B}
-
\frac{\gamma_{\lambda,k}\sigma_g^2}{B}.
\label{Proposition: Stochastic Lyapunov Function Descent 1}
\end{align}
By the update rule of $\hat w^k$,
\[
\hat w^{k+1}-\hat w^k
=
\beta(w^{k+1}-\hat w^k).
\]
Using $\|a\|^2\ge \frac12\|a-b\|^2-\|b\|^2$,
we obtain
\[
\begin{aligned}
\frac{p_w}{4\beta}
\|\hat w^{k+1}-\hat w^k\|^2
&=
\frac{p_w\beta}{4}
\|w^{k+1}-\hat w^k\|^2                                      \\
&\ge
\frac{p_w\beta}{8}
\|w^k-\hat w^k\|^2
-
\frac{p_w\beta}{4}
\|w^{k+1}-w^k\|^2 .
\end{aligned}
\]
By Lemma \ref{Lemma: Stochastic Iterates Gap},
\begin{align*}
\mathbb{E}\left[
\|w^{k+1}-w^k\|^2
\right]
&\le
2\gamma_{w,k}^2
\mathbb{E}\left[
\|\nabla_wK(w^k,\hat w^k,\lambda^k)\|^2
\right]
+
\frac{4\gamma_{w,k}^2\sigma_f^2}{B}
+
4\gamma_{w,k}^2
\left(
\frac{l_{g,2}^2C_\lambda^4}{4}\tau^2
+
\frac{\sigma_g^2}{B\tau^2}
\right).
\end{align*}
Substituting this estimate into
\eqref{Proposition: Stochastic Lyapunov Function Descent 1}, we get
\begin{align}
&\mathbb{E}[V_k]-\mathbb{E}[V_{k+1}]                                      \nonumber\\
&\ge
\left(
\frac{\gamma_{w,k}}{8}
-
\frac{p_w\beta\gamma_{w,k}^2}{2}
\right)
\mathbb{E}\left[
\|\nabla_wK(w^k,\hat w^k,\lambda^k)\|^2
\right]
+
\frac{1}{8\gamma_{\lambda,k}}
\mathbb{E}\left[
\|\lambda^{k+1}-\lambda^k\|^2
\right]                                                                  \nonumber\\
&\quad
+
\frac{p_w\beta}{8}
\mathbb{E}\left[
\|w^k-\hat w^k\|^2
\right]
-
24p_w\beta
\mathbb{E}\left[
\left\|
w^\star(\hat w^k)
-
w^\star(\hat w^k,\lambda^k)
\right\|^2
\right]                                                                  \nonumber\\
&\quad
-
\left(
\gamma_{w,k}
+
4(p_w+l_{\mathcal L,1})\gamma_{w,k}^2
+
p_w\beta\gamma_{w,k}^2
\right)
\left(
\frac{l_{g,2}^2C_\lambda^4}{4}\tau^2
+
\frac{\sigma_g^2}{B\tau^2}
\right)                                                                 \nonumber\\
&\quad
-
\frac{
4(p_w+l_{\mathcal L,1})\gamma_{w,k}^2
\sigma_f^2
}{B} - \frac{
p_w\beta\gamma_{w,k}^2\sigma_f^2
}{B}
-
\frac{\gamma_{\lambda,k}\sigma_g^2}{B}.
\label{Proposition: Stochastic Lyapunov Function Descent 2}
\end{align}
Since
\[
\gamma_{w,k}\le \frac{1}{8p_w\beta},
\]
we have
\[
\frac{\gamma_{w,k}}{8}
-
\frac{p_w\beta\gamma_{w,k}^2}{2}
\ge
\frac{\gamma_{w,k}}{16}.
\]
Thus,
\begin{align}
&\mathbb{E}[V_k]-\mathbb{E}[V_{k+1}]                                      \nonumber\\
&\ge
\frac{\gamma_{w,k}}{16}
\mathbb{E}\left[
\|\nabla_wK(w^k,\hat w^k,\lambda^k)\|^2
\right]
+
\frac{1}{8\gamma_{\lambda,k}}
\mathbb{E}\left[
\|\lambda^{k+1}-\lambda^k\|^2
\right]                                                                  \nonumber\\
&\quad
+
\frac{p_w\beta}{8}
\mathbb{E}\left[
\|w^k-\hat w^k\|^2
\right]
-
24p_w\beta
\mathbb{E}\left[
\left\|
w^\star(\hat w^k)
-
w^\star(\hat w^k,\lambda^k)
\right\|^2
\right]                                                                  \nonumber\\
&\quad
-
\left(
\gamma_{w,k}
+
4(p_w+l_{\mathcal L,1})\gamma_{w,k}^2
+
p_w\beta\gamma_{w,k}^2
\right)
\left(
\frac{l_{g,2}^2C_\lambda^4}{4}\tau^2
+
\frac{\sigma_g^2}{B\tau^2}
\right)                                                                 \nonumber\\
&\quad
-
\frac{
4(p_w+l_{\mathcal L,1})\gamma_{w,k}^2
\sigma_f^2
}{B} - \frac{
p_w\beta\gamma_{w,k}^2\sigma_f^2
}{B}
-
\frac{\gamma_{\lambda,k}\sigma_g^2}{B}.
\label{Proposition: Stochastic Lyapunov Function Descent 3}
\end{align}

Next, we convert the actual projected dual step into the ghost projected dual step.
By the non-expansiveness of $\mathcal P_\Lambda$,
\begin{align*}
\mathbb{E}\left[
\|\lambda^{k+1}-\lambda^+(\hat w^k)\|^2 \mid \mathcal{F}_k^-
\right]
&\le
\gamma_{\lambda,k}^2
\mathbb{E}\left[
\|\nabla_y g(w^k)-\nabla_y g(w^\star(\hat w^k,\lambda^k))\|^2 \mid \mathcal{F}_k^-
\right]
+
\frac{\gamma_{\lambda,k}^2\sigma_g^2}{B}                                 \\
&\le
\frac{\gamma_{\lambda,k}^2l_{g,1}^2}{(p_w-l_{\mathcal L,1})^2}
\mathbb{E}\left[
\|\nabla_wK(w^k,\hat w^k,\lambda^k)\|^2\mid \mathcal{F}_k^-
\right]
+
\frac{\gamma_{\lambda,k}^2\sigma_g^2}{B}.
\end{align*}
Taking total expectation yields
\[
\mathbb{E}\left[
\|\lambda^{k+1}-\lambda^+(\hat w^k)\|^2\right] \le
\frac{\gamma_{\lambda,k}^2l_{g,1}^2}{(p_w-l_{\mathcal L,1})^2}
\mathbb{E}\left[
\|\nabla_wK(w^k,\hat w^k,\lambda^k)\|^2
\right]
+
\frac{\gamma_{\lambda,k}^2\sigma_g^2}{B}.
\]
Therefore,
\begin{align*}
\mathbb{E}\left[
\|\lambda^{k+1}-\lambda^k\|^2
\right]
&\ge
\frac12
\mathbb{E}\left[
\|\lambda^+(\hat w^k)-\lambda^k\|^2
\right]
-
\mathbb{E}\left[
\|\lambda^{k+1}-\lambda^+(\hat w^k)\|^2
\right]                                                               \\
&\ge
\frac12
\mathbb{E}\left[
\|\lambda^+(\hat w^k)-\lambda^k\|^2
\right]
-
\frac{\gamma_{\lambda,k}^2l_{g,1}^2}{(p_w-l_{\mathcal L,1})^2}
\mathbb{E}\left[
\|\nabla_wK(w^k,\hat w^k,\lambda^k)\|^2
\right]
-
\frac{\gamma_{\lambda,k}^2\sigma_g^2}{B}.
\end{align*}
Substituting the above inequality into
\eqref{Proposition: Stochastic Lyapunov Function Descent 3}, we get
\begin{align}
&\mathbb{E}[V_k]-\mathbb{E}[V_{k+1}]                                      \nonumber\\
&\ge
\left(
\frac{\gamma_{w,k}}{16}
-
\frac{\gamma_{\lambda,k}l_{g,1}^2}{8(p_w-l_{\mathcal L,1})^2}
\right)
\mathbb{E}\left[
\|\nabla_wK(w^k,\hat w^k,\lambda^k)\|^2
\right]         +
\frac{1}{16\gamma_{\lambda,k}}
\mathbb{E}\left[
\|\lambda^+(\hat w^k)-\lambda^k\|^2
\right]                                                         \nonumber\\
&\quad
+
\frac{p_w\beta}{8}
\mathbb{E}\left[
\|w^k-\hat w^k\|^2
\right]
-
24p_w\beta
\mathbb{E}\left[
\left\|
w^\star(\hat w^k)
-
w^\star(\hat w^k,\lambda^k)
\right\|^2
\right]                                                                  \nonumber\\
&\quad
-
\left(
\gamma_{w,k}
+
4(p_w+l_{\mathcal L,1})\gamma_{w,k}^2
+
p_w\beta\gamma_{w,k}^2
\right)
\left(
\frac{l_{g,2}^2C_\lambda^4}{4}\tau^2
+
\frac{\sigma_g^2}{B\tau^2}
\right)                                                                 \nonumber\\
&\quad
-
\frac{
4(p_w+l_{\mathcal L,1})\gamma_{w,k}^2
\sigma_f^2
}{B} - \frac{
p_w\beta\gamma_{w,k}^2\sigma_f^2
}{B}
-
\frac{9\gamma_{\lambda,k}\sigma_g^2}{8B}.
\label{Proposition: Stochastic Lyapunov Function Descent 4}
\end{align}
Since
\[
\gamma_{\lambda,k}
\le \frac{(p_w-l_{\mathcal L,1})^2}{16l_{g,1}^2}\gamma_{w,k} \leq
\frac{(p_w-l_{\mathcal L,1})^2}{4l_{g,1}^2}\gamma_{w,k},
\]
we have
\[
\frac{\gamma_{w,k}}{16}
-
\frac{\gamma_{\lambda,k}l_{g,1}^2}{8(p_w-l_{\mathcal L,1})^2}
\ge
\frac{\gamma_{w,k}}{32}.
\]
Thus,
\begin{align*}
&\mathbb{E}[V_k]-\mathbb{E}[V_{k+1}]               \\
&\ge
\frac{\gamma_{w,k}}{32}
\mathbb{E}\left[
\|\nabla_wK(w^k,\hat w^k,\lambda^k)\|^2
\right]
+
\frac{1}{16\gamma_{\lambda,k}}
\mathbb{E}\left[
\|\lambda^+(\hat w^k)-\lambda^k\|^2
\right]
+
\frac{p_w\beta}{8}
\mathbb{E}\left[
\|w^k-\hat w^k\|^2
\right]                                \\
&\quad
-
24p_w\beta
\mathbb{E}\left[
\left\|
w^\star(\hat w^k)
-
w^\star(\hat w^k,\lambda^k)
\right\|^2
\right]                                \\
&\quad
-
\left(
\gamma_{w,k}
+
4(p_w+l_{\mathcal L,1})\gamma_{w,k}^2
+
p_w\beta\gamma_{w,k}^2
\right)
\left(
\frac{l_{g,2}^2C_\lambda^4}{4}\tau^2
+
\frac{\sigma_g^2}{B\tau^2}
\right)                                                                 \nonumber\\
&\quad
-
\frac{
4(p_w+l_{\mathcal L,1})\gamma_{w,k}^2
\sigma_f^2
}{B} - \frac{
p_w\beta\gamma_{w,k}^2\sigma_f^2
}{B}
-
\frac{9\gamma_{\lambda,k}\sigma_g^2}{8B}.
\end{align*}
Moreover,
\begin{align*}
\mathbb{E}\left[
\left\|
w^\star(\hat w^k)
-
w^\star(\hat w^k,\lambda^k)
\right\|^2
\right]                             \le
2\mathbb{E}\left[
\left\|
w^\star(\hat w^k)
-
w^\star(\hat w^k,\lambda^+(\hat w^k))
\right\|^2
\right]
+
2\gamma_2^2
\mathbb{E}\left[
\left\|
\lambda^+(\hat w^k)-\lambda^k
\right\|^2
\right].
\end{align*}
Finally, we obtain
\begin{align*}
&\mathbb{E}[V_k]-\mathbb{E}[V_{k+1}]                                      \\
&\ge
\frac{\gamma_{w,k}}{32}
\mathbb{E}\left[
\|\nabla_wK(w^k,\hat w^k,\lambda^k)\|^2
\right]
+
\left(
\frac{1}{16\gamma_{\lambda,k}}
-
48p_w\beta\gamma_2^2
\right)
\mathbb{E}\left[
\|\lambda^+(\hat w^k)-\lambda^k\|^2
\right]                                                                  \\
&\quad
+
\frac{p_w\beta}{8}
\mathbb{E}\left[
\|w^k-\hat w^k\|^2
\right]
-
48p_w\beta
\mathbb{E}\left[
\left\|
w^\star(\hat w^k)
-
w^\star(\hat w^k,\lambda^+(\hat w^k))
\right\|^2
\right]                                                                  \\
&\quad
-
\left(
\gamma_{w,k}
+
4(p_w+l_{\mathcal L,1})\gamma_{w,k}^2
+
p_w\beta\gamma_{w,k}^2
\right)
\left(
\frac{l_{g,2}^2C_\lambda^4}{4}\tau^2
+
\frac{\sigma_g^2}{B\tau^2}
\right)                                                                 \nonumber\\
&\quad
-
\frac{
4(p_w+l_{\mathcal L,1})\gamma_{w,k}^2
\sigma_f^2
}{B} - \frac{
p_w\beta\gamma_{w,k}^2\sigma_f^2
}{B}
-
\frac{9\gamma_{\lambda,k}\sigma_g^2}{8B}.
\end{align*}
By
\[
\gamma_{\lambda,k}
\le
\frac{1}{1536p_w\beta\gamma_2^2},
\]
we have
\[
\frac{1}{16\gamma_{\lambda,k}}
-
48p_w\beta\gamma_2^2
\ge
\frac{1}{32\gamma_{\lambda,k}}.
\]
Moreover, by the definition
\[
w^+(\hat w^k,\lambda^k)
=
w^k-\gamma_{w,k}\nabla_wK(w^k,\hat w^k,\lambda^k),
\]
we have
\[
\frac{\gamma_{w,k}}{32}
\left\|
\nabla_wK(w^k,\hat w^k,\lambda^k)
\right\|^2
=
\frac{1}{32\gamma_{w,k}}
\left\|
w^k-w^+(\hat w^k,\lambda^k)
\right\|^2.
\]
Therefore,
\begin{align*}
&\mathbb{E}[V_k]-\mathbb{E}[V_{k+1}]                                      \\
&\ge
\frac{1}{32\gamma_{w,k}}
\mathbb{E}\left[
\left\|
w^k-w^+(\hat w^k,\lambda^k)
\right\|^2
\right]
+
\frac{1}{32\gamma_{\lambda,k}}
\mathbb{E}\left[
\left\|
\lambda^+(\hat w^k)-\lambda^k
\right\|^2
\right]                                                                  \\
&\quad
+
\frac{p_w\beta}{8}
\mathbb{E}\left[
\left\|
w^k-\hat w^k
\right\|^2
\right]
-
48p_w\beta
\mathbb{E}\left[
\left\|
w^\star(\hat w^k)
-
w^\star(\hat w^k,\lambda^+(\hat w^k))
\right\|^2
\right]                                                                  \\
&\quad
-
\left(
\gamma_{w,k}
+
4(p_w+l_{\mathcal L,1})\gamma_{w,k}^2
+
p_w\beta\gamma_{w,k}^2
\right)
\left(
\frac{l_{g,2}^2C_\lambda^4}{4}\tau^2
+
\frac{\sigma_g^2}{B\tau^2}
\right)                                                                 \nonumber\\
&\quad
-
\frac{
4(p_w+l_{\mathcal L,1})\gamma_{w,k}^2
\sigma_f^2
}{B} - \frac{
p_w\beta\gamma_{w,k}^2\sigma_f^2
}{B}
-
\frac{9\gamma_{\lambda,k}\sigma_g^2}{8B}.
\end{align*}
This completes the proof.
\end{proof}

\subsection{Oracle Complexity under Stochastic Setting}
\begin{proposition}\label{Proposition: Stochastic Lyapunov Function Descent After Error Bound}
Suppose the conditions in Proposition \ref{Proposition: Stochastic Lyapunov Function Descent} hold. Suppose further that
\[
\beta
\le
\frac{1}{3072p_w\gamma_{\lambda,k}\gamma_3^2},
\]
where
\[
\gamma_3
:=
\frac{
l_{\Psi,1}+\gamma_{\lambda,k}^{-1}
}{
\sqrt{
2(p_w-l_{\mathcal L,1})p_\lambda
+
\frac{(p_w-l_{\mathcal L,1})^2\mu_g^2}
{(p_w+l_{\mathcal L,1})^2}
}
}.
\]
Then, we have
\begin{align*}
&\mathbb{E}[V_k]-\mathbb{E}[V_{k+1}]                                      \\
&\ge
\frac{1}{32\gamma_{w,k}}
\mathbb{E}\left[
\left\|
w^k-w^+(\hat w^k,\lambda^k)
\right\|^2
\right]
+
\frac{1}{64\gamma_{\lambda,k}}
\mathbb{E}\left[
\left\|
\lambda^+(\hat w^k)-\lambda^k
\right\|^2
\right]
+
\frac{p_w\beta}{8}
\mathbb{E}\left[
\left\|
w^k-\hat w^k
\right\|^2
\right]                                                                  \\
&\quad
-
\left(
\gamma_{w,k}
+
4(p_w+l_{\mathcal L,1})\gamma_{w,k}^2
+
p_w\beta\gamma_{w,k}^2
\right)
\left(
\frac{l_{g,2}^2C_\lambda^4}{4}\tau^2
+
\frac{\sigma_g^2}{B\tau^2}
\right)                                                                 \nonumber\\
&\quad
-
\frac{
4(p_w+l_{\mathcal L,1})\gamma_{w,k}^2
\sigma_f^2
}{B} - \frac{
p_w\beta\gamma_{w,k}^2\sigma_f^2
}{B}
-
\frac{9\gamma_{\lambda,k}\sigma_g^2}{8B}.
\end{align*}
\end{proposition}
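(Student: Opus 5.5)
The plan mirrors the deterministic proof of Proposition~\ref{Proposition: Recursion of Lyapunov Fucntion}. Start from the inequality of Proposition~\ref{Proposition: Stochastic Lyapunov Function Descent}. It already contains the three good residual terms and all the stochastic and finite-difference error terms in exactly the form required. The only term still to remove is the negative one,
\[
-48p_w\beta\,\mathbb E\left[\left\|w^\star(\hat w^k)-w^\star(\hat w^k,\lambda^+(\hat w^k))\right\|^2\right].
\]

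To handle it, apply Lemma~\ref{Lemma: Error Bound 1} pathwise. That lemma is purely deterministic in $(\hat w^k,\lambda^k)$. Here $\lambda^+(\hat w^k)$ is the ghost projected step built from the exact gradient $\nabla_\lambda\Psi(\hat w^k,\lambda^k)$ at the $\mathcal F_k^-$-measurable point, so no sampling enters it. Since $\lambda^k\in\Lambda$, the lemma gives, almost surely,
\[
\left\|w^\star(\hat w^k)-w^\star(\hat w^k,\lambda^+(\hat w^k))\right\|^2\le\gamma_3^2\left\|\lambda^+(\hat w^k)-\lambda^k\right\|^2 .
\]
Take expectations and use the assumed bound $\beta\le 1/(3072p_w\gamma_{\lambda,k}\gamma_3^2)$. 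This yields $48p_w\beta\gamma_3^2\le \frac{1}{64\gamma_{\lambda,k}}$, so the negative term is at most $\frac{1}{64\gamma_{\lambda,k}}\mathbb E\|\lambda^+(\hat w^k)-\lambda^k\|^2$. Subtracting it from the coefficient $\frac{1}{32\gamma_{\lambda,k}}$ leaves $\frac{1}{64\gamma_{\lambda,k}}$. All other terms carry over unchanged, which gives the claim.

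The main point to check is that $\gamma_3$ is a deterministic constant, so that the pathwise bound can be multiplied by a fixed scalar before taking expectations. This holds because the stepsizes are deterministic in this setting ($\gamma_{\lambda,k}$ is $\mathcal F_k^-$-measurable and in practice constant). One also has to confirm that Lemma~\ref{Lemma: Error Bound 1} uses only $\lambda^k\in\Lambda$, which projection guarantees, and the strong convexity and concavity structure, which needs $p_w>l_{\mathcal L,1}$ (ensured by $p_w=2l_{\mathcal L,1}$). Neither condition depends on the stochastic oracle, so no further difficulty is expected.
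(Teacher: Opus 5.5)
Your proposal is correct and matches the paper's proof: you apply Lemma~\ref{Lemma: Error Bound 1} pathwise to the ghost step $\lambda^+(\hat w^k)$, square and take expectations, and use $\beta\le 1/(3072p_w\gamma_{\lambda,k}\gamma_3^2)$ to bound the negative term by $\frac{1}{64\gamma_{\lambda,k}}\mathbb E\|\lambda^+(\hat w^k)-\lambda^k\|^2$. This term is then absorbed into the $\frac{1}{32\gamma_{\lambda,k}}$ coefficient from Proposition~\ref{Proposition: Stochastic Lyapunov Function Descent}, leaving $\frac{1}{64\gamma_{\lambda,k}}$.
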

\begin{proof}
By Proposition \ref{Proposition: Stochastic Lyapunov Function Descent}, we have
\begin{align*}
&\mathbb{E}[V_k]-\mathbb{E}[V_{k+1}]                                      \\
&\ge
\frac{1}{32\gamma_{w,k}}
\mathbb{E}\left[
\left\|
w^k-w^+(\hat w^k,\lambda^k)
\right\|^2
\right]
+
\frac{1}{32\gamma_{\lambda,k}}
\mathbb{E}\left[
\left\|
\lambda^+(\hat w^k)-\lambda^k
\right\|^2
\right]                                                                  \\
&\quad
+
\frac{p_w\beta}{8}
\mathbb{E}\left[
\left\|
w^k-\hat w^k
\right\|^2
\right]                             -
48p_w\beta
\mathbb{E}\left[
\left\|
w^\star(\hat w^k)
-
w^\star(\hat w^k,\lambda^+(\hat w^k))
\right\|^2
\right]                                                                  \\
&\quad
-
\left(
\gamma_{w,k}
+
4(p_w+l_{\mathcal L,1})\gamma_{w,k}^2
+
p_w\beta\gamma_{w,k}^2
\right)
\left(
\frac{l_{g,2}^2C_\lambda^4}{4}\tau^2
+
\frac{\sigma_g^2}{B\tau^2}
\right)                                                                 \nonumber\\
&\quad
-
\frac{
4(p_w+l_{\mathcal L,1})\gamma_{w,k}^2
\sigma_f^2
}{B} - \frac{
p_w\beta\gamma_{w,k}^2\sigma_f^2
}{B}
-
\frac{9\gamma_{\lambda,k}\sigma_g^2}{8B}.
\end{align*}
Since Lemma \ref{Lemma: Error Bound 1} is deterministic conditional on
$(\hat w^k,\lambda^k)$, it holds pathwise. Hence,
\[
\left\|
w^\star(\hat w^k)
-
w^\star(\hat w^k,\lambda^+(\hat w^k))
\right\|
\le
\gamma_3
\left\|
\lambda^+(\hat w^k)-\lambda^k
\right\|.
\]
Squaring both sides and taking expectation gives
\[
\mathbb{E}\left[
\left\|
w^\star(\hat w^k)
-
w^\star(\hat w^k,\lambda^+(\hat w^k))
\right\|^2
\right]
\le
\gamma_3^2
\mathbb{E}\left[
\left\|
\lambda^+(\hat w^k)-\lambda^k
\right\|^2
\right].
\]
Therefore, by the choice
\[
\beta
\le
\frac{1}{3072p_w\gamma_{\lambda,k}\gamma_3^2},
\]
we have
\[
48p_w\beta
\mathbb{E}\left[
\left\|
w^\star(\hat w^k)
-
w^\star(\hat w^k,\lambda^+(\hat w^k))
\right\|^2
\right]                             \le
48p_w\beta\gamma_3^2
\mathbb{E}\left[
\left\|
\lambda^+(\hat w^k)-\lambda^k
\right\|^2
\right] \leq
\frac{1}{64\gamma_{\lambda,k}}
\mathbb{E}\left[
\left\|
\lambda^+(\hat w^k)-\lambda^k
\right\|^2
\right].
\]
Substituting this estimate into the previous descent inequality gives
\begin{align*}
&\mathbb{E}[V_k]-\mathbb{E}[V_{k+1}]                                      \\
&\ge
\frac{1}{32\gamma_{w,k}}
\mathbb{E}\left[
\left\|
w^k-w^+(\hat w^k,\lambda^k)
\right\|^2
\right]
+
\frac{1}{64\gamma_{\lambda,k}}
\mathbb{E}\left[
\left\|
\lambda^+(\hat w^k)-\lambda^k
\right\|^2
\right]
+
\frac{p_w\beta}{8}
\mathbb{E}\left[
\left\|
w^k-\hat w^k
\right\|^2
\right]                                                                  \\
&\quad
-
\left(
\gamma_{w,k}
+
4(p_w+l_{\mathcal L,1})\gamma_{w,k}^2
+
p_w\beta\gamma_{w,k}^2
\right)
\left(
\frac{l_{g,2}^2C_\lambda^4}{4}\tau^2
+
\frac{\sigma_g^2}{B\tau^2}
\right)                                                                 \nonumber\\
&\quad
-
\frac{
4(p_w+l_{\mathcal L,1})\gamma_{w,k}^2
\sigma_f^2
}{B} - \frac{
p_w\beta\gamma_{w,k}^2\sigma_f^2
}{B}
-
\frac{9\gamma_{\lambda,k}\sigma_g^2}{8B}.
\end{align*}
\end{proof}

\begin{lemma}
\label{Lemma: Stochastic Proximal Residual Bound}
Under Assumptions \ref{Assumption: Bilevel Optimization} and \ref{Assumption: Stochastic First-order Oracle}, suppose that $\Lambda=\{\lambda:\|\lambda\|\le C_\lambda\}$ with $C_{\lambda} > \frac{l_{f,0}}{\mu_g} $. Set $p_w=2l_{\mathcal L,1}$.
Further choose constant stepsizes
\[
\gamma_{w,k} \leq \min\left\{
\frac{1}{8p_w\beta},
\frac{1}{8(p_w+l_{\mathcal L,1})}
\right\},
\quad
\gamma_{\lambda,k} \le
\min\left\{
\frac{1}{2\left(2l_{\Psi,1}+p_\lambda + \frac{l_{g,1}^2}{p_w+l_{\mathcal{L},1}}\right)},
\frac{l_{\mathcal L,1}^2}{16l_{g,1}^2}\gamma_{w,k},
\frac{1}{1536p_w\beta\gamma_2^2}
\right\}.
\]
Moreover, choose the averaging parameter
\[
\beta
\le
\min\left\{
\frac{\sqrt{5}-1}{48\gamma_1},
\frac{1}{3072p_w\gamma_{\lambda,k}\gamma_3^2}
\right\}.
\]
Define
\[
a
:=
\min\left\{
\frac{1}{32\gamma_{w,k}},
\frac{\bar{\kappa}_y^2}{64\gamma_{\lambda,k}},
\frac{\bar{\kappa}_y^2p_w\beta}{8}
\right\}.
\]
Also define
\[
C_g
:=
\gamma_{w,k}
+
4(p_w+l_{\mathcal L,1})\gamma_{w,k}^2
+
p_w\beta\gamma_{w,k}^2,
\]
and
\[
C_f
:=
4(p_w+l_{\mathcal L,1})\gamma_{w,k}^2 + p_w\beta\gamma_{w,k}^2.
\]
Without loss of generality, take the declared Hessian--Lipschitz upper
bound $l_{g,2}>0$ (it may always be enlarged). Choose
\[
\tau^2
=
\frac{4a}{5C_g l_{g,2}^2C_\lambda^4}
\epsilon_{\mathcal L}^2
\]
and the positive integer batch size
\[
B
=
\max\left\{
1,
\left\lceil
\max\left\{
\frac{5C_g\sigma_g^2}{a\tau^2\epsilon_{\mathcal L}^2},
\frac{5C_f\sigma_f^2}{a\epsilon_{\mathcal L}^2},
\frac{45\gamma_{\lambda,k}\sigma_g^2}{8a\epsilon_{\mathcal L}^2}
\right\}
\right\rceil
\right\},
\]
and run Algorithm~\ref{Stoc-SGHA} for
\[
K
=
\max\left\{
1,
\left\lceil
\frac{5\left(\mathbb E[V_0]-\underline f\right)}
{a\epsilon_{\mathcal L}^2}
\right\rceil
\right\}
\]
iterations. More generally, the same conclusion holds if $\tau^2$ is
chosen no larger than its displayed value and $B$ and $K$ are chosen as
positive integers no smaller than their displayed values, with the
right-hand side for $B$ evaluated at the selected $\tau$. Then there
exists an index $k\in\{0,\ldots,K-1\}$ such that
\[
\max\left\{
\mathbb E\left[
\|w^k-w^+(\hat w^k,\lambda^k)\|^2
\right],
\bar{\kappa}_y^{-2}\mathbb E\left[
\|\lambda^+(\hat w^k)-\lambda^k\|^2
\right],
\bar{\kappa}_y^{-2}\mathbb E\left[
\|w^k-\hat w^k\|^2
\right]
\right\}
\le
\epsilon_{\mathcal L}^2.
\]
\end{lemma}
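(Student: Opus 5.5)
The plan is to start from the expected one-step descent of Proposition~\ref{Proposition: Stochastic Lyapunov Function Descent After Error Bound}. The stated stepsize and $\beta$ conditions are exactly its hypotheses (and those of Proposition~\ref{Proposition: Stochastic Lyapunov Function Descent}), so it applies at every $k$. Because the stepsizes are constant, the error terms have constant coefficients. Writing $C_g$ and $C_f$ as in the statement, the descent reads
\[
\mathbb E[V_k]-\mathbb E[V_{k+1}]\ge a\,R_k - C_g\Bigl(\tfrac{l_{g,2}^2C_\lambda^4}{4}\tau^2+\tfrac{\sigma_g^2}{B\tau^2}\Bigr)-\tfrac{C_f\sigma_f^2}{B}-\tfrac{9\gamma_{\lambda,k}\sigma_g^2}{8B},
\]
where
\[
R_k:=\max\Bigl\{\mathbb E\|w^k-w^+(\hat w^k,\lambda^k)\|^2,\ \bar\kappa_y^{-2}\mathbb E\|\lambda^+(\hat w^k)-\lambda^k\|^2,\ \bar\kappa_y^{-2}\mathbb E\|w^k-\hat w^k\|^2\Bigr\}.
\]
This form follows because each of the three nonnegative coefficients $\frac{1}{32\gamma_w}$, $\frac{1}{64\gamma_\lambda}$, $\frac{p_w\beta}{8}$ dominates $a$ times the corresponding weighted residual. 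By the definition of $a$, $\frac{1}{32\gamma_w}\ge a$, $\frac{1}{64\gamma_\lambda}\ge a\bar\kappa_y^{-2}$, and $\frac{p_w\beta}{8}\ge a\bar\kappa_y^{-2}$. The sum of the three terms is then at least their maximum.

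Next, I will sum from $k=0$ to $K-1$. The left side telescopes to $\mathbb E[V_0]-\mathbb E[V_K]$, which is at most $\mathbb E[V_0]-\underline f$ by Lemma~\ref{Lemma: Lower Bound of Lyapunov Function} (it holds pathwise, so also in expectation). Dividing by $aK$ shows that
\[
\min_k R_k\le\frac1K\sum_k R_k\le \frac{\mathbb E[V_0]-\underline f}{aK}+\frac{1}{a}\Bigl(\text{error terms}\Bigr).
\]

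It then remains to check that each of the five contributions is at most $\epsilon_{\mathcal L}^2/5$.
\begin{itemize}
\item The initial-gap term: $K\ge 5(\mathbb E[V_0]-\underline f)/(a\epsilon_{\mathcal L}^2)$ gives this bound directly. If $\mathbb E[V_0]-\underline f$ is finite the ceiling is well defined, and $K\ge1$ handles the degenerate case.
\item The bias term: the choice of $\tau^2$ makes $\frac{C_g l_{g,2}^2C_\lambda^4\tau^2}{4a}=\epsilon_{\mathcal L}^2/5$ exactly. Choosing a smaller $\tau$ only decreases it.
\item The three variance terms: each is controlled by the corresponding entry of the maximum defining $B$, namely $\frac{C_g\sigma_g^2}{aB\tau^2}$, $\frac{C_f\sigma_f^2}{aB}$, and $\frac{9\gamma_\lambda\sigma_g^2}{8aB}$, using $\frac{45}{8}=5\cdot\frac98$.
\end{itemize}
Summing, $\min_k R_k\le\epsilon_{\mathcal L}^2$, and taking $k$ to be the minimizing index gives the claim. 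The generalization to a smaller $\tau$ and to larger $B,K$ follows by monotonicity, as long as the bound on $B$ is evaluated at the chosen $\tau$; this is needed because the $\sigma_g^2/(B\tau^2)$ term grows as $\tau$ shrinks.

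The argument is essentially bookkeeping. The only delicate point is the first step: the proposition controls a weighted sum of the three residuals, and I need the maximum. I would handle this by comparing each coefficient with $a$ times its weight rather than trying to bound the sum directly. I would also check integrability, i.e. that $\mathbb E[V_k]$ is finite, so that telescoping in expectation is legitimate. This follows from $\mathbb E[V_0]<\infty$ together with the one-step inequality, which is needed anyway to make the statement meaningful.
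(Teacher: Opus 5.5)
Your proposal is correct and follows the paper's proof essentially step for step. Both arguments invoke the post-error-bound stochastic descent proposition, lower-bound the three residual terms by $a$ times their weighted maximum, telescope using $V_K\ge\underline f$, and use the choices of $K$, $\tau$, and $B$ to make each of the five error contributions at most $\epsilon_{\mathcal L}^2/5$; your remark on integrability of $\mathbb E[V_k]$ is an extra technical point the paper leaves implicit.
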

\begin{proof}
By Proposition \ref{Proposition: Stochastic Lyapunov Function Descent After Error Bound}, we have
\[
\begin{aligned}
\mathbb E[V_k]-\mathbb E[V_{k+1}]
\ge\;&
\frac{1}{32\gamma_{w,k}}
\mathbb E\left[
\|w^k-w^+(\hat w^k,\lambda^k)\|^2
\right]                                +
\frac{1}{64\gamma_{\lambda,k}}
\mathbb E\left[
\|\lambda^+(\hat w^k)-\lambda^k\|^2
\right]                             +
\frac{p_w\beta}{8}
\mathbb E\left[
\|w^k-\hat w^k\|^2
\right]                                                        \\
&-
C_g
\left(
\frac{l_{g,2}^2C_\lambda^4}{4}\tau^2
+
\frac{\sigma_g^2}{B\tau^2}
\right)
-
\frac{C_f\sigma_f^2}{B}
-
\frac{9\gamma_{\lambda,k}\sigma_g^2}{8B}.
\end{aligned}
\]
By the definition of $a$,
\[
a
=
\min\left\{
\frac{1}{32\gamma_{w,k}},
\frac{\bar{\kappa}_y^2}{64\gamma_{\lambda,k}},
\frac{\bar{\kappa}_y^2p_w\beta}{8}
\right\},
\]
we obtain
\[
\begin{aligned}
\mathbb E[V_k]-\mathbb E[V_{k+1}]
\ge\;&
a
\max\left\{
\mathbb E\left[
\|w^k-w^+(\hat w^k,\lambda^k)\|^2
\right],
\bar{\kappa}_y^{-2}\mathbb E\left[
\|\lambda^+(\hat w^k)-\lambda^k\|^2
\right],
\bar{\kappa}_y^{-2}\mathbb E\left[
\|w^k-\hat w^k\|^2
\right]
\right\}                                                       \\
&-
C_g
\left(
\frac{l_{g,2}^2C_\lambda^4}{4}\tau^2
+
\frac{\sigma_g^2}{B\tau^2}
\right)
-
\frac{C_f\sigma_f^2}{B}
-
\frac{9\gamma_{\lambda,k}\sigma_g^2}{8B}.
\end{aligned}
\]
Summing from $k=0$ to $K-1$ gives
\[
\begin{aligned}
\mathbb E[V_0]-\underline{f}
\ge\;&
a
\sum_{k=0}^{K-1}
\max\left\{
\mathbb E\left[
\|w^k-w^+(\hat w^k,\lambda^k)\|^2
\right],
\bar{\kappa}_y^{-2}\mathbb E\left[
\|\lambda^+(\hat w^k)-\lambda^k\|^2
\right],
\bar{\kappa}_y^{-2}\mathbb E\left[
\|w^k-\hat w^k\|^2
\right]
\right\}                                                       \\
&-
K C_g
\left(
\frac{l_{g,2}^2C_\lambda^4}{4}\tau^2
+
\frac{\sigma_g^2}{B\tau^2}
\right)
-
K\frac{C_f\sigma_f^2}{B}
-
K\frac{9\gamma_{\lambda,k}\sigma_g^2}{8B}.
\end{aligned}
\]
Therefore, there exists $k\in\{0,\ldots,K-1\}$ such that
\[
\begin{aligned}
&\max\left\{
\mathbb E\left[
\|w^k-w^+(\hat w^k,\lambda^k)\|^2
\right],
\bar{\kappa}_y^{-2}\mathbb E\left[
\|\lambda^+(\hat w^k)-\lambda^k\|^2
\right],
\bar{\kappa}_y^{-2}\mathbb E\left[
\|w^k-\hat w^k\|^2
\right]
\right\}                                                       \\
&\le
\frac{\mathbb E[V_0]-\underline{f}}{aK}
+
\frac{C_g}{a}
\left(
\frac{l_{g,2}^2C_\lambda^4}{4}\tau^2
+
\frac{\sigma_g^2}{B\tau^2}
\right)
+
\frac{C_f\sigma_f^2}{aB}
+
\frac{9\gamma_{\lambda,k}\sigma_g^2}{8aB}.
\end{aligned}
\]
By choosing
\[
K
\ge
\left\lceil
\frac{5\left(\mathbb E[V_0]-\underline{f}\right)}
{a\epsilon_{\mathcal L}^2}
\right\rceil,
\]
we have
\[
\frac{\mathbb E[V_0]-\underline{f}}{aK}
\le \frac{1}{5}\epsilon_{\mathcal L}^2.
\]
By the choice of $\tau$,
\[
\tau^2
\le
\frac{4a}{5C_g l_{g,2}^2C_\lambda^4}
\epsilon_{\mathcal L}^2,
\]
we have
\[
\frac{C_g}{a}
\frac{l_{g,2}^2C_\lambda^4}{4}\tau^2
\le \frac{1}{5}\epsilon_{\mathcal L}^2.
\]
By the choice of $B$,
\[
B
\ge
\max\left\{
\frac{5C_g\sigma_g^2}{a\tau^2\epsilon_{\mathcal L}^2},
\frac{5C_f\sigma_f^2}{a\epsilon_{\mathcal L}^2},
\frac{45\gamma_{\lambda,k}\sigma_g^2}{8a\epsilon_{\mathcal L}^2}
\right\},
\]
we have
\[
\frac{C_g\sigma_g^2}{aB\tau^2}
\le
\frac{1}{5}\epsilon_{\mathcal L}^2,\quad \frac{C_f\sigma_f^2}{aB}
\le
\frac{1}{5}\epsilon_{\mathcal L}^2,\quad \frac{9\gamma_{\lambda,k}\sigma_g^2}{8aB}
\le
\frac{1}{5}\epsilon_{\mathcal L}^2.
\]
Combining these estimates yields
\[
\max\left\{
\mathbb E\left[
\|w^k-w^+(\hat w^k,\lambda^k)\|^2
\right],
\bar{\kappa}_y^{-2}\mathbb E\left[
\|\lambda^+(\hat w^k)-\lambda^k\|^2
\right],
\bar{\kappa}_y^{-2}\mathbb E\left[
\|w^k-\hat w^k\|^2
\right]
\right\}
\le
\epsilon_{\mathcal L}^2.
\]
This completes the proof.
\end{proof}

\subsection{Oracle Complexity of Stoc-SGHA in High Probability}

\paragraph{Proof of Theorem
\ref{Theorem: Stochastic Sample Complexity HP}.}
Define
\(
\epsilon_{\mathcal L}
:=
\frac{\epsilon\sqrt{\rho}}
{2^{12}(1+\bar L)\bar\kappa_y^3}\) and \(
L_0:=8\bar L\bar\kappa_y\).
Then
\[
p_\lambda
=
\frac18
\min\left\{
\mu_g,\frac{\epsilon_{\mathcal L}}{\bar\kappa_y}
\right\},
\qquad
p_\lambda C_\lambda
\le\frac14\epsilon_{\mathcal L},
\qquad
\tau
=
\frac{\epsilon_{\mathcal L}}
{2^{13}\bar\kappa_y}.
\]

The verification of the common stepsize and averaging conditions is
identical to that in the proof of
Theorem~\ref{Theorem: Deterministic Sample Complexity}.
In particular, with $L_0$ in place of $l_{\mathcal L,1}$,
\[
p_w=2L_0,\qquad
\gamma_1=2,\qquad
\gamma_2\le\frac{1}{8\bar\kappa_y},\qquad
l_{\Psi,1}\le\frac{\bar L}{4\bar\kappa_y},\qquad
\gamma_3\le\frac{3075}{4}.
\]
Hence, the parameters satisfy all the deterministic conditions required
in Lemma~\ref{Lemma: Stochastic Proximal Residual Bound}.

Define
\[
a
:=
\min\left\{
\frac{1}{32\gamma_w},
\frac{\bar\kappa_y^2}{64\gamma_\lambda},
\frac{\bar\kappa_y^2p_w\beta}{8}
\right\}
=
\frac{\bar L\bar\kappa_y}{2^{29}},
\]
and
\[
C_g
:=
\gamma_w
+
4(p_w+L_0)\gamma_w^2
+
p_w\beta\gamma_w^2,
\qquad
C_f
:=
4(p_w+L_0)\gamma_w^2
+
p_w\beta\gamma_w^2.
\]
Direct calculation gives
\[
C_g
=
\frac{22+\beta}{2^{12}\bar L\bar\kappa_y}
\le
\frac{23}{2^{12}\bar L\bar\kappa_y},
\qquad
C_f
=
\frac{6+\beta}{2^{12}\bar L\bar\kappa_y}
\le
\frac{7}{2^{12}\bar L\bar\kappa_y}.
\]

Using $l_{g,2}\le\bar L$ and
$C_\lambda=2\bar\kappa_y$, our choice of $\tau$ satisfies the condition in Lemma~\ref{Lemma: Stochastic Proximal Residual Bound}:
\[
\frac{C_gl_{g,2}^2C_\lambda^4}{4a}\tau^2
\le
\frac{23}{128}\epsilon_{\mathcal L}^2
<
\frac15\epsilon_{\mathcal L}^2.
\]

Moreover, the batch size in the theorem satisfies
\(
B
\ge
\frac{
2^{50}\bar\sigma^2
}{
\bar L^2\epsilon_{\mathcal L}^4
}\).
Using the bounds on $a$, $C_g$, and $C_f$, we obtain
\[
\frac{5C_g\sigma_g^2}
{a\tau^2\epsilon_{\mathcal L}^2}
\le
\frac{
115\cdot2^{43}\sigma_g^2
}{
\bar L^2\epsilon_{\mathcal L}^4
}
\le B,
\]
as well as
\[
\frac{5C_f\sigma_f^2}{a\epsilon_{\mathcal L}^2}
\le
\frac{
35\cdot2^{17}\sigma_f^2
}{
\bar L^2\bar\kappa_y^2\epsilon_{\mathcal L}^2
}
\le B,
\qquad
\frac{45\gamma_\lambda\sigma_g^2}
{8a\epsilon_{\mathcal L}^2}
\le
\frac{
45\cdot2^{18}\sigma_g^2
}{
\bar L^2\epsilon_{\mathcal L}^2
}
\le B,
\]
where we used $\bar\kappa_y\ge1$,
$\epsilon_{\mathcal L}\le1$, and
$\bar\sigma^2\ge\sigma_f^2,\sigma_g^2$.

By Lemma~\ref{Lemma: Stochastic Proximal Residual Bound}, the integer
$K$ specified in the theorem satisfies
\[
K\ge
\left\lceil
\frac{5\Delta_V}{a\epsilon_{\mathcal L}^2}
\right\rceil
=
\left\lceil
\frac{5\cdot 2^{53}(1+\bar L)^2\Delta_V}{\bar L}
\bar\kappa_y^5\epsilon^{-2}\rho^{-1}
\right\rceil.
\]
Consequently, there exists
$k_\star\in\{0,\ldots,K-1\}$ such that
\begin{align*}
\max\biggl\{
\mathbb E\left[
\|w^{k_\star}
-w^+(\hat w^{k_\star},\lambda^{k_\star})\|^2
\right],
\bar\kappa_y^{-2}
\mathbb E\left[
\|\lambda^+(\hat w^{k_\star})-\lambda^{k_\star}\|^2
\right],\,
\bar\kappa_y^{-2}
\mathbb E\left[
\|w^{k_\star}-\hat w^{k_\star}\|^2
\right]
\biggr\}
\le
\epsilon_{\mathcal L}^2.
\end{align*}

Set
\(
\delta
:=
\frac{\sqrt{3}\epsilon_{\mathcal L}}{\sqrt{\rho}}\).
By Markov's inequality and a union bound, with probability at least
$1-\rho$, we have
\[
\|w^{k_\star}
-w^+(\hat w^{k_\star},\lambda^{k_\star})\|
\le\delta,\qquad
\|\lambda^+(\hat w^{k_\star})-\lambda^{k_\star}\|
\le\bar\kappa_y\delta,
\qquad
\|w^{k_\star}-\hat w^{k_\star}\|
\le\bar\kappa_y\delta.
\]

Since
\[
\delta
=
\frac{\sqrt{3}\epsilon}
{2^{12}(1+\bar L)\bar\kappa_y^3}
\le
\frac{\epsilon}
{2^{11}(1+\bar L)\bar\kappa_y^3},
\]
the conditions of Lemma~\ref{Lemma: Deterministic Stationary Point Bridge} that are verified in the proof of
Theorem~\ref{Theorem: Deterministic Sample Complexity}
hold with $\delta$ in place of $\epsilon_{\mathcal L}$.
Also,
\[
p_\lambda C_\lambda
\le
\frac14\epsilon_{\mathcal L}
\le
\frac14\delta.
\]
Thus, Lemma~\ref{Lemma: Deterministic Stationary Point Bridge}
applies pathwise and gives
\[
\|\nabla_w
\mathcal L(w^{k_\star},\lambda^{k_\star})\|
\le
272\bar L\bar\kappa_y^2\delta.
\]
Furthermore,
\[
\frac{l_{g,1}}{(p_w-L_0)\gamma_w}
+
\bar\kappa_y\gamma_\lambda^{-1}
=
32l_{g,1}+2^8\bar L
\le
288\bar L,
\]
and hence
\[
\|\nabla_\lambda
\mathcal L(w^{k_\star},\lambda^{k_\star})\|
\le
\left(
288\bar L+\frac14
\right)\delta.
\]

As shown in the proof of
Theorem~\ref{Theorem: Deterministic Sample Complexity}, we have
\(
\frac{L_{\bar F,y}}{\mu_g}
\le
4\bar\kappa_y^3\).
Therefore, Theorem~\ref{Theorem: Approximation of Hypergradient}
implies, on the same event,
\begin{align*}
\|\nabla F(x^{k_\star})\|
&\le
(1+\bar\kappa_y)
272\bar L\bar\kappa_y^2\delta
+
4\bar\kappa_y^3
\left(
288\bar L+\frac14
\right)\delta\\
&\le
2^{11}(1+\bar L)\bar\kappa_y^3\delta =
\frac{\sqrt{3}}{2}\epsilon
<
\epsilon.
\end{align*}
Thus,
\[
\mathbb P\left(
\|\nabla F(x^{k_\star})\|\le\epsilon
\right)
\ge1-\rho.
\]

Finally, the total oracle complexity is
\[
K B
=
\mathcal O\left(
\frac{
(1+\Delta_V)\bar\sigma^2(1+\bar L)^6
}{
\bar L^3
}
\bar\kappa_y^{17}\epsilon^{-6}\rho^{-3}
\right)
=
\mathcal O\left(
\bar\kappa_y^{17}\epsilon^{-6}\rho^{-3}
\right).
\]
This completes the proof.

\subsection{Oracle Complexity of Stoc-SGHA in Expectation}
\begin{lemma}[Bounded Lagrangian and Bilevel Gradients on Bounded Iterates]
\label{Lemma: Bounded Lagrangian Gradient}
Under Assumption \ref{Assumption: Bilevel Optimization}, suppose that $\Lambda=\{\lambda:\|\lambda\|\le C_\lambda\}$ with $C_{\lambda} > \frac{l_{f,0}}{\mu_g} $. Suppose further that
there exists a constant $D_w>0$ such that
\[
\|w^k\|\le D_w,
\quad \text{almost surely}.
\]
Define
\[
G_f(D_w):=\sup_{\|w\|\le D_w}\|\nabla f(w)\|,\quad
G_g(D_w):=\sup_{\|w\|\le D_w}\|\nabla_y g(w)\|,
\]
and define
\[
G_J(D_w):=\sup_{\|w\|\le D_w}
\|\nabla_w(\nabla_y g(w))\|.
\]
Then these constants are finite. Moreover, with
\[
G_w:=G_f(D_w)+G_J(D_w)C_\lambda,
\qquad
G_\lambda:=G_g(D_w),
\]
we have
\[
\|\nabla_w\mathcal L(w^k,\lambda^k)\|\le G_w,
\quad
\|\nabla_\lambda\mathcal L(w^k,\lambda^k)\|\le G_\lambda,
\qquad
\forall k\ge 0
\]
almost surely.
In addition, the true bilevel hypergradient satisfies
\[
\|\nabla F(x^k)\|
\le
G_F
:=
G_f(D_w)
+
\frac{l_{f,1}}{\mu_g}G_g(D_w)
+
\frac{l_{g,1}}{\mu_g}l_{f,0}
\]
almost surely.
\end{lemma}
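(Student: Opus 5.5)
The plan is to argue entirely pathwise: on the almost-sure event $\|w^k\|\le D_w$, with $\|\lambda^k\|\le C_\lambda$ guaranteed by the projection onto $\Lambda$.

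\textbf{Finiteness of the constants.} By Assumption~\ref{Assumption: Bilevel Optimization}, $\nabla f$, $\nabla_y g$ and $\nabla_w(\nabla_y g)$ are continuous. This holds because $f,g$ are smooth with Lipschitz Hessians, so the Hessian blocks are continuous. Each is a continuous function on the compact ball $\{\|w\|\le D_w\}$, so $G_f(D_w)$, $G_g(D_w)$ and $G_J(D_w)$ are finite. Explicit bounds are also available: $\|\nabla_w(\nabla_y g(w))\|\le l_{g,1}$ globally, and $\|\nabla f(w)\|\le\|\nabla f(0)\|+l_{f,1}D_w$, and similarly for $\nabla_y g$.

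\textbf{Lagrangian gradients.} Write $\nabla_w\mathcal L(w,\lambda)=\nabla f(w)+J(w)^\top\lambda$, with $J$ as in the proof of Lemma~\ref{Lemma: Error Bound 1}. The triangle inequality and $\|\lambda^k\|\le C_\lambda$ give $\|\nabla_w\mathcal L(w^k,\lambda^k)\|\le G_f(D_w)+G_J(D_w)C_\lambda=G_w$. Similarly, $\nabla_\lambda\mathcal L(w^k,\lambda^k)=\nabla_y g(w^k)$ has norm at most $G_\lambda$.

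\textbf{Hypergradient.} Start from~\eqref{eq: implicit-differentiation formula}, evaluated at $y^\ast(x^k)$ rather than at $y^k$. The obstacle is that $(x^k,y^\ast(x^k))$ need not lie in the $D_w$-ball, so I compare with $(x^k,y^k)$ directly. Bound the implicit term by $\|\nabla_{xy}^2g\|\,\|(\nabla_{yy}^2g)^{-1}\|\,\|\nabla_y f\|\le \frac{l_{g,1}}{\mu_g}l_{f,0}$ at any point, which is the third term of $G_F$. For the first term, smoothness of $f$ gives $\|\nabla_x f(x^k,y^\ast(x^k))\|\le\|\nabla_x f(x^k,y^k)\|+l_{f,1}\|y^k-y^\ast(x^k)\|$. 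Strong convexity then gives $\|y^k-y^\ast(x^k)\|\le\frac1{\mu_g}\|\nabla_y g(x^k,y^k)\|\le \frac{G_g(D_w)}{\mu_g}$, as in the proof of Theorem~\ref{Theorem: Approximation of Hypergradient}. Combined with $\|\nabla_x f(x^k,y^k)\|\le G_f(D_w)$, this yields $\|\nabla F(x^k)\|\le G_F$ almost surely.

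All steps are routine; the only delicate point is this transfer from $y^\ast(x^k)$ back to $y^k$.
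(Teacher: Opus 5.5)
Your proposal is correct and follows essentially the same route as the paper. Finiteness comes from continuity on the compact ball, the Lagrangian-gradient bounds from the triangle inequality with $\|\lambda^k\|\le C_\lambda$, and the hypergradient bound from the same comparison of $\nabla_x f(x^k,y^\star(x^k))$ with $\nabla_x f(x^k,y^k)$, using $l_{f,1}$-smoothness and $\|y^k-y^\star(x^k)\|\le \|\nabla_y g(x^k,y^k)\|/\mu_g$; your extra explicit bounds (e.g.\ $\|J(w)\|\le l_{g,1}$ globally) are valid but not needed.
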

\begin{proof}
Recall that
\[
\mathcal L(w,\lambda)
=
f(w)+\lambda^\top\nabla_y g(w).
\]
Thus,
\[
\nabla_w\mathcal L(w,\lambda)
=
\nabla f(w)+J(w)^\top\lambda,
\quad
J(w):=\nabla_w(\nabla_y g(w)),\quad \nabla_\lambda\mathcal L(w,\lambda)=\nabla_y g(w).
\]
Recall the definitions of $G_f(D_w)$, $G_g(D_w)$, and $G_J(D_w)$
from the statement.
Since $f$ and $g$ are smooth, the mappings
\[
w\mapsto \nabla f(w),
\quad
w\mapsto \nabla_y g(w),
\quad
w\mapsto J(w)
\]
are continuous. Since the set
\[
\{w:\|w\|\le D_w\}
\]
is compact, the constants $G_f(D_w)$, $G_g(D_w)$, and $G_J(D_w)$ are finite.

Therefore, for every $k\ge 0$,
\[
\|\nabla_w\mathcal L(w^k,\lambda^k)\|
\le
G_f(D_w)+G_J(D_w)C_\lambda,
\quad
\|\nabla_\lambda\mathcal L(w^k,\lambda^k)\|
\le G_g(D_w).
\]
Taking
\[
G_w:=G_f(D_w)+G_J(D_w)C_\lambda,
\quad
G_\lambda:=G_g(D_w),
\]
we obtain
\[
\|\nabla_w\mathcal L(w^k,\lambda^k)\|\le G_w,
\quad
\|\nabla_\lambda\mathcal L(w^k,\lambda^k)\|\le G_\lambda.
\]
Let $y_k^\star:=y^\star(x^k)$. By lower-level strong convexity,
\[
\|y^k-y_k^\star\|
\le
\frac{1}{\mu_g}\|\nabla_y g(x^k,y^k)\|
\le
\frac{G_g(D_w)}{\mu_g}.
\]
Using the implicit hypergradient formula, the smoothness of $f$ and $g$,
and $\|\nabla_y f(x,y)\|\le l_{f,0}$, we obtain
\begin{align*}
\|\nabla F(x^k)\|
&\le
\|\nabla_x f(x^k,y_k^\star)\|
+
\frac{l_{g,1}}{\mu_g}l_{f,0}\\
&\le
\|\nabla_x f(x^k,y^k)\|
+
l_{f,1}\|y^k-y_k^\star\|
+
\frac{l_{g,1}}{\mu_g}l_{f,0}\\
&\le
G_f(D_w)
+
\frac{l_{f,1}}{\mu_g}G_g(D_w)
+
\frac{l_{g,1}}{\mu_g}l_{f,0}
=G_F.
\end{align*}
This completes the proof.
\end{proof}

\begin{lemma}[Expected stationary point bridge]
\label{Lemma: Expected Stationary Point Bridge}
Under Assumptions~\ref{Assumption: Bilevel Optimization} and
\ref{Assumption: Stochastic First-order Oracle}, suppose that
\(\Lambda=\{\lambda:\|\lambda\|\le C_\lambda\}\) with
\(C_\lambda>l_{f,0}/\mu_g\). Set \(p_w=2l_{\mathcal L,1}\).
Assume further that there exists a constant \(D_w>0\) such that
\[
    \|w^k\|\le D_w
\]
almost surely. Let $G_w$, $G_\lambda$, and $G_F$ be the constants in
Lemma~\ref{Lemma: Bounded Lagrangian Gradient}. Define
\[
A_w:=\gamma_{w,k}^{-1}+\bar{\kappa}_yp_w,\quad
A_\lambda
:=
\frac{l_{g,1}}{(p_w-l_{\mathcal L,1})\gamma_{w,k}}
+
\bar{\kappa}_y\gamma_{\lambda,k}^{-1}.
\]
Also define
\[
r_0
:=
\min\left\{
\frac{\mu_g}{4A_w}
\left(C_\lambda-\frac{l_{f,0}}{\mu_g}\right),
\frac{1}{4\bar{\kappa}_y}
\left(C_\lambda-\frac{l_{f,0}}{\mu_g}\right)
\right\}.
\]
If
\[
\mathbb E\left[\|w^k-w^+(\hat w^k,\lambda^k)\|^2\right]
\le\epsilon_{\mathcal L}^2,\quad
\mathbb E\left[\|\lambda^+(\hat w^k)-\lambda^k\|^2\right]
\le\bar\kappa_y^2\epsilon_{\mathcal L}^2,
\quad
\mathbb E\left[\|w^k-\hat w^k\|^2\right]
\le\bar\kappa_y^2\epsilon_{\mathcal L}^2,
\]
then, we have
\[
\mathbb E\left[
\|\nabla_w\mathcal L(w^k,\lambda^k)\|
\right]
\le
A_w\epsilon_{\mathcal L}+
\frac{3G_w}{r_0^2}\epsilon_{\mathcal L}^2,\quad
\mathbb E\left[
\|\nabla_\lambda\mathcal L(w^k,\lambda^k)\|
\right]
\le
A_\lambda\epsilon_{\mathcal L}+p_\lambda C_\lambda+
\frac{3G_\lambda}{r_0^2}\epsilon_{\mathcal L}^2.
\]
Furthermore,
\begin{align*}
\mathbb E\left[\|\nabla F(x^k)\|\right]
\le{}&
(1+\kappa_y)A_w\epsilon_{\mathcal L}\\
&+
\frac{L_{\bar F,y}}{\mu_g}
\left(A_\lambda\epsilon_{\mathcal L}+p_\lambda C_\lambda\right)
+
\frac{3G_F}{r_0^2}\epsilon_{\mathcal L}^2.
\end{align*}
\end{lemma}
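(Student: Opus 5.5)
The plan is a good-event decomposition combined with Markov's inequality, applying Lemma~\ref{Lemma: Deterministic Stationary Point Bridge} pathwise on the good event and the almost-sure bounds of Lemma~\ref{Lemma: Bounded Lagrangian Gradient} on its complement.

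\textbf{Good event.} Fix $k$ and set
\[
\mathcal E:=\bigl\{\|w^k-w^+(\hat w^k,\lambda^k)\|\le r_0,\ \|\lambda^+(\hat w^k)-\lambda^k\|\le\bar\kappa_y r_0,\ \|w^k-\hat w^k\|\le\bar\kappa_y r_0\bigr\}.
\]
Write $r:=\max\{\|w^k-w^+(\hat w^k,\lambda^k)\|,\ \bar\kappa_y^{-1}\|\lambda^+(\hat w^k)-\lambda^k\|,\ \bar\kappa_y^{-1}\|w^k-\hat w^k\|\}$, so $\mathcal E=\{r\le r_0\}$.

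On $\mathcal E$ we apply Lemma~\ref{Lemma: Deterministic Stationary Point Bridge} pathwise with the random tolerance $r$ in place of $\epsilon_{\mathcal L}$. Its smallness hypotheses $A_w r\le \frac{\mu_g}{4}(C_\lambda-l_{f,0}/\mu_g)$ and $\bar\kappa_y r\le\frac14(C_\lambda-l_{f,0}/\mu_g)$ hold precisely because $r\le r_0$; this is why $r_0$ is defined as it is. The lemma then gives, on $\mathcal E$,
\[
\|\nabla_w\mathcal L(w^k,\lambda^k)\|\le A_w r,\qquad \|\nabla_\lambda\mathcal L(w^k,\lambda^k)\|\le A_\lambda r+p_\lambda C_\lambda.
\]
Jensen's inequality gives $\mathbb E[r]\le\sqrt{\mathbb E[r^2]}$. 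Bounding the max by the sum of the three squared terms, $\mathbb E[r^2]\le 3\epsilon_{\mathcal L}^2$, so $\mathbb E[r\mathbf 1_{\mathcal E}]\le\sqrt3\,\epsilon_{\mathcal L}$. This produces a constant $\sqrt3$ in front of $A_w\epsilon_{\mathcal L}$, which the statement does not show.

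\textbf{The main obstacle: the constant $\sqrt3$.} To remove it, I would make the bridge lemma componentwise. Its proof bounds $\nabla_w\mathcal L$ by $\gamma_w^{-1}\|w^k-w^+\|+p_w\|w^k-\hat w^k\|$. Taking expectations of each residual separately gives $\mathbb E\|w^k-w^+\|\le\epsilon_{\mathcal L}$ and $\mathbb E\|w^k-\hat w^k\|\le\bar\kappa_y\epsilon_{\mathcal L}$, hence exactly $A_w\epsilon_{\mathcal L}$. The same componentwise split works for $A_\lambda$: the dual bound is linear in $\|w^k-w^+\|$ and $\|\lambda^+-\lambda^k\|$ separately. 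The event $\mathcal E$ is used only to guarantee that the projection is inactive. Restricting a nonnegative quantity to $\mathcal E$ only decreases its expectation, so these linear bounds on $\mathcal E$ carry over to the expectation.

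\textbf{Bad event.} On $\mathcal E^c$, Lemma~\ref{Lemma: Bounded Lagrangian Gradient} gives the almost-sure bounds $G_w$, $G_\lambda$, $G_F$. By Markov's inequality on each of the three squared residuals and a union bound,
\[
\mathbb P(\mathcal E^c)\le\frac{\epsilon_{\mathcal L}^2}{r_0^2}+\frac{\bar\kappa_y^2\epsilon_{\mathcal L}^2}{\bar\kappa_y^2r_0^2}+\frac{\bar\kappa_y^2\epsilon_{\mathcal L}^2}{\bar\kappa_y^2r_0^2}=\frac{3\epsilon_{\mathcal L}^2}{r_0^2}.
\]
This yields the terms $3G_w\epsilon_{\mathcal L}^2/r_0^2$ and $3G_\lambda\epsilon_{\mathcal L}^2/r_0^2$. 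Summing the good-event and bad-event contributions proves the two Lagrangian bounds.

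\textbf{Hypergradient bound.} Do not combine the Lagrangian bounds; decompose $\|\nabla F(x^k)\|$ directly. On $\mathcal E$, apply Theorem~\ref{Theorem: Approximation of Hypergradient}, using $\|\nabla_x\mathcal L\|+\kappa_y\|\nabla_y\mathcal L\|\le(1+\kappa_y)\|\nabla_w\mathcal L\|$. Then apply the same componentwise expectation argument, with the constant $p_\lambda C_\lambda$ carried through. On $\mathcal E^c$, use $\|\nabla F(x^k)\|\le G_F$ together with $\mathbb P(\mathcal E^c)\le 3\epsilon_{\mathcal L}^2/r_0^2$. Adding the two pieces gives the stated bound.
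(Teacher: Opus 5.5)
Your proposal is correct and, once you switch to the componentwise bounds, it is the paper's argument. The paper uses the same good event, integrates term by term (via Jensen) the linear pathwise bounds extracted from the proof of Lemma~\ref{Lemma: Deterministic Stationary Point Bridge}, bounds $\mathbb P(\mathcal E^c)\le 3\epsilon_{\mathcal L}^2/r_0^2$ by Markov's inequality and a union bound, and splits $\|\nabla F(x^k)\|$ directly over the good and bad events. The initial max-based variant with the $\sqrt3$ loss is an unnecessary detour and can simply be dropped.
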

\begin{proof}
Define the good event
\[
\mathcal G_k
:=
\left\{
\begin{aligned}
&\|w^k-w^+(\hat w^k,\lambda^k)\|\le r_0,\\
&\|\lambda^+(\hat w^k)-\lambda^k\|\le\bar\kappa_y r_0,\\
&\|w^k-\hat w^k\|\le\bar\kappa_y r_0
\end{aligned}
\right\}.
\]
By the definition of \(r_0\), the interiority conditions in
Lemma~\ref{Lemma: Deterministic Stationary Point Bridge} hold on \(\mathcal G_k\). Therefore, the proof of
Lemma~\ref{Lemma: Deterministic Stationary Point Bridge} gives
\[
\|\nabla_w\mathcal L(w^k,\lambda^k)\|
\le
\gamma_{w,k}^{-1}\|w^k-w^+(\hat w^k,\lambda^k)\|
+p_w\|w^k-\hat w^k\|,
\]
and
\[
\|\nabla_\lambda\mathcal L(w^k,\lambda^k)\|
\le
\frac{l_{g,1}}{(p_w-l_{\mathcal L,1})\gamma_{w,k}}
\|w^k-w^+(\hat w^k,\lambda^k)\|+\gamma_{\lambda,k}^{-1}
\|\lambda^+(\hat w^k)-\lambda^k\|
+p_\lambda C_\lambda.
\]
It follows that
\[
\mathbb E\left[
\|\nabla_w\mathcal L(w^k,\lambda^k)\|
\mathbf 1_{\mathcal G_k}
\right]
\le
A_w\epsilon_{\mathcal L},\quad
\mathbb E\left[
\|\nabla_\lambda\mathcal L(w^k,\lambda^k)\|
\mathbf 1_{\mathcal G_k}
\right]
\le
A_\lambda\epsilon_{\mathcal L}+p_\lambda C_\lambda.
\]
On the complement \(\mathcal G_k^c\), Lemma~\ref{Lemma: Bounded Lagrangian Gradient} bounds the primal and multiplier gradients by \(G_w\) and \(G_\lambda\), respectively. Moreover, Markov's inequality gives
\[
\begin{aligned}
\mathbb P(\mathcal G_k^c)
&\le
\frac{1}{r_0^2}
\mathbb E\left[\|w^k-w^+(\hat w^k,\lambda^k)\|^2\right]+\frac{1}{\bar\kappa_y^2r_0^2}
\mathbb E\left[\|\lambda^+(\hat w^k)-\lambda^k\|^2\right]+\frac{1}{\bar\kappa_y^2r_0^2}
\mathbb E\left[\|w^k-\hat w^k\|^2\right]\\
&\le\frac{3\epsilon_{\mathcal L}^2}{r_0^2}.
\end{aligned}
\]
Consequently, the primal and multiplier components contribute at most
\(3G_w\epsilon_{\mathcal L}^2/r_0^2\) and
\(3G_\lambda\epsilon_{\mathcal L}^2/r_0^2\), respectively, on
\(\mathcal G_k^c\).
Combining the estimates on \(\mathcal G_k\) and \(\mathcal G_k^c\) separately for the primal and multiplier components gives
\[
\mathbb E\left[
\|\nabla_w\mathcal L(w^k,\lambda^k)\|
\right]
\le
A_w\epsilon_{\mathcal L}
+
\frac{3G_w}{r_0^2}\epsilon_{\mathcal L}^2.
\]
Likewise,
\[
\mathbb E\left[
\|\nabla_\lambda\mathcal L(w^k,\lambda^k)\|
\right]
\le
A_\lambda\epsilon_{\mathcal L}+p_\lambda C_\lambda+
\frac{3G_\lambda}{r_0^2}\epsilon_{\mathcal L}^2.
\]
On the good event $\mathcal G_k$, Theorem~\ref{Theorem: Approximation of Hypergradient}
and the two pathwise bounds above imply
\begin{align*}
\mathbb E\left[
\|\nabla F(x^k)\|\mathbf 1_{\mathcal G_k}
\right]\le
(1+\kappa_y)A_w\epsilon_{\mathcal L}
+
\frac{L_{\bar F,y}}{\mu_g}
\left(A_\lambda\epsilon_{\mathcal L}+p_\lambda C_\lambda\right).
\end{align*}
On $\mathcal G_k^c$, Lemma~\ref{Lemma: Bounded Lagrangian Gradient}
gives $\|\nabla F(x^k)\|\le G_F$. Combining this with
$\mathbb P(\mathcal G_k^c)\le3\epsilon_{\mathcal L}^2/r_0^2$
proves the stated hypergradient bound.
This completes the proof.
\end{proof}

\paragraph{Proof of Theorem
\ref{Theorem: Stochastic Sample Complexity Expectation}.}
By Lemma~\ref{Lemma: Bounded Lagrangian Gradient} and
$C_\lambda=2\bar\kappa_y$, choose the problem-dependent constant
$\mathcal G$ in the theorem large enough that
$\mathcal G\ge\max\{1,G_w/\bar\kappa_y,G_\lambda\}$. Define \(
\epsilon_{\mathcal L}
:=
\frac{\epsilon}
{2^{13}(1+\bar L)\mathcal G\bar\kappa_y^3} \) and \(
L_0:=8\bar L\bar\kappa_y\).
Then the parameters in the theorem satisfy
\[
p_\lambda
=
\frac18
\min\left\{
\mu_g,\frac{\epsilon_{\mathcal L}}{\bar\kappa_y}
\right\},
\qquad
p_\lambda C_\lambda
\le\frac14\epsilon_{\mathcal L},
\qquad
\tau
=
\frac{\epsilon_{\mathcal L}}
{2^{13}\bar\kappa_y}.
\]
Define \[ a := \min\left\{ \frac{1}{32\gamma_w}, \frac{\bar\kappa_y^2}{64\gamma_\lambda}, \frac{\bar\kappa_y^2p_w\beta}{8} \right\} = \frac{\bar L\bar\kappa_y}{2^{29}}.\]

The verification of the required conditions of Lemma~\ref{Lemma: Stochastic Proximal Residual Bound} is identical
to that in the proof of
Theorem~\ref{Theorem: Stochastic Sample Complexity HP}. In particular, with $L_0$ in place of $l_{\mathcal L,1}$,
\[ p_w=2L_0, \qquad \gamma_1=2,\qquad \gamma_2\le\frac{1}{8\bar\kappa_y},\qquad l_{\Psi,1}\le\frac{\bar L}{4\bar\kappa_y},\qquad \gamma_3\le\frac{3075}{4}. \]

By Lemma~\ref{Lemma: Stochastic Proximal Residual Bound}, the integer
$K$ specified in the theorem satisfies
\[
K\ge
\left\lceil
\frac{5\Delta_V}{a\epsilon_{\mathcal L}^2}
\right\rceil
=
\left\lceil
\frac{5\cdot2^{55}(1+\bar L)^2\mathcal G^2\Delta_V}{\bar L}
\bar\kappa_y^5\epsilon^{-2}
\right\rceil.
\]
Consequently, there exists an index
$k_\star\in\{0,\ldots,K-1\}$ such that
\begin{align}
\max\biggl\{&
\mathbb E\left[
\|w^{k_\star}
-w^+(\hat w^{k_\star},\lambda^{k_\star})\|^2
\right],
\bar\kappa_y^{-2}
\mathbb E\left[
\|\lambda^+(\hat w^{k_\star})-\lambda^{k_\star}\|^2
\right],
\,
\bar\kappa_y^{-2}
\mathbb E\left[
\|w^{k_\star}-\hat w^{k_\star}\|^2
\right]
\biggr\}
\le
\epsilon_{\mathcal L}^2.
\label{eq:expectation-residual-bound}
\end{align}

We now apply
Lemma~\ref{Lemma: Expected Stationary Point Bridge}.
Its constants satisfy
\[
A_w
=
\gamma_w^{-1}+\bar\kappa_yp_w
\le
272\bar L\bar\kappa_y^2,\qquad
A_\lambda
=
\frac{l_{g,1}}{(p_w-L_0)\gamma_w}
+
\bar\kappa_y\gamma_\lambda^{-1}
\le
288\bar L.
\]
Furthermore, since
$
C_\lambda-\frac{l_{f,0}}{\mu_g}
\ge\bar\kappa_y$ and $
\mu_g=\frac{\bar L}{\bar\kappa_y}$,
the radius in
Lemma~\ref{Lemma: Expected Stationary Point Bridge}
satisfies
\begin{align*}
r_0
=
\min\left\{
\frac{\mu_g}{4A_w}
\left(
C_\lambda-\frac{l_{f,0}}{\mu_g}
\right),
\frac{1}{4\bar\kappa_y}
\left(
C_\lambda-\frac{l_{f,0}}{\mu_g}
\right)
\right\} \ge
\min\left\{
\frac{1}{1088\bar\kappa_y^2},\frac14
\right\}
=
\frac{1}{1088\bar\kappa_y^2}.
\end{align*}
Consequently,
\[
\frac{3}{r_0^2}
\le
3(1088)^2\bar\kappa_y^4
<
2^{22}\bar\kappa_y^4.
\]

The bounded-hypergradient constant in
Lemma~\ref{Lemma: Bounded Lagrangian Gradient} satisfies
\begin{align*}
G_F
&=
G_f(D_w)
+
\frac{l_{f,1}}{\mu_g}G_g(D_w)
+
\frac{l_{g,1}}{\mu_g}l_{f,0}\\
&\le
2(1+\bar L)\mathcal G\bar\kappa_y.
\end{align*}
Indeed, the preceding choice of $\mathcal G$ gives
$G_f(D_w)\le\mathcal G\bar\kappa_y$ and
$G_g(D_w)\le\mathcal G$, while
$l_{f,1}/\mu_g\le\bar\kappa_y$,
$l_{g,1}/\mu_g\le\bar\kappa_y$, and
$l_{f,0}\le\bar L$.

As shown in the proof of
Theorem~\ref{Theorem: Deterministic Sample Complexity},
$L_{\bar F,y}/\mu_g\le4\bar\kappa_y^3$.
Using $p_\lambda C_\lambda\le\epsilon_{\mathcal L}/4$ and applying
the hypergradient conclusion of
Lemma~\ref{Lemma: Expected Stationary Point Bridge}, we obtain
\begin{align*}
\mathbb E\left[\|\nabla F(x^{k_\star})\|\right]
&\le
2^{11}(1+\bar L)\bar\kappa_y^3\epsilon_{\mathcal L}
+
2^{23}(1+\bar L)\mathcal G
\bar\kappa_y^5\epsilon_{\mathcal L}^2\\
&=
\frac{\epsilon}{4\mathcal G}
+
\frac{\epsilon^2}
{8(1+\bar L)\mathcal G\bar\kappa_y}\\
&\le
\frac38\epsilon
<
\epsilon,
\end{align*}
where the last inequality uses
$0<\epsilon\le1$, $\mathcal G\ge1$, and
$\bar\kappa_y\ge1$.
Finally, the total oracle complexity is
\[
KB
=
\mathcal O\left(
\frac{
(1+\Delta_V)\bar\sigma^2(1+\bar L)^6\mathcal G^6
}{
\bar L^3
}
\bar\kappa_y^{17}\epsilon^{-6}
\right)
=
\mathcal O\left(
\bar\kappa_y^{17}\epsilon^{-6}
\right),
\]
where the last expression treats the remaining problem-dependent
constants as fixed. This completes the proof.

\section{Improved Complexities with Lower-Level Stochastic Smoothness}
\subsection{Properties of Stochastic Finite Difference Approximation with Stochastic Smoothness of \texorpdfstring{\(g\)}{g}}
\begin{lemma}\label{Lemma: Stochastic Finite Difference Estimator with g Smoothness}
Under Assumptions \ref{Assumption: Bilevel Optimization}, \ref{Assumption: Stochastic First-order Oracle} and \ref{Assumption: Stochastic Smoothness of g}, suppose that
\[
\lambda\in\Lambda=\{\lambda:\|\lambda\|\le C_\lambda\},
\]
we have
\begin{equation}\label{Lemma: Stoc FD 1 with g Smoothness}
\mathbb{E}\left[
\left\|
\nabla_{xy}^2 g(x,y)\lambda
-
\tilde{\nabla}_{xy}^2 g_{\lambda}(x,y;\mathcal{B}_g)
\right\|^2
\right]
\le
\frac{l_{g,2}^2C_{\lambda}^4}{4}\tau^2
+
\frac{\tilde{l}_{g,1} C_{\lambda}^2}{B},
\end{equation}
and
\begin{equation}\label{Lemma: Stoc FD 2 with g Smoothness}
\mathbb{E}\left[
\left\|
\nabla_{yy}^2 g(x,y)\lambda
-
\tilde{\nabla}_{yy}^2 g_{\lambda}(x,y;\mathcal{B}_g)
\right\|^2
\right]
\le
\frac{l_{g,2}^2C_{\lambda}^4}{4}\tau^2
+\frac{\tilde{l}_{g,1} C_{\lambda}^2}{B}.
\end{equation}
\end{lemma}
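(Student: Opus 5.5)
I will follow the proof of Lemma~\ref{Lemma: Stochastic Finite Difference Estimator} term by term, changing only the variance estimate. Write $\tilde{\nabla}^2_{xy} g_{\lambda}(x,y;\mathcal B_g)=\frac1B\sum_{i=1}^B Z_i$, where
\[
Z_i:=\frac{\nabla_x g(x,y+\tau\lambda;\zeta_i)-\nabla_x g(x,y-\tau\lambda;\zeta_i)}{2\tau}.
\]
By unbiasedness in Assumption~\ref{Assumption: Stochastic First-order Oracle}, $\mathbb E[Z_i]$ equals the deterministic central difference $\tilde{\nabla}^2_{xy} g_\lambda(x,y)$. The mean-square error then splits exactly into a squared bias plus a variance. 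The bias is bounded verbatim by Lemma~\ref{Lemma: Deterministic Finite Difference Estimator}, using the integral representation and $l_{g,2}$-Lipschitz Hessians, which gives $\frac{l_{g,2}^2C_\lambda^4}{4}\tau^2$.

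For the variance, the $Z_i$ are i.i.d., so the variance equals $\frac1B\mathbb E\|Z_1-\mathbb E Z_1\|^2\le\frac1B\mathbb E\|Z_1\|^2$, since a variance is at most the second moment. The key point is that both gradient evaluations in $Z_1$ use the \emph{same} sample $\zeta_1$. This lets me apply Assumption~\ref{Assumption: Stochastic Smoothness of g} at the points $(x,y+\tau\lambda)$ and $(x,y-\tau\lambda)$, whose distance is $2\tau\|\lambda\|$. The $x$-block of a gradient has norm at most that of the full gradient, so
\[
\mathbb E\|Z_1\|^2\le\frac{1}{4\tau^2}\,\mathbb E_\zeta\|\nabla g(x,y+\tau\lambda;\zeta)-\nabla g(x,y-\tau\lambda;\zeta)\|^2\le\frac{\tilde l_{g,1}\,4\tau^2\|\lambda\|^2}{4\tau^2}\le \tilde l_{g,1}C_\lambda^2 .
\]
This is where the factor $\tau^{-2}$ cancels. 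Adding the bias and variance bounds gives \eqref{Lemma: Stoc FD 1 with g Smoothness}.

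The $yy$ bound \eqref{Lemma: Stoc FD 2 with g Smoothness} follows identically, using the $y$-block of $\nabla g$ and the $yy$ version of the bias bound from Lemma~\ref{Lemma: Deterministic Finite Difference Estimator}. There is no real obstacle here. The only points needing care are to bound the centered variance by the uncentered second moment, rather than expanding with a factor of 2, so that the constant comes out exactly $\tilde l_{g,1}C_\lambda^2/B$, and to note that independence across $i$ holds while the coupling within each $i$ is what the assumption exploits.
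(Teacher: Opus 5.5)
Your proposal is correct and follows essentially the same route as the paper's proof: a bias--variance split, with the bias taken from Lemma~\ref{Lemma: Deterministic Finite Difference Estimator}, and the variance bounded by $\frac{1}{B}\mathbb E\|Z_1\|^2$. The same-sample coupling in Assumption~\ref{Assumption: Stochastic Smoothness of g} then cancels the $\tau^{-2}$ factor and leaves $\tilde l_{g,1}\|\lambda\|^2/B\le \tilde l_{g,1}C_\lambda^2/B$.
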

\begin{proof}
We prove the first bound; the second is identical. The same sample
$\zeta_i$ is used at the two perturbed points. Define
\[
D_i
:=
\frac{
\nabla_x g(x,y+\tau\lambda;\zeta_i)
-
\nabla_x g(x,y-\tau\lambda;\zeta_i)
}{2\tau}.
\]
The samples are independent across $i$, and $\mathbb E[D_i]$ is the
deterministic central finite difference. Hence,
\begin{align*}
\mathbb E\left\|
\frac1B\sum_{i=1}^B\left(D_i-\mathbb E[D_i]\right)
\right\|^2
&=
\frac1B
\mathbb E\left\|D_1-\mathbb E[D_1]\right\|^2\\
&\le
\frac1B\mathbb E\|D_1\|^2\\
&\le
\frac{\widetilde l_{g,1}}{4B\tau^2}
\left\|(x,y+\tau\lambda)-(x,y-\tau\lambda)\right\|^2\\
&=
\frac{\widetilde l_{g,1}\|\lambda\|^2}{B}
\le
\frac{\widetilde l_{g,1}C_\lambda^2}{B}.
\end{align*}
The deterministic central-difference bias is bounded by
$l_{g,2}C_\lambda^2\tau/2$ in
Lemma~\ref{Lemma: Deterministic Finite Difference Estimator}. Since the
sampling error is centered, the squared bias and variance add. This yields
\eqref{Lemma: Stoc FD 1 with g Smoothness}; applying the same calculation to
the $y$-component yields \eqref{Lemma: Stoc FD 2 with g Smoothness}.
\end{proof}

\subsection{Preliminary Lemmas}
\begin{lemma}\label{Lemma: Stochastic Iterates Gap with g Smoothness}
Under Assumptions \ref{Assumption: Bilevel Optimization}, \ref{Assumption: Stochastic First-order Oracle} and \ref{Assumption: Stochastic Smoothness of g}, and the update rule of Algorithm \ref{Stoc-SGHA}, we have
\begin{align*}
\mathbb{E}[\|x^{k+1}-x^k\|^2]
\le
2\gamma_{x,k}^2
\mathbb{E}\left[
\left\|
\nabla_x K(w^k,\hat w^k,\lambda^k)
\right\|^2
\right]
+
\frac{2\gamma_{x,k}^2\sigma_f^2}{B} +
2\gamma_{x,k}^2
\left(
\frac{l_{g,2}^2C_{\lambda}^4}{4}\tau^2
+
\frac{\tilde{l}_{g,1} C_{\lambda}^2}{B}
\right),
\end{align*}
\begin{align*}
\mathbb{E}[\|y^{k+1}-y^k\|^2]
\le
2\gamma_{y,k}^2
\mathbb{E}\left[
\left\|
\nabla_y K(w^k,\hat w^k,\lambda^k)
\right\|^2
\right]
+
\frac{2\gamma_{y,k}^2\sigma_f^2}{B} +
2\gamma_{y,k}^2
\left(
\frac{l_{g,2}^2C_{\lambda}^4}{4}\tau^2
+
\frac{\tilde{l}_{g,1} C_{\lambda}^2}{B}
\right),
\end{align*}
\[
\mathbb{E}[\|\lambda^{k+1}-\lambda^k\|^2]
\le
\gamma_{\lambda,k}^2
\mathbb{E}\left[
\left\|
\nabla_y g(x^k,y^k)-p_\lambda\lambda^k
\right\|^2
\right]
+
\frac{\gamma_{\lambda,k}^2\sigma_g^2}{B}.
\]
In particular, since $\|\lambda^k\|\le C_\lambda$, we also have
\[
\mathbb{E}[\|\lambda^{k+1}-\lambda^k\|^2]
\le
2\gamma_{\lambda,k}^2
\mathbb{E}\left[
\left\|
\nabla_y g(x^k,y^k)
\right\|^2
\right]
+
2\gamma_{\lambda,k}^2p_\lambda^2C_\lambda^2
+
\frac{\gamma_{\lambda,k}^2\sigma_g^2}{B}.
\]
\end{lemma}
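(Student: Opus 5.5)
The proof will mirror that of Lemma~\ref{Lemma: Stochastic Iterates Gap}. The only change is in the finite-difference error bound: Lemma~\ref{Lemma: Stochastic Finite Difference Estimator} is replaced by Lemma~\ref{Lemma: Stochastic Finite Difference Estimator with g Smoothness}, so the variance term $\sigma_g^2/(B\tau^2)$ becomes $\tilde l_{g,1}C_\lambda^2/B$. All bounds are first established conditionally on the pre-sampling filtration $\mathcal F_k^-$, under which $w^k,\hat w^k,\lambda^k$ are fixed and $\mathcal B_f^k,\mathcal B_g^k$ are fresh independent samples. Total expectation is taken at the end.

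For the $x$-update, I write
\[
x^{k+1}-x^k=-\gamma_{x,k}\bigl(\nabla_x f(x^k,y^k;\mathcal B_f^k)+\tilde\nabla^2_{xy}g_{\lambda^k}(x^k,y^k;\mathcal B_g^k)+p_w(x^k-\hat x^k)\bigr).
\]
I then add and subtract $\nabla^2_{xy}g(x^k,y^k)\lambda^k$ and apply $\|a+b\|^2\le2\|a\|^2+2\|b\|^2$. This splits the squared step into two pieces. The first is $2\gamma_{x,k}^2\,\mathbb E\|\nabla_x f(\cdot;\mathcal B_f^k)+\nabla^2_{xy}g\,\lambda^k+p_w(x^k-\hat x^k)\|^2$. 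By unbiasedness and the mini-batch variance bound $\sigma_f^2/B$ from Assumption~\ref{Assumption: Stochastic First-order Oracle}, this is at most $\|\nabla_xK(w^k,\hat w^k,\lambda^k)\|^2+\sigma_f^2/B$. The second piece is $2\gamma_{x,k}^2$ times the mean-squared finite-difference error. Since $\|\lambda^k\|\le C_\lambda$, this error is bounded by \eqref{Lemma: Stoc FD 1 with g Smoothness}, giving $\frac{l_{g,2}^2C_\lambda^4}{4}\tau^2+\frac{\tilde l_{g,1}C_\lambda^2}{B}$. The $y$-update is handled identically using \eqref{Lemma: Stoc FD 2 with g Smoothness}.

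The $\lambda$-update does not involve the finite differences, so the argument from Lemma~\ref{Lemma: Stochastic Iterates Gap} carries over verbatim. Since $\lambda^k=\mathcal P_\Lambda(\lambda^k)$, nonexpansiveness of $\mathcal P_\Lambda$ gives $\|\lambda^{k+1}-\lambda^k\|^2\le\gamma_{\lambda,k}^2\|\nabla_y g(x^k,y^k;\mathcal B_g^k)-p_\lambda\lambda^k\|^2$. The bias–variance decomposition then gives the bound $\|\nabla_yg(x^k,y^k)-p_\lambda\lambda^k\|^2+\sigma_g^2/B$. The final form follows from $\|a-b\|^2\le2\|a\|^2+2\|b\|^2$ together with $\|\lambda^k\|\le C_\lambda$.

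There is no real obstacle here. The one point requiring care is that Lemma~\ref{Lemma: Stochastic Finite Difference Estimator with g Smoothness} is a bound for fixed $(x,y,\lambda)$. To apply it at the random point $(x^k,y^k,\lambda^k)$, I must use that $\mathcal B_g^k$ is independent of $\mathcal F_k^-$, which lets the bound be applied conditionally. After that, the tower property yields the stated unconditional inequalities.
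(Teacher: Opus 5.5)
Your proposal is correct and follows the paper's proof: you rerun the argument of Lemma~\ref{Lemma: Stochastic Iterates Gap} conditionally on $\mathcal F_k^-$, using Lemma~\ref{Lemma: Stochastic Finite Difference Estimator with g Smoothness} in place of Lemma~\ref{Lemma: Stochastic Finite Difference Estimator}, so the variance term $\sigma_g^2/(B\tau^2)$ becomes $\tilde l_{g,1}C_\lambda^2/B$ and the $\lambda$-bound is unchanged. You also justify applying the fixed-point finite-difference bound at the random iterate through the independence of $\mathcal B_g^k$ from $\mathcal F_k^-$; the paper uses this implicitly in its conditional-expectation step, and you state it explicitly.
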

\begin{proof}
The proof is identical to that of Lemma \ref{Lemma: Stochastic Iterates Gap}, with $\frac{\sigma_g^2}{B\tau^2}$ replaced by $\frac{\tilde{l}_{g,1} C_{\lambda}^2}{B}$ throughout.
\end{proof}

\subsection{Intermediate Lemmas for Lyapunov Function}
\begin{lemma}\label{Lemma: Stochastic Primal Descent w lambda with g Smoothness}
Under Assumptions \ref{Assumption: Bilevel Optimization}, \ref{Assumption: Stochastic First-order Oracle} and \ref{Assumption: Stochastic Smoothness of g}, suppose $\gamma_{w,k}\le \frac{1}{8(p_w+l_{\mathcal L,1})}$. Then, we have
\begin{align*}
&\mathbb{E}\left[
K(w^k,\hat w^k,\lambda^k)
\right]
-
\mathbb{E}\left[
K(w^{k+1},\hat w^k,\lambda^{k+1})
\right]                                                        \\
&\ge
\frac{\gamma_{w,k}}{4}
\mathbb{E}\left[
\left\|
\nabla_wK(w^k,\hat w^k,\lambda^k)
\right\|^2
\right]                                                     \\
&\quad
-
\left(
\gamma_{w,k}
+
4(p_w+l_{\mathcal L,1})\gamma_{w,k}^2
\right)
\left(
\frac{l_{g,2}^2C_\lambda^4}{4}\tau^2
+
\frac{\tilde{l}_{g,1} C_{\lambda}^2}{B}
\right)      -
\frac{4(p_w+l_{\mathcal L,1})\gamma_{w,k}^2}{B}
\sigma_f^2                                                   \\
&\quad
+
\mathbb{E}\left[
\left\langle
\nabla_y g(x^k,y^k)-p_\lambda\lambda^k,
\lambda^k-\lambda^{k+1}
\right\rangle
\right]
-
\left(\frac{p_\lambda}{2} + \frac{l_{g,1}^2}{2(p_w+l_{\mathcal{L},1})}\right)
\mathbb{E}\left[
\|\lambda^{k+1}-\lambda^k\|^2
\right].
\end{align*}
\end{lemma}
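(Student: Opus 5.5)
The proof will repeat the argument of Lemma~\ref{Lemma: Stochastic Primal Descent w lambda} almost verbatim. The only change is that every use of Lemma~\ref{Lemma: Stochastic Finite Difference Estimator} is replaced by Lemma~\ref{Lemma: Stochastic Finite Difference Estimator with g Smoothness}, and every use of Lemma~\ref{Lemma: Stochastic Iterates Gap} by Lemma~\ref{Lemma: Stochastic Iterates Gap with g Smoothness}. In effect, the variance term $\sigma_g^2/(B\tau^2)$ becomes $\tilde l_{g,1}C_\lambda^2/B$ throughout.

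\textbf{Pathwise descent inequality.} Using $(p_w+l_{\mathcal L,1})$-smoothness of $K$ in $w$ and $p_\lambda$-smoothness in $\lambda$, I will write the same decomposition as \eqref{Lemma: Stochastic Primal Descent 1}. The cross term
\[
\langle \nabla_y g(w^{k+1})-\nabla_y g(w^k),\lambda^k-\lambda^{k+1}\rangle
\]
will be handled with Young's inequality exactly as in \eqref{Lemma: Stochastic Primal Descent 2}, using $l_{g,1}$-Lipschitzness of $\nabla_y g$. None of this involves the oracle, so nothing changes.

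\textbf{Linear term in $w$.} I will condition on $\mathcal F_k^-$. Unbiasedness of the $f$-minibatch makes the $\nabla f$ noise drop out of $\langle \nabla_w K, w^k-w^{k+1}\rangle$. What remains is $\gamma_w\|\nabla_w K\|^2$ plus an inner product with the conditional mean of the finite-difference error.

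I will apply Young's inequality to that inner product, then Jensen's inequality $\|\mathbb E[e]\|^2\le \mathbb E\|e\|^2$. Bounding $\mathbb E\|e\|^2$ via Lemma~\ref{Lemma: Stochastic Finite Difference Estimator with g Smoothness} gives the analogue of \eqref{Lemma: Stochastic Primal Descent 3}, with $\tilde l_{g,1}C_\lambda^2/B$ in place of $\sigma_g^2/(B\tau^2)$. Jensen would even allow dropping the variance part, since the conditional mean is just the deterministic bias. Keeping it only loosens the bound, which still matches the statement.

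\textbf{Quadratic term in $w$.} The term $(p_w+l_{\mathcal L,1})\|w^{k+1}-w^k\|^2$ collects both the smoothness term and the Young term. I will bound its conditional expectation by the $x$ and $y$ inequalities of Lemma~\ref{Lemma: Stochastic Iterates Gap with g Smoothness}. This produces coefficients $4(p_w+l_{\mathcal L,1})\gamma_w^2$ on the noise terms and $2(p_w+l_{\mathcal L,1})\gamma_w^2$ on $\|\nabla_x K\|^2$ and $\|\nabla_y K\|^2$.

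\textbf{Assembling.} I will take total expectation. With $\gamma_{x,k}=\gamma_{y,k}=\gamma_{w,k}\le 1/(8(p_w+l_{\mathcal L,1}))$, the coefficient $\gamma_w/2-2(p_w+l_{\mathcal L,1})\gamma_w^2$ is at least $\gamma_w/4$. The dual inner-product and $\|\lambda^{k+1}-\lambda^k\|^2$ terms are carried along untouched.

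\textbf{Main obstacle.} There is no real difficulty here; the only care needed is bookkeeping. The coefficient $(\gamma_w+4(p_w+l_{\mathcal L,1})\gamma_w^2)$ must multiply the combined bias-plus-variance bracket, and the $\sigma_f^2$ term must end up with coefficient $4(p_w+l_{\mathcal L,1})\gamma_w^2/B$.
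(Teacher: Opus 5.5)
Your proposal is correct and takes the same approach as the paper. The paper's proof simply repeats the argument of Lemma~\ref{Lemma: Stochastic Primal Descent w lambda}, with $\sigma_g^2/(B\tau^2)$ replaced by $\tilde l_{g,1}C_\lambda^2/B$ via the $g$-smoothness versions of the finite-difference and iterate-gap lemmas, and your coefficients $(\gamma_{w,k}+4(p_w+l_{\mathcal L,1})\gamma_{w,k}^2)$ and $4(p_w+l_{\mathcal L,1})\gamma_{w,k}^2\sigma_f^2/B$ come out right.
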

\begin{proof}
The proof is identical to that of Lemma \ref{Lemma: Stochastic Primal Descent w lambda}, with $\frac{\sigma_g^2}{B\tau^2}$ replaced by $\frac{\tilde{l}_{g,1} C_{\lambda}^2}{B}$ throughout.
\end{proof}

\begin{lemma}\label{Lemma: Stochastic Primal Descent - Dual Ascent with g Smoothness}
Suppose the conditions in Lemma \ref{Lemma: Stochastic Primal Descent w lambda with g Smoothness} hold. Set $p_w=2l_{\mathcal L,1}$. Suppose further that
\[
\gamma_{\lambda,k}
\le
\min\left\{
\frac{1}{2\left(2l_{\Psi,1}+p_\lambda + \frac{l_{g,1}^2}{p_w+l_{\mathcal{L},1}}\right)},
\frac{l_{\mathcal L,1}^2}{16l_{g,1}^2}\gamma_{w,k}
\right\}.
\]
Then, we have
\begin{align*}
&\mathbb{E}\left[
K(w^k,\hat w^k,\lambda^k)
\right]
-
\mathbb{E}\left[
K(w^{k+1},\hat w^k,\lambda^{k+1})
\right]
+
2\mathbb{E}\left[
\Psi(\hat w^k,\lambda^{k+1})-\Psi(\hat w^k,\lambda^k)
\right]                                                        \\
&\ge
\frac{\gamma_{w,k}}{8}
\mathbb{E}\left[
\left\|
\nabla_wK(w^k,\hat w^k,\lambda^k)
\right\|^2
\right]
+
\frac{1}{4\gamma_{\lambda,k}}
\mathbb{E}\left[
\left\|
\lambda^{k+1}-\lambda^k
\right\|^2
\right]                                                        \\
&\quad
-
\frac{4(p_w+l_{\mathcal L,1})\gamma_{w,k}^2}{B}
\sigma_f^2
-
\left(
\gamma_{w,k}
+
4(p_w+l_{\mathcal L,1})\gamma_{w,k}^2
\right)
\left(
\frac{l_{g,2}^2C_\lambda^4}{4}\tau^2
+
\frac{\tilde{l}_{g,1} C_{\lambda}^2}{B}
\right)
-
\frac{\gamma_{\lambda,k}\sigma_g^2}{B}.
\end{align*}
\end{lemma}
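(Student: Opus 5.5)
This follows the proof of Lemma~\ref{Lemma: Stochastic Primal Descent - Dual Ascent} line by line. The only change is that the finite-difference variance term $\sigma_g^2/(B\tau^2)$ is replaced by $\tilde l_{g,1}C_\lambda^2/B$ throughout, and the primal descent estimate is taken from Lemma~\ref{Lemma: Stochastic Primal Descent w lambda with g Smoothness}.

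\textbf{Step 1: combine the primal and dual estimates.} I add Lemma~\ref{Lemma: Stochastic Primal Descent w lambda with g Smoothness} to twice the $l_{\Psi,1}$-smoothness inequality of Lemma~\ref{Lemma: Dual Ascent hat w lambda}, evaluated at $(\hat w^k,\lambda^k)$ and $(\hat w^k,\lambda^{k+1})$, and take expectations. The cross term
\[
2\langle\nabla_\lambda\Psi(\hat w^k,\lambda^k),\lambda^{k+1}-\lambda^k\rangle
\]
splits into two parts:
\[
\langle\nabla_y g(w^k)-p_\lambda\lambda^k,\lambda^{k+1}-\lambda^k\rangle
\quad\text{and}\quad
2\langle\nabla_\lambda\Psi(\hat w^k,\lambda^k)-(\nabla_y g(w^k)-p_\lambda\lambda^k),\lambda^{k+1}-\lambda^k\rangle .
\]

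\textbf{Step 2: lower-bound the first part.} The projection optimality condition for the stochastic step gives a bound in terms of $\frac{1}{\gamma_{\lambda,k}}\|\lambda^{k+1}-\lambda^k\|^2$, up to an inner product with the noise $\nabla_y g(w^k;\mathcal B_g^k)-\nabla_y g(w^k)$. To control that noise term, I introduce the noiseless projected step $\bar\lambda^{k+1}$, which is $\mathcal F_k^-$-measurable. Conditionally on $\mathcal F_k^-$, the inner product of the noise with $\bar\lambda^{k+1}-\lambda^k$ has zero mean. The remaining part is bounded using nonexpansiveness of $\mathcal P_\Lambda$, giving at most $\gamma_{\lambda,k}\sigma_g^2/B$. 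The dual update still uses plain stochastic gradients of $g$, so this term keeps $\sigma_g^2$ and is unaffected by Assumption~\ref{Assumption: Stochastic Smoothness of g}.

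\textbf{Step 3: lower-bound the second part.} By Danskin's theorem,
\[
\nabla_\lambda\Psi(\hat w^k,\lambda^k)=\nabla_y g(w^\star(\hat w^k,\lambda^k))-p_\lambda\lambda^k .
\]
Combining the $l_{g,1}$-Lipschitz continuity of $\nabla_y g$ with Lemma~\ref{Lemma: SC property of K}, the mismatch is bounded by $\frac{l_{g,1}}{p_w-l_{\mathcal L,1}}\|\nabla_w K\|$. Young's inequality then gives the lower bound
\[
-\frac{1}{2\gamma_{\lambda,k}}\|\lambda^{k+1}-\lambda^k\|^2-\frac{2\gamma_{\lambda,k}l_{g,1}^2}{l_{\mathcal L,1}^2}\|\nabla_wK\|^2 .
\]

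\textbf{Step 4: collect coefficients.} The first stepsize condition gives
\[
\frac{1}{\gamma_{\lambda,k}}-l_{\Psi,1}-\frac{p_\lambda}{2}-\frac{l_{g,1}^2}{2(p_w+l_{\mathcal L,1})}-\frac{1}{2\gamma_{\lambda,k}}\ge\frac{1}{4\gamma_{\lambda,k}} .
\]
The second condition, $\gamma_{\lambda,k}\le\frac{l_{\mathcal L,1}^2}{16l_{g,1}^2}\gamma_{w,k}$, ensures the $\|\nabla_wK\|^2$ penalty is at most $\gamma_{w,k}/8$. This leaves the net coefficient $\gamma_{w,k}/8$ on $\|\nabla_wK\|^2$. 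The noise terms carried over from Lemma~\ref{Lemma: Stochastic Primal Descent w lambda with g Smoothness} and Step 2 are exactly those in the claim.

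\textbf{Main obstacle.} The only delicate point is Step 2: the martingale-difference cancellation needs the filtration to be handled carefully, so that $\bar\lambda^{k+1}$ is $\mathcal F_k^-$-measurable. Apart from that, this is a purely mechanical substitution.
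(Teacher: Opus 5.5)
Your proposal is correct and matches the paper's proof. The paper proves this lemma by repeating the proof of Lemma~\ref{Lemma: Stochastic Primal Descent - Dual Ascent} with $\sigma_g^2/(B\tau^2)$ replaced by $\tilde l_{g,1}C_\lambda^2/B$, and your Steps 1--4 reproduce that argument faithfully, including the $\mathcal F_k^-$-measurable noiseless step $\bar\lambda^{k+1}$ and the observation that the dual-noise term $\gamma_{\lambda,k}\sigma_g^2/B$ is unaffected by Assumption~\ref{Assumption: Stochastic Smoothness of g}.
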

\begin{proof}
The proof is identical to that of Lemma \ref{Lemma: Stochastic Primal Descent - Dual Ascent}, with $\frac{\sigma_g^2}{B\tau^2}$ replaced by $\frac{\tilde{l}_{g,1} C_{\lambda}^2}{B}$ throughout.
\end{proof}

\begin{proposition}\label{Proposition: Stochastic Lyapunov Function Descent with g Smoothness}
Define the Lyapunov function
\[
V_k
=
K(w^k,\hat w^k,\lambda^k)
-
2\Psi(\hat w^k,\lambda^k)
+
2P(\hat w^k).
\]
Suppose
\[
\gamma_{w,k}
\le
\min\left\{
\frac{1}{8p_w\beta},
\frac{1}{8(p_w+l_{\mathcal L,1})}
\right\},\quad \gamma_{\lambda,k}
\le
\min\left\{\frac{1}{2\left(2l_{\Psi,1}+p_\lambda+ \frac{l_{g,1}^2}{p_w+l_{\mathcal{L},1}}\right)},
\frac{l_{\mathcal L,1}^2}{16l_{g,1}^2}\gamma_{w,k},
\frac{1}{1536p_w\beta\gamma_2^2}
\right\},
\]
and
\[
\beta\le \frac{\sqrt{5}-1}{48\gamma_1}.
\]
Then, we have
\begin{align*}
&\mathbb{E}[V_k]-\mathbb{E}[V_{k+1}]                                      \\
&\ge
\frac{1}{32\gamma_{w,k}}
\mathbb{E}\left[
\left\|
w^k-w^+(\hat w^k,\lambda^k)
\right\|^2
\right]
+
\frac{1}{32\gamma_{\lambda,k}}
\mathbb{E}\left[
\left\|
\lambda^+(\hat w^k)-\lambda^k
\right\|^2
\right]                                                                  \\
&\quad
+
\frac{p_w\beta}{8}
\mathbb{E}\left[
\left\|
w^k-\hat w^k
\right\|^2
\right]
-
48p_w\beta
\mathbb{E}\left[
\left\|
w^\star(\hat w^k)
-
w^\star(\hat w^k,\lambda^+(\hat w^k))
\right\|^2
\right]                                                                  \\
&\quad
-
\left(
\gamma_{w,k}
+
4(p_w+l_{\mathcal L,1})\gamma_{w,k}^2
+
p_w\beta\gamma_{w,k}^2
\right)
\left(
\frac{l_{g,2}^2C_\lambda^4}{4}\tau^2
+
\frac{\tilde{l}_{g,1} C_{\lambda}^2}{B}
\right)                                                                 \\
&\quad
-
\frac{
4(p_w+l_{\mathcal L,1})\gamma_{w,k}^2
\sigma_f^2
}{B} - \frac{
p_w\beta\gamma_{w,k}^2\sigma_f^2
}{B}
-
\frac{9\gamma_{\lambda,k}\sigma_g^2}{8B}.
\end{align*}
\end{proposition}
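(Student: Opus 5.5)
The plan is to reproduce the argument of Proposition~\ref{Proposition: Stochastic Lyapunov Function Descent} essentially line by line. The only change is that every occurrence of the finite-difference variance term $\sigma_g^2/(B\tau^2)$ becomes $\tilde l_{g,1}C_\lambda^2/B$, as supplied by Lemma~\ref{Lemma: Stochastic Finite Difference Estimator with g Smoothness}. The structural part of the argument never uses the specific form of the variance proxy. It only uses the existence of a scalar $E_g$ such that, conditionally on $\mathcal F_k^-$,
\[
\mathbb E\bigl[\|\tilde\nabla^2_{xy}g_{\lambda^k}(x^k,y^k;\mathcal B_g^k)-\nabla^2_{xy}g(x^k,y^k)\lambda^k\|^2\mid\mathcal F_k^-\bigr]\le E_g ,
\]
and the same bound for the $yy$ block. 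So I would first state this abstraction and then set $E_g:=\frac{l_{g,2}^2C_\lambda^4}{4}\tau^2+\frac{\tilde l_{g,1}C_\lambda^2}{B}$.

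\textbf{Step 1.} Decompose $\mathbb E[V_k]-\mathbb E[V_{k+1}]$ into three groups, exactly as in the stochastic proof.
\begin{itemize}
\item The $\hat w$-step term $K(w^{k+1},\hat w^k,\lambda^{k+1})-K(w^{k+1},\hat w^{k+1},\lambda^{k+1})$, bounded by Lemma~\ref{Lemma: Primal Descent hat w}.
\item The combination $2(\Psi(\hat w^{k+1},\lambda^{k+1})-\Psi(\hat w^k,\lambda^{k+1}))-2(P(\hat w^{k+1})-P(\hat w^k))$, bounded by Lemma~\ref{Lemma: Dual Ascent - Proximal Descent}.
\item The primal--dual term, bounded by Lemma~\ref{Lemma: Stochastic Primal Descent - Dual Ascent with g Smoothness}.
\end{itemize}
The first two lemmas are pathwise and do not depend on the oracle, so they hold almost surely. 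I take total expectations in them.

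\textbf{Step 2.} Use the conditions $\beta\le(\sqrt5-1)/(48\gamma_1)$ and $\gamma_{\lambda,k}\le 1/(192p_w\beta\gamma_2^2)$. These give the coefficient $p_w/(4\beta)$ on $\|\hat w^{k+1}-\hat w^k\|^2$ and the coefficient $1/(8\gamma_{\lambda,k})$ on $\|\lambda^{k+1}-\lambda^k\|^2$.

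\textbf{Step 3.} Rewrite the $\hat w$-step as $\beta(w^{k+1}-\hat w^k)$. Then bound $\mathbb E\|w^{k+1}-w^k\|^2$ by Lemma~\ref{Lemma: Stochastic Iterates Gap with g Smoothness}, which contributes $p_w\beta\gamma_{w,k}^2(E_g+\sigma_f^2/B)$ to the error terms. The condition $\gamma_{w,k}\le 1/(8p_w\beta)$ keeps the coefficient of $\mathbb E\|\nabla_wK\|^2$ at least $\gamma_{w,k}/16$.

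\textbf{Step 4.} Convert the actual dual step into the ghost step $\lambda^+(\hat w^k)$. By nonexpansiveness of $\mathcal P_\Lambda$ plus the mini-batch variance of $\nabla_y g$,
\[
\mathbb E\|\lambda^{k+1}-\lambda^+(\hat w^k)\|^2\le \frac{\gamma_{\lambda,k}^2l_{g,1}^2}{(p_w-l_{\mathcal L,1})^2}\,\mathbb E\|\nabla_wK\|^2+\frac{\gamma_{\lambda,k}^2\sigma_g^2}{B}.
\]
The dual multiplier is updated only with plain $\nabla_y g$ samples, so $\sigma_g^2$, not $\tilde l_{g,1}$, correctly appears here and in the $9\gamma_{\lambda,k}\sigma_g^2/(8B)$ term. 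Then apply $\|a\|^2\ge\frac12\|a-b\|^2-\|b\|^2$ and the condition $\gamma_{\lambda,k}\le l_{\mathcal L,1}^2\gamma_{w,k}/(16l_{g,1}^2)$ to reach $\gamma_{w,k}/32$ and $1/(16\gamma_{\lambda,k})$.

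\textbf{Step 5.} Split $\|w^\star(\hat w^k)-w^\star(\hat w^k,\lambda^k)\|^2$ through $\lambda^+(\hat w^k)$ using Lemma~\ref{Lemma: Lipschitz continuity of w star}. Absorb the $48p_w\beta\gamma_2^2$ part using $\gamma_{\lambda,k}\le 1/(1536p_w\beta\gamma_2^2)$. Finally, identify $\gamma_{w,k}\|\nabla_wK\|^2=\gamma_{w,k}^{-1}\|w^k-w^+\|^2$.

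The only real obstacle is bookkeeping: every error constant must carry the same coefficients as before. The $E_g$ contributions are $\gamma_{w,k}+4(p_w+l_{\mathcal L,1})\gamma_{w,k}^2$ from Lemma~\ref{Lemma: Stochastic Primal Descent - Dual Ascent with g Smoothness}, plus $p_w\beta\gamma_{w,k}^2$ from Step 3. The $\sigma_f^2/B$ contributions mirror these. The $\sigma_g^2/B$ terms come from the projection-noise term $\gamma_{\lambda,k}\sigma_g^2/B$ and the ghost-step term $\gamma_{\lambda,k}\sigma_g^2/(8B)$, which sum to $9/8$. I would also check that the conditional-expectation step in the $K$-descent relies only on the bias bound for $\mathbb E[\tilde\nabla^2 g\mid\mathcal F_k^-]$ (via Jensen). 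That bias bound is unchanged under Assumption~\ref{Assumption: Stochastic Smoothness of g}, so using the larger $E_g$ there is harmless.
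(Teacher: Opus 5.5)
Your proposal is correct and follows the paper's route exactly: the paper also proves this by rerunning the proof of Proposition~\ref{Proposition: Stochastic Lyapunov Function Descent} with $\sigma_g^2/(B\tau^2)$ replaced by $\tilde l_{g,1}C_\lambda^2/B$. Your bookkeeping matches, namely the error coefficient $\gamma_{w,k}+4(p_w+l_{\mathcal L,1})\gamma_{w,k}^2+p_w\beta\gamma_{w,k}^2$ and the dual-noise terms $\gamma_{\lambda,k}\sigma_g^2/B+\gamma_{\lambda,k}\sigma_g^2/(8B)$ kept in terms of $\sigma_g^2$, and, as in the paper, the steps through Lemma~\ref{Lemma: Stochastic Primal Descent - Dual Ascent with g Smoothness} and the ghost-step conversion tacitly use $p_w=2l_{\mathcal L,1}$, which the statement leaves implicit.
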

\begin{proof}
The proof is identical to that of Proposition \ref{Proposition: Stochastic Lyapunov Function Descent}, with $\frac{\sigma_g^2}{B\tau^2}$ replaced by $\frac{\tilde{l}_{g,1} C_{\lambda}^2}{B}$ throughout.
\end{proof}

\subsection{Oracle Complexity under Stochastic Setting with Stochastic Smoothness of \texorpdfstring{\(g\)}{g}}
\begin{proposition}\label{Proposition: Stochastic Lyapunov Function Descent After Error Bound with g Smoothness}
Suppose the conditions in Proposition \ref{Proposition: Stochastic Lyapunov Function Descent with g Smoothness} hold. Suppose further that
\[
\beta
\le
\frac{1}{3072p_w\gamma_{\lambda,k}\gamma_3^2},
\]
where
\[
\gamma_3
:=
\frac{
l_{\Psi,1}+\gamma_{\lambda,k}^{-1}
}{
\sqrt{
2(p_w-l_{\mathcal L,1})p_\lambda
+
\frac{(p_w-l_{\mathcal L,1})^2\mu_g^2}
{(p_w+l_{\mathcal L,1})^2}
}
}.
\]
Then, we have
\begin{align*}
&\mathbb{E}[V_k]-\mathbb{E}[V_{k+1}]                                      \\
&\ge
\frac{1}{32\gamma_{w,k}}
\mathbb{E}\left[
\left\|
w^k-w^+(\hat w^k,\lambda^k)
\right\|^2
\right]
+
\frac{1}{64\gamma_{\lambda,k}}
\mathbb{E}\left[
\left\|
\lambda^+(\hat w^k)-\lambda^k
\right\|^2
\right]
+
\frac{p_w\beta}{8}
\mathbb{E}\left[
\left\|
w^k-\hat w^k
\right\|^2
\right]                                                                  \\
&\quad
-
\left(
\gamma_{w,k}
+
4(p_w+l_{\mathcal L,1})\gamma_{w,k}^2
+
p_w\beta\gamma_{w,k}^2
\right)
\left(
\frac{l_{g,2}^2C_\lambda^4}{4}\tau^2
+
\frac{\tilde{l}_{g,1} C_{\lambda}^2}{B}
\right)                                                                 \nonumber\\
&\quad
-
\frac{
4(p_w+l_{\mathcal L,1})\gamma_{w,k}^2
\sigma_f^2
}{B} - \frac{
p_w\beta\gamma_{w,k}^2\sigma_f^2
}{B}
-
\frac{9\gamma_{\lambda,k}\sigma_g^2}{8B}.
\end{align*}
\end{proposition}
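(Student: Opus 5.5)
The plan mirrors the proof of Proposition~\ref{Proposition: Stochastic Lyapunov Function Descent After Error Bound}. The only stochastic ingredient that changes is the finite-difference variance term, and it enters linearly as an additive constant in the descent inequality. We start from the conclusion of Proposition~\ref{Proposition: Stochastic Lyapunov Function Descent with g Smoothness}, which already holds under the stated stepsize and averaging conditions. That bound contains the three positive residual terms with coefficients $\frac{1}{32\gamma_{w,k}}$, $\frac{1}{32\gamma_{\lambda,k}}$ and $\frac{p_w\beta}{8}$. It also contains the negative coupling term $-48p_w\beta\,\mathbb E\|w^\star(\hat w^k)-w^\star(\hat w^k,\lambda^+(\hat w^k))\|^2$, together with error terms in which $\frac{\sigma_g^2}{B\tau^2}$ is already replaced by $\frac{\tilde l_{g,1}C_\lambda^2}{B}$.

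The key step is to absorb the coupling term. We invoke Lemma~\ref{Lemma: Error Bound 1}, which is a deterministic statement about $(\hat w^k,\lambda^k)$: it only involves $\Psi$, $w^\star$, and the ghost step $\lambda^+(\hat w^k)$, which is defined with exact gradients of $\Psi$. It therefore holds pathwise, giving
\[
\|w^\star(\hat w^k)-w^\star(\hat w^k,\lambda^+(\hat w^k))\|\le\gamma_3\|\lambda^+(\hat w^k)-\lambda^k\|
\]
almost surely. Squaring this and taking total expectation yields the bound in expectation. The hypothesis $\beta\le\frac{1}{3072p_w\gamma_{\lambda,k}\gamma_3^2}$ then gives $48p_w\beta\gamma_3^2\le\frac{1}{64\gamma_{\lambda,k}}$. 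Hence the coupling term is at most $\frac{1}{64\gamma_{\lambda,k}}\mathbb E\|\lambda^+(\hat w^k)-\lambda^k\|^2$.

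Subtracting this from the $\frac{1}{32\gamma_{\lambda,k}}$ coefficient leaves $\frac{1}{64\gamma_{\lambda,k}}$ on the dual residual. All other terms pass through unchanged, which gives exactly the displayed inequality.

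No real obstacle is expected. The only point needing care is measurability: Lemma~\ref{Lemma: Error Bound 1} must be applied to the ghost iterate $\lambda^+(\hat w^k)$, which is $\mathcal F_k^-$-measurable, and not to the sampled $\lambda^{k+1}$, so that no extra variance term is introduced. The stochastic smoothness assumption plays no role in this step beyond what is already contained in Proposition~\ref{Proposition: Stochastic Lyapunov Function Descent with g Smoothness}.
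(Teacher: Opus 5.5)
Your proposal is correct and follows essentially the same route as the paper. The paper's proof is the bounded-variance argument (start from the preceding descent inequality, apply Lemma~\ref{Lemma: Error Bound 1} pathwise to the ghost step $\lambda^+(\hat w^k)$, and use $\beta\le\frac{1}{3072p_w\gamma_{\lambda,k}\gamma_3^2}$ so that $48p_w\beta\gamma_3^2\le\frac{1}{64\gamma_{\lambda,k}}$), with $\frac{\sigma_g^2}{B\tau^2}$ replaced by $\frac{\tilde l_{g,1}C_\lambda^2}{B}$ throughout.
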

\begin{proof}
The proof is identical to that of Proposition \ref{Proposition: Stochastic Lyapunov Function Descent After Error Bound}, with $\frac{\sigma_g^2}{B\tau^2}$ replaced by $\frac{\tilde{l}_{g,1} C_{\lambda}^2}{B}$ throughout.
\end{proof}

\begin{lemma}
\label{Lemma: Stochastic Proximal Residual Bound with g Smoothness}
Under Assumptions \ref{Assumption: Bilevel Optimization}, \ref{Assumption: Stochastic First-order Oracle} and \ref{Assumption: Stochastic Smoothness of g}, suppose that $\Lambda=\{\lambda:\|\lambda\|\le C_\lambda\}$ with $C_{\lambda} > \frac{l_{f,0}}{\mu_g} $. Set $p_w=2l_{\mathcal L,1}$.
Further choose constant stepsizes
\[
\gamma_{w,k} \leq \min\left\{
\frac{1}{8p_w\beta},
\frac{1}{8(p_w+l_{\mathcal L,1})}
\right\},
\quad
\gamma_{\lambda,k} \le
\min\left\{
\frac{1}{2\left(2l_{\Psi,1}+p_\lambda + \frac{l_{g,1}^2}{p_w+l_{\mathcal{L},1}}\right)},
\frac{l_{\mathcal L,1}^2}{16l_{g,1}^2}\gamma_{w,k},
\frac{1}{1536p_w\beta\gamma_2^2}
\right\}.
\]
Moreover, choose the averaging parameter
\[
\beta
\le
\min\left\{
\frac{\sqrt{5}-1}{48\gamma_1},
\frac{1}{3072p_w\gamma_{\lambda,k}\gamma_3^2}
\right\}.
\]
Define
\[
a
:=
\min\left\{
\frac{1}{32\gamma_{w,k}},
\frac{\bar{\kappa}_y^2}{64\gamma_{\lambda,k}},
\frac{\bar{\kappa}_y^2p_w\beta}{8}
\right\}.
\]
Also define
\[
C_g
:=
\gamma_{w,k}
+
4(p_w+l_{\mathcal L,1})\gamma_{w,k}^2
+
p_w\beta\gamma_{w,k}^2,
\]
and
\[
C_f
:=
4(p_w+l_{\mathcal L,1})\gamma_{w,k}^2 + p_w\beta\gamma_{w,k}^2.
\]
Without loss of generality, take the declared Hessian--Lipschitz upper
bound $l_{g,2}>0$ (it may always be enlarged). Choose
\[
\tau^2
=
\frac{4a}{5C_g l_{g,2}^2C_\lambda^4}
\epsilon_{\mathcal L}^2
\]
and the positive integer batch size
\[
B
=
\max\left\{
1,
\left\lceil
\max\left\{
\frac{5\tilde l_{g,1}C_gC_\lambda^2}{a\epsilon_{\mathcal L}^2},
\frac{5C_f\sigma_f^2}{a\epsilon_{\mathcal L}^2},
\frac{45\gamma_{\lambda,k}\sigma_g^2}{8a\epsilon_{\mathcal L}^2}
\right\}
\right\rceil
\right\},
\]
and run Algorithm~\ref{Stoc-SGHA} for
\[
K
=
\max\left\{
1,
\left\lceil
\frac{5\left(\mathbb E[V_0]-\underline f\right)}
{a\epsilon_{\mathcal L}^2}
\right\rceil
\right\}
\]
iterations. More generally, the same conclusion holds if $\tau^2$ is
chosen no larger than its displayed value and $B$ and $K$ are chosen as
positive integers no smaller than their displayed values. Then there
exists an index $k\in\{0,\ldots,K-1\}$ such that
\[
\max\left\{
\mathbb E\left[
\|w^k-w^+(\hat w^k,\lambda^k)\|^2
\right],
\bar{\kappa}_y^{-2}\mathbb E\left[
\|\lambda^+(\hat w^k)-\lambda^k\|^2
\right],
\bar{\kappa}_y^{-2}\mathbb E\left[
\|w^k-\hat w^k\|^2
\right]
\right\}
\le
\epsilon_{\mathcal L}^2.
\]
\end{lemma}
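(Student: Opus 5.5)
The plan is to mirror the proof of Lemma~\ref{Lemma: Stochastic Proximal Residual Bound}, replacing the descent inequality of Proposition~\ref{Proposition: Stochastic Lyapunov Function Descent After Error Bound} by its stochastic-smoothness counterpart, Proposition~\ref{Proposition: Stochastic Lyapunov Function Descent After Error Bound with g Smoothness}. The stepsize, averaging and $p_w$ conditions in the statement are exactly those required there, so that proposition applies for every $k$. In it, the only change relative to the bounded-variance case is that the finite-difference variance term $\sigma_g^2/(B\tau^2)$ becomes $\tilde l_{g,1}C_\lambda^2/B$, which is independent of $\tau$.

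First, I lower bound the three positive residual terms using the definition of $a$. This gives
\[
\mathbb E[V_k]-\mathbb E[V_{k+1}]\ge a\,M_k-C_g\Bigl(\tfrac{l_{g,2}^2C_\lambda^4}{4}\tau^2+\tfrac{\tilde l_{g,1}C_\lambda^2}{B}\Bigr)-\tfrac{C_f\sigma_f^2}{B}-\tfrac{9\gamma_{\lambda,k}\sigma_g^2}{8B},
\]
where $M_k$ is the maximum of the three (suitably $\bar\kappa_y^{-2}$-weighted) expected residuals. The constants $C_g$ and $C_f$ are exactly the coefficients appearing in the proposition.

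Next, I telescope over $k=0,\ldots,K-1$. Lemma~\ref{Lemma: Lower Bound of Lyapunov Function} gives $V_K\ge\underline f$ pathwise, hence $\mathbb E[V_K]\ge\underline f$. Because the stepsizes are constant, the error terms do not depend on $k$. Dividing by $aK$ and bounding the minimum over $k$ by the average, I obtain an index $k$ with
\[
M_k\le \frac{\mathbb E[V_0]-\underline f}{aK}+\frac{C_g l_{g,2}^2C_\lambda^4\tau^2}{4a}+\frac{C_g\tilde l_{g,1}C_\lambda^2}{aB}+\frac{C_f\sigma_f^2}{aB}+\frac{9\gamma_{\lambda,k}\sigma_g^2}{8aB}.
\]

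Finally, I check that each of the five terms is at most $\epsilon_{\mathcal L}^2/5$. The choice of $K$ handles the first term. The choice of $\tau^2$ handles the bias term: $\frac{C_g l_{g,2}^2C_\lambda^4}{4a}\cdot\frac{4a}{5C_g l_{g,2}^2C_\lambda^4}\epsilon_{\mathcal L}^2=\epsilon_{\mathcal L}^2/5$. The three branches of the maximum defining $B$ handle the last three terms. The generalization to smaller $\tau^2$ and larger $B$, $K$ is immediate because every bound is monotone in these parameters. Unlike the bounded-variance case, $B$ no longer depends on $\tau$, so no re-evaluation at the selected $\tau$ is needed.

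The main point to double-check is not the arithmetic but the legitimacy of Proposition~\ref{Proposition: Stochastic Lyapunov Function Descent After Error Bound with g Smoothness}, whose proof is only asserted to be a substitution. The substitution is valid because the FD variance bound in Lemma~\ref{Lemma: Stochastic Finite Difference Estimator with g Smoothness} has the same structure: conditionally centered noise plus bias, with the same sample at both perturbed points. Also, every place where $\sigma_g^2/(B\tau^2)$ entered, namely Lemmas~\ref{Lemma: Stochastic Iterates Gap} and \ref{Lemma: Stochastic Primal Descent w lambda}, used only this bound. The $\sigma_g^2/B$ terms from the dual step are unchanged, which is why they remain in $B$.
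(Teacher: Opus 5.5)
Your proposal is correct and follows the paper's proof essentially step for step. You invoke the $g$-smoothness descent proposition, collapse the three residuals into a single maximum via $a$, telescope using $V_K\ge\underline f$, and split the resulting bound into five pieces, each at most $\epsilon_{\mathcal L}^2/5$, via the choices of $K$, $\tau$, and the three branches of $B$. Your remark that $B$ no longer depends on $\tau$ here, so no re-evaluation at the selected $\tau$ is needed, also matches the statement.
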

\begin{proof}
By Proposition \ref{Proposition: Stochastic Lyapunov Function Descent After Error Bound with g Smoothness}, we have
\[
\begin{aligned}
\mathbb E[V_k]-\mathbb E[V_{k+1}]
\ge\;&
\frac{1}{32\gamma_{w,k}}
\mathbb E\left[
\|w^k-w^+(\hat w^k,\lambda^k)\|^2
\right]                                +
\frac{1}{64\gamma_{\lambda,k}}
\mathbb E\left[
\|\lambda^+(\hat w^k)-\lambda^k\|^2
\right]                             +
\frac{p_w\beta}{8}
\mathbb E\left[
\|w^k-\hat w^k\|^2
\right]                                                        \\
&-
C_g
\left(
\frac{l_{g,2}^2C_\lambda^4}{4}\tau^2
+
\frac{\tilde{l}_{g,1} C_{\lambda}^2}{B}
\right)
-
\frac{C_f\sigma_f^2}{B}
-
\frac{9\gamma_{\lambda,k}\sigma_g^2}{8B}.
\end{aligned}
\]
By the definition of $a$,
\[
a
=
\min\left\{
\frac{1}{32\gamma_{w,k}},
\frac{\bar{\kappa}_y^2}{64\gamma_{\lambda,k}},
\frac{\bar{\kappa}_y^2p_w\beta}{8}
\right\},
\]
we obtain
\[
\begin{aligned}
\mathbb E[V_k]-\mathbb E[V_{k+1}]
\ge\;&
a
\max\left\{
\mathbb E\left[
\|w^k-w^+(\hat w^k,\lambda^k)\|^2
\right],
\bar{\kappa}_y^{-2}\mathbb E\left[
\|\lambda^+(\hat w^k)-\lambda^k\|^2
\right],
\bar{\kappa}_y^{-2}\mathbb E\left[
\|w^k-\hat w^k\|^2
\right]
\right\}                                                       \\
&-
C_g
\left(
\frac{l_{g,2}^2C_\lambda^4}{4}\tau^2
+
\frac{\tilde{l}_{g,1} C_{\lambda}^2}{B}
\right)
-
\frac{C_f\sigma_f^2}{B}
-
\frac{9\gamma_{\lambda,k}\sigma_g^2}{8B}.
\end{aligned}
\]
Summing from $k=0$ to $K-1$ gives
\[
\begin{aligned}
\mathbb E[V_0]-\underline{f}
\ge\;&
a
\sum_{k=0}^{K-1}
\max\left\{
\mathbb E\left[
\|w^k-w^+(\hat w^k,\lambda^k)\|^2
\right],
\bar{\kappa}_y^{-2}\mathbb E\left[
\|\lambda^+(\hat w^k)-\lambda^k\|^2
\right],
\bar{\kappa}_y^{-2}\mathbb E\left[
\|w^k-\hat w^k\|^2
\right]
\right\}                                                       \\
&-
K C_g
\left(
\frac{l_{g,2}^2C_\lambda^4}{4}\tau^2
+
\frac{\tilde{l}_{g,1} C_{\lambda}^2}{B}
\right)
-
K\frac{C_f\sigma_f^2}{B}
-
K\frac{9\gamma_{\lambda,k}\sigma_g^2}{8B}.
\end{aligned}
\]
Therefore, there exists $k\in\{0,\ldots,K-1\}$ such that
\[
\begin{aligned}
&\max\left\{
\mathbb E\left[
\|w^k-w^+(\hat w^k,\lambda^k)\|^2
\right],
\bar{\kappa}_y^{-2}\mathbb E\left[
\|\lambda^+(\hat w^k)-\lambda^k\|^2
\right],
\bar{\kappa}_y^{-2}\mathbb E\left[
\|w^k-\hat w^k\|^2
\right]
\right\}                                                       \\
&\le
\frac{\mathbb E[V_0]-\underline{f}}{aK}
+
\frac{C_g}{a}
\left(
\frac{l_{g,2}^2C_\lambda^4}{4}\tau^2
+
\frac{\tilde{l}_{g,1} C_{\lambda}^2}{B}
\right)
+
\frac{C_f\sigma_f^2}{aB}
+
\frac{9\gamma_{\lambda,k}\sigma_g^2}{8aB}.
\end{aligned}
\]
By choosing
\[
K
\ge
\left\lceil
\frac{5\left(\mathbb E[V_0]-\underline{f}\right)}
{a\epsilon_{\mathcal L}^2}
\right\rceil,
\]
we have
\[
\frac{\mathbb E[V_0]-\underline{f}}{aK}
\le
\frac{1}{5}\epsilon_{\mathcal L}^2.
\]
By the choice of $\tau$,
\[
\tau^2
\le
\frac{4a}{5C_g l_{g,2}^2C_\lambda^4}
\epsilon_{\mathcal L}^2,
\]
we have
\[
\frac{C_g}{a}
\frac{l_{g,2}^2C_\lambda^4}{4}\tau^2
\le
\frac{1}{5}\epsilon_{\mathcal L}^2.
\]
By the choice of $B$,
\[
B
\ge
\left\lceil
\max\left\{
\frac{5\tilde{l}_{g,1}C_gC_{\lambda}^2}{a\epsilon_{\mathcal L}^2},
\frac{5C_f\sigma_f^2}{a\epsilon_{\mathcal L}^2},
\frac{45\gamma_{\lambda,k}\sigma_g^2}{8a\epsilon_{\mathcal L}^2}
\right\}
\right\rceil,
\]
we have
\[
\frac{\tilde{l}_{g,1}C_gC_{\lambda}^2}{aB}
\le
\frac{1}{5}\epsilon_{\mathcal L}^2,\quad \frac{C_f\sigma_f^2}{aB}
\le
\frac{1}{5}\epsilon_{\mathcal L}^2,\quad \frac{9\gamma_{\lambda,k}\sigma_g^2}{8aB}
\le
\frac{1}{5}\epsilon_{\mathcal L}^2.
\]
Combining these estimates yields
\[
\max\left\{
\mathbb E\left[
\|w^k-w^+(\hat w^k,\lambda^k)\|^2
\right],
\bar{\kappa}_y^{-2}\mathbb E\left[
\|\lambda^+(\hat w^k)-\lambda^k\|^2
\right],
\bar{\kappa}_y^{-2}\mathbb E\left[
\|w^k-\hat w^k\|^2
\right]
\right\}
\le
\epsilon_{\mathcal L}^2.
\]
This completes the proof.
\end{proof}

\subsection{Oracle Complexity of Stoc-SGHA in High Probability with Stochastic Smoothness of \texorpdfstring{\(g\)}{g}}

\paragraph{Proof of Theorem
\ref{Theorem: Stochastic Sample Complexity HP with g Smoothness}.}
Define
$
\epsilon_{\mathcal L}
:=
\frac{\epsilon\sqrt{\rho}}
{2^{12}(1+\bar L)\bar\kappa_y^3}$ and $
L_0:=8\bar L\bar\kappa_y$.
Then
\[
p_\lambda
=
\frac18
\min\left\{
\mu_g,\frac{\epsilon_{\mathcal L}}{\bar\kappa_y}
\right\},
\qquad
p_\lambda C_\lambda
\le\frac14\epsilon_{\mathcal L},
\qquad
\tau
=
\frac{\epsilon_{\mathcal L}}
{2^{13}\bar\kappa_y}.
\]

The verification of the stepsize and averaging conditions is identical
to that in the proof of
Theorem~\ref{Theorem: Stochastic Sample Complexity HP}.
Indeed, with $L_0$ in place of $l_{\mathcal L,1}$,
\[
p_w=2L_0,\qquad
\gamma_1=2,\qquad
\gamma_2\le\frac{1}{8\bar\kappa_y},\qquad
l_{\Psi,1}\le\frac{\bar L}{4\bar\kappa_y},\qquad
\gamma_3\le\frac{3075}{4}.
\]
Thus, the parameters satisfy the common conditions required in
Lemma~\ref{Lemma: Stochastic Proximal Residual Bound with g Smoothness}.

Define
\[
a
:=
\min\left\{
\frac{1}{32\gamma_w},
\frac{\bar\kappa_y^2}{64\gamma_\lambda},
\frac{\bar\kappa_y^2p_w\beta}{8}
\right\}
=
\frac{\bar L\bar\kappa_y}{2^{29}},
\]
and
\[
C_g
:=
\gamma_w
+
4(p_w+L_0)\gamma_w^2
+
p_w\beta\gamma_w^2,
\qquad
C_f
:=
4(p_w+L_0)\gamma_w^2
+
p_w\beta\gamma_w^2.
\]
The explicit parameter choices give
\[
C_g
=
\frac{22+\beta}{2^{12}\bar L\bar\kappa_y}
\le
\frac{23}{2^{12}\bar L\bar\kappa_y},
\qquad
C_f
=
\frac{6+\beta}{2^{12}\bar L\bar\kappa_y}
\le
\frac{7}{2^{12}\bar L\bar\kappa_y}.
\]

Using
$l_{g,2}\le\bar L$ and $C_\lambda=2\bar\kappa_y$,
\[
\frac{C_gl_{g,2}^2C_\lambda^4}{4a}\tau^2
\le
\frac{23}{128}\epsilon_{\mathcal L}^2
<
\frac15\epsilon_{\mathcal L}^2.
\]
Moreover, the batch size in the theorem satisfies
$
B
\ge
\frac{
2^{26}\bar M_g
}{
\bar L^2\epsilon_{\mathcal L}^2
}$.
Consequently,
\begin{align*}
\frac{5\widetilde l_{g,1}C_gC_\lambda^2}
{a\epsilon_{\mathcal L}^2}
&\le
\frac{
115\cdot2^{19}\widetilde l_{g,1}
}{
\bar L^2\epsilon_{\mathcal L}^2
}
\le B,\\
\frac{5C_f\sigma_f^2}
{a\epsilon_{\mathcal L}^2}
&\le
\frac{
35\cdot2^{17}\sigma_f^2
}{
\bar L^2\bar\kappa_y^2\epsilon_{\mathcal L}^2
}
\le B,\\
\frac{45\gamma_\lambda\sigma_g^2}
{8a\epsilon_{\mathcal L}^2}
&=
\frac{
45\cdot2^{18}\sigma_g^2
}{
\bar L^2\epsilon_{\mathcal L}^2
}
\le B,
\end{align*}
where we used $\bar\kappa_y\ge1$ and the definition of $\bar M_g$.

By Lemma~\ref{Lemma: Stochastic Proximal Residual Bound with g Smoothness},
the integer $K$ specified in the theorem satisfies
\[
K\ge
\left\lceil
\frac{5\Delta_V}{a\epsilon_{\mathcal L}^2}
\right\rceil
=
\left\lceil
\frac{5\cdot2^{53}(1+\bar L)^2\Delta_V}{\bar L}
\bar\kappa_y^5\epsilon^{-2}\rho^{-1}
\right\rceil.
\]
Consequently, there exists an index
$k_\star\in\{0,\ldots,K-1\}$ such that
\begin{align}
\max\biggl\{
\mathbb E\left[
\|w^{k_\star}
-w^+(\hat w^{k_\star},\lambda^{k_\star})\|^2
\right],
\bar\kappa_y^{-2}
\mathbb E\left[
\|\lambda^+(\hat w^{k_\star})-\lambda^{k_\star}\|^2
\right],
\,
\bar\kappa_y^{-2}
\mathbb E\left[
\|w^{k_\star}-\hat w^{k_\star}\|^2
\right]
\biggr\}
\le
\epsilon_{\mathcal L}^2.
\label{eq:g-smooth-hp-residual}
\end{align}

Set
$
\delta
:=
\frac{\sqrt3\,\epsilon_{\mathcal L}}{\sqrt\rho}$.
By Markov's inequality and a union bound, with probability at least
$1-\rho$,
\[
\|w^{k_\star}
-w^+(\hat w^{k_\star},\lambda^{k_\star})\|
\le\delta,\qquad
\|\lambda^+(\hat w^{k_\star})-\lambda^{k_\star}\|
\le\bar\kappa_y\delta,
\qquad
\|w^{k_\star}-\hat w^{k_\star}\|
\le\bar\kappa_y\delta.
\]

We next verify the conditions of
Lemma~\ref{Lemma: Deterministic Stationary Point Bridge}.
Since
\[
\delta
=
\frac{\sqrt3\,\epsilon}
{2^{12}(1+\bar L)\bar\kappa_y^3}
\le
\frac{\epsilon}
{2^{11}(1+\bar L)\bar\kappa_y^3},
\]
the calculations in the deterministic proof give
\[
\left(
\gamma_w^{-1}+\bar\kappa_yp_w
\right)\delta
\le
\frac{\mu_g}{4}
\left(
C_\lambda-\frac{l_{f,0}}{\mu_g}
\right),
\qquad
\bar\kappa_y\delta
\le
\frac14
\left(
C_\lambda-\frac{l_{f,0}}{\mu_g}
\right).
\]
Furthermore,
\[
p_\lambda C_\lambda
\le
\frac14\epsilon_{\mathcal L}
\le
\frac14\delta.
\]
Thus, Lemma~\ref{Lemma: Deterministic Stationary Point Bridge}
applies pathwise. Using
\[
\gamma_w^{-1}+\bar\kappa_yp_w
\le
272\bar L\bar\kappa_y^2
\]
and
\[
\frac{l_{g,1}}{(p_w-L_0)\gamma_w}
+
\bar\kappa_y\gamma_\lambda^{-1}
\le
288\bar L,
\]
we obtain, on the same event,
\[
\|\nabla_w
\mathcal L(w^{k_\star},\lambda^{k_\star})\|
\le
272\bar L\bar\kappa_y^2\delta,\qquad
\|\nabla_\lambda
\mathcal L(w^{k_\star},\lambda^{k_\star})\|
\le
\left(
288\bar L+\frac14
\right)\delta.
\]

As shown in the proof of
Theorem~\ref{Theorem: Deterministic Sample Complexity}, we have
$
\frac{L_{\bar F,y}}{\mu_g}
\le
4\bar\kappa_y^3$.
Therefore, Theorem~\ref{Theorem: Approximation of Hypergradient}
implies
\begin{align*}
\|\nabla F(x^{k_\star})\|
&\le
(1+\bar\kappa_y)
272\bar L\bar\kappa_y^2\delta
+
4\bar\kappa_y^3
\left(
288\bar L+\frac14
\right)\delta\\
&\le
2^{11}(1+\bar L)\bar\kappa_y^3\delta =
\frac{\sqrt3}{2}\epsilon
<
\epsilon.
\end{align*}
Hence,
$
\mathbb P\left(
\|\nabla F(x^{k_\star})\|\le\epsilon
\right)
\ge1-\rho$.
Finally, the total oracle complexity is
\[
KB
=
\mathcal O\left(
\frac{
(1+\Delta_V)\bar M_g(1+\bar L)^4
}{
\bar L^3
}
\bar\kappa_y^{11}\epsilon^{-4}\rho^{-2}
\right)
=
\mathcal O\left(
\bar\kappa_y^{11}\epsilon^{-4}\rho^{-2}
\right).
\]
This completes the proof.

\subsection{Oracle Complexity of Stoc-SGHA in Expectation with Stochastic Smoothness of \texorpdfstring{\(g\)}{g}}

\paragraph{Proof of Theorem
\ref{Theorem: Stochastic Sample Complexity Expectation with g Smoothness}.}
By Lemma~\ref{Lemma: Bounded Lagrangian Gradient} and
$C_\lambda=2\bar\kappa_y$, choose the problem-dependent constant
$\mathcal G$ in the theorem large enough that
$\mathcal G\ge\max\{1,G_w/\bar\kappa_y,G_\lambda\}$.
Define
$
\epsilon_{\mathcal L}
:=
\frac{\epsilon}
{2^{13}(1+\bar L)\mathcal G\bar\kappa_y^3}$ and $
L_0:=8\bar L\bar\kappa_y$.
Then
\[
p_\lambda
=
\frac18
\min\left\{
\mu_g,\frac{\epsilon_{\mathcal L}}{\bar\kappa_y}
\right\},
\qquad
p_\lambda C_\lambda
\le\frac14\epsilon_{\mathcal L},
\qquad
\tau
=
\frac{\epsilon_{\mathcal L}}
{2^{13}\bar\kappa_y}.
\]
Define
\[
a
:=
\min\left\{
\frac{1}{32\gamma_w},
\frac{\bar\kappa_y^2}{64\gamma_\lambda},
\frac{\bar\kappa_y^2p_w\beta}{8}
\right\}
=
\frac{\bar L\bar\kappa_y}{2^{29}}.
\]

The verification of the required conditions of Lemma~\ref{Lemma: Stochastic Proximal Residual Bound with g Smoothness} is identical
to that in the proof of
Theorem~\ref{Theorem: Stochastic Sample Complexity HP with g Smoothness}.
Indeed, with $L_0$ in place of $l_{\mathcal L,1}$,
\[
p_w=2L_0,\qquad
\gamma_1=2,\qquad
\gamma_2\le\frac{1}{8\bar\kappa_y},\qquad
l_{\Psi,1}\le\frac{\bar L}{4\bar\kappa_y},\qquad
\gamma_3\le\frac{3075}{4}.
\]

By Lemma~\ref{Lemma: Stochastic Proximal Residual Bound with g Smoothness},
the integer $K$ specified in the theorem satisfies
\[
K
\ge
\left\lceil
\frac{5\Delta_V}
{a\epsilon_{\mathcal L}^2}
\right\rceil
=
\left\lceil
\frac{
5\cdot2^{55}(1+\bar L)^2\mathcal G^2\Delta_V
}{
\bar L
}
\bar\kappa_y^5\epsilon^{-2}
\right\rceil.
\]
Consequently, there exists
$k_\star\in\{0,\ldots,K-1\}$ such that
\begin{align}
\max\biggl\{
\mathbb E\left[
\|w^{k_\star}
-w^+(\hat w^{k_\star},\lambda^{k_\star})\|^2
\right],
\bar\kappa_y^{-2}
\mathbb E\left[
\|\lambda^+(\hat w^{k_\star})-\lambda^{k_\star}\|^2
\right],
\bar\kappa_y^{-2}
\mathbb E\left[
\|w^{k_\star}-\hat w^{k_\star}\|^2
\right]
\biggr\}
\le
\epsilon_{\mathcal L}^2.
\label{eq:g-smooth-expectation-residual}
\end{align}

We next apply
Lemma~\ref{Lemma: Expected Stationary Point Bridge}.
Its constants satisfy
\[
A_w
=
\gamma_w^{-1}+\bar\kappa_yp_w
\le
272\bar L\bar\kappa_y^2,
\qquad
A_\lambda
=
\frac{l_{g,1}}{(p_w-L_0)\gamma_w}
+
\bar\kappa_y\gamma_\lambda^{-1}
\le
288\bar L.
\]
Since
$
C_\lambda-\frac{l_{f,0}}{\mu_g}
\ge\bar\kappa_y$ and $
\mu_g=\frac{\bar L}{\bar\kappa_y}$,
the radius in
Lemma~\ref{Lemma: Expected Stationary Point Bridge} satisfies
\[
r_0
\ge
\min\left\{
\frac{1}{1088\bar\kappa_y^2},\frac14
\right\}
=
\frac{1}{1088\bar\kappa_y^2},
\qquad
\frac{3}{r_0^2}
<
2^{22}\bar\kappa_y^4.
\]

The bounded-hypergradient constant in
Lemma~\ref{Lemma: Bounded Lagrangian Gradient} satisfies
\[
G_F
\le
2(1+\bar L)\mathcal G\bar\kappa_y.
\]
As shown in the deterministic proof,
$L_{\bar F,y}/\mu_g\le4\bar\kappa_y^3$.
Using $p_\lambda C_\lambda\le\epsilon_{\mathcal L}/4$ and applying
the hypergradient conclusion of
Lemma~\ref{Lemma: Expected Stationary Point Bridge}, we obtain
\begin{align*}
\mathbb E\left[\|\nabla F(x^{k_\star})\|\right]
&\le
2^{11}(1+\bar L)\bar\kappa_y^3\epsilon_{\mathcal L}
+
2^{23}(1+\bar L)\mathcal G
\bar\kappa_y^5\epsilon_{\mathcal L}^2\\
&=
\frac{\epsilon}{4\mathcal G}
+
\frac{\epsilon^2}
{8(1+\bar L)\mathcal G\bar\kappa_y}\\
&\le
\frac38\epsilon
<
\epsilon,
\end{align*}
where the last inequality uses
$0<\epsilon\le1$, $\mathcal G\ge1$, and
$\bar\kappa_y\ge1$.

Finally, the total stochastic first-order oracle complexity is
\[
KB
=
\mathcal O\left(
\frac{
(1+\Delta_V)\bar M_g(1+\bar L)^4\mathcal G^4
}{
\bar L^3
}
\bar\kappa_y^{11}\epsilon^{-4}
\right)
=
\mathcal O\left(
\bar\kappa_y^{11}\epsilon^{-4}
\right).
\]
This completes the proof.

\end{document}